\numberwithin{equation}{section}
\newcommand{\R}{\mathbb{R}}
\newtheorem{theorem}{Theorem}[section]
\newtheorem{corollary}[theorem]{Corollary}
\newtheorem{lemma}[theorem]{Lemma}
\newtheorem{proposition}[theorem]{Proposition}
\newtheorem{remark}[theorem]{Remark}
\def\v{\varepsilon}
\def\t{\theta}
\def\b{\beta}
\def\l{\lambda}
\def\r{\rho}
\def\f{\frac}
\begin{document}
	
\title[VPB with  small small entropy]{
	Global well-posedness of Vlasov-Poisson-Boltzmann equations with  neutral initial data and small relative entropy}

\author[Z.H. Jiang]{Zaihong Jiang}
\address[Z.H. Jiang]{ Department of Mathematics, Zhejiang Normal University, Jinhua 321004, P.R.~China
}
\email{jzhong@zjnu.cn}

\author[Y. Wang]{Yong Wang}
\address[Y. Wang]{Institute of Applied Mathematics, AMSS, CAS, Beijing 100190, P.R.~China}
\email{yongwang@amss.ac.cn}

\author[H. Xiong]{Hang Xiong}
\address[H. Xiong]{{School of Mathematics and Computational Science, Xiangtan University, Xiangtan 411105, P.R.~China; Institute of Applied Mathematics, AMSS, CAS, Beijing 100190, P.R.~China}}
\email{hbear0810@amss.ac.cn}

\begin{abstract}
The dynamics of dilute plasma particles such as electrons and ions can be modeled by the fundamental two species Vlasov-Poisson-Boltzmann equations, which describes mutual interactions of plasma particles through collisions in the self-induced electric field. In this paper, we are concerned with global well-posedness of mild solutions to these equations. We establish the global existence and uniqueness of mild solutions to the two species Vlasov-Poisson-Boltzmann equations on the torus for a class of initial data with bounded time-velocity-weighted $L^{\infty}$ norm under a nearly neutral condition, along with smallness conditions on the $L^1_xL^\infty_v$ norm  and defects in mass, energy and entropy. These conditions allow the initial data to exhibit large amplitude oscillations.
Due to the nonlinear effect of  electric field, we  consider the problem in $W^{1, \infty}_{x,v}$ with large amplitude data, new difficulty arises when establishing globally uniform $W^{1, \infty}_{x,v}$ bound, which has been overcome based on nearly neutral condition, time-velocity weight function and a logarithmic estimate.  Moreover,the long-time behavior of solutions in $W^{1, \infty}_{x,v}$ norm, with exponential decay rates of convergence, is also obtained.
\end{abstract}

%\subjclass[2000]{35Q20, 35B20, 35B35, 35B45}
\keywords{Vlasov-Poisson-Boltzmann system, global well-posed, plasma particles, mild solutions, large amplitude, neutral initial data.}
\date{\today}
%\thanks{}
\maketitle
	
\setcounter{tocdepth}{2}
\tableofcontents
	
\thispagestyle{empty}

	%%%%%%%%%%%%%%%%%%%%%%%%%%%%%%%%%%%%%%%%%%%%%%%%%%%%%%%%%%%%%%%%%%
	
\section{Introduction}

\subsection{The Vlasov-Poisson-Boltzmann equations }

The Vlasov-Poisson-Boltzmann (VPB) equations of two species can be used to model the time evolution of dilute charged particles (e.g., electrons and ions) in the absence of an external magnetic field. In general, the VPB in the torus for two species of particles  take the form
\begin{equation}\label{Eq1.1}
	\left\{
	\begin{aligned}
		&\partial_{t} F_{+}+v \cdot \nabla_{x} F_{+}-\frac{e_{+}}{m_{+}}\nabla_{x} \phi \cdot \nabla_{v} F_{+}=Q(F_{+}, F_{+}+F_{-}), \\
		&\partial_{t} F_{-}+v \cdot \nabla_{x} F_{-}+\frac{e_{-}}{m_{-}}\nabla_{x} \phi \cdot \nabla_{v} F_{-}=Q(F_{-}, F_{+}+F_{-}), \\
		&-\Delta_{x} \phi=4\pi\int_{\mathbb{R}^{3}}(e_{+}F_{+}-e_{-}F_{-})dv,\quad \int_{\mathbb{T}^3}\phi(t,x)dx= 0, \\
		&F_{+}(0, x, v)=F_{+, 0}(x, v), \quad F_{-}(0, x, v)=F_{-, 0}(x, v),
	\end{aligned}
	\right.
\end{equation}
where $F_{+}=F_{+}(t,x, v)$ and $F_{-}=F_{-}(t,x, v)$ are the spatially periodic number density functions for the ions (+) and electrons (-), respectively, at time $t\ge 0$, position $x = (x_1, x_2, x_3)\in [-\frac{1}{2},\frac{1}{2}]^3 =\mathbb{T}^3$, velocity $v = (v_1, v_2, v_3)\in \mathbb{R}^3$. $e_{\pm}$ and $m_{\pm}$ denote the magnitude of their charges
and masses and $\phi(t,x)$ denotes the electric potential. 

The charged particles are moving under the electric force and the inter-particle collisions along their trajectories. The differences between the equations of $F_{+}$ and $F_{-}$ are that the directions of the electric force acting on ions and electrons are opposite and the collisions occur not only between the same kind of particles, but also between different types of particles.

The collision between particles is given by the standard Boltzmann collision operator $Q(G_1, G_2)$, where $G_1(v)$, $G_2(v)$ are two number density functions for two types of particles with masses $m_i$ and diameters $\sigma_i\ (i=1,2)$. By \cite[p83 and p89]{Chapman70Book}, $Q(G_1,G_2)(v)$ is defined as
\begin{align}\label{Eq1.2}
	Q(G_1, G_2)&=\frac{1}{4}(\sigma_1+\sigma_2)^2\int_{\mathbb{R}^{3}} \int_{\mathbb{S}^{2}}B(v-u,\t) \left(G_1\left(v^{\prime}\right) G_2\left(u^{\prime}\right)-G_1(v)G_2(u)\right)  \mathrm{d} \omega \mathrm{d} u\nonumber\\
	&:=Q_{\text{gain}}(G_1,G_2)-Q_{\text{loss}}(G_1,G_2),
\end{align}
where the relationship between the post-collision velocity $(v',u')$ of two particles with the pre-collision velocity $(v,u)$ is given by
\begin{equation*}
	u'=u+\frac{2m_1}{m_1+m_2}[(v-u)\cdot\omega]\omega,\quad v'=v-\frac{2m_2}{m_1+m_2}[(v-u)\cdot\omega]\omega,
\end{equation*}
for $\omega\in \mathbb{S}^2$, which can be determined by conservation laws of momentum and energy
\begin{equation*}
	m_1v+m_2u=m_1v'+m_2u',\quad \frac{1}{2}m_1|v|^2+\frac{1}{2}m_2|u|^2=\frac{1}{2}m_1|v'|^2+\frac{1}{2}m_2|u'|^2.
\end{equation*}
The Boltzmann collision kernel $B=B(v-u,\theta)$ in \eqref{Eq1.1} depends only on $|v-u|$ and $\theta$ with $\cos\theta=(v-u)\cdot \omega/|v-u|$.   Throughout this paper,  we  consider  the hard  potentials under the Grad's angular cut-off assumption, for instance,
\begin{equation*}
	B(v-u,\t)=|v-u|^{\gamma}b(\t),%~~\mbox{with}~~-3<\gamma\leq1,
\end{equation*}
with %$b(\t)$ satisfying the usual angular cutoff assumption, i.e.,
\begin{equation}%\label{1.4-1}
	0\leq\gamma\leq 1,\quad 0\leq b(\t)\lesssim  |\cos\t|.\notag
\end{equation}

%To clarify the collisions between two types of particles, we adopt the following notations:
%\begin{align}\label{Eq1.3}
%	Q^{+}:=Q(F_{+},F_{+}),\ Q^{\pm}:=Q(F_{+},F_{-}),\ Q^{\mp}:=Q(F_{-},F_{+}),\ Q^{-}:=Q(F_{-},F_{-}).
%\end{align}
%Note that $Q^{+}$ and $Q^{-}$ are the usual collision operators of one species.

In this paper, we  normalize all physical constants in the VPB  \eqref{Eq1.1} to be one. Then \eqref{Eq1.1} can be written as 
\begin{equation}\label{Eq1.4}
	\left\{
	\begin{aligned}
		&\partial_{t} F_{+}+v \cdot \nabla_{x} F_{+}-\nabla_{x} \phi \cdot \nabla_{v} F_{+}=Q(F_{+}, F_{+}+F_{-}),\\
		&\partial_{t} F_{-}+v \cdot \nabla_{x} F_{-}+\nabla_{x} \phi \cdot \nabla_{v} F_{-}=Q(F_{-}, F_{+}+F_{-}), \\
		&-\Delta_{x} \phi=\rho:=\int_{\mathbb{R}^{3}}(F_{+}-F_{-})\mathrm{d}v,\quad \int_{\mathbb{T}^3}\phi(t,x)\mathrm{d}x= 0, \\
		&F_{+}(0, x, v)=F_{+, 0}(x, v), \quad F_{-}(0, x, v)=F_{-, 0}(x, v),
	\end{aligned}
	\right.
\end{equation}
where
\begin{align}\label{Eq1.5}
	Q(G_1, G_2)&=\int_{\mathbb{R}^{3}} \int_{\mathbb{S}^{2}}B(v-u,\t) \left(G_1\left(v^{\prime}\right) G_2\left(u^{\prime}\right)-G_1(v)G_2(u)\right)  \mathrm{d} \omega \mathrm{d} u\nonumber\\
	&:=Q_{\text{gain}}(G_1,G_2)-Q_{\text{loss}}(G_1,G_2),
\end{align}
and 
\begin{equation*}
	v+u=v'+u',\quad |v|^2+|u|^2=|v'|^2+|u'|^2.
\end{equation*}

\subsection{A brief history of VPB }
The mathematical studies on the VPB originated from the pure Boltzmann equation and the Vlasov-Poisson equations. Extensive literature exists on the well-posedness of the Boltzmann equation and the Vlasov-Poisson equations; see \cite{Cercignani1994, Glassey96Book, Rein07Book, Villani2002}  and references therein. In what follows, we focus only on results that are directly relevant to our work.

For the Boltzmann equation, the local existence and uniqueness of general initial data in $L^\infty$ framework were firstly investigated by Kaniel-Shinbrot \cite{Kaniel-S78} and the global existence was later obtained by Illner-Shinbrot \cite{IIIner-S84} under an additional smallness assumption on velocity-weighted $L^\infty$ norm. It is well known that for general initial data with finite mass, energy and entropy, the global existence of renormalized solutions was proved  by Diperna-Lions \cite{D-Lion89} via the weak compactness method; however, the uniqueness of such solutions is unknown.  On the other hand, the convergence of a class of  large amplitude solutions toward the global Maxwellian with an explicit almost exponential rate in large time was also obtained by   Desvillettes-Villani \cite{Desvillettes-V05} conditionally under  some assumptions on smoothness and polynomial moment bounds of the solutions. The result has been  improved by Gualdani-Mischler-Mouhot \cite{GMM17} to derive a sharp exponential time rate.  In the perturbation framework, due to the extensive study of the linearized operator (Grad \cite{Grad63}, Ellis-Pinsky \cite{EP75}, and Baranger-Mouhot \cite{BM05}, for instance), the well-posedness theory of the Boltzmann equation is indeed well established in different kinds of settings since the pioneering work by Ukai \cite{Uk74}. For instance, the energy method in smooth Sobolev spaces was developed in Guo \cite{Guo-03} and Liu-Yang-Yu \cite{Liu-Yang-Yu04}. Another $L^2\cap L^\infty$ approach was found by Guo \cite{Guo10ARMA,Guo10QAM} even for treating the Boltzmann equation on a general bounded domain. Recently, Duan-Huang-Wang-Yang \cite{Duan17ARMA} developed a  $L^\infty_{x}L^1_{v}\cap L^\infty_{x,v}$ approach to prove the global well-posedness and uniqueness of the Boltzmann equation for a class of initial data ​​with large amplitude. For further related results, see \cite{AMUXY12, Briant16, DLX16ARMA, Duan19ARMA, E-G-K-M13CMP, GS11, Guo17IM, Kim2011CMP,  Kim2014CPDE, Ko2022JDE, MS16JDE, Mouhot04ARMA, SS14, Strain-Guo08} and references therein.

The well-posedness of the Vlasov-Poisson equations is well established. The local existence and uniqueness of general initial data in the framework of classical solutions were established by Kurth \cite{Kurth52}, Horst \cite{Horst81}, and Batt \cite{Batt77JDE}; Bardos-Degond \cite{B-D85} later proved the first global classical solution for small initial data by introducing the free streaming condition.  The development for Vlasov-Poisson system culminated in 1989 when independently and almost simultaneously two different proofs for global existence of classical solutions for general large data were given, one by Pfaffelmoser \cite{Pfaffelmoser92JDE} and one by Lions-Perthame \cite{Lions91IM}. For further results on the Vlasov-Poisson equations, see \cite{Caprino15CPDE, Caprino18JHDE, Chen16CMP}; as well as the monographs \cite{Glassey96Book, Rein07Book} and references therein.

For the VPB, the global existence of renormalized solutions with large initial data was established in  \cite{Lions91VPB}, and this result was later extended to cases with physical boundaries in  \cite{Mischler00}. Within the perturbation framework of the VPB, Guo \cite{Guo01CMP} proved the global smooth small-amplitude solutions near vacuum; it was shown in \cite{Guo02CPAM} that the VPB near Maxwellian admits a unique global classical solution in periodic box; see also \cite{Yang06ARMA,Yang06CMP} in the whole space. Subsequently, a robust energy method was developed in \cite{Guo03IM} to analyze  Vlasov-Maxwell-Boltzmann equations with the self-consistent electric and magnetic fields; see also\cite{Duan09SIAM, Duan12JDE, Duan13M3AS, Yang06ARMA, Yang06CMP, Zhang09JDE}. Notably, Duan-Yang-Zhao developed a time-velocity-weighted energy method to study the VPB  for general collision potentials \cite{Duan12JDE, Duan13M3AS}; see also \cite{Xiao13JDE, Xiao14SCI, Xiao17JFA}. Moreover, the large-time behavior of global solutions is also extensively studied by using different approaches \cite{Duan11ARMA, Guo03IM, Li2016IUMJ, Li2020ARMA,Li2021ARMA, Li08JDE, Wang13JDE, Yang06CMP,Yang11CMP}. We would like to mention $L^2\cap L^\infty$ approach  is also widely used for the  VPB  \cite{Cao19ARMA, Cao19KRM, G-J10CMP,  Li2021JDE} on hydrodynamic limit and bounded domain. For additional relevant contributions, see \cite{Chae06JDE, Deng21CMP, Deng23nonlinearity, Dong19SIAM, Duan13CMP, Wang19JDE} and references therein.

All existing studies on the VPB within the perturbation framework necessitate initial data of small amplitude near the global Maxwellian. This paper aims to establish the well-posedness of the VPB for initial data exhibiting large amplitude yet small relative entropy. Furthermore, we prove that the solutions converge exponentially to the global Maxwellian in $W^{1, \infty}_{x,v}$ norm.

\subsection{Notations}
Throughout this paper, we shall use the following conventions:
\begin{itemize}
\item 
$C$ denotes a generic positive constant and $C_{a}, C_{b}, C_{N} \ldots$ denote the generic positive constants depending on $a, b, N, \ldots$, respectively, which may vary from line to line. And we use $C_{1}$, $C_{2}$, $C_{3}$, etc., denote fixed positive constants.  $A \lesssim B$ means that there exists a constant $C>0$ such that $A \le C B$.  $A\cong B$ means that both $A \lesssim B$ and $B \lesssim A$ hold. 

\item For a real-valued function $\mathrm{g}(t,x,v)$ defined on $\mathbb{R}_{+}\times \mathbb{T}^3\times \mathbb{R}^3$,  the derivative of the i-th component $x_{i}$ of the spatial variable $x=(x_1,x_2,x_3)$ is defined as $\partial_{x_{i}} \mathrm{g}$ or $\partial_{i}\mathrm{g}$. The derivative of the i-th component $v_{i}$ of the spatial variable $v=(v_1,v_2,v_3)$ is defined as $\partial_{v_{i}} \mathrm{g}$ or $\partial^{i}\mathrm{g}$. 
$\nabla_{x}\mathrm{g}=\partial_{x} \mathrm{g}:=(\partial_1\mathrm{g},\partial_2\mathrm{g},\partial_3\mathrm{g})$ and $\nabla_{v}\mathrm{g}=\partial_{v} \mathrm{g}:=(\partial^1\mathrm{g},\partial^2\mathrm{g},\partial^3\mathrm{g})$ denote the gradient of $\mathrm{g}$ with respect to the spatial variable and velocity variable respectively. 

\item We use $[\cdot,\cdot]^{\mathrm{T}}$ to denote a column vector.
For a vector-valued function $\mathbf{g}(t,x,v)=[g_{+}, g_{-}]^{\mathrm{T}}$ defined on $\mathbb{R}_{+}\times \mathbb{T}^3\times \mathbb{R}^3$, we define $\partial_{x,v}\mathbf{g}(t,x,v)=[\partial_{x,v} g_{+}, \partial_{x,v} g_{-}]^{\mathrm{T}}$.

\item  $\|\cdot\|_{L^2}$ denotes either the standard  $L^{2}\left( \mathbb{T}_{x}^{3}\right)$-norm or $L^{2}\left( \mathbb{R}_{v}^{3}\right)$-norm or $L^{2}\left(\mathbb{T}_x^3 \times \mathbb{R}_{v}^{3}\right)$-norm. Similarly, $\|\cdot\|_{L^\infty}$ denotes either the $L^{\infty}\left( \mathbb{R}_{x}^{3}\right)$-norm or $L^{\infty}\left( \mathbb{R}_{v}^{3}\right)$-norm or  $L^{\infty}\left(\mathbb{T}_x^3 \times \mathbb{R}_{v}^{3}\right)$-norm.  We denote $\langle\cdot, \cdot\rangle$ as either the $L^{2}\left(\mathbb{T}_{x}^{3}\right)$ inner product or $L^{2}\left(\mathbb{R}_{v}^{3}\right)$ inner product or $L^{2}\left(\mathbb{T}_x^3 \times \mathbb{R}_{v}^{3}\right)$ inner product. We denote $\|\cdot\|_{\nu}:=\|\sqrt{\nu} \cdot\|_{L^2}$. 

\item For the vector-valued function $\mathbf{g}$,
we denote $\|\mathbf{g}\|_{L^2}:=(\|g_{+}\|_{L^2}^2+\|g_{-}\|_{L^2}^2)^{\f{1}{2}}$,  $\|\mathbf{g}\|_{L^\infty}:=\|g_{+}\|_{L^\infty}+\|g_{-}\|_{L^\infty}$,  $\|\mathbf{g}\|_{L^1_x L_v^{\infty}}:=\|g_{+}\|_{L^1_x L_v^{\infty}}+\|g_{-}\|_{L^1_x L_v^{\infty}}$
 and  $\|\mathbf{g}\|_{\nu}:=(\|g_{+}\|_{\nu}^2+\|g_{-}\|_{\nu}^2)^{\f{1}{2}}$.  
Moreover, we can also define $\|\partial_{x,v}\mathbf{g}\|_{L^\infty}:=\|\partial_{x,v}g_{+}\|_{L^\infty}+\|\partial_{x,v}g_{-}\|_{L^\infty}$, where $\|\partial_{x,v}g_{\pm}\|_{L^\infty}:=\sum\limits_{i=1}^{3}(\|\partial_{x_i}g_{\pm}\|_{L^\infty}+\|\partial_{v_i}g_{\pm}\|_{L^\infty})$. Similarly, for any vector-valued function $\mathbf{b}(x):=[b_1,b_1,b_3]^{\mathrm{T}}$, we can define the norm $\|\mathbf{b}\|_{L^2}:=\sum\limits_{i=1}^{3}\|b_{i}\|_{L^2}$  where $b_i(x)$ ($i=1,2,3$) is a real-valued function. 
For simplicity, we denote $\|(\varXi, \varPi)\|_{L^2}:=\|\varXi\|_{L^2}+\|\varPi\|_{L^2}$, where $\varXi$, $\varPi$ are real/vector-valued functions. 
\end{itemize}

\subsection{Main results}
In this paper, we study the classical solutions of \eqref{Eq1.4} around a normalized global Maxwellian:
\begin{equation*}
	\mu(v)= (2\pi)^{-\frac32}e^{-\frac{|v|^2}{2}}.
\end{equation*}
We define 
\begin{equation}\label{Eq1.6}
	f_+(t,x,v)=\frac{F_+(t,x,v)-\mu}{\sqrt{\mu}},\quad f_-(t,x,v)=\frac{F_-(t,x,v)-\mu}{\sqrt{\mu}}.
\end{equation}
Throughout the paper,  let's define
\begin{align*}
	\mathbf {F}(t,x,v):=[F_{+}(t,x,v),F_{-}(t,x,v)]^{\mathrm{T}},\quad \mathbf {f}(t,x,v):=[f_{+}(t,x,v),f_{-}(t,x,v)]^{\mathrm{T}}.
\end{align*}
For any given $\mathbf{g}=[g_{+},g_{-}]^{\mathrm{T}}$, the linearized collision operator is defined as 
\begin{align}\label{Eq1.7}
	\mathbf{Lg}:=[\mathrm{L}^{+}\mathbf{g}, \mathrm{L}^{-}\mathbf{g}]^{\mathrm{T}},
\end{align}
where
\begin{equation*}
	\mathrm{L}^{\pm}\mathbf{g}:=-\frac{1}{\sqrt{\mu}}\{2Q(\sqrt{\mu} g_{\pm}, \mu)+Q(\mu,\sqrt{\mu} g_{+}+\sqrt{\mu} g_{-})\}.
\end{equation*}
We split $\mathbf{L}$ in a standard way: $\mathbf{Lg}=\nu(v)\mathbf{g}-\mathbf{Kg}$. The collision frequency is
\begin{align}\label{Eq1.10}
	\nu(v):= 2\int_{\mathbb{R}^{3}} \int_{\mathbb{S}^{2}} B(v-u,\t) \mu(u) \mathrm{d} \omega \mathrm{d} u \cong (1+|v|)^{\gamma},
\end{align}
and $\mathbf{K}:=[\mathrm{K}^{+}, \mathrm{K}^{-}]^{\mathrm{T}}$ with $\mathrm{K}^{\pm}:=\mathrm{K}_{2}^{\pm}-\mathrm{K}_{1}^{\pm}$ which are defined  as
\begin{align}
	\left(\mathrm{K}_{1}^{+} \mathbf{g}\right)(v)&=\left(\mathrm{K}_{1}^{-} \mathbf{g}\right)(v):= \frac{1}{\sqrt{\mu}}Q_{\text{loss}}(\mu,\sqrt{\mu} g_{+}+\sqrt{\mu} g_{-})\nonumber\\
	&=\int_{\mathbb{R}^{3}} \int_{\mathbb{S}^{2}} B(v-u,\t)\sqrt{\mu(v) \mu(u)} (g_{+}(u)+g_{-}(u)) \mathrm{d} \omega \mathrm{d} u,\label{Eq1.11}\\
	\left(\mathrm{K}_{2}^{+} \mathbf{g}\right)(v)&:= \frac{2}{\sqrt{\mu}}Q_{\text{gain}}(\sqrt{\mu}       g_{+},\mu)+\frac{1}{\sqrt{\mu}}Q_{\text{gain}}(\mu,\sqrt{\mu} g_{+}+\sqrt{\mu}  g_{-})\nonumber\\
	&=2\int_{\mathbb{R}^{3}} \int_{\mathbb{S}^{2}} B(v-u,\t) \sqrt{\mu(u) \mu(u')}g_{+}(v') \mathrm{d} \omega \mathrm{d} u\nonumber\\
	&\quad +\int_{\mathbb{R}^{3}} \int_{\mathbb{S}^{2}} B(v-u,\t)\sqrt{\mu(u) \mu(v')}(g_{+}(u')+g_{-}(u')) \mathrm{d} \omega \mathrm{d} u,\label{Eq1.12}\\
	\left(\mathrm{K}_{2}^{-} \mathbf{g}\right)(v)&:= \frac{2}{\sqrt{\mu}}Q_{\text{gain}}(\sqrt{\mu} g_{-},\mu)+\frac{1}{\sqrt{\mu}}Q_{\text{gain}}(\mu,\sqrt{\mu} g_{+}+\sqrt{\mu} g_{-})\nonumber\\
	&=2\int_{\mathbb{R}^{3}} \int_{\mathbb{S}^{2}} B(v-u,\t) \sqrt{\mu(u) \mu(u')}g_{-}(v') \mathrm{d} \omega \mathrm{d} u\nonumber\\
	&\quad +\int_{\mathbb{R}^{3}} \int_{\mathbb{S}^{2}}B(v-u,\t) \sqrt{\mu(u) \mu(v')}(g_{+}(u')+g_{-}(u')) \mathrm{d} \omega \mathrm{d} u.\label{Eq1.13}
\end{align}
For $\mathbf{g}=[g_{+}, g_{-}]^{\mathrm{T}}$ and $\mathbf{f} = [f_{+}, f_{-}]^{\mathrm{T}}$, the nonlinear collision operator is defined as
\begin{align}\label{Eq1.14}
	\mathbf{\Gamma}(\mathbf{g},\mathbf{f}):=[\mathrm{\Gamma}^{+}(\mathbf{g},\mathbf{f}),\mathrm{\Gamma}^{-}(\mathbf{g},\mathbf{f})]^{\mathrm{T}}
\end{align}
with
\begin{equation}\label{Eq1.15}
	\mathrm{\Gamma}^{\pm}(\mathbf{g}, \mathbf{f}) :=\frac{1}{\sqrt{\mu}}\left\{Q(\sqrt{\mu} g_{\pm}, \sqrt{\mu} (f_{+}+f_{-}))\right\}
\end{equation}
Thus the VPB \eqref{Eq1.4} can be rewritten as
\begin{equation}\label{Eq1.17}
	\left\{
	\begin{aligned}
		&(\partial_t+v \cdot \nabla _x-\nabla_x \phi \cdot \nabla_v) f_{+}+\mathrm{L}^{+}\mathbf{f}= \mathrm{\Gamma}^{+}(\mathbf{f},\mathbf{f})-\frac{1}{2}\nabla_x \phi \cdot v f_{+}-\nabla_x \phi \cdot  v \sqrt{\mu}, \\
		&(\partial_t+v \cdot \nabla _x+\nabla_x \phi \cdot \nabla_v) f_{-}+\mathrm{L}^{-}\mathbf{f}= \mathrm{\Gamma}^{-}(\mathbf{f},\mathbf{f})+\frac{1}{2}\nabla_x \phi \cdot v f_{-}+\nabla_x \phi \cdot  v \sqrt{\mu}, \\
		&-\Delta_{x} \phi= \rho:=\int_{\mathbb{R}^{3}}\sqrt{\mu}(f_{+}-f_{-})\mathrm{d}v,  \quad \int_{\mathbb{T}^3}\phi(t,x)\mathrm{d}x= 0,   \\
		&f_{+}(0,x,v)=f_{+, 0}(x,v),\quad f_{-}(0,x,v)=f_{-, 0}(x,v). 
	\end{aligned}
	\right.
\end{equation}

For later use, $\eqref{Eq1.17}_{1,2}$ can be rewritten as
\begin{equation}\label{Eq1.18}
	\partial_t \mathbf{f} + v \cdot \nabla_x \mathbf{f} - \mathbf{q} (\nabla_x \phi \cdot \nabla_v) \mathbf{f} + \mathbf{q} (\frac{v}{2} \cdot \nabla_x \phi) \mathbf{f}+ \mathbf{L}\mathbf{f} = \mathbf{\Gamma}(\mathbf{f},\mathbf{f}) - \mathbf{q_1}v \cdot \nabla_x \phi \sqrt \mu,
\end{equation}
where $\mathbf{q} = \begin{bmatrix} 1 & 0 \\ 0 & -1  \end{bmatrix} $, and $\mathbf{q_1} = \begin{bmatrix} 1  \\ -1  \end{bmatrix} $.

It is well known that the the linearized collision operator $\mathbf{L}$ is non-negative.  The null space $\mathbf{\mathcal{N}}$ of $\mathbf{L}$ is the six dimensional space  (Lemma 1)
\begin{equation*}
	\mathbf{\mathcal{N}}=\operatorname{span}\left\{ \begin{bmatrix} \sqrt \mu \\ 0  \end{bmatrix}, \begin{bmatrix} 0  \\ \sqrt \mu \end{bmatrix}, \begin{bmatrix} \frac{v_i}{\sqrt 2 } \sqrt \mu \\ \frac{v_i}{\sqrt 2 } \sqrt \mu   \end{bmatrix}, \begin{bmatrix} \frac{|v|^2 - 3}{2\sqrt 2} \sqrt \mu \\ \frac{|v|^2 - 3}{2\sqrt 2} \sqrt \mu   \end{bmatrix}
	\right\}, \ (i=1,2,3).
\end{equation*}
For any fixed $(t, x)$, and any function
$$
\mathbf{g}(t, x, v)= \begin{bmatrix} g_+(t, x, v) \\ g_- (t, x, v)\end{bmatrix} ,
$$
we define $\mathbf P$ as its $v-$ projection onto $L_{v}^{2}\left(\mathbb{R}^{3}\right)$, to the null space $\bf{\mathcal{N}}$. We then decompose $\mathbf{g}(t, x, v)$ uniquely as
$$
\mathbf{g}(t, x, v)=\{\mathbf{ P g}\}(t, x, v)+\{\mathbf I-\mathbf P\} \mathbf{g}(t, x, v) ,
$$
where $\mathbf {P} \mathbf{g}$ is denoted by
\begin{equation*}
	\mathbf P \mathbf{g}(t, x, v) := \left\{ a_+(t, x) \begin{bmatrix} \sqrt \mu \\ 0  \end{bmatrix} + a_-(t, x) \begin{bmatrix} 0  \\ \sqrt \mu \end{bmatrix} + \mathbf{b}(t, x)  \cdot \frac{v}{\sqrt 2 } \begin{bmatrix} \sqrt \mu \\ \sqrt \mu  \end{bmatrix} + c(t, x)  \frac{|v|^2 - 3}{2\sqrt 2}\begin{bmatrix} \sqrt \mu \\ \sqrt \mu  \end{bmatrix}
	\right\},
\end{equation*}
where $\mathbf{b}(t,x)=[b_{1}(t,x),b_{2}(t,x),b_{3}(t,x)]^{\mathrm{T}}$.
Usually, we call $\mathbf{P} \mathbf{g}$  the hydrodynamic part of $\mathbf{g}$, and $\{\mathbf I-\mathbf P\} \mathbf{g}$  the microscopic part.

A direct calculation shows that the classical solutions $F_{\pm}(t, x, v)$ for VPB  \eqref{Eq1.4} satisfies the conservation laws of defect mass, momentum, and total energy:
%\begin{gather}
%	\frac{d}{dt}\int_{\mathbb{T}^3}\int_{\mathbb{R}^3}F_{+}(t,x,v)\mathrm{d}v\mathrm{d}x=\frac{d}{dt}\int_{\mathbb{T}^3}\int_{\mathbb{R}^3}F_{-}(t,x,v)\mathrm{d}v\mathrm{d}x=0,\label{Eq1.19}\\
%	\frac{d}{dt}\int_{\mathbb{T}^3}\int_{\mathbb{R}^3}v(F_{+}(t,x,v)+F_{-}(t,x,v)) \mathrm{d}v\mathrm{d}x=0,\label{Eq1.20}\\
%	\frac{d}{dt}\left\{\int_{\mathbb{T}^3}\int_{\mathbb{R}^3}|v|^2(F_{+}(t,x,v)+F_{-}(t,x,v)) \mathrm{d}v\mathrm{d}x+\frac{1}{4\pi}\int_{\mathbb{T}^3}|\nabla_x \phi(t,x)|^2dx\right\}=0.\label{Eq1.21}
%\end{gather}
\begin{align}
	&\int_{\mathbb{T}^3}\int_{\mathbb{R}^3}\{F_{\pm}(t,x,v)-\mu(v)\}\mathrm{d}v\mathrm{d}x=\int_{\mathbb{T}^3}\int_{\mathbb{R}^3}\{F_{\pm,0}(t,x,v)-\mu(v)\}\mathrm{d}v\mathrm{d}x:=M_{\pm,0}, \label{Eq2.4}  \\
	&\int_{\mathbb{T}^3}\int_{\mathbb{R}^3}v\{(F_{+}(t,x,v)-\mu(v))+(F_{-}(t,x,v)-\mu(v))\}\mathrm{d}v\mathrm{d}x\nonumber\\	=&\int_{\mathbb{T}^3}\int_{\mathbb{R}^3}v\{(F_{+,0}(t,x,v)-\mu(v))+(F_{-,0}(t,x,v)-\mu(v))\}\mathrm{d}v\mathrm{d}x:=\mathbf{J}_{0},\label{Eq2.6}\\
	&\int_{\mathbb{T}^3}\int_{\mathbb{R}^3}|v|^2\{F_{+}(t,x,v)+F_{-}(t,x,v)-2\mu\} \mathrm{d}v\mathrm{d}x+\int_{\mathbb{T}^3}|\nabla_x \phi(t,x)|^2\mathrm{d}x\nonumber\\
	=&\int_{\mathbb{T}^3}\int_{\mathbb{R}^3}|v|^2\{F_{+,0}(x,v)+F_{-,0}(x,v)-2\mu\} \mathrm{d}v\mathrm{d}x+\int_{\mathbb{T}^3}|\nabla_x \phi_0|^2\mathrm{d}x:=E_0, \label{Eq2.7}
\end{align}
as well as the inequality of defect entropy
\begin{align}\label{Eq2.8}
		&\int_{\mathbb{T}^3}\int_{\mathbb{R}^3}\{(F_{+}\ln F_{+}+F_{-}\ln F_{-})(t,x,v)-2\mu(v)\ln{\mu(v)}\}\mathrm{d}v\mathrm{d}x\nonumber\\
	&\quad\le \int_{\mathbb{T}^3}\int_{\mathbb{R}^3}\{(F_{+,0}\ln F_{+,0}+F_{-,0}\ln F_{-,0})(x,v)-2\mu(v)\ln{\mu(v)}\}\mathrm{d}v\mathrm{d}x.
\end{align}
For later use, we define the relative entropy 
\begin{align}\label{Eq2.9}
	\mathcal{E}(\mathbf{F}(t)):=&\int_{\mathbb{T}^3}\int_{\mathbb{R}^3}\left\{F_{+}(t)\ln F_{+}(t)+F_{-}(t)\ln F_{-}(t)-2\mu\ln \mu\right\}\mathrm{d}v\mathrm{d}x \nonumber\\
	-&\int_{\mathbb{T}^3}\int_{\mathbb{R}^3}(1+\ln \mu)(F_{+}(t)+F_{-}(t)-2\mu)\mathrm{d}v\mathrm{d}x+\frac{1}{2}\int_{\mathbb{T}^3}|\nabla_x \phi(t)|^2\mathrm{d}x.
\end{align}

Motivated by \cite{Duan12JDE}, we introduce a  time-velocity weight function
\begin{equation}\label{Eq1.22}
	w_\b(t, v):=
	(1+|v|^2)^{\frac{\beta}{2}}e^{\frac{ \sigma_0 }{1+t}|v|^2},
\end{equation}
for some $\b\geq 4$, $0<\sigma_0\leq \frac{1}{16}$.  Throughout the paper, we denote
\begin{align}\label{Eq1.23}
	\tilde{h}_{\pm}(t,x,v):=w_{\b_1}(t, v)f_{\pm}(t,x,v),\quad h_{\pm}(t,x,v):=w_{\b}(t, v)f_{\pm}(t,x,v).
\end{align}
Similarly, we can define $\mathbf{h}:=[h_{+},h_{-}]^{\mathrm{T}}$ and $\mathbf{\tilde{h}}:=[\tilde{h}_{+},\tilde{h}_{-}]^{\mathrm{T}}$, then define $\partial_{x,v}\mathbf{\tilde{h}}:=[\partial_{x,v}\tilde{h}_{+},\partial_{x,v}\tilde{h}_{-}]^{\mathrm{T}}$.

The main result of this paper is stated as follows:

\begin{theorem}\label{Thm1.1}
	Assume $0\leq\gamma\leq 1$, $4\leq\b_1<\b -4$ and $(M_{\pm, 0}, \mathbf{J}_{0}, E_0)=(0, \mathbf{0}, 0)$.  For given $M_0\geq 1$, suppose the initial data  $F_{\pm, 0}(x,v)=\mu(v)+\sqrt{\mu(v)}f_{\pm, 0}(x,v)\ge 0$ satisfy $\|h_{\pm, 0}\|_{L^\infty}\le M_0$, $\|\partial_{x, v}\tilde{h}_{\pm, 0}\|_{L^\infty}< +\infty$. Then  there exist small constants $\v_1$ (depending on $\gamma, \b, \b_1, M_0$) and $\v_0$ (depending on $\|\partial_{x, v}\tilde{h}_{\pm, 0}\|_{L^\infty}$) such that if
	\begin{align}\label{Eq1.24}
		\|f_{+, 0}-f_{-, 0}\|_{L^\infty}\leq \v_0, \quad	\mathcal{E}(\mathbf{F}_0)+\|\tilde{h}_{\pm, 0}\|_{L^1_xL^{\infty}_v}\le \varepsilon_1, 
	\end{align}
	the VPB \eqref{Eq1.4} admits a unique global mild solution $F_{\pm}(t,x,v)=\mu(v)+\sqrt{\mu(v)}f_{\pm}(t,x,v)\ge 0$ satisfying the conservation laws of defect mass, momentum, energy \eqref{Eq2.4}-\eqref{Eq2.7} as well as the additional defect entropy inequality \eqref{Eq2.8}, and
	\begin{align}
		&\|h_{\pm}(t)\|_{L^\infty}\le C\exp\{-\lambda_1 t\}, \label{Eq1.26} \\
		&\|f_{+}(t)-f_{-}(t)\|_{L^\infty}\le C\min\left\{\v_0 \exp\{C(1+\|\partial_{x,v}\mathbf{\tilde{h}}_0\|_{L^\infty})^2t\}, e^{-\lambda_1 t}\right\}, \label{Eq1.25} \\
		&\|\partial_{x, v}\tilde{h}_{\pm}(t)\|_{L^\infty}\le C(1+\|\partial_{x,v}\mathbf{\tilde{h}}_0\|_{L^\infty})^2\min\Big\{1, \Big(1+\ln (1+\|\partial_{x,v}\mathbf{\tilde{h}}_0\|_{L^\infty})\Big)e^{-\lambda_2t}\Big\}, \label{Eq1.27}
	\end{align}	
	for constants $\lambda_1>\lambda_2>0$,	where $C$ depends only on $\gamma, M_0, \beta_1, \beta$.	Moreover, if initial data $f_{\pm,0}$ are continuous, then  $f_{\pm}(t,x,v)$ are continuous in $[0,\infty)\times\mathbb{T}^3\times\mathbb{R}^3$.
\end{theorem}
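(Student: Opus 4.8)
The plan is to run a continuity/bootstrap argument in the spirit of the $L^2\cap L^\infty$ approach of Guo and the large-amplitude scheme of Duan--Huang--Wang--Yang, but carried one derivative further into $W^{1,\infty}_{x,v}$ and with the extra electric-field nonlinearity controlled by the near-neutrality hypothesis $\|f_{+,0}-f_{-,0}\|_{L^\infty}\le\v_0$. First I would set up the a priori framework: assume on a time interval $[0,T]$ the bounds $\|h_\pm(t)\|_{L^\infty}\le 2\bar C M_0 e^{-\lambda_1 t}$, $\|f_+(t)-f_-(t)\|_{L^\infty}\le 2\v_0 e^{C(1+\|\partial_{x,v}\mathbf{\tilde h}_0\|_{L^\infty})^2 t}$, and $\|\partial_{x,v}\mathbf{\tilde h}(t)\|_{L^\infty}\le \mathcal{C}_0$ for a suitable constant depending on the initial derivative norm, and show these can be improved to the same bounds with strictly better constants, so that the solution extends and the bounds are global. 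The conservation laws \eqref{Eq2.4}--\eqref{Eq2.7} together with $(M_{\pm,0},\mathbf J_0,E_0)=(0,\mathbf 0,0)$ and the entropy inequality \eqref{Eq2.8} feed into a macroscopic/relative-entropy estimate that, combined with smallness of $\mathcal E(\mathbf F_0)+\|\tilde h_{\pm,0}\|_{L^1_xL^\infty_v}\le\v_1$, yields the exponential-in-time $L^2$ decay of $\mathbf f$; this is where the neutral initial data are essential, since they kill the zero modes of $\phi$ and of the hydrodynamic fields and give a Poincar\'e-type coercivity.

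The $L^\infty$ decay \eqref{Eq1.26} would then come from the standard Duhamel iteration along characteristics for \eqref{Eq1.18}: write $h_\pm$ via the mild formulation, use the time-velocity weight $w_\b$ from \eqref{Eq1.22} to absorb the $\tfrac12 v\cdot\nabla_x\phi$ transport term (the $e^{\sigma_0|v|^2/(1+t)}$ factor produces a good $-\sigma_0|v|^2/(1+t)^2$ contribution after differentiating in $t$ along trajectories, which dominates $|v||\nabla_x\phi|$ once $\nabla_x\phi$ is bounded), split $\mathbf K$ into a truncated smoothing part and a small tail, and iterate the Duhamel formula twice so that the double velocity integral of the kernel picks up the $L^2$ decay via a change of variables à la Guo. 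The source term $\mathbf q_1 v\cdot\nabla_x\phi\sqrt\mu$ is quadratic once one notes $\nabla_x\phi$ is controlled by $\|\rho\|$ which is controlled by $\|\mathbf f\|_{L^2}$, and $\rho$ itself is essentially $f_+-f_-$ integrated, so near-neutrality keeps it small; the outcome is $\|h_\pm(t)\|_{L^\infty}\le CM_0 e^{-\lambda_1 t}$ with $\lambda_1$ depending only on $\gamma,\beta,\beta_1$. For \eqref{Eq1.25} I would subtract the two equations in \eqref{Eq1.17}: $f_+-f_-$ solves a transport-plus-$\mathbf L$ equation whose forcing is linear in $f_+-f_-$ with coefficient $\lesssim(1+|v|)|\nabla_x\phi|$ plus genuinely quadratic terms already known to decay; a Gr\"onwall estimate along characteristics gives the first branch $\v_0\exp\{C(1+\|\partial_{x,v}\mathbf{\tilde h}_0\|_{L^\infty})^2t\}$, and intersecting with the universal bound $e^{-\lambda_1 t}$ coming from \eqref{Eq1.26} (each $f_\pm$ already decays) gives the $\min$.

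The heart of the theorem, and the step I expect to be the main obstacle, is the uniform $W^{1,\infty}_{x,v}$ bound \eqref{Eq1.27}. Differentiating \eqref{Eq1.18} in $x$ and $v$ produces, among other terms, $\nabla_x\nabla_x\phi\cdot\nabla_v\mathbf f$ and $\nabla_x\nabla_x\phi\cdot v\,\mathbf f$; the second $x$-derivative of $\phi$ is only borderline bounded (elliptic regularity gives $\|\nabla_x^2\phi\|_{L^\infty}\lesssim \|\rho\|_{L^\infty}\ln(\ldots)+\|\nabla_x\rho\|_{L^\infty}$ type estimates, with a genuine logarithmic loss), and $\nabla_x\rho$ involves $\nabla_x\mathbf f$ — exactly the quantity we are trying to bound — so naively one gets a linear-in-$\|\partial_x\mathbf f\|_{L^\infty}$ feedback with a log, i.e. a differential inequality of Osgood type $\tfrac{d}{dt}\mathcal{D}\lesssim \mathcal{D}\ln\mathcal D$ (after incorporating the weight $w_{\beta_1}$) rather than a closed linear one. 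The resolution, following the logarithmic estimate advertised in the abstract, is: (i) use near-neutrality so that $\rho=\int\sqrt\mu(f_+-f_-)\,dv$ is small and decaying, which makes the dangerous coefficient $\|\nabla_x^2\phi\|_{L^\infty}$ small — small enough that $C\|\nabla_x^2\phi\|_{L^\infty}$ times the log is a sublinear perturbation; (ii) close the estimate in the weighted norm $\partial_{x,v}\mathbf{\tilde h}$ with the milder weight $\beta_1<\beta-4$, so the loss of four powers of $v$ from the two extra $v$-factors ($v\,\mathbf f$ and the $v$ in the weight-derivative) is absorbed by the gap between $\beta$ and $\beta_1$ using the already-established decay of $\|h_\pm\|_{L^\infty}$; (iii) run Gr\"onwall on the resulting inequality $\tfrac{d}{dt}\mathcal D(t)\le \big(C\v_0 e^{\cdots}+Ce^{-\lambda t}\big)\mathcal D(t)\ln(e+\mathcal D(t))+Ce^{-\lambda t}$, whose Osgood solution stays bounded by $(e+\mathcal D(0))^{\exp\{\int(\ldots)\}}\lesssim (1+\|\partial_{x,v}\mathbf{\tilde h}_0\|_{L^\infty})^2$ provided $\v_1$ (hence $\v_0$ and the total entropy) is chosen small depending on $M_0,\beta,\beta_1,\gamma$ — this is the source of the $(1+\|\cdot\|)^2$ and the $1+\ln(1+\|\cdot\|)$ prefactors — and then the microscopic $L^2$ decay plus interpolation upgrades it to the exponential-decay branch $e^{-\lambda_2 t}$ for large times. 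Finally, the continuity of $f_\pm$ when $f_{\pm,0}$ is continuous follows from the uniform $W^{1,\infty}$ bound and the mild formulation by a standard approximation/equicontinuity argument, and uniqueness follows from a Gr\"onwall estimate on the difference of two solutions in $L^\infty$ using the same weighted-transport machinery.
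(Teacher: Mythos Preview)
Your outline captures the large-scale architecture --- bootstrap, weighted $L^\infty$ via double Duhamel, near-neutrality for $\phi$, the logarithmic estimate for $\nabla_x^2\phi$ --- but the bootstrap is structured differently from the paper and the $W^{1,\infty}$ closure has a real gap. The paper does not bootstrap on $(h,f_+-f_-,\partial\tilde h)$; it bootstraps on the \emph{field}: one assumes a priori $\|\nabla_x\phi(t)\|_{L^\infty}\le\delta(1+t)^{-2}$ and $\|\nabla_x^2\phi(t)\|_{L^\infty}\le\delta(1+t)^{-5/2}$, derives every solution estimate under this ``free streaming'' hypothesis (the characteristics are then globally close to straight, with Jacobian control, Lemma~\ref{Lem2.7}--Corollary~\ref{Coro2.8}), and closes the field bounds only at the end via the $\min$ structure. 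Under the field hypothesis the $\nabla_x^2\phi\cdot\nabla_v\tilde h$ term already carries a small coefficient $\delta$, so the \emph{global} $W^{1,\infty}$ argument uses no Osgood inequality at all; the Osgood/log step you sketch is in the paper only for \emph{local} existence (Proposition~\ref{prop2.1}), where it produces the square $(1+\|\partial_{x,v}\tilde h_0\|)^2$ and, crucially, a lifespan $t_1$ depending only on $\|\mathbf h_0\|_{L^\infty}$, not on $\|\partial\tilde h_0\|_{L^\infty}$.

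The substantive gap is that your differential inequality for $\mathcal D=\|\partial_{x,v}\tilde h\|$ accounts only for the $\nabla_x^2\phi$ terms; the terms $\partial_x(\mathbf K\mathbf f)=\mathbf K\partial_x\mathbf f$ and $\partial_x\mathbf\Gamma(\mathbf f,\mathbf f)$ do not close by the mechanism you describe. After one Duhamel the $K$-term contributes $\sup\|\partial_x\tilde h\|$ with an $O(1)$ (not small) coefficient, so naive Gronwall gives $e^{Ct}$ growth; the paper iterates Duhamel a second time, changes variables $u\mapsto y=\hat X_+(s_1)$, and then \emph{integrates by parts in $y$} (the Guo--Jang device) to move the derivative off $\tilde h$, reducing the main piece to $\sqrt{\mathcal E(\mathbf F_0)}$ plus a $\tfrac{C}{N}\sup\|\partial_x\tilde h\|$ remainder --- this step, together with the needed control of $\partial_{vv}X$ in $L^2_v$ (Lemma~\ref{Lem2.9}), is absent from your plan. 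Likewise $\partial_x\Gamma_{\text{loss}}$ carries an extra $\nu(v)$, so Duhamel yields $\sup\|h\|\cdot\sup\|\partial_x\tilde h\|$ with $\sup\|h\|\sim M_0^2$ not small; the paper extracts an additional factor $(\int_{\mathbb R^3}|\tilde h|\,dv)^{1/2}$ via Lemma~\ref{Lem2.12}/Corollary~\ref{Coro2.16}, which is small for $t\ge t_1$ by the $L^\infty_xL^1_v$ estimate of Lemma~\ref{Lem3.2} (Remark~\ref{Rem4.4}) --- the same smallness that drives the Duan--Huang--Wang--Yang large-amplitude $L^\infty$ bound. Without these two ingredients the $W^{1,\infty}$ estimate does not close. (A minor point: the coefficient $C\v_0 e^{\cdots}+Ce^{-\lambda t}$ in your Osgood inequality is not integrable on $[0,\infty)$; you need the minimum of the two branches, and indeed that is how the paper closes the field bootstrap in Section~\ref{section 7}.)
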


\begin{remark}
	We point out that the initial data satisfying \eqref{Eq1.24} are  allowed to have  large amplitude oscillations in spatial variable. For instance, one may take
	\begin{align*}
		F_{+, 0}(x,v)=\rho_0(x)\mu, \quad F_{-, 0}(x,v)=(\rho_0(x)-\v_0)\mu,
	\end{align*}
	where $\r_0(x)\geq0$ is bounded and $\v_0$ sufficiently small. 
	%	$\r_0-1\in L^1_x$ and $\r_0\ln\r_0-\r_0+1\in L^1_x$.  
	It is straightforward to verify  
	$$	\mathcal{E}(\mathbf{F}_0)+\|\tilde{h}_{\pm, 0}\|_{L^1_xL^{\infty}_v}\lesssim \|\r_0\ln\r_0-\r_0+1\|_{L^1_x}+\|\r_0-1\|_{L^1_x}.$$ Even though 
	$ \|\r_0\ln\r_0-\r_0+1\|_{L^1_x}+\|\r_0-1\|_{L^1_x}$ is required to be small,  initial data are still allowed to have large amplitude oscillations.   
%	See the Figure 1 below.
%	
%	\begin{figure}[h]
%		\begin{tikzpicture}[xscale=1.8,yscale=1.5]
%			\draw [->] (0,0)--(4,0);
%			\draw [->] (0,0)--(0,3);
%			\draw [thick] (0,1)--(4,1);
%			\draw [dashed] (0,2)--(4,2);
%			\node [left] at (0,1) {$1$};
%			\node [left] at (0,0) {$0$};
%			\node [below] at (4,0) {$x$};
%			\node [left] at (0,2) {$2$};
%			\node [left] at (0,3) {$\rho_0(x)$};
%			\draw[fill=lightgray] {plot [smooth] coordinates{ (0,1) (0.3,0.95) (0.4,1.05) (0.45,2.4)  (0.5,1) (0.9,1) (0.95,0)}}-- (1, 0)-- (1.05,1);
%			\draw[fill=lightgray] {plot [smooth] coordinates{  (1.05,1) (1.2,1.1) (1.5,0.98) (1.85,1.05) (1.9,1.9) (1.95,1.1) (2,1.05) (2.2,1) (2.5,0.95) (2.55,2.5) (2.63,1) (3,0.95) (3.15,1) (3.2,0)}}--(3.24,0)--(3.25,1);
%			\draw[fill=lightgray] plot [smooth] coordinates{(3.25,1) (3.4,1.08) (4,1)};
%			\node [below] at (2,-0.3) {$t=0$};
%		\end{tikzpicture}
%		\caption{The area of gray region is small, but the amplitude of $\r_0$ is large.}
%	\end{figure}
\end{remark}

\begin{remark}\label{Remark 1.3}
	By the Gagliardo-Nirenberg's inequality, one has
	\begin{align*}
		\|f_{\pm, 0}\|_{L_{x,v}^{\infty}}\lesssim \|f_{\pm,0}\|_{L_{x,v}^{2}}^{\frac{1}{4}}\|\nabla_{x,v}f_{\pm, 0}\|_{L_{x,v}^{\infty}}^{\frac{3}{4}}.
	\end{align*}
	To ensure this quantity is of order unity (i.e., $\|f_{\pm,0}\|_{L_{x,v}^{2}}^{\frac{1}{4}}\|\nabla_{x,v}f_{\pm,0}\|_{L_{x,v}^{\infty}}^{\frac{3}{4}}\approx 1$), it follows that
	\begin{align*}
		\|\nabla_{x,v}f_{\pm,0}\|_{L_{x,v}^{\infty}}\gtrsim \|f_{\pm,0}\|_{L_{x,v}^{2}}^{-\frac{1}{3}}.
	\end{align*}
On the other hand, the smallness of $\mathcal{E}(\mathbf{F}_0)$ guarantees that $\|f_{\pm,0}\|_{L_{x,v}^{2}}$is sufficiently small, as detailed in \eqref{Eq3.26} and \eqref{Eq3.27}. Consequently, $\|\nabla_{x,v}f_{\pm,0}\|_{L_{x,v}^{\infty}}$ necessarily become extremely large.
\end{remark}

%\begin{remark}
%	Theoretically, the smallness condition $\|\tilde{h}_{\pm, 0}\|_{L^1_xL^{\infty}_v}$ can be removed, but doing so would require an extensive amount of space to prove, making the article overly lengthy and not worth the effort.
%\end{remark}
%
%\begin{remark}
%Due to temporary technical reasons, we currently only consider the case of periodic domain $x\in \mathbb{T}^3$.	Undoubtedly, the case of the whole space $x\in \mathbb{R}^3$ is more worthy of consideration, which will be a direction of our future research.
%\end{remark}

\bigskip
%%%%%%%%%%%%%%%%%%%%%%%%%%%%%%%%%%%%%%%%%%%%%%%%%%
%%%%%%%%%%%%%%%%%%%%%%%%%%%%%%%%%%%%%%%%%
\subsection{Main difficulties and strategy of the proof}
We make some comments on the main ideas of the proof and explain the main difficulties and techniques of the present paper.
For the global well-posedness of VPB with small relative entropy but large amplitude initial data, the difficulties arise from the nonlinearity of the characteristic flow of particles and nonlinearity of the collision operator.
 The characteristic flow  is the solution of a Hamiltonian system, which leads us to solve VPB  in $W^{1, \infty}_{x,v}$. The nonlinear effect of  collision operator brings great challenges to the large initial data problem of Boltzmann equation. As explained previously, Diperna-Lions \cite{D-Lion89} established the global existence of renormalized solutions for general large initial data, but the uniqueness of the solution is still open. Recently, Duan-Huang-Wang-Yang \cite{Duan17ARMA}  established the first unique global solution of the Boltzmann equation under the condition of small relative entropy. However, under the condition of small relative entropy, to include a large amplitude for the solution of  VPB, the derivative  must be sufficiently large, as explained in Remark \ref{Remark 1.3}, which makes the estimation of  characteristics very difficult. In the present paper, we shall consider the case of VPB with small relative entropy under nearly neutral condition.

In the following, we briefly explain the key  strategies of  proof of present paper.

 As far as we know, due to the appearance of the electric field $\nabla_{x} \phi (t,x)$,  there is no existence result even for the local smooth solution of VPB with large initial data. Specifically, it is difficult to control  $\nabla_x \phi \cdot v f_{\pm}$ in \eqref{Eq1.17} because the sign is not clear. Inspired by \cite{Duan12JDE, Duan13M3AS}, we introduce the time-velocity weight $w_\b(t, v)$, see \eqref{Eq1.22}, which helps us to control the electric field. 
 As explained in Remark \ref{Remark 1.3}, $\|\partial_{x, v}\mathbf{\tilde{h}}_{0}\|_{L^\infty}$ may be very large in our setting, then we have to establish the local existence theorem with the lifespan  depending only on $\|\mathbf{h}_0\|_{L^\infty}$ but  independent of  $\|\partial_{x, v}\mathbf{\tilde{h}}_{0}\|_{L^\infty}$, which is important  for us to derive the uniform estimates.
 Fortunately, using the classical potential analysis theory, we can obtain a logarithmic estimate on the derivative of  electric field (see Lemma \ref{Lem2.5}), which helps us to establish uniform estimates for $\|\mathbf{h}(t)\|_{L^\infty}$ and $\|\partial_{x,v}\mathbf{\tilde{h}}(t)\|_{L^\infty}$  during the time interval $[0,t_1]$ with $t_1$ depending only on $\|\mathbf{h}_0\|_{L^\infty}$,  see Proposition \ref{prop2.1} for details.

In order to extend the local solution into a global one, we need to establish some uniform {\it a priori} estimates. 
Under the {\it a priori} assumption \eqref{Eq2.21}, we can perform the $L_{x}^{\infty}L_{v}^{1}\cap L_{x,v}^{\infty}$ method  to obtain global uniform estimate on $\|\mathbf{h}\|_{L^\infty}$, see Proposition \ref{prop4.3}. Specifically, one  can first show that 
\begin{align*}
	\sup_{0\leq s\leq t}\|\mathbf{h}(s)\|_{L^\infty}
	\lesssim  \{\|\mathbf{h}_{0}\|_{L^{\infty}}+\|\mathbf{h}_{0}\|_{L^{\infty}}^2+\sqrt{\mathcal{E}(\mathbf{F}_0)}\}
	+\sup_{\substack{t_1\le s\le t\\ y \in \mathbb{T}^3}}\Big\{\|\mathbf{h}(s)\|^{\frac{3}{2}}_{L^{\infty}} \Big(\int_{\mathbb{R}_{v}^3}|\mathbf{\tilde{h}}(s,y,v)|\mathrm{d}v\Big)^{\frac{1}{2}}\Big\},
\end{align*}
see Lemma \ref{Lem3.1}. Motivated by \cite{Duan17ARMA}, we can also prove
\begin{align*}
	\int_{\mathbb{R}^3}|\mathbf{\tilde{h}}(t,x,v)|\mathrm{d}v
	&\lesssim t_{1}^{-3}\|\mathbf{\tilde{h}}_0\|_{L^1_x L_v^{\infty}}
	+\sqrt{\mathcal{E}(\mathbf{F}_0)}+\text{smallness}, \quad \forall t\geq t_1,
\end{align*}
see Lemma \ref{Lem3.2}, which yields that $\sup\limits_{ x\in\mathbb{T}^3, t\geq t_1}\int_{\mathbb{R}_{v}^3}|\mathbf{\tilde{h}}(t,x,v)|\mathrm{d}v$ can be sufficiently small, see Remark \ref{Rem4.4}.

Then, in order to close the  {\it a priori} assumption \eqref{Eq2.21}, a crucial step is to prove that the hydrodynamic part $\mathbf{P}\mathbf{f}$ can be controlled by the microscopic part $\{\mathbf{I}-\mathbf P\} \mathbf{f}$, see Lemma \ref{Lem4.1} below.
Motivated by \cite{E-G-K-M13CMP, Cao19ARMA, Cao19KRM}, the key is to construct some test function.  
However, the periodic boundary conditions  naturally require $\int_{\mathbb{T}^3} \bar{c}(t,x)\mathrm{d}x=0$ (the definition of $\bar{c}$ is shown in \eqref{Eq4.3}), so we cannot handle the estimate of $\bar{c}(t,x)$ as in  \cite{E-G-K-M13CMP, Cao19ARMA, Cao19KRM}, because
$$-\Delta\varphi_{c}(t,x)=\bar{c}(t,x), \quad \int_{\mathbb{T}^3} \varphi_{c}(t,x)\mathrm{d}x=0$$
 is ill-posed. To overcome the difficulty, by noting the conservation laws of defect total energy \eqref{Eq2.7}, we define a new function 
	$$\tilde{c}(t,x):=\bar{c}(t,x)+\frac{\sqrt{2}}{6}e^{-\lambda t}|\nabla_{x}\bar{\phi}(t,x)|^2,$$ 
	which satisfies $\int_{\mathbb{T}^3}\tilde{c}(t,x)\mathrm{d} x=0$  for all $t \geq 0$. Then we choose the test function for $\bar{c}:$
$\Psi_{c}(t,x)=(|v|^2-\beta_{c})\sqrt{\mu}v\cdot\nabla_x\varphi_{c}(t,x)$ with 
$$
-\Delta\varphi_{c}(t,x)=\tilde{c}(t,x), \quad \int_{\mathbb{T}^3} \varphi_{c}(t,x)\mathrm{d}x=0.
$$ 
Thus we  establish Lemma \ref{Lem4.1}, and  then obtain the $L^2$-$L^\infty$ exponential decay of $\mathbf{f}$, see Proposition \ref{prop5.2} and \ref{Decay}.

Next, since $\|\mathbf{h}(t)\|_{L^\infty}$ may not be small in local time, to close  {\it a priori} assumption \eqref{Eq2.21}, 
then we impose the nearly neutral condition to ensure the smallness of $\|\nabla \phi(t)\|_{L^{\infty}}$ and $\|\nabla^2 \phi(t)\|_{L^{\infty}}$,
which further yields that we have to establish  estimates for $\|\partial_{x,v}\mathbf{\tilde{h}}(t)\|_{L^\infty}$. 
%we still need to  obtain a global uniform estimate on $\|\partial_{x,v}\mathbf{\tilde{h}}(t)\|_{L^\infty}$. 
Then difficulty arises from the nonlinear term 
\begin{equation}\label{Intro1}
\int_0^te^{-\nu(v)(t-s)}\left|\left(\partial_{x,v}(w_{\b_1}\mathrm{\Gamma}^{\pm}(\mathbf{f},\mathbf{f}))\right)(s,X_{\pm}(s),V_{\pm}(s))\right| \mathrm{d}s.
\end{equation}
The general view is that this term is controlled by the integral 
$$
\int_0^te^{-\nu(v)(t-s)}\nu(v)\|\mathbf{h}(s)\|_{L^\infty}\|\partial_{x,v}\mathbf{\tilde{h}}(s)\|_{L^\infty}\mathrm{d}s,$$ 
but it is hard to get global uniform estimate by using the Gronwall's inequality  due to the factor $\nu(v)$. 
 In the present paper, we control \eqref{Intro1} by
$$
\int_0^t\|\mathbf{h}(s)\|_{L^\infty}\|\partial_{x,v}\mathbf{\tilde{h}}(s)\|_{L^\infty}+e^{-\f{\nu(v)}{2}(t-s)}\nu(v) \|\partial_x\mathbf{\tilde{h}}(s)\|_{L^\infty}\|\mathbf{h}(s)\|_{L^\infty}^{\f{1}{2}} \cdot\sup\limits_{x\in\mathbb{T}^3} \Big\{\int_{\mathbb{R}_{v}^3} |\mathbf{\tilde{h}}(s,x,v)|\mathrm{d}v\Big\}^{\f{1}{2}}    \mathrm{d}s,
$$ 
which, together with \eqref{E2.2}, \eqref{E2.3}, the exponential decay of $\|\mathbf{h}(t)\|_{L^\infty}$ in Proposition \ref{Decay}, the  smallness property of $\displaystyle\sup_{x\in\mathbb{T}^3}\int_{\mathbb{R}_{v}^3} |\mathbf{\tilde{h}}(s,x,v)|\mathrm{d}v$ in Remark \ref{Rem4.4}, Gronwall's inequality, yields global uniform estimate (see \eqref{new6.12} and \eqref{new6.19}).
 Another difficult term is
\begin{align*}
&\int_0^t \int_0^{s-\f{1}{N}} e^{-\int_s^t\tilde{\nu}_{+,1}(\tau)\mathrm{d}\tau}e^{-\int_{s_1}^{s}\hat{\nu}_{+,1}(\tau_1)\mathrm{d}\tau_1}\mathrm{d}s_1ds   \nonumber\\
&\quad\times \iint_{B}\mathtt{k}_{w_{\b_1}}^{(2)}(V_{+}(s),u)\cdot\mathtt{k}_{w_{\b_1}}^{(2)}(\hat{V}_{+}(s_1),u_1)\cdot \partial_x \tilde{h}_{+}(s_1,\hat{X}_{+}(s_1),u_1)du_1du.
\end{align*}
Motivated by \cite{G-J10CMP}, we shall apply integration by parts, which reduces the control of above term as $\mathcal{E}(\mathbf{F}_0)$. Then we have to analyze the complex characteristics.
In fact, Guo-Jang \cite{G-J10CMP}  deal with the characteristics  in local time, while we need to control the characteristic flow  global in time, which is  more delicate, see Lemma \ref{Lem2.7}, Corollary \ref{Coro2.8} and Lemma \ref{Lem2.9}. With above analysis, we can obtain a globally uniform estimate of $\|\partial_{x,v}\mathbf{\tilde{h}}\|_{L^\infty}$, see section \ref{section 6} for more details.

Finally, with above preparations, we can derive that
\begin{align*}
		&\|\nabla \phi(t)\|_{L^{\infty}}\lesssim\min\Big\{\v_0 \exp\{C(1+\|\partial_{x,v}\mathbf{\tilde{h}}_0\|_{L^\infty})^2t\}, e^{-\lambda_1 t}\Big\}\leq \delta^2(1+t)^{-2},  \\
		&\|\nabla^2 \phi(t)\|_{L^{\infty}}\lesssim\min\Big\{\v_0 \mathfrak{H}(t), e^{-\lambda_2t}\Big(1+2\ln \Big(\f{1}{\delta}\Big)+\ln (1+\|\partial_{x,v}\mathbf{\tilde{h}}_0\|_{L^\infty})\Big)\Big\}\leq \delta^2(1+t)^{-\f{5}{2}},
\end{align*}
for all $t\geq 0$. Indeed, for the case of finite time, we ensure \eqref{Eq2.21} by choosing the neutral condition $\|f_{+, 0}-f_{-, 0}\|_{L^\infty}\leq \v_0$ to be small enough. For large time, \eqref{Eq2.21} is proved based on the exponential decay of $	\|\mathbf{h}(t)\|_{L^\infty}$, see Proposition \ref{Decay}. Thus we concludes  the  {\it a priori} assumption \eqref{Eq2.21}, see section \ref{section 7} for more details.

%Through the bootstrap argument, one obtain the exponential decay rate of the solution of the VPB system in the $W^{1,\infty}$ framework, thus successfully overcoming the difficulty of the nonlinearity of the characteristic flow in large time, see \eqref{Eq6.9} and \eqref{Eq6.10}. This is based on the coercivity estimate of operator $\mathbf{L}$, see Lemma \ref{Lem3.1}.

%%%%%%%%%%%%%%%%%%%%%%%%%%%%%%%%%%%%%%%%
\subsection{Organization of the paper}
The paper is organized as follows. In Section 2, we present a number of useful lemmas and estimates for later use, and establish a new logarithmic estimate for the electric field. In Section 3, we establish the local existence theorem for  VPB so that the lifespan of  $L^\infty$ solution depends only on $\|\mathbf{h}_0\|_{L^\infty}$ and is independent on $\|\partial_{x, v}\mathbf{\tilde{h}}_{0}\|_{L^\infty}$. In Section 4, based  $L_{x}^{\infty}L_{v}^{1}\cap L_{x,v}^{\infty}$ method with the time-velocity-weighted function, we obtain global uniform estimate of $\|\mathbf{h}\|_{L^\infty}$. In Section 5, by estimating the hydrodynamic part via the microscopic part and applying the standard $L^2-L^\infty$ theory, we obtain the exponential decay rate of $\|\mathbf{h}\|_{L^\infty}$. In Section 6, we establish global uniform estimate of $\|\partial_{x,v}\mathbf{\tilde{h}}\|_{L^\infty} $. Finally, in Section 7, we prove the Theorem \ref{Thm1.1}.

%%%%%%%%%%%%%%%%%%%%%%%%%%%%%%%%%%%%%%%%%%%%%%%%

%%%%%%%%%%%%%%%%%%%%%%%%%%%%%%%%%%%%%%%%%%%%%%%%%%%%%
%%%%%%%%%%%%%%%%%%%%%%%%%%%%%%%%%%%%%%%%%%%%%%%%%%%%%%
\section{Preliminaries}
We  first present a useful result on the linear collision operator $\mathbf{L}$.  We refer to \cite[Lemma 1]{Guo03IM} for its proof.
\begin{lemma}[\cite{Guo03IM}] \label{Lem2.1}
	It holds that
	$$\langle \mathbf{L} \mathbf{g}, \mathbf{f}\rangle=\langle \mathbf{L} \mathbf{f}, \mathbf{g}\rangle, \quad \text{and} \quad  \langle \mathbf{L} \mathbf{g}, \mathbf{g}\rangle \geq 0.$$ 
	And $\mathbf{L g}=0$ if and only if
	$$ \mathbf{g}=\mathbf{P g} .$$
	Moreover, there is $\lambda_0>0$ such that
	\begin{align*}
		\langle \mathbf{L g}, \mathbf{g}\rangle \geq \lambda_0\|\{\mathbf{I}-\mathbf{P}\} \mathbf{g}\|_{\nu}^{2} .
	\end{align*}
\end{lemma}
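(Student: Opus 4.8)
Every assertion of the lemma concerns the operator $\mathbf L=\boldsymbol\nu-\mathbf K$, which by \eqref{Eq1.7}--\eqref{Eq1.13} is built entirely out of the standard Boltzmann operator $Q$. The plan is therefore to reduce the statements to the classical collisional symmetries of $Q$ (for the algebraic parts) and to a Grad-type compactness analysis of $\mathbf K$ (for the spectral gap), following \cite{Grad63, Guo03IM}. First I would record the symmetrized weak form: using energy conservation $\mu(v)\mu(u)=\mu(v')\mu(u')$ one rewrites, for scalar functions $a,b,c$,
\[
\int_{\R^3}\frac{1}{\sqrt\mu}\,Q(\sqrt\mu\,a,\sqrt\mu\,b)\,c\,\mathrm dv=\int_{\R^3}\!\int_{\R^3}\!\int_{\mathbb S^2}B(v-u,\theta)\sqrt{\mu(u)}\bigl(a(v')b(u')-a(v)b(u)\bigr)c(v)\,\mathrm d\omega\mathrm du\mathrm dv,
\]
and then the pre-post collisional change of variables $(v,u,\omega)\mapsto(v',u',\omega)$ together with $(v,u,\omega)\mapsto(u,v,\omega)$ turns $\langle\mathbf L\mathbf g,\mathbf f\rangle$, after summing over the two species, into an integral that is fully symmetric in $(\mathbf g,\mathbf f)$; this gives $\langle\mathbf L\mathbf g,\mathbf f\rangle=\langle\mathbf L\mathbf f,\mathbf g\rangle$. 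Setting $\mathbf f=\mathbf g$ in the symmetrized expression exhibits $\langle\mathbf L\mathbf g,\mathbf g\rangle$ as a $B\mu(u)$-weighted $L^2$ square of the "collisional difference" of $\mathbf g/\sqrt\mu$, whence $\langle\mathbf L\mathbf g,\mathbf g\rangle\ge0$.

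Next I would treat the kernel. If $\mathbf L\mathbf g=0$ then $\langle\mathbf L\mathbf g,\mathbf g\rangle=0$, so by the previous step the squared integrand vanishes for a.e. admissible collision configuration. Writing $\mathbf g=\sqrt\mu\,\boldsymbol\psi$, this is a functional (Cauchy-type) equation forcing $\psi_\pm(v)=A_\pm+B\cdot v+C|v|^2$ with $B\in\R^3$ and $C\in\R$ common to both species, which is exactly the six-dimensional space $\mathcal N$; conversely $\mathbf L\mathbf g=0$ on $\mathcal N$ follows from $Q(\mu,\mu)=0$ and the corresponding identities for the momentum/energy invariants. Hence $\mathbf L\mathbf g=0\iff\mathbf g=\mathbf P\mathbf g$.

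The substantive point is the coercive lower bound. I would (i) show $\mathbf K$ is a self-adjoint \emph{compact} operator on $L^2(\R^3_v;\R^2)$ by deriving Grad's pointwise bounds on the kernels of $\mathrm K_1^\pm,\mathrm K_2^\pm$ from \eqref{Eq1.11}--\eqref{Eq1.13} (bounded kernels with Gaussian-type decay); (ii) observe that since $\boldsymbol\nu(v)\cong(1+|v|)^\gamma\ge\nu(0)>0$, Weyl's theorem gives $\sigma_{\mathrm{ess}}(\mathbf L)=\sigma_{\mathrm{ess}}(\boldsymbol\nu)\subset[\nu(0),\infty)$, so $0$ is at worst an isolated eigenvalue of $\mathbf L$ of finite multiplicity whose eigenspace, by the kernel computation above, is precisely $\mathcal N$; this yields $\langle\mathbf L\mathbf g,\mathbf g\rangle\ge c_0\|\{\mathbf I-\mathbf P\}\mathbf g\|_{L^2}^2$ for some $c_0>0$; (iii) upgrade this to the $\nu$-weighted gap by splitting $\mathbf K$ into a part that is small in the relevant operator norm plus a part supported in $\{|v|\le R\}$, which gives $\langle\mathbf L\mathbf g,\mathbf g\rangle\ge\tfrac12\|\{\mathbf I-\mathbf P\}\mathbf g\|_\nu^2-C_R\|\{\mathbf I-\mathbf P\}\mathbf g\|_{L^2(|v|\le R)}^2$, and then absorbing the lower-order term by (ii) (via a standard compactness/contradiction argument if one does not track the constants) produces $\langle\mathbf L\mathbf g,\mathbf g\rangle\ge\lambda_0\|\{\mathbf I-\mathbf P\}\mathbf g\|_\nu^2$.

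The main obstacle is this last step: the compactness of $\mathbf K$ for hard potentials with angular cut-off and the passage from the plain $L^2$ spectral gap to the $\nu$-weighted one both rest on delicate pointwise estimates for the kernels of $\mathrm K_1^\pm,\mathrm K_2^\pm$; they are classical but not short. In keeping with the excerpt, one may simply invoke \cite[Lemma~1]{Guo03IM}.
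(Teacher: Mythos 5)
Your outline is correct and is precisely the classical Grad--Guo argument (collisional symmetrization for self-adjointness and non-negativity, the collision-invariant functional equation for the six-dimensional kernel with common momentum/energy components across the two species, and compactness of $\mathbf K$ plus Weyl's theorem upgraded to the $\nu$-weighted gap). The paper supplies no proof of its own and simply invokes \cite[Lemma 1]{Guo03IM}, which is exactly the argument you sketch, so there is nothing to reconcile.
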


By employing the conservation laws of defect mass \eqref{Eq2.4} and energy \eqref{Eq2.7}, combined with the defect entropy inequality \eqref{Eq2.8}, we establish the following lemma.
\begin{lemma}\label{Lem2.3}
	Let $F_{\pm}(t, x, v)$ satisfy \eqref{Eq2.4}, \eqref{Eq2.7} and \eqref{Eq2.8}, then it holds that
	\begin{align*}
		&\int_{\mathbb{T}^3} \int_{\mathbb{R}^{3}} \frac{|F_{+}(t, x, v)-\mu(v)|^{2}}{4 \mu(v)} I_{\{|F_{+}(t, x, v)-\mu(v)| \leq \mu(v)\}} \mathrm{d} v \mathrm{~d} x \nonumber\\
		&\quad+\int_{\mathbb{T}^3} \int_{\mathbb{R}^{3}} \frac{1}{4}|F_{+}(t, x, v)-\mu(v)| I_{\{|F_{+}(t, x, v)-\mu(v)| \geq \mu(v)\}} \mathrm{d} v \mathrm{~d} x \nonumber\\
		&\quad+\int_{\mathbb{T}^3} \int_{\mathbb{R}^{3}} \frac{|F_{-}(t, x, v)-\mu(v)|^{2}}{4 \mu(v)} I_{\{|F_{-}(t, x, v)-\mu(v)| \leq \mu(v)\}} \mathrm{d} v \mathrm{~d} x \nonumber\\
		&\quad+\int_{\mathbb{T}^3} \int_{\mathbb{R}^{3}} \frac{1}{4}|F_{-}(t, x, v)-\mu(v)| I_{\{|F_{-}(t, x, v)-\mu(v)| \geq \mu(v)\}} \mathrm{d} v \mathrm{~d} x \nonumber\\
		&\quad +\frac{1}{2}\int_{\mathbb{T}^3}|\nabla_x \phi(t)|^2dx
		\leq \mathcal{E}\left(\mathbf{F}_{0}\right).
	\end{align*}
\end{lemma}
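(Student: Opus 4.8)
The plan is to derive Lemma \ref{Lem2.3} directly from the entropy inequality \eqref{Eq2.8}, the mass conservation \eqref{Eq2.4} and the energy conservation \eqref{Eq2.7} by expressing the relative entropy functional $\mathcal{E}(\mathbf F_0)$ in the convex-analysis form and bounding it below pointwise in $v$. The starting point is to rewrite $\mathcal{E}(\mathbf F(t))$ as in \eqref{Eq2.9}: combining the entropy term $F_\pm\ln F_\pm - 2\mu\ln\mu$ with the linear term $-(1+\ln\mu)(F_\pm-\mu)$, one obtains for each species the Csisz\'ar--Kullback type integrand
\begin{equation*}
	h(F_\pm\mid\mu):=F_\pm\ln\frac{F_\pm}{\mu}-F_\pm+\mu\ge 0,
\end{equation*}
so that $\mathcal{E}(\mathbf F(t))=\int_{\mathbb T^3}\int_{\mathbb R^3}\{h(F_+\mid\mu)+h(F_-\mid\mu)\}\,\mathrm dv\,\mathrm dx+\tfrac12\int_{\mathbb T^3}|\nabla_x\phi(t)|^2\,\mathrm dx$. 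The first step is therefore to check this algebraic identity carefully, using $\ln\mu=-\tfrac32\ln(2\pi)-\tfrac{|v|^2}{2}$.

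Next I would show that $\mathcal{E}(\mathbf F(t))\le\mathcal{E}(\mathbf F_0)$ for all $t\ge0$. This is where \eqref{Eq2.4}, \eqref{Eq2.7}, \eqref{Eq2.8} enter: the entropy inequality \eqref{Eq2.8} controls the difference of the $\int F_\pm\ln F_\pm-2\mu\ln\mu$ pieces, while the difference of the linear pieces $-\int(1+\ln\mu)(F_++F_--2\mu)$ is \emph{conserved} because $1+\ln\mu$ is a linear combination of $1$, $|v|^2$, and the $|v|^2$-part is exactly compensated by the $\tfrac12\int|\nabla_x\phi|^2$ term through the energy identity \eqref{Eq2.7} (and the mass identity \eqref{Eq2.4} handles the constant part). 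Concretely, $-(1+\ln\mu)=-1+\tfrac32\ln(2\pi)+\tfrac{|v|^2}{2}$, so
\begin{equation*}
	-\int_{\mathbb T^3}\int_{\mathbb R^3}(1+\ln\mu)(F_++F_--2\mu)\,\mathrm dv\,\mathrm dx
	=\Big(\tfrac32\ln(2\pi)-1\Big)(M_{+,0}+M_{-,0})+\tfrac12\int_{\mathbb T^3}\int_{\mathbb R^3}|v|^2(F_++F_--2\mu)\,\mathrm dv\,\mathrm dx,
\end{equation*}
and adding $\tfrac12\int|\nabla_x\phi|^2$ gives, by \eqref{Eq2.7}, a quantity equal to its value at $t=0$; hence $\mathcal{E}(\mathbf F(t))=\mathcal E(\mathbf F_0)$ at least up to the sign of the entropy production, which gives $\le$.

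For the final step I would produce the pointwise lower bound on $h(F_\pm\mid\mu)$ that yields the quadratic-plus-linear splitting in the statement. Writing $F_\pm=\mu+(F_\pm-\mu)$ and setting $r=(F_\pm-\mu)/\mu$, one has $h(F_\pm\mid\mu)=\mu\big((1+r)\ln(1+r)-r\big)=:\mu\,\psi(r)$ with $\psi(r)\ge0$. On the region $\{|F_\pm-\mu|\le\mu\}$, i.e. $|r|\le1$, Taylor's theorem gives $\psi(r)\ge\tfrac{r^2}{2(1+r)}\ge\tfrac{r^2}{4}$, so $\mu\psi(r)\ge\tfrac{|F_\pm-\mu|^2}{4\mu}$; on the region $\{|F_\pm-\mu|\ge\mu\}$, i.e. $r\ge1$ (note $F_\pm\ge0$ forces $r\ge-1$ so $|r|\ge1$ means $r\ge1$), one checks $\psi(r)\ge c\, r$ for a universal constant, and in fact the normalization can be arranged so that $\mu\psi(r)\ge\tfrac14|F_\pm-\mu|$ — this is the routine elementary inequality $(1+r)\ln(1+r)-r\ge\tfrac{r}{4}$ for $r\ge1$, which holds since at $r=1$ the left side is $2\ln2-1\approx0.386>\tfrac14$ and the left side grows faster. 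Summing the two species' contributions over the two indicator regions, together with the full $\tfrac12\int|\nabla_x\phi|^2$ term retained from $\mathcal E(\mathbf F(t))$, and using $\mathcal E(\mathbf F(t))\le\mathcal E(\mathbf F_0)$, yields exactly the claimed inequality. The main obstacle is purely bookkeeping: pinning down the elementary constants in $\psi(r)\ge\tfrac14 r$ on $r\ge1$ and $\psi(r)\ge\tfrac14 r^2$ on $|r|\le1$ so that the coefficient $\tfrac14$ in the statement is correct, and making sure the linear-in-$|v|^2$ term is absorbed cleanly by \eqref{Eq2.7} rather than left as an error; there is no analytic difficulty beyond that.
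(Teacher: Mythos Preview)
Your proposal is correct and follows essentially the same route as the paper: the paper also rewrites the entropy integrand via the second--order Taylor remainder $F_\pm\ln F_\pm-\mu\ln\mu=(1+\ln\mu)(F_\pm-\mu)+\tfrac{1}{2\tilde F_\pm}|F_\pm-\mu|^2$ (equivalently your $\mu\psi(r)$ with the Lagrange point $\tilde F_\pm$ playing the role of $\mu(1+\xi)$), then uses \eqref{Eq2.4}, \eqref{Eq2.7}, \eqref{Eq2.8} to get $\mathcal E(\mathbf F(t))\le\mathcal E(\mathbf F_0)$, and finishes with the same two pointwise bounds on the remainder according to whether $|F_\pm-\mu|\lessgtr\mu$. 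One small slip: your intermediate inequality $\psi(r)\ge\tfrac{r^2}{2(1+r)}$ is false for $r$ near $-1$ (the right side blows up), but the desired conclusion $\psi(r)\ge r^2/4$ on $|r|\le1$ still holds directly from the Lagrange form $\psi(r)=\tfrac{r^2}{2(1+\xi)}$ with $\xi$ between $0$ and $r$, since $1+\xi\le2$; this is exactly how the paper argues via $\tilde F_\pm\le 2\mu$.
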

\begin{proof}
	By Taylor expansion, we have
	\begin{align}\label{Eq2.10}
		F_{\pm}(t) \ln F_{\pm}(t)-\mu \ln \mu=(1+\ln \mu)[F_{\pm}(t)-\mu]+\frac{1}{2 \tilde{F}_{\pm}}|F_{\pm}(t)-\mu|^{2},
	\end{align}
	where $\tilde{F}_{\pm}$ is between $F_{\pm}(t)$ and $\mu$. It follows from \eqref{Eq2.9} and \eqref{Eq2.10} that $\mathcal{E}(F(t))\ge 0$ for any $t\ge 0$.  Noting $1+\ln \mu=-\left(\frac{3}{2} \ln (2 \pi)-1\right)-\frac{1}{2}|v|^{2}$, we have from \eqref{Eq2.4} and \eqref{Eq2.7} $-$ \eqref{Eq2.9} that
	\begin{align}\label{Eq2.11}
		&\int_{\mathbb{T}^3}\int_{\mathbb{R}^{3}} \frac{1}{2 \tilde{F_{+}}}|F_{+}(t)-\mu|^{2} \mathrm{d}v \mathrm{d}x+\int_{\mathbb{T}^3}\int_{\mathbb{R}^{3}} \frac{1}{2 \tilde{F_{-}}}|F_{-}(t)-\mu|^{2} \mathrm{d}v \mathrm{d}x+\frac{1}{2}\int_{\mathbb{T}^3}|\nabla_x \phi(t)|^2\mathrm{d}x  \nonumber\\
		=& \int_{\mathbb{T}^3} \int_{\mathbb{R}^{3}}(F_{+}(t) \ln F_{+}(t)+F_{-}(t) \ln F_{-}(t)-2\mu \ln \mu) \mathrm{d}v\mathrm{d}x\nonumber\\
		&+ \Big(\frac{3}{2} \ln (2 \pi)-1\Big) \int_{\mathbb{T}^3} \int_{\mathbb{R}^{3}}(F_{+}(t)+F_{-}(t)-2\mu)\mathrm{d} v \mathrm{d} x\nonumber\\
		&+ \frac{1}{2} \Big\{\int_{\mathbb{T}^3} \int_{\mathbb{R}^{3}}|v|^{2}(F_{+}(t)+F_{-}(t)-2\mu) \mathrm{d} v \mathrm{d} x+\int_{\mathbb{T}^3}|\nabla_x \phi(t)|^2\mathrm{d}x\Big\}  \nonumber\\
		\leq & \int_{\mathbb{T}^3} \int_{\mathbb{R}^{3}}(F_{+, 0} \ln F_{+, 0}+F_{-, 0} \ln F_{-, 0}-2\mu \ln \mu) \mathrm{d} v \mathrm{d} x
		+\Big(\frac{3}{2} \ln (2 \pi)-1\Big) (M_{+, 0}+M_{-, 0})+\frac{1}{2} E_{0}\nonumber\\
		=& \mathcal{E}(\mathbf{F}_0).
	\end{align}
	By similar arguments as in \cite[Lemma 2.7]{Duan17ARMA}, we have
	\begin{align}\label{Eq2.12}
		\frac{|F_{\pm}-\mu|}{\tilde{F}_{\pm}} \geq \frac{1}{2} \ \text{for}\ |F_{\pm}-\mu|\ge \mu,
	\end{align}
	and 
	\begin{align}\label{Eq2.13}
		\frac{1}{\tilde{F}_{\pm}}\ge \frac{1}{2\mu} \ \text{for}\ |F_{\pm}-\mu|\le \mu.
	\end{align}
	Then Lemma \ref{Lem2.3} follows from \eqref{Eq2.11}--\eqref{Eq2.13}. The proof is completed.
\end{proof}

Multiplying the first equation of $\eqref{Eq1.17}_3$ by $\phi$ and using Poincar\'{e}'s inequality, one has
\begin{align*}
	\|\nabla \phi\|_{L^2}^2\le C\|\rho\|_{L^2} \|\phi\|_{L^2}\le C\|\rho\|_{L^2} \|\nabla \phi\|_{L^2},
\end{align*}
which yields that 
\begin{align}\label{Eq2.14}
	\|\nabla \phi\|_{L^2}\le C\|\rho\|_{L^2}\le C\|f_{+}-f_{-}\|_{L^2}.
\end{align}
%%%%%%%%%%%%%%%%%%%%%%%%%%%%%%
It follows from $\eqref{Eq1.17}_3$ and \eqref{Eq2.14} that
\begin{align*}
	\|\nabla \phi_0\|_{L^2}\le C\|\rho_0\|_{L^2}\le C
	\Big\|\int_{\mathbb{R}^{3}}\sqrt{\mu}(f_{+, 0}-f_{-, 0})\mathrm{d}v\Big\|_{L^{\infty}}\ll 1.
\end{align*}
%%%%%%%%%%%%%%%%%%%%%%%%%%%%%%%%%%%%%%
By the standard $L^p$-theory for elliptic equations, for $p\in [2,6]$, we have 
\begin{align}\label{Eq2.15}
	\|\phi\|_{W^{2,p}}&\lesssim \|\phi\|_{L^p}+\|\rho\|_{L^p}\lesssim \|\phi\|_{H^1}+\|f_{+}-f_{-}\|_{L^{\infty}}\nonumber\\
	&\lesssim \|\nabla \phi\|_{L^2}+\|\mathbf{h}\|_{L^{\infty}}.
\end{align}
By Gagliardo-Nirenberg's inequality, for $3<p\le 6$, it follows from \eqref{Eq2.14} and \eqref{Eq2.15} that
\begin{align}\label{Eq2.16}
	\|\nabla \phi\|_{L^{\infty}}&\lesssim \|\nabla \phi\|^{\theta}_{L^2}\cdot \|\nabla^2 \phi\|^{1-\theta}_{L^p}\lesssim \|\nabla \phi\|^{\theta}_{L^2}(\|\nabla \phi\|_{L^2}+\|\mathbf{h}\|_{L^{\infty}})^{1-\theta}\nonumber\\
	&\lesssim \v \|\mathbf{h}\|_{L^{\infty}}+\frac{C}{\v}\|\nabla \phi\|_{L^{2}}.
\end{align}

\subsection{The Poisson equation in the torus  $\mathbb{T}^3$}

In this subsection, we list some useful results of the Poisson equation in  $\mathbb{T}^3$.

We first introduce some precision concerning the expression of the field. 
\begin{lemma}[\cite{B-R91}, \cite{Pallard12CPDE}] \label{Lem2.4}
	There exists a unique $\mathbb{T}^3$-periodic function $\varphi\in C^\infty(\mathbb{R}^3)$ such that\\
	(1) For every periodic $\rho\in C^1(\mathbb{T}^3)$ with $\int_{\mathbb{T}^3}\rho(x)dx= 0,$
	$$
	\Phi(x):=\int_{\mathbb{T}^3} \varphi(x-y)\rho(y)dy=\int_{\mathbb{T}^3+x'} \varphi(x-y)\rho(y)dy, \  \  x, x'\in\mathbb{R}^3.
	$$
	is the unique-periodic solution in $C^2(\mathbb{T}^3)$ of the problem:
	\begin{align}\label{Eq2.17}
		\Delta \Phi=\rho, \  \  \int_{\mathbb{T}^3} \Phi(x)dx=0.
	\end{align}
	(2) There exists a function $\varphi_0\in C^\infty(\mathbb{R}^3/\mathbb{Z}^3\cup \{(0,0,0)\}) $ such that
	\begin{align}\label{Eq2.18}
		\varphi(x)=-\frac{1}{4\pi|x|}+\varphi_0(x), \ x\in \mathbb{R}^3/\mathbb{Z}^3 .
	\end{align}
\end{lemma}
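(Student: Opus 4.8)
The plan is to construct $\varphi$ explicitly as the periodic Green's function of the Laplacian through its Fourier series, and then to read off (1) from the convolution structure and (2) from elliptic regularity. Normalizing the unit cell to have volume one, one seeks a $\mathbb{Z}^3$-periodic distribution $\varphi$ on $\mathbb{T}^3$ solving $\Delta\varphi=\delta_0-1$; the constant $-1$ is forced since $\int_{\mathbb{T}^3}\Delta\varphi\,dx=0$ whereas $\int_{\mathbb{T}^3}\delta_0=1$. Expanding in the modes $e^{2\pi i k\cdot x}$, $k\in\mathbb{Z}^3$, and using the Dirac-comb identity $\delta_0=\sum_{k\in\mathbb{Z}^3}e^{2\pi i k\cdot x}$, matching coefficients forces $\hat\varphi_k=-(4\pi^2|k|^2)^{-1}$ for $k\neq 0$, while the normalization $\int_{\mathbb{T}^3}\varphi\,dx=0$ fixes $\hat\varphi_0=0$; this already gives uniqueness of $\varphi$. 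Since $\sum_{k\neq0}|\hat\varphi_k|^2\lesssim\sum_{k\neq0}|k|^{-4}<\infty$ in three dimensions, the series $\varphi(x)=-\sum_{k\neq0}(4\pi^2|k|^2)^{-1}e^{2\pi i k\cdot x}$ converges in $L^2(\mathbb{T}^3)\subset L^1(\mathbb{T}^3)$ and defines $\varphi$, so that $\Phi:=\varphi*\rho$ is a classical integral for bounded $\rho$.

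For part (1), given $\rho\in C^1(\mathbb{T}^3)$ with $\hat\rho_0=\int_{\mathbb{T}^3}\rho\,dx=0$, one has $\widehat\Phi_k=\hat\varphi_k\hat\rho_k$, hence $\widehat{\Delta\Phi}_k=-4\pi^2|k|^2\hat\varphi_k\hat\rho_k=\hat\rho_k$ for every $k$ (including $k=0$, where both sides vanish); thus $\Delta\Phi=\rho$ with $\int_{\mathbb{T}^3}\Phi\,dx=0$, and Schauder (or $W^{2,p}$) estimates upgrade $\Phi$ to $C^{2}(\mathbb{T}^3)$ using $\rho\in C^1\subset C^{0,\alpha}$. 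The equality $\int_{\mathbb{T}^3}\varphi(x-y)\rho(y)\,dy=\int_{\mathbb{T}^3+x'}\varphi(x-y)\rho(y)\,dy$ merely records that $y\mapsto\varphi(x-y)\rho(y)$ is $\mathbb{Z}^3$-periodic, so its integral over any fundamental domain coincides. Uniqueness of the solution of \eqref{Eq2.17} follows because the difference of two solutions is harmonic on the compact manifold $\mathbb{T}^3$ without boundary, hence constant, hence zero by the zero-mean constraint.

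For part (2), away from the lattice $\Delta\varphi=-1$ has smooth data, so interior elliptic regularity gives $\varphi\in C^\infty(\mathbb{R}^3\setminus\mathbb{Z}^3)$. Near the origin, fix $\chi\in C_c^\infty$ with $\chi\equiv1$ on a small ball and support inside one cell, and set $\psi(x):=-\chi(x)/(4\pi|x|)$, regarded as a periodic function. Since $-1/(4\pi|x|)$ is the fundamental solution of $\Delta$ in $\mathbb{R}^3$, one gets $\Delta\psi=\delta_0+g$ with $g\in C_c^\infty$ supported where $\nabla\chi\neq0$, so $\Delta(\varphi-\psi)=-1-g\in C^\infty(\mathbb{T}^3)$ and elliptic regularity gives $\varphi-\psi\in C^\infty(\mathbb{T}^3)$. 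Then $\varphi_0:=\varphi+\frac{1}{4\pi|x|}=(\varphi-\psi)+\frac{1-\chi(x)}{4\pi|x|}$ is smooth, because $(1-\chi)/(4\pi|x|)$ vanishes near $0$ and is smooth away from $0$ inside the cell; this is exactly the decomposition \eqref{Eq2.18}.

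The construction involves no deep obstacle; the only points requiring care are the distributional handling of the non--absolutely-convergent Green-function series (justifying the coefficient matching and the identity $\Delta\varphi=\delta_0-1$), and the elliptic-regularity bootstrap isolating the $-1/(4\pi|x|)$ singularity at the lattice points. Alternatively, one may simply invoke the explicit local structure of the periodic Green's function established in \cite{B-R91, Pallard12CPDE}.
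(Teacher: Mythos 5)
Your construction is correct. The paper itself offers no proof of this lemma: it is quoted verbatim as a known result with citations to \cite{B-R91} and \cite{Pallard12CPDE}, so there is no internal argument to compare against. Your Fourier-series construction of the periodic Green's function ($\hat\varphi_k=-(4\pi^2|k|^2)^{-1}$ for $k\neq0$, $\hat\varphi_0=0$, square-summability from $\sum_{k\neq0}|k|^{-4}<\infty$ in dimension three), the coefficient-matching proof of $\Delta\Phi=\rho$ combined with Schauder regularity for the $C^2$ statement, the harmonic-difference argument for uniqueness, and the cutoff decomposition $\psi=-\chi/(4\pi|x|)$ isolating the fundamental-solution singularity are all sound and constitute essentially the standard proof found in the cited references. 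The only points that genuinely require care are the ones you already flag: the distributional sense in which $\Delta\varphi=\delta_0-1$ and the Dirac-comb identity are to be read, and the verification that $\Delta\psi-\delta_0$ is smooth because $\nabla\chi$ and $\Delta\chi$ vanish where $1/|x|$ is singular. Both are routine and your sketch handles them adequately.
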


Based on Lemma \ref{Lem2.4}, we derive a logarithmic estimate for the electric field, which plays a pivotal role in controlling its nonlinear effect.

\begin{lemma}\label{Lem2.5}
Let	$\Phi$  satisfy \eqref{Eq2.17}. For any $0<d<R<\f12$, we have
	\begin{align}
	&\|\nabla_{x}\Phi\|_{L^{\infty}}\leq C\|\rho\|_{L^\infty}.  \label{Eq2.19} \\
	&\|\nabla^2_x\Phi\|_{L^{\infty}}\leq C\Big(\|\rho\|_{L^{\infty}}(1+\ln \frac{R}{d}+R^{-3})+d\|\nabla\rho\|_{L^{\infty}}\Big). \label{Eq2.20}
	\end{align}	
\end{lemma}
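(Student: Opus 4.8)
\textbf{Proof plan for Lemma \ref{Lem2.5}.}

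The plan is to use the representation $\Phi(x)=\int_{\mathbb{T}^3}\varphi(x-y)\rho(y)\,dy$ from Lemma \ref{Lem2.4} together with the splitting $\varphi(x)=-\frac{1}{4\pi|x|}+\varphi_0(x)$ with $\varphi_0\in C^\infty$ on $\mathbb{R}^3/\mathbb{Z}^3$. The contribution of $\varphi_0$ is harmless: since $\varphi_0$ and all its derivatives are bounded on the torus, differentiating under the integral sign immediately gives $\|\nabla_x(\varphi_0*\rho)\|_{L^\infty}+\|\nabla_x^2(\varphi_0*\rho)\|_{L^\infty}\lesssim\|\rho\|_{L^\infty}$, which is absorbed into the right-hand sides of both \eqref{Eq2.19} and \eqref{Eq2.20}. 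So everything reduces to estimating the Newtonian-type part $N(x):=-\frac{1}{4\pi}\int_{\mathbb{T}^3}\frac{\rho(y)}{|x-y|}\,dy$ (with the understanding that the integral is over a fundamental domain and the singular kernel is only singular at $y=x$).

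For \eqref{Eq2.19}, I would write $\nabla_x N(x)=\frac{1}{4\pi}\int_{\mathbb{T}^3}\frac{x-y}{|x-y|^3}\rho(y)\,dy$; since $\big|\frac{x-y}{|x-y|^3}\big|=|x-y|^{-2}$ is integrable near the diagonal in $\mathbb{R}^3$ and the domain is bounded, $\|\nabla_x N\|_{L^\infty}\le\|\rho\|_{L^\infty}\cdot\frac{1}{4\pi}\int_{|z|\lesssim 1}|z|^{-2}\,dz\lesssim\|\rho\|_{L^\infty}$, giving \eqref{Eq2.19}. For \eqref{Eq2.20}, the second derivative kernel $K(z):=\nabla_z^2\big(\frac{1}{|z|}\big)$ behaves like $|z|^{-3}$, which is \emph{not} integrable, so one cannot simply differentiate twice under the integral. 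The standard fix, and the one I would use, is a cutoff at the singularity together with one integration by parts. Fix the scale $d$ from the statement and split $\rho=\rho\,\chi_{|y-x|\le d}+\rho\,\chi_{d<|y-x|\le R}+\rho\,\chi_{|y-x|>R}$ (the last piece measured on the torus, so $|y-x|>R$ wraps around and is controlled by the smoothness of the periodization away from the diagonal, contributing the $R^{-3}\|\rho\|_{L^\infty}$ and the $\ln$ terms). On the outer region $d<|y-x|$, the kernel $K(x-y)$ is bounded by $|x-y|^{-3}$ and integrating $\int_{d<|z|<R}|z|^{-3}\,dz\lesssim 1+\ln\frac{R}{d}$ yields the $\|\rho\|_{L^\infty}(1+\ln\frac{R}{d})$ contribution. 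On the inner region $|y-x|\le d$, we cannot put two derivatives on $\frac{1}{|x-y|}$; instead move one derivative onto $\rho$: write $\nabla_x^2 N$ restricted to this ball as $-\frac{1}{4\pi}\int_{|y-x|\le d}\nabla_x\big(\frac{x-y}{|x-y|^3}\big)\rho(y)\,dy$ and integrate by parts in $y$ (using $\nabla_x\frac{x-y}{|x-y|^3}=-\nabla_y\frac{x-y}{|x-y|^3}$), producing a boundary term on the sphere $|y-x|=d$ of size $\lesssim d^{-2}\cdot d^2\cdot\|\rho\|_{L^\infty}=\|\rho\|_{L^\infty}$ (this can be folded into the constant, or combined with the outer estimate; a cleaner bookkeeping keeps it as part of the $\|\rho\|_{L^\infty}$ term) plus a volume term $\lesssim\int_{|z|\le d}|z|^{-2}\,dz\cdot\|\nabla\rho\|_{L^\infty}\lesssim d\,\|\nabla\rho\|_{L^\infty}$. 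Collecting the three pieces gives exactly \eqref{Eq2.20}.

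The only genuinely delicate point is the treatment of the periodization in the ``far'' region: because the convolution is against the $\mathbb{T}^3$-periodic kernel $\varphi$, the region $|y-x|>R$ on the torus really means summing the free-space kernel over all lattice translates with $|y-x+k|>R$ for the nearest representative, and one must check that this sum, after two derivatives, is uniformly bounded by $C(1+R^{-3})\|\rho\|_{L^\infty}$ — this is where the $R^{-3}$ term comes from, and it follows from $\sum_{k\in\mathbb{Z}^3\setminus\{0\}}|k|^{-3}$ being, not convergent, but controlled after accounting that only $O(R^{-3})$ lattice points lie within the fundamental cell near scale $R$, together with the smoothness of $\varphi_0$. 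In practice the cleanest route is to isolate the single nearest translate (which is what the $-\frac{1}{4\pi|x|}$ singular part captures) and absorb the rest into $\varphi_0$; then the far-field analysis above is really just the bounded kernel $\nabla^2\varphi_0$ plus the tail $\{d<|z|<R\}$ of the single Newtonian kernel, and the $R^{-3}$ is a conservative bound for $\|\nabla^2\varphi_0\|_{L^\infty}$-type contributions. I expect the bookkeeping of these overlapping regions — making sure no term is double-counted and that the boundary term from the integration by parts is correctly placed — to be the main (though entirely routine) obstacle; the analytic content is just the classical estimate that the Newtonian potential gains essentially two derivatives up to a logarithmic loss when one only controls $\|\rho\|_{L^\infty}$ and $\|\nabla\rho\|_{L^\infty}$.
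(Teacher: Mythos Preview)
Your proposal is correct and follows essentially the same skeleton as the paper: split off the smooth $\varphi_0$, bound $\nabla\Phi$ directly by integrability of $|z|^{-2}$, and for $\nabla^2\Phi$ decompose into near, middle, and far annuli. The one genuine methodological difference is the near region $\{|z|\le d\}$. You move one derivative onto $\rho$ by integration by parts, producing directly the volume term $\int_{|z|\le d}|z|^{-2}\,dz\cdot\|\nabla\rho\|_{L^\infty}\lesssim d\|\nabla\rho\|_{L^\infty}$ plus an $O(\|\rho\|_{L^\infty})$ boundary term. The paper instead first passes to the principal-value form (second derivatives on the kernel outside an $\varepsilon$-ball, with a boundary term $I(\rho)$ at $|y|=\varepsilon$), and then on $\{\varepsilon<|y|\le d\}$ exploits the mean-zero property of the Calder\'on--Zygmund kernel $\frac{3y_iy_j}{|y|^5}-\frac{\delta_{ij}}{|y|^3}$ on spheres to replace $\rho(x-y)$ by $\rho(x-y)-\rho(x)$, gaining one factor of $|y|$ from the mean value theorem. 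Both routes yield the same $d\|\nabla\rho\|_{L^\infty}$; the paper's version is the classical Schauder-type argument, yours is slightly more direct but requires you to first change variables $z=x-y$ so that the domain of integration does not depend on $x$ before differentiating (your sketch glosses over this).

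One correction on the far region: your discussion of lattice sums is a red herring. Once you have absorbed all nonzero translates into $\varphi_0\in C^\infty$, the only ``far'' contribution of the singular part is $\int_{\mathbb{T}^3\cap\{|y|>R\}}\bigl|\tfrac{3y_iy_j}{|y|^5}-\tfrac{\delta_{ij}}{|y|^3}\bigr|\,|\rho(x-y)|\,dy$, and since the integrand is pointwise $\lesssim R^{-3}\|\rho\|_{L^\infty}$ and $|\mathbb{T}^3|=1$, the bound $R^{-3}\|\rho\|_{L^\infty}$ is immediate --- no divergent series $\sum_{k\ne 0}|k|^{-3}$ enters. This is exactly how the paper obtains it.
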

\begin{proof}
According to the periodic condition, it is easy to have that
\begin{equation}\label{A1}
	\Phi(x)=\int_{\mathbb{T}^3} \varphi(x-y)\rho(y)dy=\int_{\mathbb{T}^3} \varphi(y)\rho(x-y)dy.
\end{equation}
which implies that
\begin{equation}\label{A2}
	\partial_{x_i}\Phi(x)=\int_{\mathbb{T}^3} \partial_{x_i}\varphi(x-y)\rho(y)dy=\int_{\mathbb{T}^3} \partial_{y_i}\varphi(y)\rho(x-y)dy.
\end{equation}
Then \eqref{Eq2.19} follows from \eqref{Eq2.18} and \eqref{A2}.

For  $\nabla^2_x\Phi(x)$,  we have from \eqref{A2} that
\begin{align}
	\partial_{x_i}\partial_{x_j}\Phi(x)
	&=\int_{\mathbb{T}^3} \partial_{y_i}\varphi(y)\partial_{x_j}\rho(x-y)dy   
	=-\int_{\mathbb{T}^3} \partial_{y_i}\varphi(y)\partial_{y_j}\rho(x-y)dy    \nonumber\\
	&=-\Big\{\int_{\mathbb{T}^3\cap\{|y|\leq \varepsilon\}}+\int_{\mathbb{T}^3\cap\{|y|> \varepsilon\}}\Big\} \partial_{y_i}\varphi(y)\partial_{y_j}\rho(x-y)dy   \nonumber\\
	&=-\int_{\mathbb{T}^3\cap\{|y|> \varepsilon\}} \partial_{y_i}\varphi(y)\partial_{y_j}\rho(x-y)dy +O(\varepsilon) \|\nabla\rho\|_{L^{\infty}} \nonumber\\
	&=\int_{\mathbb{T}^3\cap\{|y|> \varepsilon\}} \partial_{y_j}\partial_{y_i}\varphi(y)\rho(x-y)dy+I(\rho) +O(\varepsilon) \|\nabla\rho\|_{L^{\infty}},  \label{A3}
\end{align}
where 
\begin{equation*}
	I(\rho):=\lim\limits_{\varepsilon\rightarrow0}\int_{\mathbb{T}^3\cap\{|y|=\varepsilon\}} \partial_{y_i}\varphi(y)\rho(x-y)\overrightarrow{n_j}\mathrm{d}s_y\cong \|\rho\|_{L^\infty}.
\end{equation*}
Recalling the definition of $\varphi$ in \eqref{Eq2.18}, we know that
\begin{equation*}
	\partial_{y_j}\partial_{y_i}\varphi(y)=\Big(\frac{3y_iy_j}{|y|^5}-\frac{\delta_{ij}}{|y|^3}\Big)
	+\partial_{y_j}\partial_{y_i}\varphi_0(y).
\end{equation*}
which, together with \eqref{A3}, yields that
\begin{align}
	\partial_{x_i}\partial_{x_j}\Phi(x)=&I(\rho)+\int_{\mathbb{T}^3\cap\{|y|> \varepsilon\}} \partial_{y_j}\partial_{y_i}\varphi_0(y)\rho(x-y)\mathrm{d}y +O(\varepsilon) \|\nabla\rho\|_{L^{\infty}}  \nonumber\\
	&+\int_{\mathbb{T}^3\cap\{|y|> \varepsilon\}} \Big(\frac{3y_iy_j}{|y|^5}-\frac{\delta_{ij}}{|y|^3}\Big)\rho(x-y)dy:=I .  \label{A4}
\end{align}
A direct calculation shows that
\begin{align}
	I&=\Big\{\int_{\mathbb{T}^3\cap\{ \varepsilon<|y|\leq d\}}+\int_{\mathbb{T}^3\cap\{|y|> d\}} \Big\} \Big(\frac{3y_iy_j}{|y|^5}-\frac{\delta_{ij}}{|y|^3}\Big)\rho(x-y)\mathrm{d}y\nonumber\\
	&=\int_{\mathbb{T}^3\cap\{\varepsilon<|y|\leq d\}} \Big(\frac{3y_iy_j}{|y|^5}-\frac{\delta_{ij}}{|y|^3}\Big)(\rho(x-y)-\rho(x))\mathrm{d}y\nonumber\\
	&\quad+\Big\{\int_{\mathbb{T}^3\cap\{d<|y|\leq R\}}+\int_{\mathbb{T}^3\cap\{|y|\geq R\}}\Big\} \Big(\frac{3y_iy_j}{|y|^5}-\frac{\delta_{ij}}{|y|^3}\Big)\rho(x-y)dy. \label{A5}
\end{align}
It follows from \eqref{A4} and \eqref{A5} that
\begin{align}
	\partial_{y_j}\partial_{y_i}\Phi(y)
	=&\int_{\mathbb{T}^3\cap\{\varepsilon<|y|\leq d\}} \Big(\frac{3y_iy_j}{|y|^5}-\frac{\delta_{ij}}{|y|^3}\Big)(\rho(x-y)-\rho(x))\mathrm{d}y\nonumber\\
	&+\Big\{\int_{\mathbb{T}^3\cap\{d<|y|\leq R\}}+\int_{\mathbb{T}^3\cap\{|y|\geq R\}}\Big\} \Big(\frac{3y_iy_j}{|y|^5}-\frac{\delta_{ij}}{|y|^3}\Big)\rho(x-y)dy \nonumber\\
	&+\int_{\mathbb{T}^3} \partial_{y_i}\partial_{y_j}\varphi_0(y)\rho(x-y)dy+I(\rho)+O(\varepsilon) \|\nabla\rho\|_{L^{\infty}}.  \label{A6}
\end{align}
Taking $\varepsilon$ sufficiently small, then \eqref{Eq2.20} follows from \eqref{A6}. Therefore  the proof of Lemma \ref{Lem2.5} is completed.
\end{proof}

%%%%%%%%%%%%%%%%%%%%%%%%%%%%%%%%%%%%%%%%%%%%%%%%%%%%
\subsection{Characteristics}
As in \cite{B-D85,Rein07Book}, we study the characteristics under the following free streaming condition,
\begin{align}\label{Eq2.21}
	\|\nabla_x\phi(t)\|_{L^{\infty}}\le \delta (1+t)^{-2},\quad \|\nabla^2_x\phi(t)\|_{L^{\infty}}\le \delta (1+t)^{-\frac{5}{2}},
\end{align}
where $\delta$ is a sufficiently small positive constant  determined later.

Under the condition of \eqref{Eq2.21}, we can define the following backward characteristics of the ions (+) and electrons (-):
\begin{equation}\label{Eq2.22}
	\left\{
	\begin{aligned}
		&\frac{dX_{\pm}(\tau)}{\mathrm{d}\tau}=V_{\pm}(\tau), \\
		&\frac{dV_{\pm}(\tau)}{\mathrm{d}\tau}=\mp\nabla_x\phi(\tau,X_{\pm}(\tau)),     \quad \tau\in [0,t],\\
		&X_{\pm}(t;t,x,v)=x,\ V_{\pm}(t;t,x,v)=v,
	\end{aligned}
	\right.
\end{equation}
where we have used the simplified notations
\begin{align*}
	X_{\pm}(\tau)=X_{\pm}(\tau;t,x,v),\quad V_{\pm}(\tau)=V_{\pm}(\tau;t,x,v).
\end{align*}

\begin{lemma}\label{Lem2.6}
	Assume \eqref{Eq2.21}. For any $(t,x,v)$ and $0\le s_1\le s_2\le t<\infty$, it holds
	\begin{equation*}
			|V_{\pm}(s_2)-V_{\pm}(s_1)|\le C\delta.
	\end{equation*}
\end{lemma}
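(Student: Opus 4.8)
The plan is to integrate the velocity equation in \eqref{Eq2.22} directly and estimate the resulting integral using the free streaming condition \eqref{Eq2.21}. From $\frac{dV_{\pm}(\tau)}{d\tau}=\mp\nabla_x\phi(\tau,X_{\pm}(\tau))$ we get, for $0\le s_1\le s_2\le t$,
\begin{equation*}
	V_{\pm}(s_2)-V_{\pm}(s_1)=\mp\int_{s_1}^{s_2}\nabla_x\phi(\tau,X_{\pm}(\tau))\,d\tau,
\end{equation*}
so that
\begin{equation*}
	|V_{\pm}(s_2)-V_{\pm}(s_1)|\le\int_{s_1}^{s_2}\|\nabla_x\phi(\tau)\|_{L^{\infty}}\,d\tau\le\delta\int_{s_1}^{s_2}(1+\tau)^{-2}\,d\tau.
\end{equation*}

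The key point is that the time weight $(1+t)^{-2}$ in \eqref{Eq2.21} is integrable on $[0,\infty)$, which is precisely why it was chosen with exponent $2$ (rather than, say, $1$). Indeed
\begin{equation*}
	\int_{s_1}^{s_2}(1+\tau)^{-2}\,d\tau\le\int_{0}^{\infty}(1+\tau)^{-2}\,d\tau=1,
\end{equation*}
uniformly in $s_1,s_2$. Hence $|V_{\pm}(s_2)-V_{\pm}(s_1)|\le\delta\le C\delta$, which is the claim. No Gronwall argument is needed here because the bound on $\|\nabla_x\phi\|_{L^\infty}$ is assumed a priori in \eqref{Eq2.21}, so the estimate is a one-line consequence once the right-hand side is recognized as an integrable tail.

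There is essentially no real obstacle in this particular lemma — it is a warm-up observation that will be used repeatedly later (for instance to control how far $V_{\pm}$ can drift from $v$, and thereby to compare weights $w_\b(\tau,V_{\pm}(\tau))$ with $w_\b(t,v)$). The only thing to be slightly careful about is that the characteristics are well defined on all of $[0,t]$, which follows from the Lipschitz regularity of $\nabla_x\phi$ guaranteed by the second bound in \eqref{Eq2.21} together with standard ODE theory; this is already implicit in the setup preceding \eqref{Eq2.22}. One could also state the sharper bound $|V_{\pm}(s_2)-V_{\pm}(s_1)|\le\delta\big((1+s_1)^{-1}-(1+s_2)^{-1}\big)$, but the uniform form $C\delta$ suffices for the applications.
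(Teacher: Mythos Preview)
Your proof is correct and follows essentially the same approach as the paper: integrate the velocity ODE from \eqref{Eq2.22} over $[s_1,s_2]$, bound $|\nabla_x\phi|$ pointwise by $\delta(1+\tau)^{-2}$ from \eqref{Eq2.21}, and use the integrability of $(1+\tau)^{-2}$ on $[0,\infty)$. The additional commentary you provide (on well-posedness of the characteristics and the sharper bound) is accurate but not needed for the lemma itself.
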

\begin{proof}
	Integrating $\eqref{Eq2.22}_2$ over $[s_1,s_2]$, we have from \eqref{Eq2.21} that
	\begin{align*}
		|V_{\pm}(s_2)-V_{\pm}(s_1)|=\Big|\int_{s_1}^{s_2}\nabla_x\phi(\tau,X_{+}(\tau))\mathrm{d}\tau\Big|\le C\delta \int_{0}^{\infty}(1+\tau)^{-2}\mathrm{d}\tau\le C\delta.
	\end{align*}
	Therefore the proof of Lemma \ref{Lem2.6} is completed.	
\end{proof}

\begin{lemma}[\cite{B-D85}]\label{Lem2.7}
	With the assumption of \eqref{Eq2.21}, for any  $0\le s\le t<\infty$, it holds that
	\begin{align}
		&\Big|\frac{\partial X_{\pm}(s; t, x, v)}{\partial v}+(t-s)Id\Big|+\Big|\frac{\partial V_{\pm}(s; t, x, v)}{\partial v}-Id\Big|\le C\delta (t-s),    \label{Eq2.23}\\
		&\Big|\frac{\partial X_{\pm}(s; t, x, v)}{\partial x}-Id\Big|+\Big|\frac{\partial V_{\pm}(s; t, x, v)}{\partial x}\Big|\le C\delta,    \label{Eq2.25}
	\end{align}
	for some positive constant $C>0$.
\end{lemma}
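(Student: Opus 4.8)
The plan is to differentiate the characteristic ODE system \eqref{Eq2.22} with respect to the data $(x,v)$ and apply a Gronwall-type argument on the resulting linear variational system. Introduce the variational (sensitivity) matrices
\[
\Xi_v(s):=\frac{\partial X_{\pm}(s;t,x,v)}{\partial v},\quad \Upsilon_v(s):=\frac{\partial V_{\pm}(s;t,x,v)}{\partial v},\quad
\Xi_x(s):=\frac{\partial X_{\pm}(s;t,x,v)}{\partial x},\quad \Upsilon_x(s):=\frac{\partial V_{\pm}(s;t,x,v)}{\partial x}.
\]
Differentiating \eqref{Eq2.22}, each pair $(\Xi,\Upsilon)$ solves
\[
\frac{d}{ds}\Xi(s)=\Upsilon(s),\qquad \frac{d}{ds}\Upsilon(s)=\mp\nabla^2_x\phi\big(s,X_{\pm}(s)\big)\,\Xi(s),
\]
with terminal conditions at $s=t$: $\Xi_v(t)=0,\ \Upsilon_v(t)=Id$ for the $v$-derivatives and $\Xi_x(t)=Id,\ \Upsilon_x(t)=0$ for the $x$-derivatives. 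The coefficient matrix $\mp\nabla^2_x\phi(s,X_{\pm}(s))$ is bounded in norm by $\delta(1+s)^{-5/2}$ thanks to the second bound in \eqref{Eq2.21}, and in particular $\int_0^t\|\nabla^2_x\phi(s)\|_{L^\infty}\,ds\le C\delta$ uniformly in $t$; this integrability in time (the exponent $5/2>1$, in fact $>2$ is what the subsequent analysis will want) is the whole point of the free-streaming hypothesis.

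First I would treat the $x$-derivatives, which is the cleaner case. Integrating the $\Upsilon_x$ equation backward from $t$ gives $|\Upsilon_x(s)|\le \int_s^t\|\nabla^2_x\phi(\tau)\|_{L^\infty}|\Xi_x(\tau)|\,d\tau$, and integrating the $\Xi_x$ equation backward from $t$ gives $|\Xi_x(s)-Id|\le\int_s^t|\Upsilon_x(\tau)|\,d\tau$. Setting $A(s):=|\Xi_x(s)-Id|+|\Upsilon_x(s)|$ and using $|\Xi_x(\tau)|\le 1+A(\tau)$, one obtains
\[
A(s)\le C\delta+\int_s^t\big(1+\|\nabla^2_x\phi(\tau)\|_{L^\infty}\big)A(\tau)\,d\tau\quad\text{on, say, a bounded time interval,}
\]
so Gronwall gives $A(s)\le C\delta$; but to get the bound uniformly for all $t<\infty$ I would instead bootstrap: feed the crude bound $|\Xi_x|\le 2$ (valid once $A\le 1$, which holds for $\delta$ small) into $|\Upsilon_x(s)|\le 2\int_s^t\|\nabla^2_x\phi\|_{L^\infty}\,d\tau\le C\delta$, then $|\Xi_x(s)-Id|\le\int_s^t|\Upsilon_x(\tau)|\,d\tau$ — here I must be careful, since $|\Upsilon_x|\le C\delta$ alone only gives $|\Xi_x(s)-Id|\le C\delta(t-s)$, which is \emph{not} what \eqref{Eq2.25} claims. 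The fix is to use the sharper pointwise decay $|\Upsilon_x(\tau)|\le\int_\tau^t\|\nabla^2_x\phi\|_{L^\infty}\,d\sigma\cdot\sup|\Xi_x|\le C\delta(1+\tau)^{-3/2}$ (integrating $(1+\sigma)^{-5/2}$ from $\tau$), which is integrable in $\tau$ over $[0,\infty)$, so $|\Xi_x(s)-Id|\le\int_s^t C\delta(1+\tau)^{-3/2}\,d\tau\le C\delta$ uniformly. That closes \eqref{Eq2.25}.

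For the $v$-derivatives the structure is the same but the terminal data are $\Xi_v(t)=0,\Upsilon_v(t)=Id$, so the natural comparison is with the free flow $\Xi^0(s)=-(t-s)Id,\ \Upsilon^0(s)=Id$. Write $\Xi_v(s)=-(t-s)Id+\xi(s)$, $\Upsilon_v(s)=Id+\eta(s)$ with $\xi(t)=\eta(t)=0$; then
\[
\frac{d}{ds}\xi=\eta,\qquad \frac{d}{ds}\eta=\mp\nabla^2_x\phi(s,X_{\pm}(s))\big({-}(t-s)Id+\xi(s)\big),
\]
so $|\eta(s)|\le\int_s^t\|\nabla^2_x\phi(\tau)\|_{L^\infty}\big((t-\tau)+|\xi(\tau)|\big)d\tau$ and $|\xi(s)|\le\int_s^t|\eta(\tau)|\,d\tau$. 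On a finite horizon $[0,t]$, using $(t-\tau)\le t$ and Gronwall on $B(s):=|\xi(s)|+|\eta(s)|$ against the kernel $\|\nabla^2_x\phi\|_{L^\infty}(1+\cdot)$ gives $B(s)\le C\delta(t-s)$ — precisely the $(t-s)$-weighted bound in \eqref{Eq2.23}; one checks the constant stays uniform because $\int_0^t(t-\tau)\|\nabla^2_x\phi(\tau)\|_{L^\infty}d\tau\le\int_0^\infty(t-\tau)\delta(1+\tau)^{-5/2}d\tau$ but more robustly because one may also absorb a factor $(t-s)$ and run Gronwall on $B(s)/(t-s)$, or simply note the claim only needs $B(s)\le C\delta(t-s)$ which is stable under the bootstrap. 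The main obstacle I anticipate is exactly this uniformity-in-$t$ issue: a naive Gronwall produces constants like $e^{Ct}$ or bounds like $C\delta(t-s)^2$, and the whole subtlety — as the authors flag in the introduction when contrasting with Guo--Jang's local-in-time treatment — is to exploit the decay rates $(1+t)^{-2}$ and $(1+t)^{-5/2}$ in \eqref{Eq2.21} (which make the relevant time integrals converge) together with Lemma \ref{Lem2.6} to keep every constant independent of $t$, so that after the bootstrap the bounds \eqref{Eq2.23}--\eqref{Eq2.25} hold on $[0,\infty)$.
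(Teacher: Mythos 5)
Your proof is correct and is exactly the standard variational-equation argument behind this lemma: differentiate the characteristic ODE in $(x,v)$, compare with the free flow, and close a bootstrap using that $\int_0^\infty\|\nabla^2_x\phi(\tau)\|_{L^\infty}\,d\tau\lesssim\delta$ (and $\int_0^\infty(1+\tau)^{-3/2}d\tau<\infty$) so the constants are uniform in $t$. The paper itself gives no proof and simply cites Bardos--Degond \cite{B-D85}, whose argument is the one you reproduce, so there is nothing to contrast.
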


\begin{corollary}\label{Coro2.8}
	For $\delta$ suitably small and $0\le s\le t$, it holds that
	\begin{align*}
		\Big|\operatorname{det}\Big(\frac{\partial X_{\pm}(s)}{\partial v}\Big)\Big|\ge \frac{1}{2}(t-s)^3.
	\end{align*}
\end{corollary}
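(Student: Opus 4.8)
The plan is to deduce the lower bound on the Jacobian determinant directly from the entrywise estimate \eqref{Eq2.23} in Lemma \ref{Lem2.7}. Write $A_{\pm}(s):=\frac{\partial X_{\pm}(s;t,x,v)}{\partial v}$ and set $M:=-(t-s)^{-1}A_{\pm}(s)$, so that \eqref{Eq2.23} says $\|M-Id\|\le C\delta$ in whatever matrix norm is induced by the estimate (say the entrywise sup norm, up to a dimensional constant absorbed into $C$). Then $\det A_{\pm}(s)=(-(t-s))^3\det M=-(t-s)^3\det M$, so it suffices to show $|\det M|\ge \tfrac12$ once $\delta$ is small enough.

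First I would recall that the determinant is a continuous (indeed polynomial) function of the matrix entries, with $\det(Id)=1$. More quantitatively, for a $3\times 3$ matrix $M$ with $\|M-Id\|\le\eta\le 1$ one has the elementary bound $|\det M-1|\le C_0\eta$ for an absolute constant $C_0$ (expand the determinant as a sum of products of entries, each differing from the corresponding entry of $Id$ by at most $\eta$, and collect terms; or use $\det M=\prod(1+\lambda_i)$ with $|\lambda_i|\le\eta$ via Gershgorin). Applying this with $\eta=C\delta$ gives $|\det M|\ge 1-C_0 C\delta$. Choosing $\delta$ small enough that $C_0C\delta\le\tfrac12$ yields $|\det M|\ge\tfrac12$, hence
\[
\Big|\operatorname{det}\Big(\frac{\partial X_{\pm}(s)}{\partial v}\Big)\Big|=(t-s)^3|\det M|\ge\frac{1}{2}(t-s)^3,
\]
which is exactly the claim.

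The only mild subtlety — and the step I would be most careful about — is bookkeeping the matrix norms: \eqref{Eq2.23} is stated with an unspecified norm on $3\times 3$ matrices, and the passage from $\|M-Id\|\le C\delta$ to $|\det M-1|\le C_0 C\delta$ uses equivalence of norms on the finite-dimensional space of matrices, which only changes the constant. Since $\delta$ is at our disposal ("a sufficiently small positive constant determined later"), absorbing these dimensional constants is harmless, and there is no genuine obstacle here; the corollary is a routine perturbation-of-the-identity fact. I would therefore present it in two or three lines, citing Lemma \ref{Lem2.7} for the perturbative estimate and the multilinearity/continuity of $\det$ for the conclusion, and noting that the same argument shows $\det\big(\tfrac{\partial X_{\pm}(s)}{\partial v}\big)$ has sign $(-1)^3=-1$ for $s<t$, so in particular the characteristic map $v\mapsto X_{\pm}(s;t,x,v)$ is a diffeomorphism onto its image — a fact that is used later for change-of-variables arguments.
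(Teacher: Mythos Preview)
Your proof is correct and is exactly the argument the paper has in mind: the paper states Corollary \ref{Coro2.8} without proof, as an immediate consequence of the estimate \eqref{Eq2.23} in Lemma \ref{Lem2.7}, and the perturbation-of-the-identity computation you give is the only natural way to read that implication. Your remarks on norm equivalence and the sign of the determinant are accurate but optional embellishments.
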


\begin{lemma}\label{Lem2.9}
	With the assumption of \eqref{Eq2.21}, for any  $0\le s\le t<\infty$, it holds that
	\begin{align}\label{Eq2.35}
		\sup_{0\leq s \leq t,x\in \mathbb{T}^{3}}\Big\{
		\int_{|v|\leq CN}|\partial
		_{vv}X_{\pm}(s ;t,x,v)|^{2}\mathrm{d}v\Big\} ^{1/2}  \leq C_{N}(t-s)^\f{5}{2}\sup_{0\le s\le t}\|\partial_x \mathbf{\tilde{h}}(s)\|_{L^{\infty}}.
	\end{align}
\end{lemma}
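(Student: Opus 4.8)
The statement concerns the second $v$-derivative of the backward spatial characteristic $X_\pm(s;t,x,v)$, and the plan is to derive a closed integral inequality for $\partial_{vv}X_\pm$ by differentiating the characteristic ODEs \eqref{Eq2.22} twice in $v$, then estimate the resulting Volterra-type inequality using the already-established first-order bounds from Lemma \ref{Lem2.7} and Corollary \ref{Coro2.8}, together with the free streaming condition \eqref{Eq2.21}. First I would differentiate $\eqref{Eq2.22}_2$ twice with respect to $v$. Writing $\partial_v V_\pm$, $\partial_{vv}V_\pm$, $\partial_v X_\pm$, $\partial_{vv}X_\pm$ for the Jacobians/Hessians, the chain rule gives schematically
\begin{align*}
	\frac{d}{d\tau}\,\partial_{vv}X_\pm(\tau) &= \partial_{vv}V_\pm(\tau),\\
	\frac{d}{d\tau}\,\partial_{vv}V_\pm(\tau) &= \mp\,\nabla^2_x\phi(\tau,X_\pm(\tau))\cdot\partial_{vv}X_\pm(\tau)\ \mp\ \nabla^3_x\phi(\tau,X_\pm(\tau))\,[\partial_v X_\pm(\tau),\partial_v X_\pm(\tau)],
\end{align*}
with terminal data $\partial_{vv}X_\pm(t)=0$, $\partial_{vv}V_\pm(t)=0$. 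The term carrying $\nabla^3_x\phi$ is the source; here one uses $\|\nabla^2_x\phi\|_{L^\infty}\lesssim\|\rho\|_{L^\infty}(1+\ln\frac{R}{d}+R^{-3})+d\|\nabla\rho\|_{L^\infty}$ from Lemma \ref{Lem2.5}, but since we need $\nabla^3_x\phi$ we differentiate the Poisson representation once more: $\nabla^3_x\phi$ is controlled, via the same potential-theoretic splitting as in the proof of Lemma \ref{Lem2.5} (singular kernel $\sim|y|^{-4}$ plus smooth part), by $\|\nabla_x\rho\|_{L^\infty}$ up to logarithmic losses, and $\|\nabla_x\rho\|_{L^\infty}\lesssim\|\partial_x\mathbf{\tilde h}\|_{L^\infty}$ since $\rho=\int\sqrt\mu(f_+-f_-)\,dv$ and $w_{\beta_1}^{-1}\sqrt\mu$ is integrable. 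This is where the factor $\sup_{0\le s\le t}\|\partial_x\mathbf{\tilde h}(s)\|_{L^\infty}$ enters.

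\textbf{Key steps.} (1) Integrate the $\partial_{vv}V_\pm$ equation backward from $t$ to $s$ and the $\partial_{vv}X_\pm$ equation backward from $t$ to $s$, obtaining
$$
\partial_{vv}X_\pm(s) = \int_s^t\!\!\int_\tau^t\Big(\mp\nabla^2_x\phi(\sigma,X_\pm(\sigma))\,\partial_{vv}X_\pm(\sigma)\mp\nabla^3_x\phi(\sigma,X_\pm(\sigma))[\partial_vX_\pm(\sigma),\partial_vX_\pm(\sigma)]\Big)\,d\sigma\,d\tau.
$$
(2) Use \eqref{Eq2.23} to get $|\partial_vX_\pm(\sigma)|\le (t-\sigma)+C\delta(t-\sigma)\lesssim (t-\sigma)$, so the source term is bounded by $(t-\sigma)^2\|\nabla^3_x\phi(\sigma)\|_{L^\infty}$. (3) Plug in $\|\nabla^3_x\phi(\sigma)\|_{L^\infty}\lesssim\|\partial_x\mathbf{\tilde h}(\sigma)\|_{L^\infty}$ (with an acceptable log or $R$-power loss absorbed into $C_N$) and $\|\nabla^2_x\phi(\sigma)\|_{L^\infty}\le\delta(1+\sigma)^{-5/2}$ from \eqref{Eq2.21}, yielding
$$
|\partial_{vv}X_\pm(s)|\le C\delta\int_s^t(t-\tau)\sup_{\tau\le\sigma\le t}|\partial_{vv}X_\pm(\sigma)|\,d\tau\ +\ C\,\Big(\sup_{0\le\sigma\le t}\|\partial_x\mathbf{\tilde h}(\sigma)\|_{L^\infty}\Big)\int_s^t(t-\tau)(t-\tau)^2\,d\tau.
$$
(4) A Gronwall/bootstrap argument on $M(s):=\sup_{s\le\sigma\le t}|\partial_{vv}X_\pm(\sigma)|$: the first integral contributes $C\delta(t-s)^2 M(s)$, which for $\delta$ small is absorbed into the left side, giving $|\partial_{vv}X_\pm(s)|\lesssim (t-s)^4\sup\|\partial_x\mathbf{\tilde h}\|_{L^\infty}$ pointwise. (5) Finally, convert the pointwise bound into the $L^2_v$ bound over $|v|\le CN$ claimed in \eqref{Eq2.35}: integrating $(t-s)^8\|\partial_x\mathbf{\tilde h}\|^2_{L^\infty}$ over the ball $\{|v|\le CN\}$ of volume $\cong N^3$ gives $\big(\int_{|v|\le CN}|\partial_{vv}X_\pm|^2\,dv\big)^{1/2}\le C_N(t-s)^4\sup\|\partial_x\mathbf{\tilde h}\|_{L^\infty}$; the stated exponent $\tfrac52$ presumably reflects either a sharper bookkeeping of the time weights (e.g. only one factor $(t-s)$ from the $d\tau$ integration combined with one genuine $(t-\sigma)^{3/2}$-type loss from the $L^2$-in-$v$ change of variables via $\det(\partial X_\pm/\partial v)\gtrsim(t-s)^3$ in Corollary \ref{Coro2.8}), so I would redo step (5) writing the $v$-integral in the $X_\pm(s)$-variable: $\int|\partial_{vv}X_\pm|^2\,dv = \int|\partial_{vv}X_\pm|^2|\det(\partial v/\partial X)|\,dX\lesssim (t-s)^{-3}\int\cdots$, which lowers the power by $3/2$ and produces the $(t-s)^{5/2}$.

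\textbf{Main obstacle.} The delicate point is step (3): obtaining a usable bound on $\nabla^3_x\phi$. The Calderón–Zygmund kernel for the third derivative of the Newtonian/periodic potential is $\sim|y|^{-4}$, which is not locally integrable, so one cannot simply bound $\|\nabla^3_x\phi\|_{L^\infty}$ by $\|\nabla_x\rho\|_{L^\infty}$ without a logarithmic or polynomial loss — exactly the mechanism already visible in Lemma \ref{Lem2.5} for $\nabla^2_x\phi$. I would handle this by the same dyadic splitting into $\{|y|\le d\}$ (where one subtracts $\nabla\rho(x)$ and uses the cancellation of the kernel's angular average, picking up $d\|\nabla^2\rho\|$ — but $\nabla^2\rho$ is not available, so instead integrate by parts moving one derivative off the kernel onto $\rho$, trading $|y|^{-4}$ for $|y|^{-3}$ acting on $\partial_x\rho$) and $\{|y|>d\}$ (where $|y|^{-4}$ is integrable, giving $d^{-1}\|\rho\|$-type terms); all these losses are powers of $d$, $R$ and logs that get swallowed into the constant $C_N$ once $N$ (hence the relevant length scales) is fixed. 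The second subtlety is making the Gronwall constant in step (4) genuinely $\delta$-independent in its leading behavior — this is fine because the $\delta$-term multiplies $(t-s)^2 M(s)$ and $\delta$ is at our disposal, but one must be careful that $t-s$ can be large, so the absorption must be done on the running supremum $M(s)$ over the full interval rather than on a short-time interval; splitting $[s,t]$ into $O(1/\sqrt\delta)$ subintervals of length $\sim\delta^{-1/2}$ and iterating, or directly invoking the integral form of Gronwall, closes this. Modulo these two points, everything reduces to the first-order estimates already proved in Lemma \ref{Lem2.7} and Corollary \ref{Coro2.8}.
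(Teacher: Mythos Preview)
Your skeleton (differentiate the characteristic ODEs twice, integrate backward, Gronwall) matches the paper, but the crucial step (3) is where your approach diverges and runs into a genuine gap. You try to control $\|\nabla^3_x\phi\|_{L^\infty}$ by potential theory, but with only $\nabla_x\rho\in L^\infty$ (which is all that $\|\partial_x\mathbf{\tilde h}\|_{L^\infty}<\infty$ gives), Calder\'on--Zygmund theory yields $\nabla^3_x\phi\in\mathrm{BMO}$, not $L^\infty$; the integration-by-parts trick you sketch still leaves a kernel $|y|^{-3}$ acting on $\partial_x\rho$, and the near-origin piece $\int_{|y|\le d}|y|^{-3}\,dy$ diverges logarithmically, so the dependence on $d$ cannot be absorbed into a fixed $C_N$. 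Your step (5) change of variables $v\mapsto X_\pm(s)$ comes too late (the damage is done once you invoke $\|\nabla^3_x\phi\|_{L^\infty}$) and does not actually drop the power as you claim, since $|\partial_{vv}X_\pm|$ expressed in the $X_\pm(s)$-variable does not simplify.

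The paper's key idea is to \emph{avoid} any $L^\infty$ bound on $\nabla^3_x\phi$. After Gronwall one has the pointwise inequality
\[
|\partial_{vv}X_\pm(s;t,x,v)|\ \le\ C(t-s)\int_s^t |\nabla^3_x\phi(\tau,X_\pm(\tau;t,x,v))|\,(t-\tau)^2\,d\tau,
\]
and one takes the $L^2$ norm in $v$ over $\{|v|\le CN\}$ \emph{before} estimating $\nabla^3_x\phi$. By Minkowski, the $L^2_v$ norm goes inside the $\tau$-integral; then for each fixed $\tau$ one changes variables $v\mapsto y=X_\pm(\tau;t,x,v)$ (not $s$!), which by Corollary \ref{Coro2.8} has Jacobian $\gtrsim (t-\tau)^3$, converting the integral into $(t-\tau)^{-3/2}\|\nabla^3_x\phi(\tau,\cdot)\|_{L^2(\mathbb T^3)}$ (up to a $C_N$ counting how many fundamental domains the image ball covers). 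The $L^2$ elliptic estimate $\|\nabla^3_x\phi\|_{L^2}\lesssim\|\nabla_x\rho\|_{L^2}\lesssim\|\partial_x\mathbf{\tilde h}\|_{L^\infty}$ is then immediate. The power $(t-s)^{5/2}$ arises as $(t-s)\cdot\int_s^t(t-\tau)^2(t-\tau)^{-3/2}\,d\tau$. For the Gronwall step, the paper uses the decay $\|\nabla^2_x\phi(\tau)\|_{L^\infty}\le\delta(1+\tau)^{-5/2}$ from \eqref{Eq2.21}, which after one time integration gives the integrable kernel $\delta(1+\tau)^{-3/2}$ and closes globally without any subinterval splitting.
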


\begin{proof}
	From \eqref{Eq2.22} and a direct computation, for  $0\leq s\leq t$, one has 
	\begin{align}\label{Eq2.36}
		\partial^2_{vv}\ddot{X}_{\pm}(\tau)=\pm\nabla_x^3\phi(\tau, X_{\pm}(\tau))( \partial_vX_{\pm}(\tau))^2 
		\pm \nabla_x^2\phi(\tau, X_{\pm}(\tau))(\partial^2_{vv}X_{\pm}(\tau)),
	\end{align}
	with $\partial^2_{vv}X_{\pm}(t)=	\partial^2_{vv}\dot{X}_{\pm}(t)=0$. Integrating \eqref{Eq2.36} on $\tau$ twice, then switching the order of integration, we obtain
	\begin{align}\label{Eq2.37}
		\partial^2_{vv}X_{\pm}(s)=	\int_s^t\int_\tau^t\partial^2_{vv}\ddot{X}_{\pm}(\tau_1) \mathrm{d} \tau_1 \mathrm{d}\tau=\int_s^t\int_s^{\tau_1}\partial^2_{vv}\ddot{X}_{\pm}(\tau_1) \mathrm{d} \tau \mathrm{d}\tau_1. 
	\end{align}
	Recalling \eqref{Eq2.21}, it follows from \eqref{Eq2.23}, \eqref{Eq2.36} and \eqref{Eq2.37} that
	\begin{align*}
		|\partial^2_{vv}X_{\pm}(s)|&\leq	
		\int_s^t\int_s^{\tau_1} |\nabla_x^3\phi(\tau_1, X_{\pm}(\tau_1))||\partial_vX_{\pm}(\tau_1)|^2  \mathrm{d} \tau \mathrm{d}\tau_1
		+\delta\int_s^t(1+\tau_1)^{-\frac{3}{2}}|\partial^2_{vv}X_{\pm}(\tau_1)|  \mathrm{d}\tau_1  \nonumber\\
		&\leq C\int_s^t |\nabla_x^3\phi(\tau_1, X_{\pm}(\tau_1))|(\tau_1-s)(t-\tau_1)^2\mathrm{d}\tau_1
		+\delta\int_s^t(1+\tau_1)^{-\frac{3}{2}}|\partial^2_{vv}X_{\pm}(\tau_1)|  \mathrm{d}\tau_1    \nonumber\\
		&\leq C(t-s)\int_s^t|\nabla_x^3\phi(\tau_1, X_{\pm}(\tau_1))|(t-\tau_1)^2 \mathrm{d}\tau_1
		+\delta\int_s^t(1+\tau_1)^{-\frac{3}{2}}|\partial^2_{vv}X_{\pm}(\tau_1)|  \mathrm{d}\tau_1 .
	\end{align*}
	Gronwall's inequality implies that	for  $0\leq s\leq t$
	\begin{align*}
		|\partial^2_{vv}X_{\pm}(s; t, x, v)|\leq C(t-s)\int_s^t|\nabla_x^3\phi(\tau, X_{\pm}(\tau; t, x, v))| (t-\tau_1)^2\mathrm{d}\tau.
	\end{align*}
	We thus conclude that
	\begin{align}\label{Eq2.38}
		&\Big(\int_{|v|\leq CN}|\partial_{vv}X_{\pm}(s ;t,x,v)|^{2}\mathrm{d}v\Big) ^{1/2}   \nonumber\\
		\leq& C(t-s)\Big\{\int_{|v|\leq CN}\Big(\int_s^t|\nabla_x^3\phi(\tau, X_{\pm}(\tau; t, x, v))|(t-\tau_1)^2 \mathrm{d}\tau\Big)^2\mathrm{d}v\Big\}^{1/2}  \nonumber\\
		\leq& C(t-s) \int_s^t(t-\tau_1)^2\Big(\int_{|v|\leq CN}|\nabla_x^3\phi(\tau, X_{\pm}(\tau; t, x, v))|^2\mathrm{d}v\Big)^{1/2} \mathrm{d}\tau.
	\end{align}
	
	On the other hand, making the change of variable $y=X_{\pm}(\tau; t, x, v)$, one has from Corollary \ref{Coro2.8} that
	\begin{align}\label{Eq2.39}
		\Big(\int_{|v|\leq CN}|\nabla_x^3\phi(\tau, X_{\pm}(\tau; t, x, v))|^2\mathrm{d}v\Big)^{\f{1}{2}}  
		\leq& \Big(\int_{|y-x|\leq CN(t-\tau)}|\nabla_x^3\phi(\tau, y)|^2 |\det(\frac{\partial v}{\partial y})| \mathrm{d}y\Big)^{\f{1}{2}} \nonumber\\
		\leq& C(t-\tau)^{-\f{3}{2}} \Big(\int_{|y-x|\leq CN(t-\tau)}|\nabla_x^3\phi(\tau, y)|^2 \mathrm{d}y\Big)^{\f{1}{2}}\nonumber\\
		\leq& C_N \f{\left((t-\tau)^3+1\right)^{\f{1}{2}}}{(t-\tau)^{\f{3}{2}}}\|\nabla_x^3 \phi(\tau)\|_{L^2}\nonumber\\
		\leq& C_N (t-\tau)^{-\frac{3}{2}}\|\partial_x \mathbf{\tilde{h}}(\tau)\|_{L^{\infty}},
	\end{align}
	where we have used the standard elliptic estimate $\|\nabla_x^3 \phi(\tau)\|_{L^2}\leq C\|\nabla_x \rho(\tau)\|_{L^2}\leq C\|\partial_x \mathbf{\tilde{h}}(\tau)\|_{L^{\infty}}$ in the last inequality. Substituting \eqref{Eq2.39} into \eqref{Eq2.38}, we complete the proof of Lemma \ref{Lem2.9} .
\end{proof}

%%%%%%%%%%%%%%%%%%%%%%%%%%%%%%%%%%%%%%%%%%%%%%%%%%%%%%%%%%%%
%%%%%%%%%%%%%%%%%%%%%%%%%%%%%%%%%%%%%%%%%%%%%%%%%%%%%%%%%%%%%%
\subsection{Some useful estimates}
Lastly, for convenient use later, we will provide some very important estimates regarding the collision operators.

For any scalar function $\mathrm{g}(v)$, we define the operator $\mathrm{K}$ in scalar form:
\begin{align*}
\mathrm{K} \mathrm{g}:=\frac{1}{\sqrt{\mu}}\left\{Q_{\text{gain}}(\sqrt{\mu} \mathrm{g}, \mu)+Q(\mu,\sqrt{\mu} \mathrm{g})\right\}.
\end{align*}
The $\mathrm{K} \mathrm{g}$  can be further split into $\mathrm{K} \mathrm{g}=\mathrm{K}_2 \mathrm{g} -\mathrm{K}_1 \mathrm{g}$ with
\begin{align}
	\left(\mathrm{K}_{1} \mathrm{g}\right)(v)
	:=&\int_{\mathbb{R}^{3}} \int_{\mathbb{S}^{2}} B(v-u,\t)\sqrt{\mu(v) \mu(u)} \mathrm{g}(u) \mathrm{d} \omega \mathrm{d} u
	=\int_{\mathbb{R}^3}\mathrm{k}_1(v,\eta)\mathrm{g}(\eta)\,\mathrm{d}\eta,    \label{newK1}\\
	\left(\mathrm{K}_{2} \mathrm{g}\right)(v)
	:=&\int_{\mathbb{R}^{3}} \int_{\mathbb{S}^{2}} B(v-u,\t) \sqrt{\mu(u) \mu(u')}\mathrm{g}(v') \mathrm{d} \omega \mathrm{d} u\nonumber\\
	&\quad +\int_{\mathbb{R}^{3}} \int_{\mathbb{S}^{2}} B(v-u,\t)\sqrt{\mu(u) \mu(v')}\mathrm{g}(u') \mathrm{d} \omega \mathrm{d} u 
	=\int_{\mathbb{R}^3}\mathrm{k}_2(v,\eta)\mathrm{g}(\eta)\,\mathrm{d}\eta,    \label{newK2}
\end{align}
where $\mathrm{k}_i(v,\eta)=\mathrm{k}_i(\eta,v)$ ($i=1,2$) satisfies the following Grad's estimates \cite{Grad63}:
\begin{align}
	0\leq &\mathrm{k}_1(v,\eta)\leq C|v-\eta|^{\gamma}e^{-\f{1}{4}(|v|^2+|u|^2)},  \label{newK3}\\
	0\leq &\mathrm{k}_2(v,\eta)\leq C\f{1}{|v-\eta|}e^{-\f{|v-\eta|^2}{8}}e^{-\f{||v|^2-|\eta|^2|^2}{8|v-\eta|^2}}.     \label{newK4}
\end{align}

By comparing \eqref{Eq1.11}$-$\eqref{Eq1.13} with \eqref{newK1}$-$\eqref{newK2}, for any $\mathbf{g}=[g_{+},g_{-}]^{\mathrm{T}}$, we see
 \begin{align}\label{newK5}
 	\left(\mathrm{K}^{\pm} \mathbf{g}\right)(v)
 	&=\int_{\mathbb{R}^3}\Big(\f{3}{2}\mathrm{k}_2(v,\eta)-\mathrm{k}_1(v,\eta)\Big)g_{\pm}(\eta)\,\mathrm{d}\eta+\int_{\mathbb{R}^3}\Big(\f{1}{2}\mathrm{k}_2(v,\eta)-\mathrm{k}_1(v,\eta)\Big)g_{\mp}(\eta)\,\mathrm{d}\eta \nonumber\\
 	&=:\int_{\mathbb{R}^3}\mathtt{k}^{(2)}(v,\eta)g_{\pm}(\eta)\,\mathrm{d}\eta+\int_{\mathbb{R}^3}\mathtt{k}^{(1)}(v,\eta)g_{\mp}(\eta)\,\mathrm{d}\eta.
\end{align}

According to \eqref{newK3} $-$ \eqref{newK5}, one has
\begin{lemma}\label{Lem2.10}
	There is $C>0$ such that 
	\begin{align}\label{Eq2.40}
		0\leq |\mathtt{k}^{(i)}(v,\eta)|\leq C\Big(\f{1}{|v-\eta|}+|v-\eta|^{\gamma}\Big)e^{-\f{|v-\eta|^2}{8}}e^{-\f{||v|^2-|\eta|^2|^2}{8|v-\eta|^2}}, \quad i=1,2,
	\end{align}
for any $v, \eta\in \R^3$ with $v\neq \eta$. 
\end{lemma}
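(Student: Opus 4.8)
The plan is to read off the bound on $\mathtt{k}^{(i)}$ directly from its definition in \eqref{newK5}, namely $\mathtt{k}^{(2)} = \tfrac{3}{2}\mathrm{k}_2 - \mathrm{k}_1$ and $\mathtt{k}^{(1)} = \tfrac{1}{2}\mathrm{k}_2 - \mathrm{k}_1$, by invoking the two Grad estimates \eqref{newK3} and \eqref{newK4}. First I would observe that by the triangle inequality $|\mathtt{k}^{(i)}(v,\eta)| \le \tfrac{3}{2}|\mathrm{k}_2(v,\eta)| + |\mathrm{k}_1(v,\eta)|$ for $i=1,2$, so it suffices to bound each of the two pieces by the right-hand side of \eqref{Eq2.40}. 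The $\mathrm{k}_2$ piece is immediate from \eqref{newK4}: up to the harmless constant $\tfrac32$ it already has exactly the form $\tfrac{1}{|v-\eta|}e^{-|v-\eta|^2/8}e^{-||v|^2-|\eta|^2|^2/(8|v-\eta|^2)}$, which is dominated by $\left(\tfrac{1}{|v-\eta|}+|v-\eta|^\gamma\right)e^{-|v-\eta|^2/8}e^{-||v|^2-|\eta|^2|^2/(8|v-\eta|^2)}$.

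Next I would handle the $\mathrm{k}_1$ piece. From \eqref{newK3} we have $0 \le \mathrm{k}_1(v,\eta) \le C|v-\eta|^\gamma e^{-\frac14(|v|^2+|\eta|^2)}$ (reading the exponent correctly in terms of $v$ and $\eta$). The task is to absorb this into the target bound, so I need the elementary pointwise inequality
\begin{align*}
e^{-\frac14(|v|^2+|\eta|^2)} \le C\, e^{-\frac{|v-\eta|^2}{8}}\, e^{-\frac{||v|^2-|\eta|^2|^2}{8|v-\eta|^2}}.
\end{align*}
This is a standard fact (used throughout the Grad-estimate literature): writing $r=|v-\eta|$ and decomposing $v = \frac{v+\eta}{2} + \frac{v-\eta}{2}$, one checks that $|v|^2 + |\eta|^2 \ge \tfrac12|v-\eta|^2 + c\,\frac{(|v|^2-|\eta|^2)^2}{|v-\eta|^2}$ for a suitable constant, after which exponentiating and adjusting constants gives the claim. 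Combining $|v-\eta|^\gamma e^{-\frac14(|v|^2+|\eta|^2)} \le C |v-\eta|^\gamma e^{-\frac{|v-\eta|^2}{8}} e^{-\frac{||v|^2-|\eta|^2|^2}{8|v-\eta|^2}}$ with the $\mathrm{k}_2$ bound yields \eqref{Eq2.40}, with the constant $C$ absorbing the factors $\tfrac32$, $1$, and the constant from the exponent comparison.

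The nonnegativity of $\mathrm{k}_1$ and $\mathrm{k}_2$ in \eqref{newK3}--\eqref{newK4} guarantees $|\mathtt{k}^{(i)}|$ is genuinely finite for $v \ne \eta$, so no separate argument is needed there; note $\mathtt{k}^{(i)}$ itself need not be nonnegative (there is a subtraction), which is why the statement only asserts $0 \le |\mathtt{k}^{(i)}|$, a tautology recorded for uniformity of notation. I do not expect any serious obstacle here — the lemma is a bookkeeping consequence of Grad's classical kernel estimates; the only mildly technical point is the exponent comparison $\frac14(|v|^2+|\eta|^2) \gtrsim \frac{|v-\eta|^2}{8} + \frac{||v|^2-|\eta|^2|^2}{8|v-\eta|^2}$, which I would dispatch in one or two lines using the parallelogram identity and the fact that $||v|^2-|\eta|^2| = |v-\eta|\,|v+\eta|$ together with $|v+\eta|^2 + |v-\eta|^2 = 2(|v|^2+|\eta|^2)$.
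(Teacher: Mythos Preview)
Your proposal is correct and matches the paper's approach: the paper simply states the lemma as an immediate consequence of \eqref{newK3}--\eqref{newK5} without writing out a proof, and what you have written is exactly the natural unpacking of that claim. One small correction: for vectors $v,\eta\in\R^3$ the identity you quote should be the inequality $||v|^2-|\eta|^2|=|(v-\eta)\cdot(v+\eta)|\le |v-\eta|\,|v+\eta|$, but this is precisely the direction you need, so the exponent comparison $\tfrac14(|v|^2+|\eta|^2)\ge \tfrac{|v-\eta|^2}{8}+\tfrac{||v|^2-|\eta|^2|^2}{8|v-\eta|^2}$ goes through as stated.
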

It is easy to show that
\begin{align}\label{Eq2.47}
	\partial_{x} (\mathrm{K}^{\pm}\mathbf{g})=\int_{\mathbb{R}^3}\mathtt{k}^{(2)}(v,\eta)	\partial_{x}g_{\pm}(\eta)\,\mathrm{d}\eta+\int_{\mathbb{R}^3}\mathtt{k}^{(1)}(v,\eta)	\partial_{x}g_{\mp}(\eta)\,\mathrm{d}\eta,
\end{align}
\begin{align}\label{Eq2.50}
	\partial_{v}(\mathrm{K}^{\pm}\mathbf{g})=&\int_{\mathbb{R}^3}\tilde{\mathtt{k}}^{(2)}(v,\eta)g_{\pm}(\eta)\,\mathrm{d}\eta+\int_{\mathbb{R}^3}\mathtt{k}^{(2)}(v,\eta)\partial_{v}g_{\pm}(\eta)\,\mathrm{d}\eta \nonumber\\
	&+\int_{\mathbb{R}^3}\tilde{\mathtt{k}}^{(1)}(v,\eta)g_{\mp}(\eta)\,\mathrm{d}\eta
	+\int_{\mathbb{R}^3}\mathtt{k}^{(1)}(v,\eta)\partial_{v}g_{\mp}(\eta)\,\mathrm{d}\eta,
\end{align}
where $\tilde{\mathtt{k}}^{(i)}(v,\eta)$ ($i=1,2$) satisfies for all $0<\v<1$
\begin{align}\label{Eq2.51}
	0\leq |\tilde{\mathtt{k}}^{(i)}(v,\eta)|\leq C\Big(\f{1}{|v-\eta|}+|v-\eta|\Big)e^{-\f{(1-\v)|v-\eta|^2}{8}}e^{-\f{(1-\v)||v|^2-|\eta|^2|^2}{8|v-\eta|^2}}, \quad i=1,2.
\end{align}
Moreover, using an argument similar to the one in  \cite{Guo10ARMA},  it holds for $\b \geq 0$ and $\sigma_0<\frac{1}{16}$ that
\begin{align}\label{Eq2.41}
	\int_{\mathbb{R}^3}\Big|\mathtt{k}^{(i)}(v,\eta)\f{w_{\b}(t, v)}{w_{\b}(t, \eta)}\Big|\mathrm{d}\eta+\int_{\mathbb{R}^3}\Big|\tilde{\mathtt{k}}^{(i)}(v,\eta)\f{w_{\b}(t, v)}{w_{\b}(t, \eta)}\Big| \mathrm{d}\eta\leq  C(1+|v|)^{-1} \quad i=1,2.
\end{align}

For any scalar functions $\mathrm{g}(v)$ and $\mathrm{f}(v)$, we define the nonlinear operator $\mathrm{\Gamma}$:
\begin{align*}
	\mathrm{\Gamma} (\mathrm{f},\mathrm{g}):=\frac{1}{\sqrt{\mu}}Q(\sqrt{\mu}\mathrm{f}, \sqrt{\mu} \mathrm{g})&=\frac{1}{\sqrt{\mu}}\left\{Q_{\text{gain}}(\sqrt{\mu} \mathrm{f},\sqrt{\mu} \mathrm{g})-Q_{\text{loss}}(\sqrt{\mu} \mathrm{f},\sqrt{\mu} \mathrm{g})\right\} \nonumber\\
	&=:\mathrm{\Gamma}_{\text{gain}} (\mathrm{f},\mathrm{g})-\mathrm{\Gamma}_{\text{loss}} (\mathrm{f},\mathrm{g}).
\end{align*}

The next lemma is very powerful and important.  The reader can refer to \cite{Duan19Adv} for a rigorous proof. 
\begin{lemma}\label{Lem2.11}
There is a generic constant $C>0$ such that 
\begin{align}\label{Eq2.42}
	\Big|w_{\b}\mathrm{\Gamma}_{\text{gain}}(\mathrm{f},\mathrm{g})\Big|+\Big|w_{\b}\mathrm{\Gamma}_{\text{gain}}(\mathrm{g},\mathrm{f})\Big|  \leq \frac{C \|w_{\b}\mathrm{f}\|_{L_{v}^\infty}}{1+|v|} \Big(\int_{\mathbb{R}^3}(1+|\eta|)^{4}|e^{\f{\sigma_0}{1+t}|\eta|^2}\mathrm{g}(\eta)|^2\,\mathrm{d}\eta\Big)^{\f12},
\end{align}
for all $v\in \R^3$.
In particular, for $\b\geq 4$, one has
\begin{align}\label{Eq2.43}
	\left|w_{\b}\mathrm{\Gamma}_{\text{gain}}(\mathrm{f},\mathrm{g})\right| +\left|w_{\b}\mathrm{\Gamma}_{\text{gain}}(\mathrm{g},\mathrm{f})\right|\leq C \|w_{\b}\mathrm{f}\|_{L^\infty}\|w_{\b}\mathrm{g}\|_{L^\infty},
\end{align}
for all $v\in \R^3$.	
\end{lemma}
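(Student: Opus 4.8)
The statement to prove is Lemma~\ref{Lem2.11}, the weighted gain-term estimate for the nonlinear collision operator $\mathrm{\Gamma}_{\text{gain}}$.

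The plan is to reduce the bilinear estimate to a pointwise bound on the gain kernel and then apply Cauchy--Schwarz in the $u$-integration. First I would write out $w_\b\mathrm{\Gamma}_{\text{gain}}(\mathrm{f},\mathrm{g})(v)$ explicitly as $w_\b(t,v)\mu(v)^{-1/2}\iint B(v-u,\t)\sqrt{\mu(v')\mu(u')}\,\mathrm{f}(v')\mathrm{g}(u')\,\mathrm{d}\omega\,\mathrm{d}u$, and insert the factors $w_\b(t,v')/w_\b(t,v')$ and $w_\b(t,u')/w_\b(t,u')$ so that the integrand becomes $w_\b(t,v')\mathrm{f}(v')\cdot w_\b(t,u')^{-1}\cdot\big[w_\b(t,v)\mu(v)^{-1/2}\sqrt{\mu(v')\mu(u')}\,w_\b(t,v')^{-1}\big]\cdot w_\b(t,u')\mathrm{g}(u')$. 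Since $\|w_\b\mathrm{f}\|_{L^\infty_v}$ pulls out, it remains to bound $w_\b(t,v)\mu(v)^{-1/2}\iint B(v-u,\t)\sqrt{\mu(v')\mu(u')}\,w_\b(t,v')^{-1}\,|w_\b(t,u')\mathrm{g}(u')|\,\mathrm{d}\omega\,\mathrm{d}u$. Using the energy conservation $|v|^2+|u|^2=|v'|^2+|u'|^2$ together with the explicit form \eqref{Eq1.22} of $w_\b$, and the elementary inequality $|v|^2 \le |v'|^2+|u'|^2$, one gets that the bracketed quantity is dominated by $C(1+|v|)^{\b}e^{\frac{\sigma_0}{1+t}|v|^2}\mu(v)^{-1/2}e^{-\frac14(|v'|^2+|u'|^2)}(1+|v'|)^{-\b}e^{-\frac{\sigma_0}{1+t}|v'|^2}$; choosing $\sigma_0<\tfrac1{16}$ guarantees the Gaussian weights absorb all the polynomial and exponential growth in $v$, leaving a factor that is bounded and, after the standard hard-potential Grad-type manipulation, produces the gain $\frac{1}{1+|v|}$.

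Next I would perform the $\omega,u$ integration. After the pointwise bound, the remaining integral has the schematic form $\frac{C}{1+|v|}\iint |v-u|^{\gamma}e^{-c|u|^2}\,(1+|v'|)^{-\b}e^{\frac{\sigma_0}{1+t}|u'|^2}|\mathrm{g}(u')|\,\mathrm{d}\omega\,\mathrm{d}u$, where the change of variables to the post-collisional velocities and a Cauchy--Schwarz split the $|\mathrm{g}(u')|$ factor off against an $L^2$-norm. Applying Cauchy--Schwarz in $u$ (and $\omega$), one half of the product contributes a finite constant (since $\gamma\le 1$ and the Gaussian decay dominates $|v-u|^{\gamma}$ for $|v|$ bounded, while for large $|v|$ the extra $(1+|v|)^{-1}$ and the weight split handle the growth), and the other half yields $\big(\int_{\mathbb{R}^3}(1+|\eta|)^4|e^{\frac{\sigma_0}{1+t}|\eta|^2}\mathrm{g}(\eta)|^2\,\mathrm{d}\eta\big)^{1/2}$ after relabelling $u'=\eta$ and absorbing the Jacobian, which is unimodular. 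This proves \eqref{Eq2.42}; the symmetric estimate for $\mathrm{\Gamma}_{\text{gain}}(\mathrm{g},\mathrm{f})$ follows by interchanging the roles of $v'$ and $u'$ in the same argument.

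Finally, \eqref{Eq2.43} follows from \eqref{Eq2.42} by pulling $\|w_\b\mathrm{g}\|_{L^\infty}$ out of the $L^2$-integral: $\big(\int_{\mathbb{R}^3}(1+|\eta|)^4|e^{\frac{\sigma_0}{1+t}|\eta|^2}\mathrm{g}(\eta)|^2\,\mathrm{d}\eta\big)^{1/2}\le \|w_{\b}\mathrm{g}\|_{L^\infty}\big(\int_{\mathbb{R}^3}(1+|\eta|)^{4-2\b}\,\mathrm{d}\eta\big)^{1/2}$, and the last integral is finite precisely when $\b\ge 4$ (in fact $2\b-4>3$, i.e.\ $\b>7/2$, suffices, so $\b\ge 4$ is more than enough). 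The main obstacle is the careful bookkeeping in the pointwise kernel bound: one must verify that the choice $\sigma_0<\tfrac1{16}$ really does let the Maxwellian factor $\mu(v)^{-1/2}=(2\pi)^{3/4}e^{|v|^2/4}$ be absorbed by $e^{-\frac14(|v'|^2+|u'|^2)}$ after using energy conservation and $|v|^2\le|v'|^2+|u'|^2$, together with the time-dependent exponential weight, uniformly in $t\ge0$ — this is where the constraint on $\sigma_0$ in \eqref{Eq1.22} is genuinely used, and it is exactly the kind of estimate carried out in \cite{Duan19Adv}, which we invoke.
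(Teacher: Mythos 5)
The paper itself does not prove Lemma \ref{Lem2.11}: it only cites \cite{Duan19Adv}, so there is no in-paper argument to compare against, and your sketch — which also ends by invoking \cite{Duan19Adv} for the hard step — must be judged on its own. As written it has several genuine gaps. First, the worry about absorbing $\mu(v)^{-1/2}$ is moot: by energy conservation $\mu(v)^{-1/2}\sqrt{\mu(v')\mu(u')}=\sqrt{\mu(u)}$ \emph{exactly}, so no condition on $\sigma_0$ enters at this point (the restriction $\sigma_0<\tfrac{1}{16}$ is needed for the $\mathbf{K}$-kernel bounds \eqref{Eq2.41}, not for the gain term). Second — and this is the heart of the lemma — the factor $\frac{1}{1+|v|}$ is asserted rather than derived. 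With the Cauchy--Schwarz splitting you describe, the kernel half is of the form $\big(\iint B(v-u,\theta)\sqrt{\mu(u)}(\cdots)\,\mathrm{d}\omega\,\mathrm{d}u\big)^{1/2}\cong\nu(v)^{1/2}\cong(1+|v|)^{\gamma/2}$, which \emph{grows}; and no pointwise bound can give decay, since e.g.\ $u=0$, $\omega=v/|v|$ yields $v'=0$, $u'=v$ with $\sqrt{\mu(u)}=O(1)$, $b(\theta)=O(1)$ and $w_{\beta}(v)/w_{\beta}(v')=w_{\beta}(v)$. Obtaining $(1+|v|)^{-2}$ for the integrated kernel half requires the Grad-type argument exploiting $b(\theta)\lesssim|\cos\theta|$ and the smallness of the set of $(u,\omega)$ on which $|v'|\ll|v|$ while $|u|$ stays bounded — essentially the computation behind \eqref{Eq3.18} — and this cannot be dismissed as bookkeeping.

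Third, the change of variables for the $u'$-argument is not unimodular: at fixed $\omega$ the map $u\mapsto u'=u-[(u-v)\cdot\omega]\omega$ is a degenerate (rank-two) affine map, and in the $z_{\shortparallel},z_{\perp}$ coordinates one has $\mathrm{d}\omega\,\mathrm{d}u=\frac{2}{|z_{\shortparallel}|^2}\mathrm{d}z_{\shortparallel}\mathrm{d}z_{\perp}$ with $u'=v+z_{\perp}$ ranging over a $2$-plane depending on $\omega$. One must first symmetrize $v'\leftrightarrow u'$ by a rotation (as the paper does in the $H_5$ and $H_{324}$ computations) and then control the resulting $|\eta-v|^{-2}$ singularity. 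Finally, your opening insertion of $w_{\beta}(t,u')/w_{\beta}(t,u')$ would leave $w_{\beta}(\eta)\mathrm{g}(\eta)$ in the $L^2$ factor, i.e.\ a weight $(1+|\eta|^2)^{\beta}$ rather than the stated $(1+|\eta|)^{4}$; to obtain \eqref{Eq2.42} one must pair only $(1+|u'|^2)^{2}e^{\frac{\sigma_0}{1+t}|u'|^2}|\mathrm{g}(u')|$ with the $L^2$ half and absorb the remaining $(1+|u'|^2)^{\frac{\beta}{2}-2}$ into the kernel half, which is exactly where the delicate integration takes place. Your deduction of \eqref{Eq2.43} from \eqref{Eq2.42}, including the observation that $2\beta-4>3$ suffices, is correct.
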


\begin{lemma}\label{Lem2.12}
	There is a generic constant $C>0$ such that 
	\begin{align}\label{Eq2.44}
		\left|w_{\b}(t,v)\mathrm{\Gamma}_{\text{loss}}(\mathrm{f},\mathrm{g})(v)\right| \leq C\nu(v)|w_{\b}(t,v)\mathrm{f}(v)|\cdot\|w_{\b}\mathrm{g}\|_{L^\infty}^{\f{1}{2}}\Big(\int_{\mathbb{R}_{v}^3} |\mathrm{g}(v)|\mathrm{d}v\Big)^{\f{1}{2}},
	\end{align}
	for all $v\in \R^3$.
	In particular,  for $\b\geq 3$,
	\begin{align}\label{Eq2.45}
		\left|w_{\b}\mathrm{\Gamma}_{\text{loss}}(\mathrm{f},\mathrm{g})\right| \leq C\nu(v) \|w_{\b}\mathrm{f}\|_{L^\infty}\|w_{\b}\mathrm{g}\|_{L^\infty},
	\end{align}
	for all $v\in \R^3$. 
\end{lemma}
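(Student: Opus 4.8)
\textbf{Proof strategy for Lemma \ref{Lem2.12}.} The plan is to reduce both bounds to the explicit pointwise formula for $\mathrm{\Gamma}_{\text{loss}}$ together with a Grad-type velocity-moment estimate. Since $Q_{\text{loss}}(G_1,G_2)(v)=G_1(v)\int_{\mathbb{R}^3}\int_{\mathbb{S}^2}B(v-u,\theta)G_2(u)\,\mathrm{d}\omega\,\mathrm{d}u$, the $\tfrac{1}{\sqrt\mu}$ factor in \eqref{Eq1.15} cancels the $\sqrt{\mu(v)}$ in front and one obtains the identity
\begin{equation*}
	\mathrm{\Gamma}_{\text{loss}}(\mathrm{f},\mathrm{g})(v)=\mathrm{f}(v)\int_{\mathbb{R}^3}\int_{\mathbb{S}^2}B(v-u,\theta)\sqrt{\mu(u)}\,\mathrm{g}(u)\,\mathrm{d}\omega\,\mathrm{d}u .
\end{equation*}
Using $B(v-u,\theta)=|v-u|^{\gamma}b(\theta)$ with $\int_{\mathbb{S}^2}b(\theta)\,\mathrm{d}\omega\lesssim1$ and multiplying by the weight, this gives at once
\begin{equation*}
	\big|w_{\beta}(t,v)\mathrm{\Gamma}_{\text{loss}}(\mathrm{f},\mathrm{g})(v)\big|\le C\big|w_{\beta}(t,v)\mathrm{f}(v)\big|\int_{\mathbb{R}^3}|v-u|^{\gamma}\sqrt{\mu(u)}\,|\mathrm{g}(u)|\,\mathrm{d}u .
\end{equation*}

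To extract the $L^\infty$ and $L^1_v$ norms of $\mathrm{g}$ with the right powers, I would split $|\mathrm{g}(u)|=|\mathrm{g}(u)|^{1/2}|\mathrm{g}(u)|^{1/2}$, bound one half by $|\mathrm{g}(u)|^{1/2}\le\|w_{\beta}\mathrm{g}\|_{L^\infty}^{1/2}w_{\beta}(t,u)^{-1/2}$, and apply Cauchy--Schwarz to the rest:
\begin{equation*}
	\int_{\mathbb{R}^3}|v-u|^{\gamma}\sqrt{\mu(u)}\,|\mathrm{g}(u)|\,\mathrm{d}u\le\|w_{\beta}\mathrm{g}\|_{L^\infty}^{1/2}\Big(\int_{\mathbb{R}^3}|v-u|^{2\gamma}\frac{\mu(u)}{w_{\beta}(t,u)}\,\mathrm{d}u\Big)^{1/2}\Big(\int_{\mathbb{R}^3}|\mathrm{g}(u)|\,\mathrm{d}u\Big)^{1/2}.
\end{equation*}
Because $w_{\beta}(t,u)\ge1$ for all $t$ and $u$, the middle factor is dominated, uniformly in $t$, by $\big(\int_{\mathbb{R}^3}|v-u|^{2\gamma}\mu(u)\,\mathrm{d}u\big)^{1/2}$; then the elementary inequality $|v-u|^{2\gamma}\le C(1+|v|)^{2\gamma}(1+|u|)^{2\gamma}$, valid since $0\le\gamma\le1$, together with the Gaussian integrability of $(1+|u|)^{2\gamma}$, yields $\big(\int_{\mathbb{R}^3}|v-u|^{2\gamma}\mu(u)\,\mathrm{d}u\big)^{1/2}\le C(1+|v|)^{\gamma}\cong\nu(v)$ by \eqref{Eq1.10}. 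Combining the three displays gives \eqref{Eq2.44}.

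For the coarser bound \eqref{Eq2.45} it suffices to estimate more crudely: from $|\mathrm{g}(u)|\le\|w_{\beta}\mathrm{g}\|_{L^\infty}w_{\beta}(t,u)^{-1}\le\|w_{\beta}\mathrm{g}\|_{L^\infty}$ and $w_{\beta}(t,u)\ge1$ one gets $\int_{\mathbb{R}^3}|v-u|^{\gamma}\sqrt{\mu(u)}|\mathrm{g}(u)|\,\mathrm{d}u\le\|w_{\beta}\mathrm{g}\|_{L^\infty}\int_{\mathbb{R}^3}|v-u|^{\gamma}\sqrt{\mu(u)}\,\mathrm{d}u\le C\nu(v)\|w_{\beta}\mathrm{g}\|_{L^\infty}$, whence $|w_{\beta}\mathrm{\Gamma}_{\text{loss}}(\mathrm{f},\mathrm{g})|\le C\nu(v)|w_{\beta}\mathrm{f}|\,\|w_{\beta}\mathrm{g}\|_{L^\infty}\le C\nu(v)\|w_{\beta}\mathrm{f}\|_{L^\infty}\|w_{\beta}\mathrm{g}\|_{L^\infty}$; alternatively one may simply insert $\int_{\mathbb{R}^3}|\mathrm{g}|\,\mathrm{d}u\le\|w_{\beta}\mathrm{g}\|_{L^\infty}\int_{\mathbb{R}^3}w_{\beta}(t,u)^{-1}\,\mathrm{d}u$ (finite when $\beta\ge3$ thanks to the extra exponential factor in $w_\beta$) into \eqref{Eq2.44}. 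There is no genuine obstacle here; the only points requiring a little care are keeping the $t$-dependence of $w_{\beta}$ harmless — which works precisely because $w_{\beta}(t,\cdot)\ge1$, so it can always be discarded against the Gaussian $\mu$ or $\sqrt\mu$ — and choosing the Hölder split of $|\mathrm{g}|$ so that $\big(\int_{\mathbb{R}^3}|\mathrm{g}|\,\mathrm{d}u\big)^{1/2}$ appears with exactly the power $1/2$ demanded by \eqref{Eq2.44}.
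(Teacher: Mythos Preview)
Your proof is correct and follows essentially the same route as the paper: write out the explicit formula $\mathrm{\Gamma}_{\text{loss}}(\mathrm{f},\mathrm{g})(v)=\mathrm{f}(v)\int_{\mathbb{R}^3}\int_{\mathbb{S}^2}B(v-u,\theta)\sqrt{\mu(u)}\,\mathrm{g}(u)\,\mathrm{d}\omega\,\mathrm{d}u$, pull out $w_{\beta}(t,v)\mathrm{f}(v)$, and bound the remaining $u$-integral. The paper's proof is extremely terse --- it simply asserts the final inequality --- whereas you have made the Cauchy--Schwarz split and the moment bound $\big(\int|v-u|^{2\gamma}\mu(u)\,\mathrm{d}u\big)^{1/2}\lesssim\nu(v)$ explicit, which is exactly the computation the paper leaves to the reader.
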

\begin{proof}
	It can be calculated directly that
	\begin{align*}
	\left|w_{\b}\mathrm{\Gamma}_{\text{loss}}(\mathrm{f},\mathrm{g})\right| 	&=\Big|\frac{w_{\b}(t,v)}{\sqrt{\mu}(v)}Q_{\text{loss}}(\sqrt{\mu}\mathrm{f},\sqrt{\mu}\mathrm{g})\Big|   \nonumber\\
		&=\Big|w_{\b}(t,v)\mathrm{f}(v)\Big|\cdot\Big|\int_{\mathbb{R}^{3}} \int_{\mathbb{S}^{2}}B(v-u,\t) \sqrt{\mu(u)}\mathrm{g}(u) \mathrm{d} \omega \mathrm{d} u\Big|   \nonumber\\
		&\leq C\nu(v)|w_{\b}(t,v)\mathrm{f}(v)|\cdot\|w_{\b}\mathrm{g}\|_{L^\infty}^{\f{1}{2}}\Big(\int_{\mathbb{R}_{v}^3} |\mathrm{g}(v)|\mathrm{d}v\Big)^{\f{1}{2}},
	\end{align*}
which yields that \eqref{Eq2.44} and \eqref{Eq2.45} hold. This completes the proof.
\end{proof}
With the help of the preceding Lemmas \ref{Lem2.11} and \ref{Lem2.12}, one can obtain that
\begin{corollary}\label{Coro2.13}
	There is a generic constant $C>0$ such that 
	\begin{align}\label{Eq2.46}
		\left|w_{\b}(t, v)\mathrm{\Gamma}^{\pm}(\mathbf{f},\mathbf{f})(v)\right| \leq C\nu(v) \|w_{\b}\mathbf{f}\|_{L^\infty}^{\f{3}{2}} \Big(\int_{\mathbb{R}^3}e^{\f{\sigma_0}{1+t}|\eta|^2}|\mathbf{f}(\eta)|\,\mathrm{d}\eta\Big)^{\f12},
	\end{align}
	for all $v\in \R^3$ and $\b\geq 4$.
\end{corollary}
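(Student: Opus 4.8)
The plan is to decompose $\mathrm{\Gamma}^{\pm}(\mathbf{f},\mathbf{f})$ into its gain and loss parts and estimate each of them by the two preceding lemmas. By \eqref{Eq1.15},
\[
\mathrm{\Gamma}^{\pm}(\mathbf{f},\mathbf{f})=\frac{1}{\sqrt\mu}Q\big(\sqrt\mu\, f_{\pm},\sqrt\mu\,(f_{+}+f_{-})\big)=\mathrm{\Gamma}_{\text{gain}}(f_{\pm},f_{+}+f_{-})-\mathrm{\Gamma}_{\text{loss}}(f_{\pm},f_{+}+f_{-}),
\]
so it suffices to bound the scalar quantities $w_{\b}\mathrm{\Gamma}_{\text{gain}}(f_{\pm},f_{+}+f_{-})$ and $w_{\b}\mathrm{\Gamma}_{\text{loss}}(f_{\pm},f_{+}+f_{-})$ and then sum over the two species, using $\|w_{\b}f_{\pm}\|_{L^\infty}\le\|w_{\b}\mathbf{f}\|_{L^\infty}$ and $|\mathbf{f}|\cong|f_{+}|+|f_{-}|$.

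For the gain part, I would apply \eqref{Eq2.42} of Lemma \ref{Lem2.11} with $\mathrm{f}=f_{\pm}$ and $\mathrm{g}=f_{+}+f_{-}$, which produces the weighted $L^2_v$ norm $\big(\int_{\mathbb{R}^3}(1+|\eta|)^{4}|e^{\frac{\sigma_0}{1+t}|\eta|^2}(f_{+}+f_{-})(\eta)|^2\,\mathrm{d}\eta\big)^{1/2}$ multiplied by $\tfrac{C}{1+|v|}\|w_{\b}f_{\pm}\|_{L^\infty}$. The key step is an $L^2\le (L^\infty)^{1/2}(L^1)^{1/2}$ interpolation: since $\b\ge 4$ one has $(1+|\eta|)^{4}e^{\frac{\sigma_0}{1+t}|\eta|^2}\le (1+|\eta|^2)^{\b/2}e^{\frac{\sigma_0}{1+t}|\eta|^2}=w_{\b}(t,\eta)$, hence
\[
\int_{\mathbb{R}^3}(1+|\eta|)^{4}e^{\frac{2\sigma_0}{1+t}|\eta|^2}|(f_{+}+f_{-})(\eta)|^2\,\mathrm{d}\eta\lesssim \|w_{\b}\mathbf{f}\|_{L^\infty}\int_{\mathbb{R}^3}e^{\frac{\sigma_0}{1+t}|\eta|^2}|\mathbf{f}(\eta)|\,\mathrm{d}\eta .
\]
Combining this with $\tfrac{1}{1+|v|}\lesssim\nu(v)$ (valid since $\nu(v)\cong(1+|v|)^{\gamma}$ is bounded below) gives the claimed bound for the gain contribution.

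For the loss part, I would invoke \eqref{Eq2.44} of Lemma \ref{Lem2.12}, again with $\mathrm{f}=f_{\pm}$, $\mathrm{g}=f_{+}+f_{-}$, giving
\[
|w_{\b}\mathrm{\Gamma}_{\text{loss}}(f_{\pm},f_{+}+f_{-})|\le C\nu(v)|w_{\b}f_{\pm}(v)|\cdot\|w_{\b}(f_{+}+f_{-})\|_{L^\infty}^{1/2}\Big(\int_{\mathbb{R}^3}|(f_{+}+f_{-})(v)|\,\mathrm{d}v\Big)^{1/2},
\]
after which I bound $|w_{\b}f_{\pm}(v)|\le\|w_{\b}\mathbf{f}\|_{L^\infty}$, $\|w_{\b}(f_{+}+f_{-})\|_{L^\infty}\lesssim\|w_{\b}\mathbf{f}\|_{L^\infty}$, and $\int_{\mathbb{R}^3}|(f_{+}+f_{-})(v)|\,\mathrm{d}v\le\int_{\mathbb{R}^3}|\mathbf{f}(v)|\,\mathrm{d}v\le\int_{\mathbb{R}^3}e^{\frac{\sigma_0}{1+t}|v|^2}|\mathbf{f}(v)|\,\mathrm{d}v$ since $e^{\frac{\sigma_0}{1+t}|v|^2}\ge1$. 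Adding the gain and loss estimates and summing over $\pm$ yields \eqref{Eq2.46}. This corollary is essentially bookkeeping on top of Lemmas \ref{Lem2.11} and \ref{Lem2.12}, so no genuine difficulty arises; the only point meriting care is matching the polynomial-weight exponents in the $L^2\le (L^\infty)^{1/2}(L^1)^{1/2}$ step, which is exactly where the hypothesis $\b\ge4$ is used.
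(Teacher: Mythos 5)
Your proposal is correct and follows exactly the route the paper intends: the corollary is stated as a direct consequence of Lemmas \ref{Lem2.11} and \ref{Lem2.12}, obtained by splitting $\mathrm{\Gamma}^{\pm}(\mathbf{f},\mathbf{f})=\mathrm{\Gamma}_{\text{gain}}(f_{\pm},f_{+}+f_{-})-\mathrm{\Gamma}_{\text{loss}}(f_{\pm},f_{+}+f_{-})$, using \eqref{Eq2.42} with the $L^2\le (L^\infty)^{1/2}(L^1)^{1/2}$ interpolation (where $\b\ge4$ absorbs the weight $(1+|\eta|)^4$ into $w_\b$, up to a constant) and $1\lesssim\nu(v)$ for the gain term, and \eqref{Eq2.44} for the loss term. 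No gaps.
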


Using Lemmas \ref{Lem2.11} and \ref{Lem2.12}, we will prove the following two key lemmas, which play an important role in the subsequent proof.
\begin{lemma}\label{Lem2.14}
	There is a generic constant $C>0$ such that, for $4\leq\b_1\leq\b -1$, 
\begin{align}
\left|\partial_{x}\left(w_{\b_1}\mathrm{\Gamma}_{\text{gain}}(\mathrm{f},\mathrm{g})\right)\right|&+\left|\partial_{x}\left(w_{\b_1}\mathrm{\Gamma}_{\text{gain}}(\mathrm{g},\mathrm{f})\right)\right| \leq C\|w_{\b_1}\partial_{x}\mathrm{f}\|_{L^\infty}\|w_{\b_1}\mathrm{g}\|_{L^\infty},    \label{new2.1}    \\
	\left|\partial_{x}\left(w_{\b_1}\mathrm{\Gamma}_{\text{loss}}(\mathrm{f},\mathrm{g})\right)\right|&  
\leq C\nu(v)\|w_{\b_1}\partial_{x}\mathrm{f}\|_{L^\infty}\|w_{\b_1}\mathrm{g}\|_{L^\infty}^{\f{1}{2}}
\Big(\int_{\mathbb{R}_{v}^3} |\mathrm{g}(v)|\mathrm{d}v\Big)^{\f{1}{2}}\nonumber\\
&\quad+ C\|w_{\b}\mathrm{f}\|_{L^\infty}\|w_{\b_1}\partial_x\mathrm{g}\|_{L^\infty}.   \label{new2.3}  
\end{align}
\end{lemma}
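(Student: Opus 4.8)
The plan is to differentiate the explicit integral representations of $\mathrm{\Gamma}_{\text{gain}}$ and $\mathrm{\Gamma}_{\text{loss}}$ with respect to $x$ and then reduce everything to the already-established pointwise bounds of Lemmas \ref{Lem2.11} and \ref{Lem2.12}. The crucial structural observation is that the collision kernel $B(v-u,\t)$, the Maxwellians $\mu(u),\mu(v'),\mu(u')$, and the time-velocity weight $w_{\b_1}(t,v)$ are all \emph{independent} of $x$; the $x$-dependence sits entirely inside the arguments $\mathrm{f}(\cdot)$ and $\mathrm{g}(\cdot)$ of the functions being acted on. Hence $\partial_x$ simply commutes through the collision operator and lands on $\mathrm{f}$ or $\mathrm{g}$ via the product rule. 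Concretely, $\partial_x\big(w_{\b_1}\mathrm{\Gamma}_{\text{gain}}(\mathrm{f},\mathrm{g})\big) = w_{\b_1}\mathrm{\Gamma}_{\text{gain}}(\partial_x\mathrm{f},\mathrm{g}) + w_{\b_1}\mathrm{\Gamma}_{\text{gain}}(\mathrm{f},\partial_x\mathrm{g})$, and symmetrically with the roles of $\mathrm{f}$, $\mathrm{g}$ swapped in the second summand.

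For \eqref{new2.1}: apply \eqref{Eq2.43} of Lemma \ref{Lem2.11} to each of the two terms after the product rule. The first term is bounded by $C\|w_{\b_1}\partial_x\mathrm{f}\|_{L^\infty}\|w_{\b_1}\mathrm{g}\|_{L^\infty}$ directly. For the second term $w_{\b_1}\mathrm{\Gamma}_{\text{gain}}(\mathrm{f},\partial_x\mathrm{g})$, I would use the non-symmetric refined bound \eqref{Eq2.42}: it gives $\tfrac{C\|w_{\b_1}\mathrm{f}\|_{L^\infty}}{1+|v|}\big(\int (1+|\eta|)^4 |e^{\frac{\sigma_0}{1+t}|\eta|^2}\partial_x\mathrm{g}(\eta)|^2 d\eta\big)^{1/2}$; pulling $w_{\b_1}(t,\eta) = (1+|\eta|^2)^{\b_1/2} e^{\frac{\sigma_0}{1+t}|\eta|^2}$ out of the integral (legitimate since $\b_1\geq 4$ so $(1+|\eta|)^4 \leq (1+|\eta|^2)^{\b_1/2}$ up to a constant, and the Gaussian $e^{-\sigma_0|\eta|^2/(1+t)}$ absorbing the remaining weight is bounded) bounds it by $C\|w_{\b_1}\mathrm{f}\|_{L^\infty}\|w_{\b_1}\partial_x\mathrm{g}\|_{L^\infty}$. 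Since $\mathrm{\Gamma}_{\text{gain}}(\mathrm{f},\mathrm{g})$ and $\mathrm{\Gamma}_{\text{gain}}(\mathrm{g},\mathrm{f})$ only differ by which slot is which, the same estimate with $\mathrm{f}\leftrightarrow\mathrm{g}$ handles the second quantity in \eqref{new2.1}; combining and relabelling (using $\|w_{\b_1}\mathrm{g}\|_{L^\infty}\leq\|w_{\b_1}\partial_x\mathrm{g}\|_{L^\infty}$-type absorption is not even needed here, one just keeps the symmetric form) yields the stated bound $C\|w_{\b_1}\partial_x\mathrm{f}\|_{L^\infty}\|w_{\b_1}\mathrm{g}\|_{L^\infty}$ after noting the hypothesis places the derivative on $\mathrm{f}$.

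For \eqref{new2.3}: from the explicit loss term $\mathrm{\Gamma}_{\text{loss}}(\mathrm{f},\mathrm{g})(v) = \mathrm{f}(v)\int_{\mathbb{R}^3}\int_{\mathbb{S}^2} B(v-u,\t)\sqrt{\mu(u)}\,\mathrm{g}(u)\,d\omega\,du$ (as displayed in the proof of Lemma \ref{Lem2.12}), the product rule gives two pieces: $\partial_x\mathrm{f}(v)$ times the same $\mathrm{g}$-integral, and $\mathrm{f}(v)$ times the integral against $\partial_x\mathrm{g}(u)$. The first piece is estimated exactly as in \eqref{Eq2.44}, producing $C\nu(v)\|w_{\b_1}\partial_x\mathrm{f}\|_{L^\infty}\|w_{\b_1}\mathrm{g}\|_{L^\infty}^{1/2}\big(\int|\mathrm{g}|dv\big)^{1/2}$. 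The second piece, $w_{\b_1}(t,v)\mathrm{f}(v)\int B(v-u,\t)\sqrt{\mu(u)}\partial_x\mathrm{g}(u)\,d\omega\,du$, is bounded by writing $\sqrt{\mu(u)} = \mu(u)^{1/4}\cdot\mu(u)^{1/4}$: one $\mu^{1/4}$ factor together with $|v-u|^\gamma b(\t)$ integrates to a constant times $\nu(v)$-type growth that is absorbed by going from $w_{\b}$ (note the hypothesis $\b_1\leq\b-1$, which provides a spare power of $(1+|v|)$ to kill the $\nu(v)\cong(1+|v|)^\gamma$ loss) while the other $\mu^{1/4}$ dominates $1/w_{\b_1}(t,u)$, yielding $C\|w_{\b}\mathrm{f}\|_{L^\infty}\|w_{\b_1}\partial_x\mathrm{g}\|_{L^\infty}$. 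Adding the two pieces gives \eqref{new2.3}.

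\textbf{Main obstacle.} The routine part is the commutation of $\partial_x$ through the $x$-independent kernel; the only genuinely delicate point is the bookkeeping of velocity weights in the term where $\partial_x$ falls on the \emph{second} argument $\mathrm{g}$ — there the weight $w_{\b_1}(t,v)$ outside does not match the weight $w_{\b_1}(t,u)$ (or $w_{\b_1}(t,\eta)$) needed to form $\|w_{\b_1}\partial_x\mathrm{g}\|_{L^\infty}$, and one must exploit the Gaussian decay of $\mu(u)^{1/4}$ (in the loss case) or the explicit $(1+|\eta|)^4$-weighted $L^2$ bound of \eqref{Eq2.42} (in the gain case) to transfer weights, which is precisely why the hypothesis $4\leq\b_1\leq\b-1$ appears. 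I expect this weight-transfer step, and in particular checking that the constraint $\b_1\leq\b-1$ supplies exactly the extra decay to control the $\nu(v)$ factor without losing the $L^\infty$ structure, to be where the care is needed; everything else follows by direct invocation of Lemmas \ref{Lem2.11} and \ref{Lem2.12}.
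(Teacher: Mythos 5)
Your proposal is correct and follows essentially the same route as the paper: commute $\partial_x$ through the $x$-independent kernel via the product rule, invoke \eqref{Eq2.42}--\eqref{Eq2.43} (with the rotation symmetry) for the gain part and the explicit loss-term representation with the $\sqrt{\mu}$/weight-transfer argument for the loss part. The only caveat — that the term $w_{\b_1}\mathrm{\Gamma}_{\text{gain}}(\mathrm{f},\partial_x\mathrm{g})$ genuinely produces $\|w_{\b_1}\mathrm{f}\|_{L^\infty}\|w_{\b_1}\partial_x\mathrm{g}\|_{L^\infty}$ rather than the single product displayed in \eqref{new2.1} — is a feature of the lemma's statement (which is applied with $\mathrm{f}=\mathrm{g}$ in Corollary \ref{Coro2.16}) and is glossed over identically in the paper's own proof.
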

\begin{proof}
Notice that
\begin{align}\label{Eq2.48}
		\partial_{x}\left(w_{\b_1}\mathrm{\Gamma}_{\text{gain}}(\mathrm{f},\mathrm{g})\right)=w_{\b_1}\mathrm{\Gamma}_{\text{gain}}(\partial_{x}\mathrm{f},\mathrm{g})+w_{\b_1}\mathrm{\Gamma}_{\text{gain}}(\mathrm{f},\partial_{x}\mathrm{g}).
		\end{align}
	Thus, by the rotation and \eqref{Eq2.42}, \eqref{new2.1} immediately holds.

	For the loss term, it is obvious that
\begin{align}\label{new2.5}
	\partial_{x}\left(w_{\b_1}\mathrm{\Gamma}_{\text{loss}}(\mathrm{f},\mathrm{g})\right)=w_{\b_1}\mathrm{\Gamma}_{\text{loss}}(\partial_{x}\mathrm{f},\mathrm{g})+w_{\b_1}\mathrm{\Gamma}_{\text{gain}}(\mathrm{f},\partial_{x}\mathrm{g}).
\end{align}
Use \eqref{Eq2.44}, one has
\begin{align}\label{new6.8}
	\left|w_{\b_1}\mathrm{\Gamma}_{\text{loss}}(\partial_{x}\mathrm{f},\mathrm{g})\right|
	\leq \nu(v)\|w_{\b_1}\partial_{x}\mathrm{f}\|_{L^\infty}\|w_{\b_1}\mathrm{g}\|_{L^\infty}^{\f{1}{2}}\Big(\int_{\mathbb{R}_{v}^3} |\mathrm{g}(v)|\mathrm{d}v\Big)^{\f{1}{2}},
\end{align}
and
\begin{align}\label{new6.9}
	\Big|w_{\b_1}\mathrm{\Gamma}_{\text{loss}}(\mathrm{f},\partial_{x}\mathrm{g})\Big|
	&\leq \nu(v)|w_{\b_1}(t,v)\mathrm{f}(v)|\cdot\Big|\int_{\mathbb{R}^{3}} \int_{\mathbb{S}^{2}}B(v-u,\t) \sqrt{\mu(u)}\partial_x\mathrm{g}(u) \mathrm{d} \omega \mathrm{d} u\Big|   \nonumber\\
	&\leq \|w_{\b}\mathrm{f}\|_{L^\infty}\|w_{\b_1}\partial_x\mathrm{g}\|_{L^\infty}.
\end{align}
Thus we conclude \eqref{new2.3}  from \eqref{new2.5} $-$ \eqref{new6.9}. 
Therefore the proof of Lemma \ref{Lem2.14} is completed.
\end{proof}

\begin{lemma}\label{Lem2.15}
	There is a generic constant $C>0$ such that, for $4\leq\b_1\leq\b -2$, 
	\begin{align}
	&\left|\partial_{v}\left(w_{\b_1}\mathrm{\Gamma}_{\text{gain}}(\mathrm{f},\mathrm{g})\right)\right|+\left|\partial_{v}\left(w_{\b_1}\mathrm{\Gamma}_{\text{gain}}(\mathrm{g},\mathrm{f})\right)\right|  
	\leq C\|\partial_{v}(w_{\b_1}\mathrm{f})\|_{L^\infty}\|w_{\b}\mathrm{g}\|_{L^\infty}   \nonumber\\
	&+C\|\partial_{v}(w_{\b_1}\mathrm{g})\|_{L^\infty}\|w_{\b}\mathrm{f}\|_{L^\infty} 
	+C\|w_{\b}\mathrm{f}\|_{L^\infty}\|w_{\b}\mathrm{g}\|_{L^\infty},  \label{new2.2}    \\
		&\left|\partial_{v}\left(w_{\b_1}\mathrm{\Gamma}_{\text{loss}}(\mathrm{f},\mathrm{g})\right)\right| 
		\leq C\nu(v)\|\partial_{v}(w_{\b_1}\mathrm{f})\|_{L^\infty}\|w_{\b_1}\mathrm{g}\|_{L^\infty}^{\f{1}{2}}\Big(\int_{\mathbb{R}_{v}^3} |\mathrm{g}(v)w_{\b_1}(t,v)|\mathrm{d}v\Big)^{\f{1}{2}} \nonumber\\
		&+C\|\partial_{v}(w_{\b_1}\mathrm{g})\|_{L^\infty}\|w_{\b}\mathrm{f}\|_{L^\infty}
		+C\|w_{\b}\mathrm{f}\|_{L^\infty}\|w_{\b}\mathrm{g}\|_{L^\infty}.   \label{new2.4}
	\end{align}
\end{lemma}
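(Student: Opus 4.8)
\emph{Plan.} The plan is to deduce both bounds from the bilinear estimates of Lemmas~\ref{Lem2.11} and~\ref{Lem2.12}, after first rewriting $\partial_v\mathrm{\Gamma}_{\text{gain}}$ and $\partial_v\mathrm{\Gamma}_{\text{loss}}$ in a form in which no $v$-derivative ever falls on the angular kernel $b(\t)$; it then suffices to bound each $\partial_{v_i}$, the rest being pure weight bookkeeping. For the gain term, using the energy identity $\mu(v')\mu(u')=\mu(v)\mu(u)$ one first rewrites
\[
\mathrm{\Gamma}_{\text{gain}}(\mathrm{f},\mathrm{g})(v)=\int_{\mathbb{R}^{3}}\int_{\mathbb{S}^{2}}B(v-u,\t)\sqrt{\mu(u)}\,\mathrm{f}(v')\mathrm{g}(u')\,\mathrm{d}\omega\,\mathrm{d}u .
\]
The decisive step is the substitution $u\mapsto V:=v-u$ (a translation, of unit Jacobian): in these variables $|v-u|=|V|$ and the collision angle $\t$ (the angle between $V$ and $\omega$) no longer depend on $v$, while $v'=v-(V\cdot\omega)\omega$ and $u'=v-\bigl(V-(V\cdot\omega)\omega\bigr)$ depend on $v$ only through the explicit leading $v$, so that $\partial_v v'=\partial_v u'=Id$. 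Hence the only surviving $v$-dependence is through $\sqrt{\mu(v-V)}=\sqrt{\mu(u)}$, and differentiating and undoing the substitution yields
\begin{align*}
\partial_{v_i}\mathrm{\Gamma}_{\text{gain}}(\mathrm{f},\mathrm{g})
&=\mathrm{\Gamma}_{\text{gain}}(\partial_{v_i}\mathrm{f},\mathrm{g})+\mathrm{\Gamma}_{\text{gain}}(\mathrm{f},\partial_{v_i}\mathrm{g})\\
&\quad-\tfrac12\int_{\mathbb{R}^{3}}\int_{\mathbb{S}^{2}}B(v-u,\t)\,u_i\sqrt{\mu(u)}\,\mathrm{f}(v')\mathrm{g}(u')\,\mathrm{d}\omega\,\mathrm{d}u .
\end{align*}
Splitting $\partial_{v_i}\bigl(w_{\b_1}\mathrm{\Gamma}_{\text{gain}}(\mathrm{f},\mathrm{g})\bigr)=(\partial_{v_i}w_{\b_1})\mathrm{\Gamma}_{\text{gain}}(\mathrm{f},\mathrm{g})+w_{\b_1}\,\partial_{v_i}\mathrm{\Gamma}_{\text{gain}}(\mathrm{f},\mathrm{g})$ and applying \eqref{Eq2.42} to each resulting piece (the extra $u_i\sqrt{\mu(u)}$ is still a Gaussian weight, so \eqref{Eq2.42} applies verbatim to the last integral) reduces \eqref{new2.2} to controlling $w_{\b_1}\mathrm{\Gamma}_{\text{gain}}(\partial_v\mathrm{f},\mathrm{g})$ and $w_{\b_1}\mathrm{\Gamma}_{\text{gain}}(\mathrm{f},\partial_v\mathrm{g})$; using $w_{\b_1}\partial_v\mathrm{f}=\partial_v(w_{\b_1}\mathrm{f})-(\partial_v w_{\b_1})\mathrm{f}$ with $|\partial_v w_{\b_1}|\le C(1+|v|)w_{\b_1}\le Cw_{\b_1+1}$ one trades $\|w_{\b_1}\partial_v\mathrm{f}\|_{L^\infty}$ for $\|\partial_v(w_{\b_1}\mathrm{f})\|_{L^\infty}+C\|w_{\b}\mathrm{f}\|_{L^\infty}$, the factor $(1+|\eta|)^{4}$ in \eqref{Eq2.42} being absorbed since $\int_{\mathbb{R}^3}(1+|\eta|)^{4}(1+|\eta|^{2})^{-\b_1}\,\mathrm{d}\eta<\infty$ for $\b_1\ge4$; the bound $\b_1\le\b-2$ supplies the remaining room $w_{\b_1+1}\le w_\b$. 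The companion bound for $\mathrm{\Gamma}_{\text{gain}}(\mathrm{g},\mathrm{f})$ follows in the same way.

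\emph{The loss term.} Here $\mathrm{\Gamma}_{\text{loss}}(\mathrm{f},\mathrm{g})(v)=\mathrm{f}(v)\,\nu_{\mathrm{g}}(v)$ with $\nu_{\mathrm{g}}(v):=\int_{\mathbb{R}^{3}}\int_{\mathbb{S}^{2}}B(v-u,\t)\sqrt{\mu(u)}\,\mathrm{g}(u)\,\mathrm{d}\omega\,\mathrm{d}u$, so
\[
\partial_{v_i}\bigl(w_{\b_1}\mathrm{\Gamma}_{\text{loss}}(\mathrm{f},\mathrm{g})\bigr)=\bigl(\partial_{v_i}(w_{\b_1}\mathrm{f})\bigr)\nu_{\mathrm{g}}+\bigl(w_{\b_1}\mathrm{f}\bigr)\,\partial_{v_i}\nu_{\mathrm{g}} .
\]
The first summand is handled by the pointwise bound $|\nu_{\mathrm{g}}(v)|\le C\nu(v)\|w_{\b_1}\mathrm{g}\|_{L^\infty}^{1/2}\bigl(\int_{\mathbb{R}^3}|w_{\b_1}\mathrm{g}|\,\mathrm{d}v\bigr)^{1/2}$, which is exactly the estimate established in the proof of Lemma~\ref{Lem2.12}, and produces the first term of \eqref{new2.4}. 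For the second summand I would once more substitute $u\mapsto v-u$, so that $\t$ becomes $v$-free, obtaining
\[
\partial_{v_i}\nu_{\mathrm{g}}(v)=\int_{\mathbb{R}^{3}}\int_{\mathbb{S}^{2}}B(v-u,\t)\sqrt{\mu(u)}\Bigl((\partial_{v_i}\mathrm{g})(u)-\tfrac{u_i}{2}\mathrm{g}(u)\Bigr)\,\mathrm{d}\omega\,\mathrm{d}u ,
\]
whence $|\partial_{v_i}\nu_{\mathrm{g}}(v)|\le C\nu(v)\bigl(\|w_{\b_1}\partial_v\mathrm{g}\|_{L^\infty}+\|w_\b\mathrm{g}\|_{L^\infty}\bigr)$; multiplying by $|w_{\b_1}(t,v)\mathrm{f}(v)|$ and absorbing the prefactor via $\nu(v)w_{\b_1}\le Cw_{\b_1+\g}\le Cw_\b$ (valid since $\g\le1$ and $\b_1\le\b-2$), together with $\|w_{\b_1}\partial_v\mathrm{g}\|_{L^\infty}\le\|\partial_v(w_{\b_1}\mathrm{g})\|_{L^\infty}+C\|w_\b\mathrm{g}\|_{L^\infty}$, yields the remaining two terms of \eqref{new2.4}.

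\emph{Main obstacle.} I expect the heart of the argument to be the gain-term change of variables: recognizing that passing to the relative velocity $V=v-u$ simultaneously removes the $v$-dependence from $|v-u|^{\g}b(\t)$ and trivializes the $v$-Jacobians of the post-collisional velocities $v',u'$, so that $\partial_v$ never differentiates $b(\t)$ (which would otherwise produce an uncontrolled angular singularity). Once this identity is available, the rest is routine: it amounts to checking that the weight budget $4\le\b_1\le\b-2$ exactly absorbs the algebraic losses coming from $\partial_v w_{\b_1}$, from the extra factor $u_i$, and from $\nu(v)\cong(1+|v|)^{\g}$, and then quoting Lemmas~\ref{Lem2.11} and~\ref{Lem2.12}.
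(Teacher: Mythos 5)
Your proposal is correct and follows essentially the same route as the paper: the product rule on $w_{\b_1}$, the translation to relative-velocity variables $z=u-v$ (so that $v'=v+z_{\shortparallel}$, $u'=v+z_{\perp}$ and $\partial_v$ never hits $B(z,\t)$), the resulting identity $\partial_v\mathrm{\Gamma}_{\text{gain}}(\mathrm{f},\mathrm{g})=\mathrm{\Gamma}_{\text{gain}}(\partial_v\mathrm{f},\mathrm{g})+\mathrm{\Gamma}_{\text{gain}}(\mathrm{f},\partial_v\mathrm{g})+(\text{term with }\partial_v\sqrt{\mu})$, and the final appeal to Lemmas~\ref{Lem2.11}--\ref{Lem2.12} together with $\|w_{\b_1}\partial_v\mathrm{f}\|_{L^\infty}\le C(\|\partial_v(w_{\b_1}\mathrm{f})\|_{L^\infty}+\|w_{\b}\mathrm{f}\|_{L^\infty})$. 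The weight bookkeeping under $4\le\b_1\le\b-2$ is also as in the paper, so no further comment is needed.
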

\begin{proof}
	Noting $\partial_{v}\left(w_{\b_1}\mathrm{\Gamma}_{\text{gain}}(\mathrm{f},\mathrm{g})\right)=\partial_vw_{\b_1}\mathrm{\Gamma}_{\text{gain}}(\mathrm{f},\mathrm{g})+w_{\b_1}\partial_v\left(\mathrm{\Gamma}_{\text{gain}}(\mathrm{f},\mathrm{g})\right)$, one has
	\begin{align}\label{new6.13}
		\left|\partial_v\left(w_{\b_1}\mathrm{\Gamma}_{\text{gain}}(\mathrm{f},\mathrm{g})\right)\right| 
		\leq \left|\partial_vw_{\b_1}\mathrm{\Gamma}_{\text{gain}}(\mathrm{f},\mathrm{g})\right|+\left|w_{\b_1}\partial_v\left(\mathrm{\Gamma}_{\text{gain}}(\mathrm{f},\mathrm{g})\right)\right|.
	\end{align}	
	Using the change of variables $u'=v+z_{\perp}$, $v'=v+z_{\shortparallel}$ with $z=u-v$, $z_{\shortparallel}=(z\cdot \omega)\omega$, $z_{\perp}=z-z_{\shortparallel}$, one gets
	\begin{align*}
		\mathrm{\Gamma}_{\text{gain}}(\mathrm{f},\mathrm{g})
		&=\int_{\mathbb{R}^{3}} \int_{\mathbb{S}^{2}}B(v-u,\t) \sqrt{\mu(u)}\mathrm{f}(v')\mathrm{g}(u') \mathrm{d} \omega \mathrm{d} u\nonumber\\
		&=\int_{\mathbb{R}^{3}} \int_{\mathbb{S}^{2}}B(z,\t) \sqrt{\mu(v+z)}\mathrm{f}(v+z_{\shortparallel})\mathrm{g}(v+z_{\perp}) \mathrm{d} \omega \mathrm{d}z,
	\end{align*}	
	which yields that
	\begin{align}\label{new6.14}
		\partial_v\left(\mathrm{\Gamma}_{\text{gain}}(\mathrm{f},\mathrm{g})\right)
		=&\int_{\mathbb{R}^{3}} \int_{\mathbb{S}^{2}}B(z,\t) 	\partial_v\left(\sqrt{\mu(v+z)}\right)\mathrm{f}(v+z_{\shortparallel})\mathrm{g}(v+z_{\perp}) \mathrm{d} \omega \mathrm{d}z   \nonumber\\
		&+\int_{\mathbb{R}^{3}} \int_{\mathbb{S}^{2}}B(z,\t) 	\sqrt{\mu(v+z)}\partial_v\mathrm{f}(v+z_{\shortparallel})\mathrm{g}(v+z_{\perp}) \mathrm{d} \omega \mathrm{d}z \nonumber\\
		&+\int_{\mathbb{R}^{3}} \int_{\mathbb{S}^{2}}B(z,\t) 	\sqrt{\mu(v+z)}\mathrm{f}(v+z_{\shortparallel})\partial_v\mathrm{g}(v+z_{\perp}) \mathrm{d} \omega \mathrm{d}z.
	\end{align}	
	It follows from \eqref{new6.14} and \eqref{Eq2.43} that for $4\leq\b_1\leq\b -2$,
	\begin{align}\label{new6.15}
		\left|w_{\b_1}\partial_v\left(\mathrm{\Gamma}_{\text{gain}}(\mathrm{f},\mathrm{g})\right)\right|&\leq C\left(\|w_{\b_1}\partial_{v}\mathrm{f}\|_{L^\infty}\|w_{\b}\mathrm{g}\|_{L^\infty}
		+\|w_{\b_1}\partial_{v}g\|_{L^\infty}\|w_{\b}\mathrm{f}\|_{L^\infty}+ \|w_{\b}\mathrm{f}\|_{L^\infty}\|w_{\b}\mathrm{g}\|_{L^\infty}\right) \nonumber\\
		&\leq C\left(\|\partial_{v}(w_{\b_1}\mathrm{f})\|_{L^\infty}\|w_{\b}\mathrm{g}\|_{L^\infty}
		+\|\partial_{v}(w_{\b_1}\mathrm{g})\|_{L^\infty}\|w_{\b}\mathrm{f}\|_{L^\infty}\right)  \nonumber\\
		&\quad+C\|w_{\b}\mathrm{f}\|_{L^\infty}\|w_{\b}\mathrm{g}\|_{L^\infty}, 
	\end{align}	
where we have used the fact $ \|w_{\b_1}\partial_{v}\mathrm{f}\|_{L^\infty}\leq C(\|\partial_{v}(w_{\b_1}\mathrm{f})\|_{L^\infty}+\|w_{\b}\mathrm{f}\|_{L^\infty})$. 
From \eqref{new6.13}, \eqref{new6.15} and \eqref{Eq2.43}, we obtain \eqref{new2.2}.
	
It is clear that
\begin{align}\label{new2.6}
	\left|\partial_v\left(w_{\b_1}\mathrm{\Gamma}_{\text{loss}}(\mathrm{f},\mathrm{g})\right)\right| 
	\leq \left|\partial_vw_{\b_1}\mathrm{\Gamma}_{\text{loss}}(\mathrm{f},\mathrm{g})\right|+\left|w_{\b_1}\partial_v\left(\mathrm{\Gamma}_{\text{loss}}(\mathrm{f},\mathrm{g})\right)\right|.
\end{align}	
 Making a change of variables $z=u-v$ in \eqref{new2.6}, one has 
\begin{align*}
	\mathrm{\Gamma}_{\text{loss}}(\mathrm{f},\mathrm{g})
	=\mathrm{f}(v)\int_{\mathbb{R}^{3}} \int_{\mathbb{S}^{2}}B(z,\t) \sqrt{\mu(v+z)}\mathrm{g}(v+z) \mathrm{d} \omega \mathrm{d}z,
\end{align*}	
which yields that
\begin{align} \label{new6.16}
	\partial_{v}\left(\mathrm{\Gamma}_{\text{loss}}(\mathrm{f},\mathrm{g})\right)
	=&\partial_{v}\mathrm{f}(v)\int_{\mathbb{R}^{3}} \int_{\mathbb{S}^{2}}B(z,\t) \sqrt{\mu(v+z)}\mathrm{g}(v+z) \mathrm{d} \omega \mathrm{d}z   \nonumber\\
	&+f(v)\int_{\mathbb{R}^{3}} \int_{\mathbb{S}^{2}}B(z,\t) \partial_{v}\left(\sqrt{\mu(v+z)}\right)\mathrm{g}(v+z) \mathrm{d} \omega \mathrm{d}z \nonumber\\
	&+\mathrm{f}(v)\int_{\mathbb{R}^{3}} \int_{\mathbb{S}^{2}}B(z,\t) \sqrt{\mu(v+z)}\partial_{v}\mathrm{g}(v+z) \mathrm{d} \omega \mathrm{d}z.
\end{align}	
Using $\eqref{new6.16}$ and similar arguments as in \eqref{new6.8} and \eqref{new6.9},  one can obtain that for $4\leq\b_1\leq\b -2$,
\begin{align}\label{new6.17}
	\left|w_{\b_1}\partial_v\left(\mathrm{\Gamma}_{\text{loss}}(\mathrm{f},\mathrm{g})\right)\right|\leq& \nu(v)\|\partial_{v}(w_{\b_1}\mathrm{f})\|_{L^\infty}\|w_{\b_1}\mathrm{g}\|_{L^\infty}^{\f{1}{2}}\Big(\int_{\mathbb{R}_{v}^3} |\mathrm{g}(v)w_{\b_1}(t,v)|\mathrm{d}v\Big)^{\f{1}{2}}\nonumber\\
	&+\|\partial_{v}(w_{\b_1}\mathrm{g})\|_{L^\infty}\|w_{\b}\mathrm{f}\|_{L^\infty}
	+\|w_{\b}\mathrm{f}\|_{L^\infty}\|w_{\b}\mathrm{g}\|_{L^\infty}.
\end{align}	
We conclude \eqref{new2.4} from \eqref{new2.6}, \eqref{new6.17} and \eqref{Eq2.44}. Therefore the proof of Lemma \ref{Lem2.15} is complete.
	\end{proof}

From Lemmas \ref{Lem2.14} and \ref{Lem2.15}, we have following results.
\begin{corollary}\label{Coro2.16}
	There is a generic constant $C>0$ such that, for $4\leq\b_1\leq\b -2$, 
	\begin{align}
		\left|\partial_x\left(w_{\b_1}\mathrm{\Gamma}^{\pm}(\mathbf{f},\mathbf{f})\right)\right| \leq& \nu(v)\|w_{\b_1}\partial_{x}\mathbf{f}\|_{L^\infty}\|w_{\b_1}\mathbf{f}\|_{L^\infty}^{\f{1}{2}}\Big(\int_{\mathbb{R}_{v}^3}    |\mathbf{f}(v)w_{\b_1}(t,v)|\mathrm{d}v\Big)^{\f{1}{2}}\nonumber\\
		&+\|w_{\b}\mathbf{f}\|_{L^\infty}\|w_{\b_1}\partial_x\mathbf{f}\|_{L^\infty},   \label{new6.11}   \\
	\left|\partial_v\left(w_{\b_1}\mathrm{\Gamma}^{\pm}(\mathbf{f},\mathbf{f})\right)\right| \leq&
	\nu(v)\|\partial_{v}(w_{\b_1}\mathbf{f})\|_{L^\infty}\|w_{\b_1}\mathbf{f}\|_{L^\infty}^{\f{1}{2}}\Big(\int_{\mathbb{R}_{v}^3} |\mathbf{f}(v)w_{\b_1}(t,v)|\mathrm{d}v\Big)^{\f{1}{2}} \nonumber\\ 
	&+\|\partial_{v}(w_{\b_1}\mathbf{f})\|_{L^\infty}\|w_{\b}\mathbf{f}\|_{L^\infty}
	+\|w_{\b}\mathbf{f}\|_{L^\infty}^2.   \label{new6.18}
\end{align}
\end{corollary}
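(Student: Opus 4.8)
The plan is to reduce the two-species estimates to the scalar bilinear bounds of Lemmas~\ref{Lem2.14} and~\ref{Lem2.15} by unfolding the definition \eqref{Eq1.15}. First I would record that, in the scalar notation of the previous subsection,
\[
\mathrm{\Gamma}^{\pm}(\mathbf{f},\mathbf{f})=\mathrm{\Gamma}(f_{\pm},f_{+}+f_{-})=\mathrm{\Gamma}_{\text{gain}}(f_{\pm},f_{+}+f_{-})-\mathrm{\Gamma}_{\text{loss}}(f_{\pm},f_{+}+f_{-}),
\]
so that the triangle inequality gives
\[
\left|\partial_{x}\big(w_{\b_1}\mathrm{\Gamma}^{\pm}(\mathbf{f},\mathbf{f})\big)\right|\le\left|\partial_{x}\big(w_{\b_1}\mathrm{\Gamma}_{\text{gain}}(f_{\pm},f_{+}+f_{-})\big)\right|+\left|\partial_{x}\big(w_{\b_1}\mathrm{\Gamma}_{\text{loss}}(f_{\pm},f_{+}+f_{-})\big)\right|,
\]
and likewise with $\partial_{v}$ in place of $\partial_{x}$. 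This splits each estimate of the corollary into a gain piece and a loss piece, each of which is a scalar quantity of the type handled in the two lemmas.

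Next I would apply \eqref{new2.1} and \eqref{new2.3} with the choice $\mathrm{f}=f_{\pm}$, $\mathrm{g}=f_{+}+f_{-}$, which is admissible because the hypothesis $4\le\b_1\le\b-2$ implies in particular $\b_1\le\b-1$. This bounds the gain part by $C\|w_{\b_1}\partial_{x}f_{\pm}\|_{L^\infty}\|w_{\b_1}(f_{+}+f_{-})\|_{L^\infty}$ and the loss part by the right-hand side of \eqref{new2.3} with $\mathrm{g}=f_{+}+f_{-}$. To put these into the stated form I would use three elementary facts: since $\b_1<\b$ one has $w_{\b_1}(t,v)\le w_{\b}(t,v)$, so every $w_{\b_1}$-weighted $L^\infty$ norm is dominated by the corresponding $w_{\b}$-weighted one; by the definition of the vector-valued norms, $\|w_{\b_1}(f_{+}+f_{-})\|_{L^\infty}\le\|w_{\b}\mathbf{f}\|_{L^\infty}$ and $\|w_{\b_1}\partial_{x}(f_{+}+f_{-})\|_{L^\infty}\le\|w_{\b_1}\partial_{x}\mathbf{f}\|_{L^\infty}$; and since $w_{\b_1}\ge1$, $\int_{\mathbb{R}^{3}}|f_{+}+f_{-}|\,\mathrm{d}v\le\int_{\mathbb{R}^{3}}|w_{\b_1}(t,v)\mathbf{f}(v)|\,\mathrm{d}v$. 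After these substitutions the gain term and the second term of \eqref{new2.3} both collapse into $\|w_{\b}\mathbf{f}\|_{L^\infty}\|w_{\b_1}\partial_{x}\mathbf{f}\|_{L^\infty}$, while the first term of \eqref{new2.3} becomes the $\nu(v)$-term, yielding \eqref{new6.11}.

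The velocity estimate \eqref{new6.18} is obtained in exactly the same fashion from \eqref{new2.2} and \eqref{new2.4}, again with $\mathrm{f}=f_{\pm}$, $\mathrm{g}=f_{+}+f_{-}$; here the constraint $\b_1\le\b-2$ is precisely what those two lemmas require. Applying the same three reductions, the gain term $C\|\partial_{v}(w_{\b_1}(f_{+}+f_{-}))\|_{L^\infty}\|w_{\b}f_{\pm}\|_{L^\infty}$ together with the analogous loss term merge into $\|\partial_{v}(w_{\b_1}\mathbf{f})\|_{L^\infty}\|w_{\b}\mathbf{f}\|_{L^\infty}$, the $\nu(v)$-term retains the weighted $L^{1}_{v}$ factor of $\mathbf{f}$, and the residual bilinear $L^\infty$ pieces add up to $\|w_{\b}\mathbf{f}\|_{L^\infty}^{2}$. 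I do not expect a genuine obstacle here; the only points needing attention are keeping the weight bookkeeping between $w_{\b_1}$ and $w_{\b}$ consistent with $4\le\b_1\le\b-2$, and checking that the mixed $L^\infty$--$L^{1}_{v}$ factors in \eqref{new2.3} and \eqref{new2.4} translate correctly into the single weighted integral $\big(\int_{\mathbb{R}^{3}}|\mathbf{f}(v)w_{\b_1}(t,v)|\,\mathrm{d}v\big)^{1/2}$ appearing in \eqref{new6.11} and \eqref{new6.18}.
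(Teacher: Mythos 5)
Your proposal is correct and follows exactly the route the paper intends: the paper gives no written proof beyond "From Lemmas \ref{Lem2.14} and \ref{Lem2.15}, we have following results," and your substitution $\mathrm{f}=f_{\pm}$, $\mathrm{g}=f_{+}+f_{-}$ into \eqref{new2.1}, \eqref{new2.3}, \eqref{new2.2}, \eqref{new2.4}, together with the weight comparisons $w_{\b_1}\le w_{\b}$ and $1\le w_{\b_1}$, is precisely the omitted argument. The parameter bookkeeping ($4\le\b_1\le\b-2$ implying the hypotheses of both lemmas) is also handled correctly.
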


%%%%%%%%%%%%%%%%%%%%%%%%%%%%%%%%%%%%%%%%%%%%%%%%%%%%%%%%%%%%%%%%%%%%%%%%%%%%%%%%%%%%%%%%%%%%%%%%%%%%%%%%%%%%%%%%%%%%%%%
\section{Local-in-time existence}
Due to the influence of electric field, it seems that there is no reference on the existence of strong solution of VPB for large data.
To prove Theorem \ref{Thm1.1}, we need to figure out more quantitative properties of the local existence about the lifespan of local $L^\infty$ solution.  More precisely, since the derivative of the initial data is very large in our own setting, (see Remark \ref{Remark 1.3}), we have to establish the local existence with  the lifespan depending only on $\|\mathbf{h}_0\|_{L^\infty}$, but independent of $\|\partial_{x, v}\mathbf{\tilde{h}}_{0}\|_{L^\infty}$. 
Then the key point is to overcome the difficulties arising from the nonlinear terms. Fortunately, by utilizing the logarithmic estimate on  derivative of electric field, (see Lemma \ref{Lem2.5}), one can obtain the uniform estimate of  derivative with lifespan of the local  solution  depending only on $\|\mathbf{h}_0\|_{L^\infty}$.

\begin{proposition}[Local Existence]\label{prop2.1}
	Let $\b_1\geq 4$,  $\b>\b_1+4$. Assume the initial data satisfy $F_{\pm, 0}(x,v)=\mu(v)+\sqrt{\mu(v)}f_{\pm, 0}(x,v)\ge 0$, $\|h_{\pm, 0}\|_{L^\infty}<\infty$ and $\|\partial_{x, v}\tilde{h}_{\pm, 0}\|_{L^\infty}<\infty$.  Then there exists a positive time 
	\begin{align}\label{LT}
		t_1:=%\triangleq
		\frac{1}{16C_1C_2C_3C_4(\|\mathbf{h}_0\|_{L^\infty}+1)^2} 
	\end{align}
	such that the VPB  \eqref{Eq1.4} admits a unique solution $F_{\pm}(t,x,v)=\mu(v)+\sqrt{\mu(v)}f_{\pm}(t,x,v)\geq 0$ satisfying 
	\begin{align}
		&\sup_{0\leq t\leq t_1}\|\mathbf{h}(t)\|_{L^\infty}\leq 2C_1(1+\|\mathbf{h}_0\|_{L^\infty}), \label{E2.2} \\
		& \sup_{0\leq t\leq t_1}\|\partial_{x, v}\mathbf{\tilde{h}}(t)\|_{L^\infty}\leq C_5(1+\|\partial_{x, v}\mathbf{\tilde{h}}_{0})\|_{L^\infty})^2, \label{E2.3}
	\end{align}
	where the positive constants $C, C_i$ ($i=1,2,3,4,5$) depend only on $\b, \b_1$. Moreover,  the conservation laws of defect mass, momentum, energy \eqref{Eq2.4}-\eqref{Eq2.7} as well as the additional defect entropy inequality \eqref{Eq2.8} hold. 
	Finally, if initial data $f_{\pm,0}$ are continuous, then  $f_{\pm}(t,x,v)$ are continuous in $[0,t_1]\times\mathbb{T}^3\times\mathbb{R}^3$.
	
	Moreover, if  $\|f_{+, 0}-f_{-, 0}\|_{L^\infty}\leq \varepsilon_0$, it holds that
	\begin{equation}\label{E2.4}
		\sup_{0\leq t\leq t_1}\|f_{+, 0}-f_{-, 0}(t)\|_{L^\infty}\leq C\varepsilon_0 \exp\{(1+\|\partial_{x, v}\mathbf{\tilde{h}}_{0}\|_{L^\infty})^2\}.
	\end{equation}	 
\end{proposition}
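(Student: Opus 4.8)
I would construct the solution by a weighted $L^\infty$ iteration, designed so that positivity and continuity propagate, and then extract the two quantitative features of the statement --- a lifespan $t_1$ depending only on $\|\mathbf{h}_0\|_{L^\infty}$, and a first--derivative bound growing only quadratically in $\|\partial_{x,v}\mathbf{\tilde{h}}_0\|_{L^\infty}$ --- from the logarithmic bound \eqref{Eq2.20} on $\nabla_x^2\phi$. Start from $F^0_\pm\equiv\mu$; given $F^n_\pm=\mu+\sqrt\mu f^n_\pm\ge0$, let $\phi^n$ solve $-\Delta_x\phi^n=\int_{\mathbb{R}^3}\sqrt\mu(f^n_+-f^n_-)\,\mathrm{d}v$ with $\int_{\mathbb{T}^3}\phi^n=0$, let $(X^n_\pm,V^n_\pm)$ be the characteristics \eqref{Eq2.22} driven by $\nabla_x\phi^n$, and define $F^{n+1}_\pm$ as the solution of the linear transport problem that keeps the loss frequency $F^{n+1}_\pm\int B(F^n_++F^n_-)$ implicit while freezing the field, the gain term $Q_{\mathrm{gain}}(F^n_\pm,F^n_++F^n_-)\ge0$ and the remaining quadratic terms at level $n$. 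Duhamel's formula then keeps $F^{n+1}_\pm\ge0$ from $F_{\pm,0}\ge0$, and keeps continuity. On the short interval we will obtain, $\|\nabla_x\phi^n\|_{L^\infty}\lesssim\|\mathbf{h}^n\|_{L^\infty}$ by \eqref{Eq2.19}, so the velocity variation along $(X^n_\pm,V^n_\pm)$ is $\lesssim\|\mathbf{h}^n\|_{L^\infty}t_1\lesssim(1+\|\mathbf{h}_0\|_{L^\infty})^{-1}$ and the characteristic flow stays uniformly close to free transport, which is all that the change--of--variable arguments below require.

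\textbf{Uniform $L^\infty$ bound.} Writing $h^{n+1}_\pm=w_\beta f^{n+1}_\pm$ and commuting the weight through \eqref{Eq1.18}, the Duhamel formula along $(X^n_\pm,V^n_\pm)$ gives, besides $e^{-\int\nu}h_{\pm,0}$: the gain $w_\beta\mathbf{K}\mathbf{f}^n$ (bounded by \eqref{Eq2.41}, contributing a factor $(1+|v|)^{-1}\|\mathbf{h}^n\|_{L^\infty}$), the term $w_\beta\mathbf{\Gamma}(\mathbf{f}^n,\mathbf{f}^n)$ (bounded by Corollary \ref{Coro2.13}), a $\sqrt\mu$--weighted field term (harmless), and a multiplier $\tfrac{1}{2}v\!\cdot\!\nabla_x\phi^n+\big(\tfrac{\beta}{1+|v|^2}+\tfrac{2\sigma_0}{1+s}\big)(v\!\cdot\!\nabla_x\phi^n)+\tfrac{\sigma_0|v|^2}{(1+s)^2}$ acting on $h^{n+1}_\pm$ that records the commutators. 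The last summand is a nonnegative damping, nondegenerate on $[0,t_1]$, which absorbs the indefinite $v\!\cdot\!\nabla_x\phi^n$ pieces by Young's inequality at the cost of a bounded remainder $\lesssim(1+t_1)^2\|\nabla_x\phi^n\|_{L^\infty}^2\lesssim\|\mathbf{h}^n\|_{L^\infty}^2$. Collecting, $\sup_{[0,t]}\|\mathbf{h}^{n+1}\|_{L^\infty}\le C_1\|\mathbf{h}_0\|_{L^\infty}+C_1+t\,C_2C_3C_4\big(1+\sup_{[0,t]}\|\mathbf{h}^n\|_{L^\infty}\big)^2$, and the choice \eqref{LT} of $t_1$ closes the induction $\sup_{[0,t_1]}\|\mathbf{h}^n\|_{L^\infty}\le2C_1(1+\|\mathbf{h}_0\|_{L^\infty})$, proving \eqref{E2.2}.

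\textbf{Uniform derivative bound --- the main obstacle.} Differentiating the $\tilde{h}^{n+1}_\pm=w_{\beta_1}f^{n+1}_\pm$ equation in $(x,v)$ and propagating along $(X^n_\pm,V^n_\pm)$, the dangerous new sources are $\nabla_x^2\phi^n\!\cdot\!\nabla_v f^{n+1}$ and $\nabla_x^2\phi^n\!\cdot\!v\,f^n$ from the transport and the $\tfrac{v}{2}\!\cdot\!\nabla_x\phi$ terms; the derivatives of $\mathbf{K}\mathbf{f}^n$ are controlled by \eqref{Eq2.47}, \eqref{Eq2.50}--\eqref{Eq2.51}, \eqref{Eq2.41}, those of $\mathbf{\Gamma}(\mathbf{f}^n,\mathbf{f}^n)$ by Corollary \ref{Coro2.16}, and the extra $|v|$--powers produced when $\partial_{x,v}$ hits the weight commutators are absorbed because $\beta>\beta_1+4$. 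The hard point is that $\nabla_x^2\phi^n$ must not be estimated by $\|\nabla_x\rho^n\|_{L^\infty}\sim\|\partial_x\mathbf{\tilde{h}}^n\|_{L^\infty}$: that would force $t_1$ to collapse as $\|\partial_{x,v}\mathbf{\tilde{h}}_0\|_{L^\infty}\to\infty$, precisely the regime of the theorem (cf.\ Remark \ref{Remark 1.3}). Instead one invokes \eqref{Eq2.20} with $R$ fixed and $d\cong\|\rho^n\|_{L^\infty}/\|\nabla_x\rho^n\|_{L^\infty}$, obtaining $\|\nabla_x^2\phi^n\|_{L^\infty}\lesssim\|\mathbf{h}^n\|_{L^\infty}\big(1+\ln(1+\|\partial_x\mathbf{\tilde{h}}^n\|_{L^\infty})\big)$. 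With $Y^n(t):=\sup_{[0,t]}\|\partial_{x,v}\mathbf{\tilde{h}}^n\|_{L^\infty}$ this yields $Y^{n+1}(t)\le C\|\partial_{x,v}\mathbf{\tilde{h}}_0\|_{L^\infty}+C\int_0^t\|\mathbf{h}^n\|_{L^\infty}\big(1+\ln(1+Y^n(s))\big)Y^{n+1}(s)\,\mathrm{d}s+(\text{lower order})$; under the induction hypothesis $Y^n\le C_5(1+\|\partial_{x,v}\mathbf{\tilde{h}}_0\|_{L^\infty})^2$ the logarithm is $\le C(1+\ln(1+\|\partial_{x,v}\mathbf{\tilde{h}}_0\|_{L^\infty}))$, and Gronwall's inequality with $\|\mathbf{h}^n\|_{L^\infty}t_1\lesssim(1+\|\mathbf{h}_0\|_{L^\infty})^{-1}$ bounds $Y^{n+1}(t_1)$ by $C(1+\|\partial_{x,v}\mathbf{\tilde{h}}_0\|_{L^\infty})^{1+C'\|\mathbf{h}^n\|_{L^\infty}t_1}$; taking $C_2,C_3,C_4$ large enough (which only shrinks $t_1$) forces the exponent $\le1$, and then $C_5$ large closes the induction and gives \eqref{E2.3}. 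Equivalently, the feedback $Y'\lesssim\|\mathbf{h}\|_{L^\infty}(1+\ln(1+Y))Y$ has $Y(t)\le(1+Y_0)^{\exp(C\|\mathbf{h}\|_{L^\infty}t)}$, which is lifespan--stable.

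\textbf{Convergence, conservation laws, difference estimate.} With \eqref{E2.2}--\eqref{E2.3} uniform in $n$, the differences $\mathbf{f}^{n+1}-\mathbf{f}^n$ solve a linear problem with coefficients Lipschitz in the iterates --- the characteristics depend on $\phi^n-\phi^{n-1}$, hence on $\|\mathbf{h}^n-\mathbf{h}^{n-1}\|_{L^\infty}$ via \eqref{Eq2.19} --- so after shrinking $t_1$ by an absolute factor the iteration is a contraction in $C([0,t_1];L^\infty_{x,v})$ with weight $w_\beta$; the limit $\mathbf{f}$ is the unique mild solution and inherits \eqref{E2.2}, \eqref{E2.3}, positivity and continuity. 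Testing \eqref{Eq1.4} against $1$, $v$, $|v|^2$ and $\ln F_\pm$, using $\int_{\mathbb{T}^3}\nabla_x\phi=0$ and the $H$-theorem structure of $Q$ (on the regularized iterates and then in the limit), produces \eqref{Eq2.4}--\eqref{Eq2.7} and the entropy inequality \eqref{Eq2.8}. For \eqref{E2.4}, subtract the two equations: $g:=f_+-f_-$ does not close, but its source is $\lesssim\|\nabla_x\phi\|_{L^\infty}\|\nabla_{x,v}\mathbf{f}\|_{L^\infty}+\cdots\lesssim\|g\|\big(1+(1+\|\partial_{x,v}\mathbf{\tilde{h}}_0\|_{L^\infty})^2\big)$ after \eqref{Eq2.14} and \eqref{E2.3}, so Gronwall over $[0,t_1]$ together with $t_1\lesssim1$ yields \eqref{E2.4}.
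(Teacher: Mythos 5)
Your proposal follows essentially the same route as the paper: the same semi-implicit iteration (loss term implicit at level $n+1$, gain and field frozen at level $n$) to preserve positivity, the same use of the time-velocity weight to absorb $v\cdot\nabla_x\phi^n$ via the $\sigma_0|v|^2/(1+t)^2$ damping, the same invocation of the logarithmic estimate \eqref{Eq2.20} to get $\|\nabla_x^2\phi^n\|_{L^\infty}\lesssim\|\mathbf{h}^n\|_{L^\infty}(1+\ln(1+\|\partial_x\mathbf{\tilde h}^n\|_{L^\infty}))$ and close the derivative bound with exponent controlled by $\exp(C\|\mathbf{h}\|_{L^\infty}t_1)\le 2$, and the same structure for conservation laws and the $f_+-f_-$ difference estimate.

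One step is stated imprecisely: for the convergence of the iterates you claim the map becomes a contraction ``after shrinking $t_1$ by an absolute factor.'' The Lipschitz constant of the difference equation is of order $(1+\|\mathbf{h}_0\|_{L^\infty}+\|\partial_{x,v}\mathbf{\tilde h}_0\|_{L^\infty})^2$, because the difference of the transport operators produces $\nabla_x(\phi^n-\phi^{n-1})\cdot\nabla_v\tilde h^n$, which is controlled only through \eqref{E2.3}; so an absolute shrinking of $t_1$ cannot make the product of lifespan and Lipschitz constant less than one, and shrinking $t_1$ in terms of $\|\partial_{x,v}\mathbf{\tilde h}_0\|_{L^\infty}$ would defeat the whole point of the proposition. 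The fix is routine and is what the paper does: iterate the integral inequality to obtain $\sup_{[0,t_1]}\|\mathbf{\tilde h}^{n+1}-\mathbf{\tilde h}^n\|_{L^\infty}\le C^n(1+\|\mathbf{h}_0\|_{L^\infty}+\|\partial_{x,v}\mathbf{\tilde h}_0\|_{L^\infty})^{2n}t_1^n/n!$, which is summable for the fixed $t_1$ of \eqref{LT}. With that correction your argument is complete.
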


\begin{proof}
 Since the proof is lengthy, we divide it into seven steps. 

\medskip
\noindent{ Step 1.} To consider the local existence of solutions for the VPB  \eqref{Eq1.4}, we start from  
the iteration that for $n=0,1,2,\cdots$, 

\begin{equation}\label{L1}
	\left\{
	\begin{aligned}
		&\{\partial_t+v \cdot \nabla_x-\nabla_x \phi^n \cdot \nabla_v\} F_{+}^{n+1}+\iint B(v-u, w)(F_{+}^n+F_{-}^n)(u) \mathrm{d} w \mathrm{d} u\cdot F_{+}^{n+1} \\
		&=Q_{g a i n}(F_{+}^n, F_{+}^n+F_{-}^n),  \\
		& \{\partial_t+v \cdot \nabla_x+\nabla_x \phi^n \cdot \nabla_v\} F_{-}^{n+1}+\iint B(v-u, w)(F_{+}^n+F_{-}^n)(u) \mathrm{d} w \mathrm{d} u\cdot F_{-}^{n+1} \\
		&=Q_{g a i n}(F_{-}^n, F_{+}^n+F_{-}^n), \\
		&-\Delta_{x} \phi^n=\int_{\mathbb{R}^{3}}(F_{+}^{n}-F_{-}^{n})\mathrm{d}v,       \quad \int_{\mathbb{T}^3}\phi^n \mathrm{d}x= 0, \\
		&F_{\pm}^{n+1}(t,x,v)\Big|_{t=0}=F_{\pm,0}(x,v)\geq0, \quad F_{\pm}^0(t,x,v)=\mu(v).
	\end{aligned}
	\right.
\end{equation}

Denote 
\begin{equation*}
	f_{\pm}^{n+1}=\f{F_{\pm}^{n+1}-\mu}{\sqrt{\mu}}.\notag
\end{equation*}
Then $\eqref{L1}_{1}$ and $\eqref{L1}_{2}$  can be written equivalently as 
\begin{equation}\label{L2}
	\left\{
	\begin{aligned}
		&\{\partial_t+v \cdot \nabla_x-\nabla_x \phi^n \cdot \nabla_v\} f_{+}^{n+1}+g^n f_{+}^{n+1}+\nabla_x \phi^n \cdot \frac{v}{2} f_{+}^{n+1}+v\cdot \nabla_x \phi^n \sqrt{\mu} \\
		&= \mathrm{K}^{+} \mathbf{f}^n+\mathrm{\Gamma}_{\text {gain }}^{+}(\mathbf{f}^n, \mathbf{f}^n), \\
		&\{\partial_t+v \cdot \nabla_x+\nabla_x \phi^n \cdot \nabla_v\} f_{-}^{n+1}+g^n f_{-}^{n+1}-\nabla_x \phi^n \cdot \frac{v}{2} f_{-}^{n+1}-v_{\cdot} \nabla_x \phi^n \sqrt{\mu}   \\
		&=  \mathrm{K}^{-} \mathbf{f}^n+\mathrm{\Gamma}_{\text {gain }}^{-}(\mathbf{f}^n, \mathbf{f}^n), \\
		&-\Delta_{x} \phi^n=\int_{\mathbb{R}^{3}}\sqrt{\mu}(f_{+}^{n}-f_{-}^{n})\mathrm{d}v,  \quad \int_{\mathbb{T}^3}\phi^n \mathrm{d}x= 0,  \\
		&f_{\pm}^{n+1}(0,x,v)=f_{\pm,0}(x,v),\quad f_{\pm}^{0}(0,x,v)=0,
	\end{aligned}
	\right.
\end{equation}
with
\begin{align}\label{L3}
	g^n(t, x, v) & =\iint B(v-u, \omega)\left(F_+^n+F_{-}^n\right)(t, x, u)  \mathrm{d} \omega  \mathrm{d} u    \nonumber\\
	& =\iint B(v-u, \omega)\left(2 \mu(u)+\sqrt{\mu}(u)\big(f_{+}^n(t, x, u)+f_{-}^n(t, x, u)\big)\right) \mathrm{d} \omega  \mathrm{d} u .
\end{align}

\medskip
\noindent{ Step 2.} 
Next, we shall use the induction argument on $n=0,1,\cdots$ to  prove that there exists a positive time $t_1>0$,  independent of $n$, such that \eqref{L1} ( equivalently \eqref{L2}), admits a unique mild solution on the time interval $[0,t_1]$,  with
\begin{align}\label{Pri0}
F_{\pm}^{n}(t,x,v)\geq0, \quad	\|\mathbf{h}^n(t)\|_{L^\infty}\leq 2C_1(\|\mathbf{h}_0\|_{L^\infty}+1), \quad % \  
	 \|\partial_{x, v} \mathbf{\tilde{h}}^n(t)\|_{L^\infty}\leq B(t), \quad \forall t\in [0,t_1],
\end{align}
where $B(t)$ is some continuous positive function. 

Firstly, we consider the positivity of $F_{\pm}^{n+1}$.  By induction on $n$,  it follows from \eqref{L1} and \eqref{L3} that $F_{\pm}^{n+1}\geq0,~n=0,1,\cdots$, if $F_{\pm}^{n}\geq0$.

Next, we consider the  uniform estimate for the approximation sequence.  And it is more convenient to use the equivalent form $f^{n+1}_{\pm}$.  To control the bad term $\nabla_x \phi^n \cdot \frac{v}{2} f_{\pm}^{n+1}$, we need the  time-dependent weight function $w_{\b}(t,v)$ defined in \eqref{Eq1.22},
%\begin{align}\label{L4}
%	w(t,v)=w_{\beta}(t,v):=(1+| v|^2)^{\frac{\beta}{2}} e^{\frac{\sigma_0}{1+t}|v|^2} ,
%\end{align}
and denote $h_{\pm}^{n}(t,x,v):=w_{\b}(t, v)f_{\pm}^{n}(t, x, v)$. It follows from \eqref{L2} that
\begin{align}\label{L5}
	\left\{\partial_t+v \cdot \nabla_x\mp\nabla_x\phi^n \cdot \nabla_{v}+\tilde{g}_{\pm}^{n}\right\} h_{\pm}^{n+1}=\left(\mp v \cdot \nabla_x \phi^n \sqrt{\mu} + \mathrm{K}^{\pm} \mathbf{f}^n+ \mathrm{\Gamma}_{\text {gain }}^{\pm}(\mathbf{f}^n , \mathbf{f}^n)\right)\cdot w_{\b},
\end{align}
where 
\begin{align}\label{L6}
	\tilde{g}_{ \pm}^{n}(t, x, v)=\frac{\sigma_0}{(1+t)^{2}}|v|^2 \pm \nabla_x \phi^n \cdot v\left\{\frac{1}{2}+\frac{\beta}{1+|v|^2}+\frac{2 \sigma_0}{1+t}\right\}+g^n(t, x, v).
\end{align}

We define the backward characteristics $(X_{\pm}^{n}(\tau ; t, x, v)$, $V_{\pm}^{n}(\tau ; t, x, v))$ passing though $(t, x, v)$ such that
\begin{equation}\label{L7}
	\left\{
	\begin{aligned}
		&\frac{d X_{\pm}^{n}(\tau ; t, x, v)}{\mathrm{d} \tau}=V_{\pm}^{n}(\tau ; t, x, v), \\
		&\frac{d V_{\pm}^{n}(\tau ; t, x, v)}{\mathrm{d} \tau}=\mp\nabla_{x} \phi^{n}(\tau, X_{\pm}^{n}(\tau ; t, x, v)), \\
	&X_{\pm}^{n}(t ; t, x, v)=x, \quad	V_{\pm}^{n}(t ; t, x, v)=v .
	\end{aligned}
	\right.
\end{equation}
For convenience, we abbreviate $(X_{\pm}^{n}(\tau ), V_{\pm}^{n}(\tau )):=(X_{\pm}^{n}(\tau ; t, x, v), V_{\pm}^{n}(\tau ; t, x, v))$ and $\tilde{g}_{ \pm}^{n}(s):=\tilde{g}_{ \pm}^{n}(s, X_{\pm}^{n}(s ), V_{\pm}^{n}(s ))$. Then integrating along  the characteristics, the mild solution of \eqref{L5} can be represented as
\begin{align} \label{L8}
	h_{ \pm}^{n+1}(t, x, v)&=h_{\pm, 0}\left(X_{ \pm}^n(0) , V_{ \pm}^n(0)\right) e^{-\int_0^{t} \tilde{g}_{ \pm}^{n}(\tau) \mathrm{d} \tau} \nonumber\\
	&\quad \mp \int_0^t e^{-\int_s^t \tilde{g}_{ \pm}^{n}(\tau) \mathrm{d} \tau}\left(v \cdot \nabla_x \phi^n\sqrt{\mu} w_{\b}\right)(s, X_{\pm}^n(s), V_{ \pm}^{n}(s)) \mathrm{d} s \nonumber\\
	&\quad +\int_0^t e^{-\int_s^{t} \tilde{g}_{ \pm}^{n}(\tau) \mathrm{d} \tau}\left(w_{\b} \mathrm{K}^{ \pm} \mathbf{f}^n\right)(s, X_{ \pm}^n(s), V_{ \pm}^n(s)) \mathrm{d} s \nonumber\\
	&\quad +\int_0^t e^{-\int_s^t \tilde{g}_{ \pm}^{n}(\tau) \mathrm{d} \tau}\left(w_{\b} \mathrm{\Gamma}_{\text {gain }}^{ \pm}(\mathbf{f}^n, \mathbf{f}^n)\right)(s, X_{ \pm}^n(s), V_{ \pm}^n(s)) \mathrm{d} s.
\end{align}

For simplicity, we always take $t_1<1$ in the following proof. For convenience, we agree on $C_0=\frac{\sigma_0}{4}$. If $\sup\limits_{0\leq s\leq t_1}\|\mathbf{h}^{n}(s)\|_{L^\infty}\leq 2C_1(\|\mathbf{h}_0\|_{L^\infty}+1)$, where $C_1\geq e$, a direct calculation shows that
\begin{equation*}
	\tilde{g}_{ \pm}^{n}(t, x, v)\geq C_0|v|^2-2CC_1(\|\mathbf{h}_0\|_{L^\infty}+1)|v| \geq -C_2(\|\mathbf{h}_0\|_{L^\infty}+1)^2,
\end{equation*}%$C_2=\frac{C^2C_1^2}{C_0}$
where $C_2=\f{C^2C_1^2}{C_0}$. Taking $t_1\leq\frac{1}{C_2(\|\mathbf{h}_0\|_{L^\infty}+1)^2}$, which yields that
\begin{equation}\label{L9}
	\exp\Big\{-\int_{0}^{t} \tilde{g}_{ \pm}^{n}(\tau) \mathrm{d} \tau\Big\}\leq C_1.%$e\leq C_1$
\end{equation}
For $0 \leq t \leq t_1$, a direct calculation shows that
\begin{align}\label{L10}
	\|\mathbf{h}^{n+1}(t)\|_{L^\infty} \leq C_1\|\mathbf{h}_0\|_{L^\infty}+C_3t_1 \sup _{0 \leq s \leq t_1}\|\mathbf{h}^{n}(s)\|_{L^\infty}+C_3 t_1 \sup _{0 \leq s \leq t_1}\|\mathbf{h}^{n}(s)\|_{L^\infty}^2,
\end{align}%
where $C_3>0$ is a specific constant. Taking $t_1\leq\frac{1}{8C_1C_2C_3(\|\mathbf{h}_0\|_{L^\infty}+1)^2}$, we have 
\begin{align}\label{L11}
	\sup _{0 \leq s \leq t_1}\|\mathbf{h}^{n+1}(s)\|_{L^\infty} \leq \frac{3}{2}C_1(\|\mathbf{h}_0\|_{L^\infty}+1).
\end{align}

%Then by induction on $n$, we can prove that if
%\begin{align}\label{L9}
%\sup _{0 \leq s \leq t_1}\|f_{+}^n-f_{-}^n(s)\|_{\infty} \leq C_* \varepsilon_0, \quad	\sup_{0\leq s\leq t_1}\|\mathbf{h}^{n}(s)\|_{L^\infty}\leq 2(\|\mathbf{h}_0\|_{\infty}+1),\quad t_1=[12C_1(1+\|\mathbf{h}_0\|_{L^\infty})]^{-1},
%\end{align}
%then it follows from \eqref{L8} and \eqref{L9} that
%\begin{equation}\label{L10}
%	\sup_{0\leq s\leq t_1}\|\mathbf{h}^{n+1}(t)\|_{\infty} \leq \frac{3}{2}(\|\mathbf{h}_0\|_{\infty}+1)<2(\|\mathbf{h}_0\|_{\infty}+1)~\mbox{with}~t_1=[12C_1(1+\|\mathbf{h}_0\|_{L^\infty})]^{-1},%~\mbox{for}~n\geq0.
%\end{equation}
%for all $n\geq 0$.

Next we consider the estimate of $\partial_{x, v}\mathbf{\tilde{h}}^{n+1}(s)$. We denote
\begin{equation}\label{L13}
	\tilde{h}_{\pm}^n(t, x, v):=f_{ \pm}^n(t, x, v) w_{\b_1}(t, v),\quad \text{with} \quad
	w_{\b_1}(t, v)=(1+|v|^2)^{\frac{\b_1}{2}} e^{\frac{\sigma_0}{1+t}|v|^2}.
\end{equation}
It follows from \eqref{L2} that
\begin{align}\label{L14}
	\Big\{\partial_t+v\cdot \nabla_x \mp \nabla_x \phi^n \cdot \nabla_v +\tilde{g}_{\pm,1}^{n}\Big\} \tilde{h}_{\pm}^{n+1}=\Big( \mp v \cdot \nabla_x \phi^n \sqrt{\mu}+\mathrm{K}^{\pm} \mathbf{f}^n+ \mathrm{\Gamma}_{\text {gain }}^{\pm}(\mathbf{f}^n, \mathbf{f}^n)\Big)\cdot w_{\b_1}, 
\end{align}
where $\tilde{g}_{\pm,1}^{ n }(t, x, v)=\frac{\sigma_0}{(1+t)^2} |v|^2 \pm \left\{\frac{1}{2}+\frac{\b_1}{1+|v|^2}+\frac{2 \sigma_0}{1+t}\right\}v\cdot\nabla_x \phi^n +g^n$.
Denote $\partial_{i}=\frac{\partial}{\partial x_i}$,
then it follows from \eqref{L13} that
\begin{align}\label{L15}
	& \left\{\partial_t+v \cdot \nabla_x \mp \nabla_x \phi^n \cdot \nabla_v\right\} \partial_i \tilde{h}_{\pm}^{n+1}+\tilde{g}_{\pm,1}^{n} \partial_i \tilde{h}_{\pm}^{n+1}=\pm\nabla_x \partial_i \phi^n \cdot \nabla_v \tilde{h}_{+}^{n+1}-\partial_i \tilde{g}_{\pm,1}^{n} \tilde{h}_{\pm}^{n+1}     \nonumber\\
	& \mp v \cdot \nabla_x \partial_i \phi^n \sqrt{\mu}(v) w_{\b_1}(t , v)+\partial_i\left(w_{\b_1} \mathrm{K}^{\pm} \mathbf{f}^n\right)+\partial_i\left(w_{\b_1} \mathrm{\Gamma}_{\text {gain }}^{\pm}(\mathbf{f}^n , \mathbf{f}^n)\right).
\end{align}
Integrating \eqref{L15} along the characteristics, we have
\begin{align}\label{L16}
	\partial_i \tilde{h}_{ \pm}^{n+1}(t, x, v)=&\partial_i \tilde{h}_{\pm, 0}(X_{ \pm}(0), V_{ \pm}(0)) e^{-\int_0^{t} \tilde{g}_{\pm,1}^{ n}(\tau) \mathrm{d} \tau} \nonumber\\
	& \pm \int_0^t e^{-\int_s^t \tilde{g}_{\pm,1}^{ n }(\tau) \mathrm{d} \tau}\left(\nabla_x \partial_i \phi^n \cdot \nabla_v \tilde{h}_{ \pm}^{n+1}\right)(s, X_{ \pm}^n(s) , V_{ \pm}^n(s)) \mathrm{d} s \nonumber\\
	& -\int_0^t e^{-\int_s^t \tilde{g}_{\pm,1}^{ n }(\tau) \mathrm{d} \tau}\left(\partial_i \tilde{g}_{\pm,1}^{ n } \tilde{h}_{ \pm}^{n+1}\right)(s, X_{ \pm}^n(s) , V_{ \pm}^n(s)) \mathrm{d} s \nonumber\\
	& \mp \int_0^t e^{-\int_s^t \tilde{g}_{\pm,1}^{ n }(\tau) \mathrm{d} \tau}\left(v \cdot \nabla_x \partial_i \phi^n\sqrt{\mu}(v) w_{\b_1}(t, v)\right)(s, X_{ \pm}^n(s), V_{ \pm}^n(s)) \mathrm{d} s \nonumber\\
	& +\int_0^t e^{-\int_s^t \tilde{g}_{\pm,1}^{ n }(\tau)\mathrm{d} \tau} \left(\partial_i\big(w_{\b_1} \mathrm{K}^{\pm} \mathbf{f}^n\big)\right)(s, X_{ \pm}^n(s), V_{ \pm}^n(s)) \mathrm{d} s \nonumber\\
	& +\int_0^t e^{-\int_s^t \tilde{g}_{\pm,1}^{ n }(\tau) \mathrm{d} \tau}\left(\partial_i\big(w_{\b_1} \mathrm{\Gamma}_{\text {gain }}^{ \pm}(\mathbf{f}^n , \mathbf{f}^n)\big)\right)(s,X_{ \pm}^n(s), V_{ \pm}^n(s)) \mathrm{d} s \nonumber\\
	:=&\sum_{i=0}^{5}D_i.
\end{align}
Similar to \eqref{L9}, it holds for $t\in [0,t_1]$ that
\begin{equation}\label{L17}
	\exp\Big\{-\int_0^{t} \tilde{g}_{\pm,1}^{ n}(\tau) \mathrm{d} \tau\Big\}\leq C_1.
\end{equation}
For $D_1$, taking $d=\f{1}{C(\|\nabla \rho\|_{L^\infty}+1)}$ and $R=\f14$ in \eqref{Eq2.20},
then one gets that
\begin{equation}\label{L18}
	\|\nabla^2 \phi\|_{L^\infty} \leq C(1+\|\rho\|_{L^\infty})\left(1+\ln(\|\nabla \rho\|_{L^\infty}+1)\right).
\end{equation}
which together with \eqref{L18} yields that for $0 \leq t \leq t_1$
\begin{align}\label{L19}
	D_1 &\leq C\int_{0}^{t}(1+\|\mathbf{h}^{n}(s)\|_{L^\infty})\left(1+\ln (1+\|\partial_{x}\mathbf{\tilde{h}}^{n}(s)\|_{L^\infty})\right)\|\partial_{v}\mathbf{\tilde{h}}^{n+1}(s)\|_{L^\infty} \mathrm{d}s \nonumber \\
	&\leq C(1+\|\mathbf{h}_0\|_{L^\infty})\int_{0}^{t}\left(1+\ln(1+ \|\partial_{x}\mathbf{\tilde{h}}^{n}(s)\|_{L^\infty})\right)\|\partial_{v}\mathbf{\tilde{h}}^{n+1}(s)\|_{L^\infty} \mathrm{d}s
\end{align}
For $D_2$,  taking $d=\f18$ and $R=\f14$ in \eqref{Eq2.20}, one has
\begin{equation}\label{L20}
	\|\nabla^2 \phi\|_{L^\infty} \leq C(\|\rho\|_{L^\infty}+\|\nabla \rho\|_{L^\infty}).
\end{equation}
Recalling the definition of $\tilde{g}_{\pm,1}^{ n }(t, x, v)$ in \eqref{L14}, one has from \eqref{L20} that
\begin{align*}
	|\partial_i \tilde{g}_{\pm,1}^{ n}|  &\leq C(1+|v|)(\|\nabla_x \partial_i \phi^n \|_{L^\infty}+\|\partial_x \mathbf{\tilde{h}}^n\|_{L^\infty}) \\
	&\leq C(1+|v|)(\|\mathbf{h}^{n}(s)\|_{L^\infty}+\|\partial_x \mathbf{\tilde{h}}^n(s)\|_{L^\infty}),
\end{align*}
which yields that for $0 \leq t \leq t_1$
\begin{align}\label{L21}
	D_2 &\leq C\int_{0}^{t}\|\mathbf{h}^{n+1}(s)\|_{L^\infty}(\|\nabla_x \partial_i \phi^n(s) \|_{L^\infty}+\|\partial_x \mathbf{\tilde{h}}^n(s)\|_{L^\infty})\mathrm{d}s  \nonumber \\
	&\leq C\int_{0}^{t}\|\mathbf{h}^{n+1}(s)\|_{L^\infty}(\|\mathbf{h}^{n}(s)\|_{L^\infty}+\|\partial_x \mathbf{\tilde{h}}^n(s)\|_{L^\infty})\mathrm{d}s   \nonumber  \\
	&\leq C+C(1+\|\mathbf{h}_0\|_{L^\infty})\int_{0}^{t}\|\partial_x \mathbf{\tilde{h}}^n(s)\|_{L^\infty}\mathrm{d}s. 
\end{align}
Similarly, it follows from \eqref{L20} that for $0 \leq t \leq t_1$
\begin{align}\label{L22}
	D_3 &\leq C\int_{0}^{t}(\|\mathbf{h}^{n}(s)\|_{L^\infty}+\|\partial_x \mathbf{\tilde{h}}^n(s)\|_{L^\infty})\mathrm{d}s   \nonumber  \\
	&\leq C+C\int_{0}^{t}\|\partial_x \mathbf{\tilde{h}}^n(s)\|_{L^\infty}\mathrm{d}s. 
\end{align}
From \eqref{Eq2.47} and \eqref{Eq2.41}, it is clear that
\begin{align}\label{L23}
	D_4 &\leq C\int_{0}^{t}\|\partial_{i}\mathbf{\tilde{h}}^{n}(s)\|_{L^\infty}\int_{\mathbb{R}^3} \f{w_{\b_1}(s, V_{\pm}^{n}(s))}{w_{\b_1}(s, u)}\left(|\mathrm{k}^{+}(V_{\pm}^{n}(s), u)|+|\mathrm{k}^{-}(V_{\pm}^{n}(s), u)|\right) \mathrm{d} u\mathrm{d}s    \nonumber  \\
	&\leq C\int_{0}^{t}\|\partial_x \mathbf{\tilde{h}}^n(s)\|_{L^\infty}\mathrm{d}s. 
\end{align}
For the last term on the RHS of \eqref{L16}, it follows from \eqref{new2.1} that
\begin{align}\label{L24}
	D_5 &\leq C\int_{0}^{t}\|\mathbf{h}^{n}(s)\|_{L^\infty}\|\partial_x \mathbf{\tilde{h}}^n(s)\|_{L^\infty}\mathrm{d}s     \nonumber  \\
	&\leq C(1+\|\mathbf{h}_0\|_{L^\infty})\int_{0}^{t}\|\partial_x \mathbf{\tilde{h}}^n(s)\|_{L^\infty}\mathrm{d}s. 
\end{align}
Substituting  \eqref{L19} and \eqref{L21} $-$ \eqref{L24} into \eqref{L16}, one obtains that
\begin{align}\label{L25}
	\|\partial_x \tilde{h}^{n+1}\|_{L^\infty}  &\leq C(1+ \|\partial_{x, v} \mathbf{\tilde{h}}_0\|_{L^\infty}) + C (1+\|\mathbf{h}_0\|_{L^\infty})\int_{0}^{t}\|\partial_x \mathbf{\tilde{h}}^n(s)\|_{L^\infty}\mathrm{d}s    \nonumber \\
	&\quad+C(1+\|\mathbf{h}_0\|_{L^\infty})\int_{0}^{t}\left(1+\ln (1+\|\partial_{x}\mathbf{\tilde{h}}^{n}(s)\|_{L^\infty})\right)\|\partial_{v}\mathbf{\tilde{h}}^{n+1}(s)\|_{L^\infty} \mathrm{d}s.
\end{align}

To close the estimate of $\|\partial_x \tilde{h}^{n+1}\|_{L^\infty}$, we still need to control $\|\partial_v \tilde{h}^{n+1}\|_{L^\infty}$. We denote $\partial^j=\partial_{v_j}$,  it follows from \eqref{L14} that
\begin{align}\label{L26}
	\{\partial_t+v \cdot \nabla_x \mp \nabla_x \phi^n \cdot \nabla_v\} \partial^j \tilde{h}_{\pm}^{n+1}+\tilde{g}_{\pm,1}^{n} \partial^j \tilde{h}_{\pm}^{n+1}&=-\partial_j \tilde{h}_{\pm}^{n+1}-\partial^j \tilde{g}_{\pm,1}^{n} \tilde{h}_{\pm}^{n+1} \mp \partial^j(\sqrt{\mu} w_{\b_1} v) \cdot \nabla_x \phi^n \nonumber\\
	& +\partial^j\Big(w_{\b_1} \mathrm{K}^{\pm}\mathbf{f}^n\Big)+\partial^j\Big(w_{\b_1} \mathrm{\Gamma}_{\text {gain }}^{\pm}(\mathbf{f}^n , \mathbf{f}^n)\Big).
\end{align}
Integrating \eqref{L26} along  the characteristics, we have
\begin{align}\label{L27}
	\partial^j \tilde{h}_{ \pm}^{n+1}(t, x, v)=&\Big(\partial^j \tilde{h}_{\pm, 0}\Big)(X_{ \pm}^n(0), V_{ \pm}^n(0)) e^{-\int_0^t \tilde{g}_{\pm,1}^{n}(\tau) \mathrm{d} \tau} \nonumber\\
	& -\int_0^t e^{-\int_{s}^{t} \tilde{g}_{\pm,1}^{n}(\tau) \mathrm{d} \tau} \Big(\partial_j \tilde{h}_{ \pm}^{n+1}\Big)(s, X_{ \pm}^n(s), V_{ \pm}^n(s)) \mathrm{d} s \nonumber\\
	& -\int_0^t e^{-\int_s^t \tilde{g}_{\pm,1}^{n}(\tau) \mathrm{d} \tau}\Big(\partial^j \tilde{g}_{\pm,1}^{n} \cdot\tilde{h}_{\pm}^{n+1}\Big)(s, X_{\pm}^n(s) \cdot V_{ \pm}^n(s)) \mathrm{d} s \nonumber\\
	& \mp\int_0^t e^{-\int_s^t \tilde{g}_{\pm,1}^{n}(\tau) \mathrm{d} \tau}\Big(\partial^j\big(\sqrt{\mu} w_{\b_1} v\big)\cdot \nabla_x \phi^n\Big)(s, X_{ \pm}^n(s), V_{ \pm}^n(s)) \mathrm{d} s \nonumber\\
	& +\int_0^t e^{-\int_s^{t} \tilde{g}_{\pm,1}^{n}(\tau) \mathrm{d} \tau}\Big(\partial^j\big(w_{\b_1} \mathrm{K}^{\pm} \mathbf{f}^n\big)\Big)(s, X_{ \pm}^n(s), V_{ \pm}^n(s)) \mathrm{d} s \nonumber\\
	& +\int_0^t e^{-\int_s^t \tilde{g}_{\pm,1}^{n}(\tau) \mathrm{d} \tau}\left(\partial^j\big(w_{\b_1} \mathrm{\Gamma}_{\text {gain }}^{ \pm}(\mathbf{f}^{n}, \mathbf{f}^n)\big)\right)(s, X_{ \pm}^n(s), V_{ \pm}^n(s)) \mathrm{d} s \nonumber\\
	:=&\sum_{i=0}^{5}E_i.
\end{align}
It follows from \eqref{L17} that for $0 \leq t \leq t_1$
\begin{align}\label{L28}
	\sum_{i=0,1,3}E_i& \leq  C(\|\partial_{x,v} \mathbf{\tilde{h}}_0\|_{L^\infty}+1)+C\int_{0}^{t}\|\partial_{x}\mathbf{\tilde{h}}^{n+1}(s)\|_{L^\infty}\mathrm{d}s+C\int_{0}^{t}\|\mathbf{h}^{n}(s)\|_{L^\infty}\mathrm{d}s  \nonumber\\
	&\leq C(\|\partial_{x,v} \mathbf{\tilde{h}}_0\|_{L^\infty}+1)+C\int_{0}^{t}\|\partial_{x}\mathbf{\tilde{h}}^{n+1}(s)\|_{L^\infty}\mathrm{d}s.
\end{align}

For $E_2$, we note that
\begin{equation*}
	|(\partial_{v_j} \tilde{g}_{\pm,1}^{n}) (t, x, v)| \leq C(1+|v|+\|\nabla_x \phi^n\|_{L^\infty}+\|\mathbf{h}^{n}\|_{L^\infty}),
\end{equation*}
which yields that for $0 \leq t \leq t_1$
\begin{align}\label{L29}
	E_2 \leq & C\int_{0}^{t}\left(1+\|\nabla_x \phi^n(s)\|_{L^\infty}+\|\mathbf{h}^{n}(s)\|_{L^\infty}\right)\|\mathbf{h}^{n+1}(s)\|_{L^\infty} \mathrm{d}s   \nonumber \\
	\leq& C\int_{0}^{t}(1+\|\mathbf{h}^{n}(s)\|_{L^\infty})\|\mathbf{h}^{n+1}(s)\|_{L^\infty} ds  \leq C.
\end{align}

For $E_4$, noting $\partial^{j}w_{\b_1}\leq Cw_{\b}$.  From \eqref{Eq2.50} and \eqref{Eq2.41}, it is obvious that
\begin{align*}
	|\partial^{j}(w_{\b_1}\mathrm{K}^{\pm}\mathbf{f}^n)| \leq& |\partial^{j}(w_{\b_1})\mathrm{K}^{\pm}\mathbf{f}^n|+|w_{\b_1}\partial^{j}(\mathrm{K}^{\pm}\mathbf{f}^n)|   \nonumber\\
	\leq& C(\| w_{\b_1} \mathbf{f}^n\|_{L^\infty}+\|w_{\b_1} \partial^{j}\mathbf{f}^n\|_{L^\infty}+\| w_{\b} \mathbf{f}^n \|_{L^\infty})\nonumber\\
	\leq& C(\| \mathbf{h}^n\|_{L^\infty}+\| \partial^{j}\mathbf{\tilde{h}}^{n}\|_{L^\infty}),
\end{align*}
which yields that  for $0 \leq t \leq t_1$
\begin{align}\label{L30}
	E_4 \leq C\int_{0}^{t}(\| \mathbf{h}^{n}(s)\|_{L^\infty}+\|\partial_{v} \mathbf{\tilde{h}}^{n}(s)\|_{L^\infty})\mathrm{d}s   
	\leq C+C\int_{0}^{t}\|\partial_{v} \mathbf{\tilde{h}}^{n}(s)\|_{L^\infty}\mathrm{d}s.
\end{align}
For $E_5$, we have from \eqref{new2.2} that
\begin{align}\label{L31}
	E_5 &\leq C\int_{0}^{t}(\|\mathbf{h}^{n}(s)\|_{L^\infty}^2+\|\mathbf{h}^{n}(s)\|_{L^\infty}\|\partial_{v} \mathbf{\tilde{h}}^{n}(s)\|_{L^\infty})\mathrm{d}s   \nonumber\\
	&\leq C+C(1+\|\mathbf{h}_0\|_{L^\infty})\int_{0}^{t}\|\partial_v \mathbf{\tilde{h}}^n(s)\|_{L^\infty}\mathrm{d}s. 
\end{align}
Combining \eqref{L27} $-$ \eqref{L31}, one has
\begin{align}\label{L32}
	\|\partial_v \mathbf{\tilde{h}}^{n+1}(t)\|_{L^\infty} 
	\leq& C(\|\partial_{x, v} \mathbf{\tilde{h}}_0\|_{L^\infty} +1) +C(1+\|\mathbf{h}_0\|_{L^\infty})\int_{0}^{t}\|\partial_v \mathbf{\tilde{h}}^n(s)\|_{L^\infty}\mathrm{d}s \nonumber\\
	&\quad+\int_{0}^{t}\|\partial_{x}\mathbf{\tilde{h}}^{n+1}(s)\|_{L^\infty}\mathrm{d}s.
\end{align}

It follows from \eqref{L25} and \eqref{L32} that
\begin{align}\label{L33}
	\|\partial_{x, v} \mathbf{\tilde{h}}^{n+1}(t)\|_{L^\infty}  &\leq C(1+ \|\partial_{x, v} \mathbf{\tilde{h}}_0\|_{L^\infty}) + C (1+\|\mathbf{h}_0\|_{L^\infty})\int_{0}^{t}\|\partial_{x, v} \mathbf{\tilde{h}}^n(s)\|_{L^\infty}\mathrm{d}s    \nonumber \\
	&\quad+C(1+\|\mathbf{h}_0\|_{L^\infty})\int_{0}^{t}\left(1+\ln \|\partial_{x, v}\mathbf{\tilde{h}}^{n}(s)\|_{L^\infty}\right)\|\partial_{x, v}\mathbf{\tilde{h}}^{n+1}(s)\|_{L^\infty} \mathrm{d}s.
\end{align}
We make the {\it a priori} assumption $\|\partial_{x, v} \mathbf{\tilde{h}}^{n}(t)\|_{L^\infty}\leq B(t)$. Then \eqref{L33} yields that
\begin{align}\label{newL33}
	\|\partial_{x, v} \mathbf{\tilde{h}}^{n+1}(t)\|_{L^\infty}  &\leq C_{4}(1+ \|\partial_{x, v} \mathbf{\tilde{h}}_0\|_{L^\infty}) + C_4 (1+\|\mathbf{h}_0\|_{L^\infty})\int_{0}^{t}B(s) \mathrm{d}s    \nonumber \\
	&\quad+C_4(1+\|\mathbf{h}_0\|_{L^\infty})\int_{0}^{t}\left(1+\ln B(s)\right)\|\partial_{x, v}\mathbf{\tilde{h}}^{n+1}(s)\|_{L^\infty} \mathrm{d}s,
\end{align}
where $C_4$ is a specific constant.

 We are now in a position to determine $B(t)$. In fact, we take $B(t)$ as the unique solution of the following integral equation:
\begin{align}\label{newL34}
	B(t)&= C_{4}(1+ \|\partial_{x, v} \mathbf{\tilde{h}}_0\|_{L^\infty}) + C_4 (1+\|\mathbf{h}_0\|_{L^\infty})\int_{0}^{t}B(s)\mathrm{d}s    \nonumber \\
	&\quad+C_4(1+\|\mathbf{h}_0\|_{L^\infty})\int_{0}^{t}\left(1+\ln (1+B(s))\right)B(s) \mathrm{d}s.
\end{align}
After simple manipulation, \eqref{newL34} implies that
\begin{equation*}
	B(t)\leq e^{-1}\cdot\Big\{\Big(C_4(1+ \|\partial_{x, v} \mathbf{\tilde{h}}_0\|_{L^\infty})+1\Big)\cdot e\Big\}^{\exp\{2C_4(1+\|\mathbf{h}_0\|_{L^\infty})t\}}-1.	
\end{equation*}
Moreover, combining \eqref{newL33} and \eqref{newL34}, Gronwall's inequality yields that $\|\partial_{x, v} \mathbf{\tilde{h}}^{n+1}(t)\|_{L^\infty}\leq B(t)$.

Without loss of generality, taking $t_1=\frac{1}{16C_1C_2C_3C_4(\|\mathbf{h}_0\|_{L^{\infty}}+1)^2}$, for $0 \leq t \leq t_1$ one has
\begin{equation*}
	B(t)\leq C_5(1+ \|\partial_{x, v} \mathbf{\tilde{h}}_0\|_{L^\infty})^2.
\end{equation*}
By induction on $n$, we can prove that if 
\begin{align}\label{L34}
	\|\partial_{x, v} \mathbf{\tilde{h}}^n(t)\|_{L^\infty}\leq B(t)\leq C_5(1+ \|\partial_{x, v} \mathbf{\tilde{h}}_0\|_{L^\infty})^2,
\end{align}
then it follows from \eqref{L34}  that  $0 \leq t \leq t_1$
\begin{equation}\label{L35}
	\|\partial_{x, v} \mathbf{\tilde{h}}^{n+1}(t)\|_{L^\infty}\leq B(t) \leq C_5(1+ \|\partial_{x, v} \mathbf{\tilde{h}}_0\|_{L^\infty})^2,
\end{equation}
for all $n\geq 0$.

\medskip
\noindent{ Step 3.} 
Now we  prove that $\tilde{h}^{n+1},~n=0,1,2,\cdots$ is a Cauchy sequence.  
It follows from \eqref{L14} that
\begin{align}\label{L36}
	& \{\partial_t+v\cdot \nabla_x-\nabla_x \phi^n \cdot \nabla_v\} (\tilde{h}_{+}^{n+1}-\tilde{h}_{+}^{n})+\tilde{g}_{+,1}^{n} (\tilde{h}_{+}^{n+1}-\tilde{h}_{+}^{n})\nonumber\\
	=& \nabla_x (\phi^n-\phi^{n-1})\nabla_v \tilde{h}_{+}^{n} -(\tilde{g}_{+,1}^{n}-\tilde{g}_{+,1}^{n-1})\tilde{h}_{+}^{n}-v \cdot \nabla_x (\phi^n-\phi^{n-1})\sqrt{\mu}(v) w_{\b_1}(t, v)\nonumber\\
	&+w_{\b_1}\Big(\mathrm{K}^{+}\mathbf{f}^n-\mathrm{K}^{+}\mathbf{f}^{n-1}\Big)+
	w_{\b_1} \Big(\mathrm{\Gamma}_{\text {gain }}^{+}(\mathbf{f}^n, \mathbf{f}^n)-\mathrm{\Gamma}_{\text {gain }}^{+}(\mathbf{f}^{n-1}, \mathbf{f}^{n-1})\Big),
\end{align}
and
\begin{align}\label{L37}
	& \{\partial_t+v\cdot \nabla_x+\nabla_x \phi^n \cdot \nabla_v\} (\tilde{h}_{-}^{n+1}-\tilde{h}_{-}^{n})+\tilde{g}_{-,1}^{n} (\tilde{h}_{-}^{n+1}-\tilde{h}_{-}^{n})\nonumber\\
	=& -\nabla_x (\phi^n-\phi^{n-1})\nabla_v \tilde{h}_{-}^{n} -(\tilde{g}_{-,1}^{n}-\tilde{g}_{-,1}^{n-1})\tilde{h}_{-}^{n}+v \cdot \nabla_x (\phi^n-\phi^{n-1})\sqrt{\mu}(v) w_{\b_1}(t, v)\nonumber\\
	&+w_{\b_1}\left(\mathrm{K}^{-}\mathbf{f}^n-\mathrm{K}^{-}\mathbf{f}^{n-1}\right)
	+w_{\b_1} \left(\mathrm{\Gamma}_{\text {gain }}^{-}(\mathbf{f}^n, \mathbf{f}^n)-\mathrm{\Gamma}_{\text {gain }}^{-}(\mathbf{f}^{n-1}, \mathbf{f}^{n-1})\right).
\end{align}
Then integrating \eqref{L36} and \eqref{L37}  along  the characteristics, one has
\begin{align}\label{L38}
	&\Big(\tilde{h}_{ \pm}^{n+1}-\tilde{h}_{ \pm}^{n}\Big)(t, x, v)\nonumber\\
	=& \pm \int_0^t e^{-\int_s^t \tilde{g}_{\pm,1}^{ n }(\tau) \mathrm{d} \tau}\Big(\nabla_x (\phi^n-\phi^{n-1}) \cdot \nabla_v \tilde{h}_{ \pm}^{n+1}\Big)(s, X_{ \pm}^n(s) , V_{ \pm}^n(s)) \mathrm{d} s \nonumber\\
	& -\int_0^t e^{-\int_s^t \tilde{g}_{\pm,1}^{ n }(\tau) \mathrm{d} \tau}\Big((\tilde{g}_{\pm,1}^{n}-\tilde{g}_{\pm,1}^{n-1})\cdot\tilde{h}_{\pm}^{n}\Big)(s, X_{ \pm}^n(s) , V_{ \pm}^n(s)) \mathrm{d} s \nonumber\\
	& \mp \int_0^t e^{-\int_s^t \tilde{g}_{\pm,1}^{ n }(\tau) \mathrm{d} \tau}\Big(v \cdot \nabla_x (\phi^n-\phi^{n-1})\sqrt{\mu}(v) w_{\b_1}(t, v)\Big)(s, X_{ \pm}^n(s), V_{ \pm}^n(s)) \mathrm{d} s \nonumber\\
	& +\int_0^t e^{-\int_s^t \tilde{g}_{\pm,1}^{ n }(\tau)\mathrm{d} \tau}  \Big(w_{\b_1}(\mathrm{K}^{\pm} \mathbf{f}^n-\mathrm{K}^{\pm} \mathbf{f}^{n-1})\Big)(s, X_{ \pm}^n(s), V_{ \pm}^n(s)) \mathrm{d} s \nonumber\\
	& +\int_0^t e^{-\int_s^t \tilde{g}_{\pm,1}^{ n }(\tau) \mathrm{d}\tau}\Big(w_{\b_1} \big(\mathrm{\Gamma}_{\text {gain }}^{\pm}(\mathbf{f}^n, \mathbf{f}^n)-\mathrm{\Gamma}_{\text {gain }}^{\pm}(\mathbf{f}^{n-1}, \mathbf{f}^{n-1})\big)\Big)(s,X_{ \pm}^n(s), V_{ \pm}^n(s)) \mathrm{d} s  \nonumber\\
	=&:\sum_{i=1}^{5}J_i.
\end{align}
Utilizing \eqref{L35} and \eqref{Eq2.19}, one gets that
\begin{align}\label{L39}
	J_1\leq C(1+ \|\partial_{x, v} \mathbf{\tilde{h}}_0\|_{L^\infty})^2\int_{0}^{t}\|(\mathbf{\tilde{h}}^n-\mathbf{\tilde{h}}^{n-1})(s)\|_{L^\infty}\mathrm{d}s.
\end{align}

For $J_2$, we note from \eqref{L13} and \eqref{L3} that
\begin{align*}
	\left|(\tilde{g}_{\pm,1}^{n}-\tilde{g}_{\pm,1}^{n-1})(t, x, v)\right|\leq C(1+|v|)\|(\mathbf{\tilde{h}}^{n}-\mathbf{\tilde{h}}^{n-1})(t)\|_{L^\infty},
\end{align*}
which, together with \eqref{Pri0},  yields that
\begin{align}\label{L40}
	J_2+J_3\leq C(1+\|\mathbf{h}_0\|_{L^\infty})\int_{0}^{t}\|(\mathbf{\tilde{h}}^n-\mathbf{\tilde{h}}^{n-1})(s)\|_{L^\infty}\mathrm{d}s.
\end{align}

For $J_4$, it is easy from \eqref{newK5} and \eqref{Eq2.41} to see that
\begin{align*}
	\Big|w_{\b_1}(\mathrm{K}^{\pm} \mathbf{f}^n-\mathrm{K}^{\pm} \mathbf{f}^{n-1})\Big|
	=&\Big|w_{\b_1}\int_{\mathbb{R}^3}\mathtt{k}^{(2)}(v,u)(f_{\pm}^{n}-f_{\pm}^{n-1})(u)\,\mathrm{d}u\Big| \nonumber\\
	&+\Big|w_{\b_1}\int_{\mathbb{R}^3}\mathtt{k}^{(1)}(v,u)(f^{n}_{\mp}-f^{n-1}_{\mp})(u)\,\mathrm{d}u\Big|  \nonumber\\
	\leq& C\|\mathbf{\tilde{h}}^{n}-\mathbf{\tilde{h}}^{n-1}\|_{L^\infty},
\end{align*}
which implies that
\begin{align}\label{L41}
J_4\leq C\int_{0}^{t}\|(\mathbf{\tilde{h}}^n-\mathbf{\tilde{h}}^{n-1})(s)\|_{L^\infty}\mathrm{d}s.
\end{align}

For $J_5$, we have from \eqref{Eq1.15} and \eqref{Eq2.43} that
\begin{equation*}
	w_{\b_1}\Big|\mathrm{\Gamma}_{\text {gain }}^{\pm}(\mathbf{f}^n, \mathbf{f}^n)-\mathrm{\Gamma}_{\text {gain }}^{\pm}(\mathbf{f}^{n-1}, \mathbf{f}^{n-1})\Big|\leq C(
	\|\mathbf{\tilde{h}}^{n}\|_{L^\infty}+\|\mathbf{\tilde{h}}^{n-1}\|_{L^\infty})\|\mathbf{\tilde{h}}^{n}-\mathbf{\tilde{h}}^{n-1})\|_{L^\infty},
\end{equation*}
which, together with \eqref{Pri0}, yields that 
\begin{align}\label{L42}
	J_5\leq C(1+\|\mathbf{h}_0\|_{L^\infty})\int_{0}^{t}\|(\mathbf{\tilde{h}}^n-\mathbf{\tilde{h}}^{n-1})(s)\|_{L^\infty}\mathrm{d}s.
\end{align}

Substituting \eqref{L39}$-$\eqref{L42} into \eqref{L38}, one obtains that for $0 \leq t \leq t_1$,
\begin{align*}
	\|\mathbf{\tilde{h}}^{n+1}-\mathbf{\tilde{h}}^{n}(t)\|_{L^\infty}
	\leq C(1+\|\mathbf{h}_0\|_{L^\infty}+\|\partial_{x, v}\mathbf{\tilde{h}}_0\|_{L^\infty})^2\int_{0}^{t}\|(\mathbf{\tilde{h}}^n-\mathbf{\tilde{h}}^{n-1})(s)\|_{L^\infty}\mathrm{d}s, 
\end{align*}
and by induction on $n$
\begin{equation*}
	\sup _{0 \leq s \leq t_1}\|\mathbf{\tilde{h}}^{n+1}-\mathbf{\tilde{h}}^{n}(s)\|_{L^\infty}\leq \frac{C^n(1+\|\mathbf{h}_0\|_{L^\infty}+\|\partial_{x, v}\mathbf{\tilde{h}}_0\|_{L^\infty})^{2n}}{n!}t_1^n\leq \frac{C^n(1+\|\mathbf{h}_0\|_{L^\infty}+\|\partial_{x, v}\mathbf{\tilde{h}}_0\|_{L^\infty})^{2n}}{n!},
\end{equation*}
which yields immediately that $\tilde{h}_{\pm}^{n+1},~n=0,1,2,\cdots$ is a Cauchy sequence. Therefore, there exists a limit $f_{\pm}$ such that 
\begin{equation}\notag
	\sup_{0\leq s\leq t_1}\|w_{\b_1}(\mathbf{f}^n-\mathbf{f})(s)\|_{L^\infty_{x,v}}\rightarrow0 ~~\mbox{as}~~n\rightarrow +\infty.
\end{equation}
It is clear to know that $f_{\pm}$ is a mild solution of \eqref{Eq1.17}, thus $F_{\pm}=\mu(v)+\sqrt{\mu(v)}f_{\pm} \geq 0$ is a mild solution of VPB \eqref{Eq1.4}. It follows from \eqref{L11}, \eqref{L34} and \eqref{L35} that 
\begin{align*}
	&\sup_{0\leq s\leq t_1}\|\mathbf{h}(s)\|_{L^\infty}\leq 2C_1(1+\|\mathbf{h}_0\|_{L^\infty}),\\ 
	&\sup _{0 \leq s \leq t_1}\|\partial_{x, v}\mathbf{\tilde{h}}(s)\|_{L^\infty} \leq C_5(1+ \|\partial_{x, v} \mathbf{\tilde{h}}_0\|_{L^\infty})^2.
\end{align*}

\medskip
\noindent{ Step 4.} 
Now we consider the uniqueness. Let $\mathbf{g}(t,x,v)$ be another solution of \eqref{Eq1.17}  with 
 \begin{align*}
 &\sup\limits_{0\leq t\leq t_1}\|w_{\b}\mathbf{g}(t)\|_{L^\infty}\leq 2C_1(1+\|\mathbf{h}_0\|_{L^\infty}),  \\
 &\sup\limits_{0\leq t\leq t_1}\|\partial_{x, v}(w_{\b_1}\mathbf{g})\|_{L^\infty})\leq C_5(1+ \|\partial_{x, v} \mathbf{\tilde{h}}_0\|_{L^\infty})^2.
\end{align*}
 By similar arguments as in \eqref{L36} $-$ \eqref{L42},  it is direct to obtain that 
\begin{align*}\notag
	\|w_{\b_1}(\mathbf{f}-\mathbf{g})(t)\|_{L^\infty}\leq 
	C(1+\|\mathbf{h}_0\|_{L^\infty}+\|\partial_{x, v}\mathbf{\tilde{h}}_0\|_{L^\infty})^2 \int_0^t\|w_{\b_1}(\mathbf{f}-\mathbf{g})(s)\|_{L^\infty}\mathrm{d}s,
\end{align*}
which, together with the Gronwall's  inequality, yields the uniqueness, i.e., $\mathbf{f}=\mathbf{g}$.

\medskip
\noindent{ Step 5.} 
 Taking the limit  $n\rightarrow+\infty$ in \eqref{L1},  and then multiplying $\eqref{L1}_1$ by  $1,v,|v|^2$ and $\ln F_{+}$, and multiplying  $\eqref{L1}_2$  by  $1,v,|v|^2$ and $\ln F_{-}$, integrating by parts, together with $\eqref{L1}_3$, one can obtain \eqref{Eq2.4}$-$\eqref{Eq2.7} and \eqref{Eq2.8}.  

\medskip
\noindent{ Step 6.} 
If $F_{\pm, 0}$ (or equivalent $f_{\pm, 0}$) is continuous, it is direct to check that $F_{\pm}^{n+1}(t,x,v)$ (or equivalent $f_{\pm}^{n+1}(t,x,v)$) is continuous in $[0,\infty)\times\mathbb{T}^3 \times\mathbb{R}^3$. The continuous of $\mathbf{f}(t,x,v)$  is an immediate consequence of  $\sup\limits_{0\leq s\leq t_1}\|(\mathbf{f}^{n+1}-\mathbf{f})(s)\|_{L^\infty}\rightarrow 0$ as $n\rightarrow +\infty$. 

\medskip
\noindent{ Step 7.} 
Let $\|f_{+, 0}-f_{-, 0}\|_{\infty}\leq \varepsilon_0$. 
It follows from \eqref{L2} that
\begin{align}\label{L43}
	& \{\partial_t+v \cdot \nabla_x\} (f_{+}-f_{-})+g(t, x, v) (f_{+}-f_{-})  \nonumber\\
	=&\nabla_x \phi \cdot \nabla_v(f_{+}+f_{-})
	-\nabla_x \phi \cdot \frac{v}{2} (f_{+}+f_{-})
	-2v \cdot \nabla_x \phi \sqrt{\mu} 
	+ \mathrm{K}^{+} \mathbf{f}-\mathrm{K}^{-} \mathbf{f}    \nonumber\\
	&+\mathrm{\Gamma}_{\text {gain }}^{+}(\mathbf{f}, \mathbf{f})-\mathrm{\Gamma}_{\text {gain }}^{-}(\mathbf{f}, \mathbf{f}),
\end{align}
where
\begin{align*}
	g(t, x, v) & =\iint B(v-u, \omega)\left(F_++F_{-}\right)(t, x, u)  \mathrm{d} \omega  \mathrm{d} u \geq 0.
\end{align*}
From \eqref{L43}, it is direct to have that
\begin{align}\label{L44}
	\left(f_{+}-f_{-}\right)(t, x, v)
	=&\left(f_{+, 0}-f_{-, 0}\right)(x-tv,v) e^{-\int_0^{t} g(\tau, \mathfrak{X}(\tau), v) \mathrm{d} \tau} \nonumber\\
	& + \int_0^t e^{-\int_s^t g(\tau, \mathfrak{X}(\tau), v) \mathrm{d} \tau}\Big(\nabla_x  \phi \cdot \nabla_v (f_{+}+f_{-})\Big)(s,\mathfrak{X}(\tau), v) \mathrm{d} s \nonumber\\
	& -\int_0^t e^{-\int_s^t g(\tau, \mathfrak{X}(\tau), v) \mathrm{d} \tau}\Big(\nabla_x  \phi\cdot \frac{v}{2} (f_{+}+f_{-})\Big)(s,\mathfrak{X}(\tau), v) \mathrm{d} s  \nonumber\\
	& -2 \int_0^t e^{-\int_s^t g(\tau, \mathfrak{X}(\tau), v) \mathrm{d} \tau}\Big(v \cdot \nabla_x \phi\sqrt{\mu}\Big)(s,\mathfrak{X}(\tau), v) \mathrm{d} s \nonumber\\
	& +\int_0^t e^{-\int_s^t g(\tau, \mathfrak{X}(\tau), v)\mathrm{d} \tau} \Big( \mathrm{K}^{+} \mathbf{f}-\mathrm{K}^{-} \mathbf{f}\Big)(s, \mathfrak{X}(\tau), v) \mathrm{d} s \nonumber\\
	& +\int_0^t e^{-\int_s^t g(\tau, \mathfrak{X}(\tau), v) \mathrm{d} \tau}\Big( \mathrm{\Gamma}_{\text {gain }}^{ +}(\mathbf{f}, \mathbf{f})-\mathrm{\Gamma}_{\text {gain }}^{ -}(\mathbf{f} , \mathbf{f})\Big)(s,\mathfrak{X}(\tau), v) \mathrm{d} s        \nonumber\\
	:=&\sum_{i=0}^{5}H_i,
\end{align}
where $\mathfrak{X}(s):=x-v(t-s)$.
Noting \eqref{E2.2}, \eqref{E2.3}, we have
\begin{equation}\label{L45}
	\sum_{i=0}^{3}H_i\leq \|f_{+, 0}-f_{-, 0}\|_{L^\infty} +C(1+\|\mathbf{h}_0\|_{L^\infty}+\|\partial_{x, v}\mathbf{\tilde{h}}_0\|_{L^\infty})^2\int_{0}^{t}\|(f_{+}-f_{-})(s)\|_{L^\infty}\mathrm{d}s.
\end{equation}

For $H_4$, by a rotation, it follows from \eqref{newK2}  that
\begin{align*}
	\mathrm{K}^{+}\mathbf{f}-\mathrm{K}^{-}\mathbf{f}
	=&2\int_{\mathbb{R}^3}\int_{\mathbb{S}^2} B(v-u,\t)(f_+-f_-)(v')\sqrt{\mu(u)} \sqrt{\mu(u')} d u d \omega \\
	=&\int_{\mathbb{R}^3}\mathrm{k}_{2}(v,u)(f_+-f_-)(u)\,\mathrm{d}\eta ,
\end{align*}
which combined with \eqref{newK4} and \eqref{Eq2.41} yield that
\begin{equation}\label{L46}
	|\mathrm{K}^{+}\mathbf{f}-\mathrm{K}^{-}\mathbf{f}|\leq C\|f_+-f_-\|_{L^\infty}.
\end{equation}
Then it follows from \eqref{L46} that
\begin{equation}\label{L47}
	H_4\leq  C\int_{0}^{t}\|(f_{+}-f_{-})(s)\|_{L^\infty}\mathrm{d}s.
\end{equation}
For $H_5$, by a rotation, it is noted that
\begin{align*}
	\mathrm{\Gamma}_{\text {gain }}^{+}(\mathbf{f} , \mathbf{f})-\mathrm{\Gamma}_{\text {gain }}^{-}(\mathbf{f} , \mathbf{f})
	=&\frac{1}{\sqrt{\mu}}Q_{\text {gain }}\left(\sqrt{\mu}(f_{+}-f_{-}), \sqrt{\mu}(f_{+}+f_{-})\right) \nonumber\\
	=&\iint B(v-u,\t)\sqrt{\mu(u)}(f_{+}-f_{-})(v')(f_{+}+f_{-})(u')  \mathrm{d} \omega  \mathrm{d} u \nonumber\\
	=&\iint B(v-u,\t)\sqrt{\mu(u)}(f_{+}-f_{-})(u')(f_{+}+f_{-})(v')  \mathrm{d} \omega  \mathrm{d} u =:I_A.
\end{align*}
To $I_A$, as in \cite{Glassey96Book}, we use the change of variables $u'=v+z_{\perp}$, $v'=v+z_{\shortparallel}$ and $\eta=v+z_{\shortparallel}$ with $z=u-v$, $z_{\shortparallel}=(z\cdot \omega)\omega$, $z_{\perp}=z-z_{\shortparallel}$. Moreover, 
\begin{align*}
	|B(v-u, \t)|\leq  C(|z_{\shortparallel}|^2+|z_{\perp}|^2)^{\frac{\gamma-1}{2}}|z_{\shortparallel}|,
\end{align*}
and 
\begin{align*}
	\mathrm{d}\omega \mathrm{d}u=\mathrm{d}z_{\shortparallel}\mathrm{d}z_{\perp}\mathrm{d}\omega =2|z_{\shortparallel}|^2\mathrm{d}|z_{\shortparallel}|\mathrm{d}\omega\frac{\mathrm{d}z_{\perp}}{|z_{\shortparallel}|^2}=\frac{2}{|z_{\shortparallel}|^2} \mathrm{d}z_{\shortparallel}\mathrm{d}z_{\perp}=\frac{2}{|\eta-v|^2}\mathrm{d}\eta \mathrm{d}z_{\perp}.
\end{align*}
Then one has 
\begin{align}\label{L48}
	I_A&\leq \|(f_{+}-f_{-})\|_{L^\infty}\int_{\eta} \int_{z_{\perp}}\f{1}{|\eta-v|}|(z_{\shortparallel}|^2+|z_{\perp}|^2)^{\frac{\gamma-1}{2}}e^{-\f{|\eta+z_{\perp}|^2}{4}}|\mathbf{f}(\eta)|\mathrm{d}\eta \mathrm{d}z_{\perp}    \nonumber\\
	&\leq C\|f_{+}-f_{-}\|_{L^\infty}\|\mathbf{h}\|_{L^\infty}.
\end{align}
which yields that
\begin{equation}\label{L49}
	H_5\leq  C(1+\|\mathbf{h}_0\|_{L^\infty})\int_{0}^{t}\|(f_{+}-f_{-})(s)\|_{L^\infty}\mathrm{d}s.
\end{equation}
Submitting \eqref{L45}, \eqref{L47} and \eqref{L49} into \eqref{L44},  one has
\begin{align*}
	\|(f_{+}-f_{-}(t)\|_{L^\infty} \leq \varepsilon_0 \exp\left\{C(1+\|\mathbf{h}_0\|_{L^\infty}+\|\partial_{x, v}\mathbf{\tilde{h}}_0\|_{L^\infty})^2t\right\}.
\end{align*}
Then for  $0 \leq t \leq t_1$,
\begin{align}\label{L50}
	\|(f_{+}-f_{-}(t)\|_{L^\infty} \leq C\varepsilon_0 \exp\left\{C(1+\|\partial_{x, v}\mathbf{\tilde{h}}_0\|_{L^\infty})^2t\right\}\leq C\varepsilon_0 \exp\left\{C(1+\|\partial_{x, v}\mathbf{\tilde{h}}_0\|_{L^\infty})^2\right\}.
\end{align}
Therefore the proof of  Proposition \ref{prop2.1} is completed. 
\end{proof}

%%%%%%%%%%%%%%%%%%%%%%%%%%%%%%%%%%%%%%%%%%%%%%%%%%%%%%%%%%%%%%%

%%%%%%%%%%%%%%%%%%%%%%%%%%%%%%%%%%%%%%%%%%%%%%%%%%%%%%%%%%%5

\section{Uniform $L^{\infty}$-Estimate}   \label{Section 4}
From now on, we make the {\it a priori} assumption \eqref{Eq2.21}, i.e.,
\begin{align*}
	\|\nabla_x\phi(t)\|_{L^{\infty}}\le \delta (1+t)^{-2},\quad \|\nabla^2_x\phi(t)\|_{L^{\infty}}\le \delta (1+t)^{-\frac{5}{2}},
\end{align*}
where $\delta$ is a  small positive constant to be determined later.

\subsection{$L^{\infty}$ {\it a priori} estimate}
Recall the weight function $w_{\b}(t, v)$ in \eqref{Eq1.22}.
 It follows from $\eqref{Eq1.17}_{1, 2}$  that
\begin{align}
	\left(\partial_t+v \cdot \nabla _x-\nabla_x \phi \cdot \nabla_v\right)h_{+}+\tilde{\nu}_{+}h_{+}= w_{\b}\mathrm{K}^{+}\mathbf{f}+w_{\b}\mathrm{\Gamma}^{+}(\mathbf{f},\mathbf{f})-\nabla_x \phi \cdot v w_{\b}\sqrt{\mu},\label{Eq3.5}\\
	\left(\partial_t+v \cdot \nabla _x+\nabla_x \phi \cdot \nabla_v\right)h_{-}+\tilde{\nu}_{-}h_{-}=  w_{\b}\mathrm{K}^{-}\mathbf{f}+w_{\b}\mathrm{\Gamma}^{-}(\mathbf{f},\mathbf{f})+\nabla_x \phi \cdot v w_{\b}\sqrt{\mu},\label{Eq3.6}
\end{align}
where
\begin{align}\label{Eq3.7}
	\tilde{\nu}_{\pm}(t,x,v):=\frac{ \sigma_0}{(1+t)^{2}}|v|^2+\nu(v)\pm\nabla_x\phi\cdot v\Big(\frac{1}{2}+\frac{\beta}{1+|v|^2}+\frac{2\sigma_0}{1+t}\Big).
\end{align}

For $\delta\ll 1$, it follows from  \eqref{Eq2.21} and \eqref{Eq3.7} that 
\begin{align}\label{Eq3.10}
	\tilde{\nu}_{\pm}(t,x,v)&\geq \frac{1}{(1+t)^2}\left(\sigma_0|v|^2-C\delta |v|\right)+\nu(v)  \nonumber\\
	&=\frac{\sigma_0}{(1+t)^2}\Big(|v|-\f{C\delta}{2\sigma_0}\Big)^2-\frac{C^2\delta^2}{4\sigma_0(1+t)^2}+\nu(v)
	\geq \frac{1}{2}\nu(v)
	\geq \tilde{\nu}_0>0,
\end{align}
for a positive constant $\tilde{\nu}_0$.

Recall the backward characteristics $\left(X_{\pm}(\tau), V_{\pm}(\tau)\right)=\left(X_{\pm}(\tau;t,x,v), V_{\pm}(\tau;t,x,v)\right)$ in \eqref{Eq2.22}, the mild formulations of \eqref{Eq3.5},  \eqref{Eq3.6} take the form
\begin{align}\label{Eq3.11}
	h_{\pm}(t,x,v)
	&=\left(h_{\pm, 0}\right)(X_{\pm}(0),V_{\pm}(0))e^{-\int_0^t\tilde{\nu}_{\pm}\left(\tau, X_{\pm}(\tau), V_{\pm}(\tau)\right)\mathrm{d}\tau}\nonumber\\
	&\quad +\int_0^te^{-\int_s^t\tilde{\nu}_{\pm}\left(\tau, X_{\pm}(\tau), V_{\pm}(\tau)\right)\mathrm{d}\tau}\Big(w_{\b}\mathrm{K}^{\pm}\mathbf{f}\Big)(s,X_{\pm}(s),V_{\pm}(s))\mathrm{d}s\nonumber\\
	&\quad \mp\int_0^te^{-\int_s^t\tilde{\nu}_{\pm}\left(\tau, X_{\pm}(\tau), V_{\pm}(\tau)\right)\mathrm{d}\tau}\Big(\nabla_x\phi\cdot vw_{\b}\sqrt{\mu}\Big)(s,X_{\pm}(s),V_{\pm}(s))\mathrm{d}s\nonumber\\
	&\quad +\int_0^te^{-\int_s^t\tilde{\nu}_{\pm}\left(\tau, X_{\pm}(\tau), V_{\pm}(\tau)\right)\mathrm{d}\tau}\Big(w_{\b}\mathrm{\Gamma}^{\pm}(\mathbf{f},\mathbf{f})\Big)(s,X_{\pm}(s),V_{\pm}(s))\mathrm{d}s.
\end{align}

\begin{lemma}\label{Lem3.1}
	Under the condition \eqref{Eq2.21}, it holds that for $\b \geq 4$ and $\b_1\geq 0$, 
	\begin{align}\label{E4.5-1}
		\sup_{0\leq s\leq t}\|\mathbf{h}(s)\|_{L^\infty}
		\le& C_6\{\|\mathbf{h}_{0}\|_{L^{\infty}}+\|\mathbf{h}_{0}\|_{L^{\infty}}^2+\sqrt{\mathcal{E}(\mathbf{F}_0)}\} \nonumber\\
		&\quad+C_6\sup_{\substack{t_1\le s\le t\\ y \in \mathbb{T}^3}}\Big\{\|\mathbf{h}(s)\|^{\frac{3}{2}}_{L^{\infty}} \Big(\int_{\mathbb{R}^3}|\mathbf{\tilde{h}}(s,y,u)|\mathrm{d}u\Big)^{\frac{1}{2}}\Big\},
	\end{align}
	where the positive constant $C_6\geq1$ depends only on $\gamma$, $\b$, Here $t_1>0$ is the lifespan defined in \eqref{LT}.
\end{lemma}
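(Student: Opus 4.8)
The plan is to estimate $\|\mathbf{h}(t)\|_{L^\infty}$ directly from the mild formulation \eqref{Eq3.11}, splitting the time integral at the local lifespan $t_1$ so that the contribution over $[0,t_1]$ is absorbed into the local bound \eqref{E2.2} and the genuinely global work happens over $[t_1,t]$. For the initial-data term, \eqref{Eq3.10} gives $e^{-\int_0^t\tilde\nu_\pm\,\mathrm{d}\tau}\le e^{-\tilde\nu_0 t}\le 1$, so it is controlled by $\|\mathbf{h}_0\|_{L^\infty}$. For the electric-field term $\mp\int_0^t e^{-\int_s^t\tilde\nu_\pm\mathrm{d}\tau}(\nabla_x\phi\cdot v\,w_\b\sqrt\mu)(s,X_\pm(s),V_\pm(s))\,\mathrm{d}s$, I would use the a priori bound \eqref{Eq2.21} on $\|\nabla_x\phi(s)\|_{L^\infty}$ together with the fast decay of $v\,w_\b\sqrt\mu$ in $v$ (which kills the polynomial-exponential weight), and crucially the $L^2$ control $\|\nabla\phi(s)\|_{L^2}\le C\|f_+-f_-\|_{L^2}\le C\sqrt{\mathcal E(\mathbf F_0)}$ coming from \eqref{Eq2.14} and Lemma \ref{Lem2.3}; interpolating via \eqref{Eq2.16} between the $L^2$ and $L^\infty$ bounds on $\nabla\phi$ lets this term be dominated by $\sqrt{\mathcal E(\mathbf F_0)}$ plus a small multiple of $\sup_s\|\mathbf h(s)\|_{L^\infty}$, which is then absorbed.

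The nonlinear term $\int_0^t e^{-\int_s^t\tilde\nu_\pm\mathrm{d}\tau}(w_\b\mathrm\Gamma^\pm(\mathbf f,\mathbf f))(s,X_\pm(s),V_\pm(s))\,\mathrm{d}s$ is handled by Corollary \ref{Coro2.13}: $|w_\b\mathrm\Gamma^\pm(\mathbf f,\mathbf f)|\le C\nu(v)\|\mathbf h\|_{L^\infty}^{3/2}\big(\int e^{\frac{\sigma_0}{1+s}|\eta|^2}|\mathbf f(\eta)|\,\mathrm d\eta\big)^{1/2}$. Over $[0,t_1]$ the factor $\nu(v)$ is absorbed by $e^{-\int_s^t\tilde\nu_\pm}\le e^{-\frac12\int_s^t\nu}$ (from \eqref{Eq3.10}) and the resulting time-integral, of length $\le t_1$, produces the $\|\mathbf h_0\|_{L^\infty}+\|\mathbf h_0\|_{L^\infty}^2$ terms via \eqref{E2.2}; over $[t_1,t]$ one bounds the $v$-integral of $\mathbf f$ pointwise in $x$ by $\sup_{y}\int|\mathbf{\tilde h}(s,y,u)|\,\mathrm du$ (note $e^{\frac{\sigma_0}{1+s}|\eta|^2}\le w_{\beta_1}(s,\eta)$ since $\beta_1\ge 0$), take the remaining $\|\mathbf h\|_{L^\infty}^{3/2}$ and $(\cdot)^{1/2}$ out of the integral, and use $\int_{t_1}^t e^{-\frac12\int_s^t\nu(V_\pm(\tau))\mathrm d\tau}\nu(v)\,\mathrm ds\le C$ after noting that along the characteristic $|V_\pm(\tau)-v|\le C\delta$ by Lemma \ref{Lem2.6}, so $\nu(V_\pm(\tau))\cong\nu(v)$. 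This yields exactly the $\sup_{t_1\le s\le t,\,y}\{\|\mathbf h(s)\|_{L^\infty}^{3/2}(\int|\mathbf{\tilde h}|\,\mathrm du)^{1/2}\}$ term on the right of \eqref{E4.5-1}.

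Finally, the term $\int_0^t e^{-\int_s^t\tilde\nu_\pm\mathrm{d}\tau}(w_\b\mathrm K^\pm\mathbf f)(s,X_\pm(s),V_\pm(s))\,\mathrm ds$ is the standard $K$-iteration term. Using \eqref{newK5}–\eqref{Eq2.41}, the operator $w_\b\mathrm K^\pm$ acting on $\mathbf f=\mathbf h/w_\b$ has kernel $\mathtt k^{(i)}(v,\eta)\frac{w_\b(t,v)}{w_\b(t,\eta)}$ with $\int|\mathtt k^{(i)}\frac{w_\b(t,v)}{w_\b(t,\eta)}|\,\mathrm d\eta\le C(1+|v|)^{-1}$, so a single application only gives $\int_0^t e^{-\int_s^t\tilde\nu_\pm}\nu(v)\,\mathrm ds\cdot\sup_s\|\mathbf h(s)\|_{L^\infty}$, which is not small. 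I therefore expect the \textbf{main obstacle} to be the double-iteration argument: substitute the Duhamel formula for $\mathbf h$ once more inside the $K$-term, split the $v,u,u_1$ integrals into a high-velocity region $\{|V_\pm(s)|\ge N\}\cup\{|u|\ge N\}\cup\{|u_1|\ge N\}$ (small by the decay of the kernels, giving a small constant times $\sup_s\|\mathbf h\|_{L^\infty}$) and a bounded region, where one changes variables $y=X_\pm(s_1;s,X_\pm(s),u)$ using Corollary \ref{Coro2.8} to convert the triple velocity integral into an $L^2_x$ bound on $\mathbf f$, which by Lemma \ref{Lem2.3} is $\lesssim\sqrt{\mathcal E(\mathbf F_0)}$, plus the contribution from the near-diagonal time slab $s_1\in(s-\frac1N,s)$ handled crudely. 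Collecting all pieces, choosing $N$ large and $\delta$ small, and absorbing the $\epsilon\sup_{0\le s\le t}\|\mathbf h(s)\|_{L^\infty}$ terms on the left gives \eqref{E4.5-1}.
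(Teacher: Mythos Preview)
Your proposal is correct and follows essentially the same approach as the paper: mild formulation, the interpolation \eqref{Eq2.16} plus Lemma~\ref{Lem2.3} for the field term, Corollary~\ref{Coro2.13} for the nonlinear term, and the standard double Duhamel iteration on the $K$-term with the velocity truncation at level $N$, the change of variables $y=\hat X_\pm(s_1)$ via Corollary~\ref{Coro2.8}, and Lemma~\ref{Lem2.3} to extract $\sqrt{\mathcal E(\mathbf F_0)}$. The only cosmetic difference is that the paper first proves the bound with $\sup_{0\le s\le t}$ on the nonlinear term (their \eqref{4.31}) and only afterwards invokes the local estimate \eqref{E2.2} to reduce the sup to $[t_1,t]$ and produce the $\|\mathbf h_0\|_{L^\infty}^2$ contribution, whereas you split the time integral at $t_1$ inside the nonlinear term from the outset; the two orderings are equivalent.
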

\begin{proof}
	From \eqref{Eq2.46} and \eqref{Eq3.11}, it is easy to have that
	\begin{align}\label{Eq3.12}
		|h_{\pm}(t,x,v)|&\le \|\mathbf{h}_{0}\|_{L^{\infty}}e^{-\tilde{\nu}_0 t} +\int_0^te^{-\tilde{\nu}_0(t-s)}|(w_{\b}\mathrm{K}^{\pm}\mathbf{f})(s,X_{\pm}(s),V_{\pm}(s))|\mathrm{d}s\nonumber\\
		&\quad +C\int_0^te^{-\tilde{\nu}_0(t-s)}\mu^{\frac{1}{4}}(V_{\pm}(s))|\nabla_x\phi(s,X_{\pm}(s),V_{\pm}(s))|\mathrm{d}s\nonumber\\
		&\quad +C\int_0^te^{-\int_s^t\tilde{\nu}_{\pm}(\tau)\mathrm{d}\tau}\cdot \nu(V_{\pm}(s))\|\mathbf{h}(s)\|^{\f{3}{2}}_{L^{\infty}} \Big(\int_{\mathbb{R}^3}|\mathbf{\tilde{h}}(s,X_{\pm}(s),u)|\mathrm{d}u\Big)^{\f{1}{2}}\mathrm{d}s.
	\end{align}
	It follows from \eqref{Eq2.16} that 
	\begin{align}\label{Eq3.13}
		&\int_0^te^{-\tilde{\nu}_0(t-s)}\mu^{\frac{1}{4}}(V_{\pm}(s))|\nabla_x\phi(s,X_{\pm}(s))|\mathrm{d}s\nonumber\\
		&\le \f{C}{N} \int_0^te^{-\tilde{\nu}_0(t-s)}\|\mathbf{h}(s)\|_{L^{\infty}}\mathrm{d}s+C_{N} \int_0^te^{-\tilde{\nu}_0(t-s)}\|\nabla_x\phi(s)\|_{L^{2}}\mathrm{d}s\nonumber\\
		&\le  \f{C}{N}\sup_{0\le s\le t}\|\mathbf{h}(s)\|_{L^{\infty}}+C_{N}\sup_{0\le s\le t}\|\nabla_x\phi(s)\|_{L^{2}}.
	\end{align}
	Substituting \eqref{Eq3.13} into \eqref{Eq3.12}, one has
	\begin{align}\label{Eq3.14}
		|h_{\pm}(t,x,v)|&\le \|\mathbf{h}_{0}\|_{L^{\infty}}e^{-\tilde{\nu}_0 t}+ \f{C}{N}\sup_{0\le s\le t}\|\mathbf{h}(s)\|_{L^{\infty}}+ C_{N}\sup_{0\le s\le t}\|\nabla_x\phi(s)\|_{L^{2}}\nonumber\\ 
		&\quad +C\sup_{\substack{0\le s\le t\\ y \in \mathbb{T}^3}}\Big\{\|\mathbf{h}(s)\|^{\f{3}{2}}_{L^{\infty}}\cdot \Big(\int_{\mathbb{R}^3}|\mathbf{\tilde{h}}(s,y,u)|\mathrm{d}u\Big)^{\f{1}{2}}\Big\}\nonumber\\
		&\quad +C\int_0^te^{-\tilde{\nu}_0(t-s)}\mathrm{d}s\int_{\mathbb{R}^3}|\mathtt{k}_{w_{\b}}^{(2)}(V_{\pm}(s),u)h_{\pm}(s,X_{\pm}(s),u)|\mathrm{d}u\nonumber\\
		&\quad +C\int_0^te^{-\tilde{\nu}_0(t-s)}\mathrm{d}s\int_{\mathbb{R}^3}|\mathtt{k}_{w_{\b}}^{(1)}(V_{\pm}(s),u)h_{\mp}(s,X_{\pm}(s),u)|du,
	\end{align}
	where $\mathtt{k}_{w_{\b}}^{(i)}(v,u)=\mathtt{k}^{(i)}(v,u)\cdot\frac{w_{\b}(t, v)}{w_{\b}(t, u)}$ ($i=1,2$). We  only  consider $|h_{+}(t,x,v)|$ because $|h_{-}(t,x,v)|$ can be dealt similarly. For simplicity of presentation, we denote
	\begin{equation}\label{Hat1}
	(\hat{X}_{\pm}(\tau),\hat{V}_{\pm}(\tau)):=(X_{\pm}(\tau;s,X_{+}(s),u), V_{\pm}(\tau;s,X_{+}(s),u)).
	\end{equation}
	 Using \eqref{Eq3.14} again, one has
	\begin{align}\label{Eq3.15}
		|h_{+}(t,x,v)|
		\le& C\|\mathbf{h}_{0}\|_{L^{\infty}}+ \f{C}{N}\sup_{0\le s\le t}\|\mathbf{h}(s)\|_{L^{\infty}}+C_{N}\sup_{0\le s\le t}\|\nabla_x\phi(s)\|_{L^{2}}\nonumber\\ 
		&+C\sup_{\substack{0\le s\le t\\ y \in \mathbb{T}^3}}\Big\{\|\mathbf{h}(s)\|^{\f{3}{2}}_{L^{\infty}}\cdot \Big(\int_{\mathbb{R}^3}|\mathbf{\tilde{h}}(s,y,u)|\mathrm{d}u\Big)^{\f{1}{2}}\Big\} +\sum_{i=1}^{4}I_i,
      \end{align}
	where,
		\begin{align*}
		 I_{1}:=&\int_0^t\int_0^se^{-\tilde{\nu}_0(t-s)}e^{-\tilde{\nu}_0(s-\tau)}\mathrm{d}\tau\mathrm{d}s      \nonumber\\
		 &\times\int_{\mathbb{R}^3}\int_{\mathbb{R}^3}|\mathtt{k}_{w_{\b}}^{(2)}(V_{+}(s),u)\mathtt{k}_{w_{\b}}^{(2)}(\hat{V}_{+}(\tau),u')| |h_{+}(\tau,\hat{X}_{+}(\tau),u')|\mathrm{d}u'du; \nonumber\\
		I_{2}:=&\int_0^t\int_0^se^{-\tilde{\nu}_0(t-s)}e^{-\tilde{\nu}_0(s-\tau)}\mathrm{d}\tau\mathrm{d}s      \nonumber\\
		&\times\int_{\mathbb{R}^3}\int_{\mathbb{R}^3}|\mathtt{k}_{w_{\b}}^{(2)}(V_{+}(s),u)\mathtt{k}_{w_{\b}}^{(1)}(\hat{V}_{+}(\tau),u')| |h_{-}(\tau,\hat{X}_{+}(\tau),u')|\mathrm{d}u'du; \nonumber\\
		I_{3}:=&\int_0^t\int_0^se^{-\tilde{\nu}_0(t-s)}e^{-\tilde{\nu}_0(s-\tau)}\mathrm{d}\tau\mathrm{d}s      \nonumber\\
		&\times\int_{\mathbb{R}^3}\int_{\mathbb{R}^3}|\mathtt{k}_{w_{\b}}^{(1)}(V_{+}(s),u)\mathtt{k}_{w_{\b}}^{(2)}(\hat{V}_{-}(\tau),u')| |h_{-}(\tau,\hat{X}_{-}(\tau),u')|\mathrm{d}u'du; \nonumber\\
		I_{4}:=&\int_0^t\int_0^se^{-\tilde{\nu}_0(t-s)}e^{-\tilde{\nu}_0(s-\tau)}\mathrm{d}\tau\mathrm{d}s      \nonumber\\
		&\times\int_{\mathbb{R}^3}\int_{\mathbb{R}^3}|\mathtt{k}_{w_{\b}}^{(1)}(V_{+}(s),u)\mathtt{k}_{w_{\b}}^{(1)}(\hat{V}_{-}(\tau),u')| |h_{+}(\tau,\hat{X}_{-}(\tau),u')|\mathrm{d}u'du.
	\end{align*}
	
	We fix $N>0$ large enough so that  for any $0\le \tau\le s\le t$
	\begin{align}\label{Eq3.17}
		\sup_{0\le s\le t}|V_{\pm}(s)-v|\le C\delta \le \frac{N}{8},\quad \sup_{0\le \tau\le s\le t}|\hat{V}_{\pm}(\tau)-u|\le C\delta \le \frac{N}{8}.
	\end{align}
	We first discuss $I_1$ of \eqref{Eq3.15} and split it into three cases.\\
	\text{Case 1:} $|v|\ge N$. In this case, we have $|V_{+}(s)|\ge \frac{N}{2}$. It is noted that
	\begin{align}\label{Eq3.18}
		\int_{\mathbb{R}^3}|\mathtt{k}_{w_{\b}}^{(i)}(V_{+}(s),u)|\mathrm{d}u\le \frac{C}{1+|V_{+}(s)|}, \quad i=1,2,
	\end{align}
	which yields that
	\begin{align}\label{Eq3.19}
		I_1\le \frac{C}{N}\sup_{0\le s\le t}\|\mathbf{h}(s)\|_{L^{\infty}}.
	\end{align}
	\text{Case 2:} For either $|v|\le N, |u|\ge 2N$ or $|u|\le 2N, |u'|\ge 3N$,
	we get either
	\begin{align*}
		|V_{+}(s)-u|\geq |u-v|-|V_{+}(s)-v| \geq |u|-|v|-|V_{+}(s)-v|\geq \frac{N}{2},
	\end{align*}
	or
	\begin{align*}
		|\hat{V}_{+}(\tau)-u'|\geq |u'-u|-|\hat{V}_{+}(\tau)-u| \geq |u'|-|u|-|\hat{V}_{+}(\tau)-u|\geq \frac{N}{2},
	\end{align*}
	for $N$ suitably large, $s, \tau \in [0,t]$. Then either one of the following is valid: for $i=1,2$,
	\begin{equation}\label{Eq3.20}
		\begin{split}
			|\mathtt{k}_{w_{\b}}^{(i)}(V_{+}(s), u)|&\leq e^{-\frac{N^{2}}{64}}|\mathtt{k}_{w_{\b}}^{(i)}(V_{+}(s), u)|e^{\frac{|V_{+}(s)-u|^{2}}{16}},\quad |V_{+}(s)-u|\ge \frac{N}{2},\\
			|\mathtt{k}_{w_{\b}}^{(i)}(\hat{V}_{+}(\tau), u')|&\leq e^{-\frac{N^{2}}{64}}|\mathtt{k}_{w_{\b}}^{(i)}(\hat{V}_{+}(\tau), u')|e^{\frac{|\hat{V}_{+}(\tau)-u'|^{2}}{16}}, \quad |\hat{V}_{+}(\tau)-u'|\ge \frac{N}{2}.
		\end{split}
	\end{equation}
	We also note for $i=1,2$,
	\begin{equation}\label{Eq3.21}
		\begin{split}
			\int_{\mathbb{R}^3}|\mathtt{k}_{w_{\b}}^{(i)}(V_{+}(s), u)|e^{\frac{|V_{+}(s)-u|^{2}}{16}}du&\leq \frac{C}{1+|V_+(s)|},\\
			\int_{\mathbb{R}^3}|\mathtt{k}_{w_{\b}}^{(i)}(\hat{V}_{+}(\tau), u')|e^{\frac{|\hat{V}_{+}(\tau)-u'|^{2}}{16}}\mathrm{d}u'&\leq \frac{C}{1+|\hat{V}_{+}(\tau)|}.
		\end{split}
	\end{equation}
	Using \eqref{Eq3.20}--\eqref{Eq3.21}, one has 
	\begin{align}\label{Eq3.22}
		&\int_0^te^{-\tilde{\nu}_0(t-s)}\mathrm{d}s\int_{0}^se^{-\tilde{\nu}_0(s-\tau)}\mathrm{d}\tau
		 \Big\{\int_{|u|\ge 2N}\int_{\mathbb{R}^3}+\int_{|u|\le 2N}\int_{|u'|\ge 3N}\Big\}|h_{+}(\tau,\hat{X}_{+}(\tau),u')|   \nonumber\\
		 &\quad \times|\mathtt{k}_{w_{\b}}^{(2)}(V_{+}(s),u)\mathtt{k}_{w_{\b}}^{(2)}(\hat{V}_{+}(\tau),u')|
		 \mathrm{d}u'\mathrm{d}u\nonumber\\
		&\le Ce^{-\frac{N^2}{64}}\sup_{0\le s\le t}\|\mathbf{h}(s)\|_{L^{\infty}}.
	\end{align}
	\text{Case 3:} $|v|\le N, |u|\le 2N, |u'|\le 3N, s-\f{1}{N}\le \tau\le s$. In this case, it is easy to see that 
	\begin{align}\label{Eq3.23}
		&\int_0^te^{-\tilde{\nu}_0(t-s)}\mathrm{d}s\int_{s-\f{1}{N}}^se^{-\tilde{\nu}_0(s-\tau)}\mathrm{d}\tau
		\int_{|u|\le 2N}\int_{|u'|\le 3N}|h_{+}(\tau,\hat{X}_{+}(\tau),u')|\nonumber\\
		&\quad\times |\mathtt{k}_{w_{\b}}^{(2)}(V_{+}(s),u)\mathtt{k}_{w_{\b}}^{(2)}(\hat{V}_{+}(\tau),u')| \mathrm{d}u'\mathrm{d}u\nonumber\\
		&\le \f{C}{N} \sup_{0\le s\le t}\|\mathbf{h}(s)\|_{L^{\infty}}.
	\end{align}
	\text{Case 4:} $|v|\le N, |u|\le 2N, |u'|\le 3N, 0\le \tau \le s-\f{1}{N}$. A direct calculation shows that
	\begin{align}\label{Eq3.24}
		&\int_0^te^{-\tilde{\nu}_0(t-s)}\mathrm{d}s\int_{0}^{s-\f{1}{N}}e^{-\tilde{\nu}_0(s-\tau)}\mathrm{d}\tau
		\int_{|u|\le 2N}\int_{|u'|\le 3N}|h_{+}(\tau,\hat{X}_{+}(\tau),u')|\nonumber\\
		&\quad  \times |\mathtt{k}_{w_{\b}}^{(2)}(V_{+}(s),u)\mathtt{k}_{w_{\b}}^{(2)}(\hat{V}_{+}(\tau),u')|\mathrm{d}u'\mathrm{d}u\nonumber\\
		&\le 	\int_0^te^{-\tilde{\nu}_0(t-s)}\mathrm{d}s\int_{0}^{s-\f{1}{N}}e^{-\tilde{\nu}_0(s-\tau)}\mathrm{d}\tau\Big(\int_{|u|\le 2N}\int_{|u'|\le 3N} |h_{+}(\tau,\hat{X}_{+}(\tau),u')|^2\mathrm{d}u'\mathrm{d}u\Big)^{\frac{1}{2}}\nonumber\\
		&\le 	C_N\int_0^te^{-\tilde{\nu}_0(t-s)}\mathrm{d}s\int_{0}^{s-\f{1}{N}}e^{-\tilde{\nu}_0(s-\tau)}\mathrm{d}\tau\Big(\int_{|u|\le 2N}\int_{|u'|\le 3N} |f_{+}(\tau,\hat{X}_{+}(\tau),u')|^2\mathrm{d}u'\mathrm{d}u\Big)^{\frac{1}{2}}.
	\end{align}
	Let $y=\hat{X}_{+}(\tau)\equiv X_{+}(\tau;s,X_{+}(s),u)$, then it follows from Corollary \ref{Coro2.8} that 
	\begin{align}\label{Eq3.25}
		\Big|\frac{dy}{du}\Big|\ge \frac{1}{2}(s-\tau)^3, \quad |y-X_{+}(s)|\le C(s-\tau)N.
	\end{align}
	Using Lemma \ref{Lem2.3}, we have
	\begin{align}\label{Eq3.26}
		&\int_{|u|\le 2N}\int_{|u'|\le 3N} |f_{+}(\tau,\hat{X}_{+}(\tau),u')|^2\mathrm{d}u'\mathrm{d}u\nonumber\\
		=&\int_{|u|\le 2N}\int_{|u'|\le 3N} |f_{+}(\tau,\hat{X}_{+}(\tau),u')|^2\mathbf{1}_{\{|F_{+}(\tau,\hat{X}_{+}(\tau),u')-\mu(u')|\ge \mu(u')\}}\mathrm{d}u'\mathrm{d}u\nonumber\\
		&+\int_{|u|\le 2N}\int_{|u'|\le 3N} |f_{+}(\tau,\hat{X}_{+}(\tau),u')|^2\mathbf{1}_{\{|F_{+}(\tau,\hat{X}_{+}(\tau),u')-\mu(u')|< \mu(u')\}}\mathrm{d}u'\mathrm{d}u\nonumber\\
		\le & C_N\|h_{+}(\tau)\|_{L^{\infty}}\int_{|u|\le 2N}\int_{|u'|\le 3N} |F_{+}(\tau,y,u')-\mu(u')|\mathbf{1}_{\{|F_{+}(\tau,y,u')-\mu(u')|\ge \mu(u')\}}\mathrm{d}u'\mathrm{d}u\nonumber\\
		&+ C_N\int_{|u|\le 2N}\int_{|u'|\le 3N} \frac{|F_{+}(\tau,y,u')-\mu(u')|^2}{\mu(u')}\mathbf{1}_{\{|F_{+}(\tau,y,u')-\mu(u')|< \mu(u')\}}\mathrm{d}u'\mathrm{d}u\nonumber\\
		\le & C_N\frac{1+(s-\tau)^3}{(s-\tau)^3}\|h_{+}(\tau)\|_{L^{\infty}}\int_{\mathbb{T}^3}\int_{|u'|\le 3N} |F_{+}(\tau,y,u')-\mu(u')|\mathbf{1}_{\{|F_{+}(\tau,y,u')-\mu(u')|\ge \mu(u')\}}\mathrm{d}u'\mathrm{d}y\nonumber\\
		&+C_N\frac{1+(s-\tau)^3}{(s-\tau)^3}\int_{\mathbb{T}^3}\int_{|u'|\le 3N} \frac{|F_{+}(\tau,y,u')-\mu(u')|^2}{\mu(u')}\mathbf{1}_{\{|F_{+}(\tau,y,u')-\mu(u')|< \mu(u')\}}\mathrm{d}u'\mathrm{d}y\nonumber\\
		\le & C_{N}N^3\Big(\|h_{+}(\tau)\|_{L^{\infty}}\mathcal{E}(\mathbf{F}_0)+\mathcal{E}(\mathbf{F}_0)\Big),
	\end{align}
	which yields that 
	\begin{align}\label{Eq3.27}
		\Big(\int_{|u|\le 2N}\int_{|u'|\le 3N} |f_{+}(\tau,\hat{X}_{+}(\tau),u')|^2\mathrm{d}u'\mathrm{d}u\Big)^{\frac{1}{2}}\le C_{N}\Big(\|h_{+}(\tau)\|^{\frac{1}{2}}_{L^{\infty}}\sqrt{\mathcal{E}(\mathbf{F}_0)}+\sqrt{\mathcal{E}(\mathbf{F}_0)}\Big).
	\end{align}
	Combining \eqref{Eq3.24} and \eqref{Eq3.27}, we have
	\begin{align}\label{Eq3.28}
		&\int_0^te^{-\tilde{\nu}_0(t-s)}\mathrm{d}s\int_{0}^{s-\f{1}{N}}e^{-\tilde{\nu}_0(s-\tau)}\mathrm{d}\tau\nonumber\\
		&\quad \times\int_{|u|\le 2N}\int_{|u'|\le 3N}|\mathtt{k}_{w_{\b}}^{(2)}(V_{+}(s),u)\mathtt{k}_{w_{\b}}^{(2)}(\hat{V}_{+}(\tau),u')|\cdot |h_{+}(\tau,\hat{X}_{+}(\tau),u')|\mathrm{d}u'\mathrm{d}u\nonumber\\
		\le &\f{1}{N} \|h_{+}(\tau)\|_{L^{\infty}}+C_{N}\Big(\mathcal{E}(\mathbf{F}_0)+\sqrt{\mathcal{E}(\mathbf{F}_0)}\Big).
	\end{align}
Then it is a consequence of	\eqref{Eq3.19}, \eqref{Eq3.22}-\eqref{Eq3.23} and \eqref{Eq3.28} that 
	\begin{align}\label{Eq3.29}
		I_1&\le \frac{C}{N}\sup_{0\le s\le t}\|\mathbf{h}(s)\|_{L^{\infty}}+C_{N}\sqrt{\mathcal{E}(\mathbf{F}_0)}.
	\end{align}
	An argument similar to the one used in \eqref{Eq3.29} shows that
	\begin{align}\label{Eq3.30}
		\sum_{i=1}^{4}I_i&\le \frac{C}{N}\sup_{0\le s\le t}\|\mathbf{h}(s)\|_{L^{\infty}}+C_{N}\sqrt{\mathcal{E}(\mathbf{F}_0)}.
	\end{align}
	Combining \eqref{Eq3.15} and \eqref{Eq3.30}, one has 
	\begin{align}\label{E3.31}
		|h_{+}(t,x,v)|
		&\le C\|\mathbf{h}_{0}\|_{L^{\infty}}+\frac{C}{N}\sup_{0\le s\le t}\|\mathbf{h}(s)\|_{L^{\infty}}+C_{N}\sqrt{\mathcal{E}(\mathbf{F}_0)}\nonumber\\
		&\quad +C\sup_{\substack{0\le s\le t\\ y \in \mathbb{T}^3}}\Big\{\|\mathbf{h}(s)\|^{\f{3}{2}}_{L^{\infty}}\cdot \Big(\int_{\mathbb{R}^3}|\mathbf{\tilde{h}}(s,y,u)|\mathrm{d}u\Big)^{\f{1}{2}}\Big\}.
	\end{align}
	For $h_{-}(t,x,v)$, following an argument similar to the proof of \eqref{E3.31}, one gets
	\begin{align}\label{E3.32}
		|h_{-}(t,x,v)|
		&\le C\|\mathbf{h}_{0}\|_{L^{\infty}}+\frac{C}{N}\sup_{0\le s\le t}\|\mathbf{h}(s)\|_{L^{\infty}}+C_{N}\sqrt{\mathcal{E}(\mathbf{F}_0)}\nonumber\\
		&\quad +C\sup_{\substack{0\le s\le t\\ y \in \mathbb{T}^3}}\Big\{\|\mathbf{h}(s)\|^{\f{3}{2}}_{L^{\infty}}\cdot \Big(\int_{\mathbb{R}^3}|\mathbf{\tilde{h}}(s,y,u)|\mathrm{d}u\Big)^{\f{1}{2}}\Big\}.
	\end{align}
	Choosing $N$ sufficiently large in \eqref{E3.31} and \eqref{E3.32} so that $\frac{C}{N}\leq \frac{1}{2}$, one obtains that for $\b \geq 4$ and $\b_1\geq 0$,
	\begin{align}\label{4.31}
		\sup_{0\leq s\leq t}\|\mathbf{h}(s)\|_{L^\infty}
	\le& C\Big(\|\mathbf{h}_{0}\|_{L^{\infty}}+\sqrt{\mathcal{E}(\mathbf{F}_0)}\Big)
	+C\sup_{\substack{0\le s\le t\\ y \in \mathbb{T}^3}}\Big\{\|\mathbf{h}(s)\|^{\frac{3}{2}}_{L^{\infty}}\cdot \Big(\int_{\mathbb{R}^3}|\mathbf{\tilde{h}}(s,y,u)|\mathrm{d}u\Big)^{\frac{1}{2}}\Big\}.
	\end{align}
	
Using Proposition \ref{prop2.1}, one has that for $\b \geq 4$ and $\b_1\geq 0$,
	\begin{align*}
		\sup_{\substack{0\le s\le t_1\\ y \in \mathbb{T}^3}}\Big\{\|\mathbf{h}(s)\|^{\frac{3}{2}}_{L^{\infty}}\cdot \Big(\int_{\mathbb{R}^3}|\mathbf{\tilde{h}}(s,y,u)|\mathrm{d}u\Big)^{\frac{1}{2}}\Big\}\leq C\sup_{0\leq s\leq t_1}\|\mathbf{h}(s)\|_{L^\infty}^2
		\leq C \|\mathbf{h}_{0}\|_{L^{\infty}}^2,
	\end{align*}
which, together with \eqref{4.31}, yields \eqref{E4.5-1}.
	Therefore the proof of Lemma \ref{Lem3.1} is completed.
\end{proof}

%%%%%%%%%%%%%%%%%%%%%%%%%%%%%%%%%%%%%%%%%%%%%%%%%%%%%%%%%%%%%
%%%%%%%%%%%%%%%%%%%%%%%%%%%%%%%%%%%%%%%%%%%%%%%%%%%%%%%%%%%%

\subsection{$L^{\infty}_xL^1_v$-estimate}

In this part, we mainly focus on the estimate of 
\begin{align*}
	\int_{\mathbb{R}^3}|\mathbf{\tilde{h}}(t,x,v)|\mathrm{d}v.
\end{align*}
Motivated by \cite{Duan17ARMA}, if $\mathcal{E}(\mathbf{F}_0)+\|\mathbf{\tilde{h}}_0\|_{L^1_xL^\infty_v}$ is small, due to the hyperbolicity of the VPB system, one should be able to show that  $\int_{\mathbb{R}^3}|\mathbf{\tilde{h}}(t,x,v)|\mathrm{d}v$ is small for $t\geq t_1$, even though it could be initially large, i.e.,  $\int_{\mathbb{R}^3}|\mathbf{\tilde{h}}_{0}(x,v)|\mathrm{d}v$ is large.  Indeed, we have the following lemma which plays a key role in this paper.
\begin{lemma}\label{Lem3.2}
	Under the condition \eqref{Eq2.21}, it holds for $0\leq\b_1<\b-4$, and $t\geq t_1$, that
	\begin{align*}
		\int_{\mathbb{R}^3}|\mathbf{\tilde{h}}(t,x,v)|\mathrm{d}v
		&\le C\Big(\|\mathbf{h}_0\|^{\frac{3}{\b-\b_1}}_{L^{\infty}}\|\mathbf{\tilde{h}}_0\|^{1-\frac{3}{\b-\b_1}}_{L^1_x L_v^{\infty}}+(\|\mathbf{h}_0\|_{L^{\infty}}+1)^6\|\mathbf{\tilde{h}}_0\|_{L^1_x L_v^{\infty}}\Big)
		 +C_{N, \lambda}\sqrt{\mathcal{E}(\mathbf{F}_0)}\nonumber\\
		&\quad 
		+C(\lambda+\frac{1}{N^{\b-\b_1-4}}) (\sup_{0\le s\le t}\|\mathbf{h}(s)\|_{L^{\infty}}+\sup_{0\le s\le t}\|\mathbf{h}(s)\|^2_{L^{\infty}}),
	\end{align*}
	where $\l>0$ and  $N\geq1$ will be chosen later. 
\end{lemma}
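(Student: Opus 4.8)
The plan is to run the $L^\infty_x L^1_v$ estimate along the backward characteristics $(X_\pm(\tau),V_\pm(\tau))$ defined in \eqref{Eq2.22}, starting from the mild formulation \eqref{Eq3.11} for $\tilde h_\pm$ (i.e. with weight $w_{\b_1}$ in place of $w_\b$). Integrating $|\tilde h_\pm(t,x,v)|$ over $v\in\mathbb{R}^3$, the free-transport term contributes $\|\tilde h_0\|_{L^1_x L^\infty_v}$ multiplied by a decay factor coming from $e^{-\int_0^t\tilde\nu_\pm}$; since $\tilde\nu_\pm\ge\frac12\nu(v)\gtrsim (1+|v|)^\gamma$ by \eqref{Eq3.10}, integrating $e^{-\frac12\nu(v)t}$ in $v$ against the unit weight gives a factor $\sim t^{-3/\gamma}$ for $\gamma>0$; to get a bound uniform in $\gamma\in[0,1]$ one instead splits $\mathbb{R}^3_v$ into $|v|\le N$ and $|v|>N$. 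On $|v|>N$ one pays a factor $N^{-(\b-\b_1)}$ by trading weight $w_{\b_1}$ against $w_\b$ (this is where $\b_1<\b-4$ enters), bounding the integrand by $N^{-(\b-\b_1)}\|\mathbf h\|_{L^\infty}$; on $|v|\le N$ the velocity measure is $O(N^3)$ and one keeps track of the $t^{-3}$-type Jacobian gain from Corollary~\ref{Coro2.8} when changing variables $v\mapsto X_\pm(\tau;t,x,v)$. The first displayed term $\|\mathbf h_0\|^{3/(\b-\b_1)}_{L^\infty}\|\tilde h_0\|^{1-3/(\b-\b_1)}_{L^1_x L^\infty_v}$ is precisely the outcome of optimizing the choice of cutoff $N$ (or equivalently interpolating between the $L^\infty$ bound $\|\mathbf h_0\|_{L^\infty}$ and the $L^1_x L^\infty_v$ bound on $\tilde h_0$) on the early time layer $[0,t_1]$, using the local-existence bounds \eqref{E2.2}--\eqref{E2.3}; the factor $(\|\mathbf h_0\|_{L^\infty}+1)^6$ reflects the $t_1^{-3}$ scaling in \eqref{LT} applied twice.

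Next I would handle the source terms in \eqref{Eq3.11}. The electric-field term $\nabla_x\phi\cdot v\,w_{\b_1}\sqrt\mu$ is harmless: $w_{\b_1}\sqrt\mu$ is integrable in $v$ and $\|\nabla_x\phi(s)\|_{L^\infty}\le\delta(1+s)^{-2}$ by \eqref{Eq2.21}, so after using \eqref{Eq2.16} it contributes $C\delta$ times $\sup_s\|\mathbf h(s)\|_{L^\infty}$ plus $C_N\sqrt{\mathcal E(\mathbf F_0)}$ via \eqref{Eq2.14} and Lemma~\ref{Lem2.3}; this feeds into the $C(\lambda+N^{-(\b-\b_1-4)})\sup\|\mathbf h\|_{L^\infty}$ term. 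The gain part of the nonlinear term $w_{\b_1}\mathrm\Gamma^\pm_{\mathrm{gain}}(\mathbf f,\mathbf f)$ is controlled by \eqref{Eq2.43} (requiring $\b_1\ge4$, hence the lower bound on $\b_1$), giving $\lesssim\|\mathbf h\|_{L^\infty}^2$ pointwise, which after $v$-integration on $|v|\le N$ costs $N^3$ but after integration in $s$ against $e^{-\nu(v)(t-s)/2}$ contributes only $\sup\|\mathbf h\|^2_{L^\infty}$ times a small constant once $N$ is fixed and $\lambda$ small; the loss part is treated with \eqref{Eq2.44}, which is where the $\b-4$ constraint becomes sharp since one keeps the extra $\nu(v)$ factor and still needs $w_{\b_1}$ against $w_\b$ room to integrate $\nu(v)e^{-\nu(v)(t-s)/2}$ in $v$ over $|v|>N$ — this produces the $N^{-(\b-\b_1-4)}$ rather than $N^{-(\b-\b_1)}$.

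The main obstacle, as in \cite{Duan17ARMA}, is the iterated kernel term: substituting the Duhamel formula for $w_{\b_1}\mathrm K^\pm\mathbf f$ back into itself produces a double kernel $\mathtt k^{(i)}_{w_{\b_1}}(V_\pm(s),u)\,\mathtt k^{(j)}_{w_{\b_1}}(\hat V_\pm(\tau),u')$ integrated against $|h(\tau,\hat X,u')|$. One must split into the four cases exactly as in the proof of Lemma~\ref{Lem3.1}: large $|v|$ (use \eqref{Eq3.18} to gain $N^{-1}$), large $|u|$ or $|u'|$ (use the Gaussian tail splitting \eqref{Eq3.20}--\eqref{Eq3.21} to gain $e^{-N^2/64}$), short time $s-\tau\le 1/N$ (gain $N^{-1}$ from the time integral), and the bulk case $s-\tau>1/N$, $|v|,|u|,|u'|$ bounded, where one changes variables $u\mapsto y=\hat X_+(\tau)$ via Corollary~\ref{Coro2.8}, passes to an $L^2_v$ bound, and invokes Lemma~\ref{Lem2.3} to bound the localized $L^2$ energy by $\|h\|_{L^\infty}\mathcal E(\mathbf F_0)+\mathcal E(\mathbf F_0)$, exactly as in \eqref{Eq3.24}--\eqref{Eq3.28}. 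Collecting, the kernel term yields $C_{N,\lambda}\sqrt{\mathcal E(\mathbf F_0)}+C(\lambda+N^{-(\b-\b_1-4)})(\sup\|\mathbf h\|_{L^\infty}+\sup\|\mathbf h\|^2_{L^\infty})$. Finally, assembling the free-transport layer estimate on $[0,t_1]$ with the source and kernel estimates on $[t_1,t]$, and noting that the decay factors make the $\|\tilde h_0\|_{L^1_x L^\infty_v}$-dependence collapse to the two displayed terms, gives the claimed bound; the parameters $\lambda$ and $N$ remain free to be optimized later when closing the a priori assumption \eqref{Eq2.21}.
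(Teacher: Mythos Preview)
Your overall architecture is right, but two points diverge from the paper and one of them is a genuine gap.

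\textbf{The $\mathrm K$ term: no iteration is needed.} Unlike in Lemma~\ref{Lem3.1}, here you already carry an outer $\int_{\mathbb R^3}\,\mathrm dv$, so the paper does \emph{not} substitute Duhamel back into itself. In the bulk case $|v|\le N$, $|v'|\le 2N$, $s\le t-\lambda$, one changes variables directly in the outer integral, $v\mapsto y=X_+(s;t,x,v)$, with Jacobian bounded below by $\tfrac12(t-s)^3\ge\tfrac12\lambda^3$ via Corollary~\ref{Coro2.8}, and then invokes Lemma~\ref{Lem2.3}. Your double-kernel scheme would also close, but it is an unnecessary layer here.

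\textbf{The nonlinear term: the pointwise bound is not enough.} This is the real gap. Invoking \eqref{Eq2.43} gives only $|w_{\b_1}\Gamma^\pm_{\mathrm{gain}}|\lesssim (1+|v|)^{-(\b-\b_1)}\|\mathbf h\|_{L^\infty}^2$ after trading $w_{\b_1}$ against $w_\b$; integrating this in $v$ and then in $s$ against $e^{-\tilde\nu_0(t-s)}$ yields $C\|\mathbf h\|_{L^\infty}^2$ with \emph{no} small prefactor on the bulk region $|v|\le N$, $s\le t-\lambda$. The $s$-integral is $O(1)$, not $o(1)$. To produce the factor $(\lambda+N^{-(\b-\b_1-4)})$ in front of $\sup\|\mathbf h\|_{L^\infty}^2$, the paper keeps the explicit collision structure, performs the Carleman-type change of variables \eqref{Eq3.54} for the gain part (and the analogous splitting for the loss part in $H_{314}$), applies Cauchy--Schwarz, then changes variables $v\mapsto X_+(s)$ and uses Lemma~\ref{Lem2.3} to convert the bulk contribution into $C_{N,\lambda}\mathcal E(\mathbf F_0)$ plus $C\lambda\sup\|\mathbf h\|_{L^\infty}^2$ (see \eqref{Eq3.48} and \eqref{Eq3.55}). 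You correctly describe this entropy mechanism for the $\mathrm K$ term; it must be applied to $\Gamma^\pm$ as well.

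\textbf{Minor.} The lemma is stated for $0\le\b_1<\b-4$, not $\b_1\ge 4$; the paper's direct treatment of $\Gamma_{\mathrm{gain}}$ via the weight splitting $w_{\b_1}(V_+)\le w_{\b_1}(V_+')\,w_{\b_1}(u')$ avoids the $\b_1\ge 4$ hypothesis of \eqref{Eq2.43}.
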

\begin{proof}
	\medskip
	\noindent{ 1.} 
	From $\eqref{Eq1.17}_1$, one has
	\begin{align}\label{Eq3.31}
		(\partial_t+v \cdot \nabla _x\mp\nabla_x \phi \cdot \nabla_v)\tilde{h}_{\pm}+\tilde{\nu}_{\pm,1}\tilde{h}_{\pm}= w_{\b_1}\mathrm{K}^{\pm}\mathbf{f}+w_{\b_1}\mathrm{\Gamma}^{\pm}(\mathbf{f},\mathbf{f})\mp\nabla_x \phi \cdot v w_{\b_1}\sqrt{\mu}
	\end{align}
	where 
	\begin{align}\label{Eq3.32}
		\tilde{\nu}_{\pm,1}(t,x,v):=\frac{\sigma_0}{(1+t)^{2}}|v|^2+\nu(v)\pm\nabla_x\phi\cdot v\Big(\frac{1}{2}+\frac{\beta_1}{1+|v|^2}+\frac{2\sigma_0}{1+t}\Big).
	\end{align}
	Analogous to \eqref{Eq3.10}, we have
	\begin{align}\label{Eq3.33}
		|\tilde{\nu}_{+,1}(t, x, v)|\ge \frac{1}{2}\nu(v)\ge \tilde{\nu}_0.
	\end{align}
It suffices to prove the lemma for $\tilde{h}_{+}(t,x,v)$, since $\tilde{h}_{-}(t,x,v)$ can be obtained similarly.
	
	The mild solution of \eqref{Eq3.31} takes the form
	\begin{align}\label{Eq3.34}
		\tilde{h}_{+}(t,x,v)
		&=\tilde{h}_{+, 0}(X_{\pm}(0),V_{\pm}(0))e^{-\int_0^t\tilde{\nu}_{+,1}(\tau, X_{+}(\tau), V_{+}(\tau))\mathrm{d}\tau}\nonumber\\
		&\quad +\int_0^te^{-\int_s^t\tilde{\nu}_{+,1}(\tau, X_{+}(\tau), V_{+}(\tau))\mathrm{d}\tau}\Big(w_{\b_1}\mathrm{K}^{+}\mathbf{f}\Big)(s,X_{\pm}(s),V_{\pm}(s))\mathrm{d}s\nonumber\\
		&\quad -\int_0^te^{-\int_s^t\tilde{\nu}_{+,1}(\tau, X_{+}(\tau), V_{+}(\tau))\mathrm{d}\tau}\Big(\nabla_x\phi\cdot vw_{\b_1}\sqrt{\mu}\Big)(s,X_{+}(s),V_{+}(s))\mathrm{d}s\nonumber\\
		&\quad +\int_0^te^{-\int_s^t\tilde{\nu}_{+,1}(\tau, X_{+}(\tau), V_{+}(\tau))\mathrm{d}\tau}\Big(w_{\b_1}\mathrm{\Gamma}^{+}(\mathbf{f},\mathbf{f})\Big)(s,X_{+}(s),V_{+}(s))\mathrm{d}s,
	\end{align}
	where we have denoted $\tilde{\nu}_{+,1}(\tau):=\tilde{\nu}_{+,1}(\tau, X_{+}(\tau), V_{+}(\tau))$. Similar to \eqref{Eq3.14}, we denote $\mathtt{k}_{w_{\b_1}}^{(i)}(v,u)=\mathtt{k}^{(i)}(v, u)\cdot\frac{w_{\b_1}(t, v)}{w_{\b_1}(t, u)}$ ($i=1,2$). Using \eqref{Eq3.33}-\eqref{Eq3.34}, one has
	\begin{align}\label{Eq3.35}
		\int_{\mathbb{R}^3}|\tilde{h}_{+}(t,x,v)|\mathrm{d}v\le& e^{-\tilde{\nu}_0t}\int_{\mathbb{R}^3}|\tilde{h}_{+, 0}(X_{+}(0),V_{+}(0))|\mathrm{d}v \nonumber\\
		&+\int_0^t\int_{\mathbb{R}^3}e^{-\int_s^t\tilde{\nu}_{+,1}(\tau)\mathrm{d}\tau}\int_{\mathbb{R}^3}|\mathtt{k}_{w_{\b_1}}^{(2)}(V_{+}(s),v')\tilde{h}_{+}(s,X_{+}(s),v')|\mathrm{d}v'\mathrm{d}v\mathrm{d}s \nonumber\\
		&+\int_0^t\int_{\mathbb{R}^3}e^{-\int_s^t\tilde{\nu}_{+,1}(\tau)\mathrm{d}\tau}\int_{\mathbb{R}^3}|\mathtt{k}_{w_{\b_1}}^{(1)}(V_{+}(s),v')\tilde{h}_{-}(s,X_{+}(s),v')|\mathrm{d}v'\mathrm{d}v\mathrm{d}s \nonumber\\
		&+\int_0^t\int_{\mathbb{R}^3}e^{-\int_s^t\tilde{\nu}_{+,1}(\tau)\mathrm{d}\tau}|\Big(w_{\b_1}\mathrm{\Gamma}^+(\mathbf{f},\mathbf{f})\Big) (s,X_{+}(s),V_{+}(s))|\mathrm{d}v\mathrm{d}s\nonumber\\
		&+\int_0^t\int_{\mathbb{R}^3}e^{-\int_s^t\tilde{\nu}_{+,1}(\tau)\mathrm{d}\tau}\nabla_x \phi(s,X_{+}(s)) \cdot V_{+}(s) w_{\b_1}(s, V_{+}(s))\sqrt{\mu(V_{+}(s))}\mathrm{d}v\mathrm{d}s\nonumber\\
		&:=H_1+H_2^{+}+H_{2}^{-}+H_3+H_4.
	\end{align}

		\medskip
	\noindent{ 2.} It is noted that
	\begin{align}\label{Eq3.36}
		H_2^{+}&\le \int_{t-\lambda}^t\int_{\mathbb{R}^3}e^{-\int_s^t\tilde{\nu}_{+,1}(\tau)\mathrm{d}\tau}\int_{\mathbb{R}^3}|\mathtt{k}_{w_{\b_1}}^{(2)}(V_{+}(s),v')\tilde{h}_{+}(s,X_{+}(s),v')|\mathrm{d}v'\mathrm{d}v\mathrm{d}s\nonumber\\
		&\quad +\int_{0}^{t-\lambda}\int_{\mathbb{R}^3}e^{-\int_s^t\tilde{\nu}_{+,1}(\tau)\mathrm{d}\tau}\int_{\mathbb{R}^3}|\mathtt{k}_{w_{\b_1}}^{(2)}(V_{+}(s),v')\tilde{h}_{+}(s,X_{+}(s),v')|\mathrm{d}v'\mathrm{d}v\mathrm{d}s\nonumber\\
		&:=H_{21}^{+}+H_{22}^{+}.
	\end{align}

	Using the fact that $1+|v|\cong 1+|V_{\pm}(s)|$ for any $0\le s\le t$, we have
	\begin{align}\label{Eq3.37}
		H_{21}^{+}&\le C\sup_{0\le s\le t}\|\mathbf{h}(s)\|_{L^{\infty}}\int_{t-\lambda}^t\int_{\mathbb{R}^3} e^{-\int_s^t\tilde{\nu}_{+,1}(\tau)\mathrm{d}\tau}(1+|V_{+}(s)|^2)^{-\f{\b-\b_1}{2}}\mathrm{d}v \mathrm{d}s\nonumber\\
		&\le C\lambda \sup_{0\le s\le t}\|\mathbf{h}(s)\|_{L^{\infty}}.
	\end{align}

	To estimate $H_{22}^{+}$, we notice that
	\begin{align}\label{Eq3.38}
		H_{22}^{+}=&\int_0^{t-\lambda}\int_{|v|\geq N} e^{-\int_s^t\tilde{\nu}_{+,1}(\tau)\mathrm{d}\tau}\int_{\mathbb{R}^3}|\mathtt{k}_{w_{\b_1}}^{(2)}(V_{+}(s),v')\tilde{h}_{+}(s,X_{+}(s), v')|\mathrm{d}v'\mathrm{d}v \mathrm{d}s\nonumber\\
		&+\int_0^{t-\lambda}\int_{|v|\le N} e^{-\int_s^t\tilde{\nu}_{+,1}(\tau)\mathrm{d}\tau}\int_{|v'|\ge 2N}|\mathtt{k}_{w_{\b_1}}^{(2)}(V_{+}(s),v')\tilde{h}_{+}(s,X_{+}(s), v')|\mathrm{d}v'\mathrm{d}v \mathrm{d}s\nonumber\\
		&+\int_0^{t-\lambda}\int_{|v|\le N} e^{-\int_s^t\tilde{\nu}_{+,1}(\tau)\mathrm{d}\tau}\int_{|v'|\le 2N}|\mathtt{k}_{w_{\b_1}}^{(2)}(V_{+}(s),v')\tilde{h}_{+}(s,X_{+}(s), v')|\mathrm{d}v'\mathrm{d}v \mathrm{d}s\nonumber\\
		:=&H_{221}^{+}+H_{222}^{+}+H_{223}^{+}.
	\end{align}

	It is straightforward to show that
	\begin{align}\label{Eq3.39}
		H_{221}^{+}&\le \sup_{0\le s\le t}\|\mathbf{h}(s)\|_{L^{\infty}} \int_0^{t-\lambda}\int_{|v|\geq N} e^{-\int_s^t\tilde{\nu}_{+,1}(\tau)\mathrm{d}\tau}(1+|V_{+}(s)|)^{-(\b-\b_1)-1}\mathrm{d}v \mathrm{d}s\nonumber\\
		&\le C\sup_{0\le s\le t}\|\mathbf{h}(s)\|_{L^{\infty}} \int_0^{t-\lambda}\int_{|v|\geq N} e^{-\tilde{\nu}_0(t-s)}(1+|v|)^{-(\b-\b_1)-1}\mathrm{d}v \mathrm{d}s\nonumber\\
		&\le \frac{C}{N^{\beta-\beta_{1}-2}}\sup_{0\le s\le t}\|\mathbf{h}(s)\|_{L^{\infty}}.
	\end{align}

	For $H_{222}^{+}$, it holds that $|V_{+}(s)-v'|\ge |v'-v|-|V_{+}(s)-v|\ge |v'|-|v|-|V_{+}(s)-v|\ge \frac{N}{2}$. Then we have
	\begin{align}\label{Eq3.40}
		H_{222}^{+}&\le e^{-\frac{N^2}{64}}\sup_{0\le s\le t}\|\mathbf{h}(s)\|_{L^{\infty}} \int_0^{t-\lambda}\int_{|v|\le N} e^{-\tilde{\nu}_0(t-s)}(1+|V_{+}(s)|)^{-(\b-\b_1)}\mathrm{d}v \mathrm{d}s\nonumber\\
		&\quad\times \int_{|v'|\ge 2N}|\mathtt{k}_{w_{\b}}^{(2)}(V_{+}(s),v')|e^{\frac{|V_{+}(s)-v'|^2}{16}}\mathrm{d}v'\nonumber\\
		&\le Ce^{-\frac{N^2}{64}}\sup_{0\le s\le t}\|\mathbf{h}(s)\|_{L^{\infty}} \int_0^{t-\lambda}\int_{|v|\le N} e^{-\tilde{\nu}_0(t-s)}(1+|V_{+}(s)|)^{-(\b-\b_1)}\mathrm{d}v \mathrm{d}s\nonumber\\
		&\le Ce^{-\frac{N^2}{64}}\sup_{0\le s\le t}\|\mathbf{h}(s)\|_{L^{\infty}}.
	\end{align}

	For $H_{223}^{+}$, similar argument as in \eqref{Eq3.26}, we have 
	\begin{align}\label{Eq3.41}
		H_{223}^{+}
		\le & C_N\int_0^{t-\lambda}e^{-\tilde{\nu}_0(t-s)}\mathrm{d}s\Big(\int_{|v|\le N}\int_{|v'|\le 2N}|f_{+}(s,X_{+}(s), v')|^2\mathrm{d}v' \mathrm{d}v\Big)^{\frac{1}{2}} \nonumber\\
		\le & C_{N,\lambda}\Big(\|h_{+}(\tau)\|^{\frac{1}{2}}_{L^{\infty}}\sqrt{\mathcal{E}(\mathbf{F}_0)}+\sqrt{\mathcal{E}(\mathbf{F}_0)}\Big)\nonumber\\
		\le & \lambda \sup_{0\le s\le t}\|\mathbf{h}(s)\|_{L^{\infty}}+ C_{N, \lambda}\sqrt{\mathcal{E}(\mathbf{F}_0)}.
	\end{align}

	Combining \eqref{Eq3.36}-\eqref{Eq3.41}, we obtain
	\begin{align}\label{Eq3.42}
		H_2^{+}\le C(\lambda+\frac{1}{N}) \sup_{0\le s\le t}\|\mathbf{h}(s)\|_{L^{\infty}}+ C_{N, \lambda}\sqrt{\mathcal{E}(\mathbf{F}_0)}.
	\end{align}

Similar to \eqref{Eq3.42}, one has
	\begin{align}\label{Eq3.42-}
	H_2^{-}\le C(\lambda+\frac{1}{N}) \sup_{0\le s\le t}\|\mathbf{h}(s)\|_{L^{\infty}}+ C_{N,\lambda}\sqrt{\mathcal{E}(\mathbf{F}_0)}.
\end{align}

\medskip
\noindent{ 3.} For $H_3$, we note
	\begin{align}\label{Eq3.43}
		H_3&\le C\int_0^te^{-\tilde{\nu}_0(t-s)}\int_{\mathbb{R}^3}w_{\b_1}(s, V_{+}(s))\int_{\mathbb{R}^3\times \mathbb{S}^2}B(V_{+}(s)-u, \t)e^{-\frac{|u|^2}{4}}\nonumber\\
		&\qquad\times |f_{+}(s,X_{+}(s),V_{+}(s))|\left(|f_{+}(s,X_{+}(s),u)|+|f_{-}(s,X_{+}(s),u)|\right)\mathrm{d}\omega \mathrm{d}u \mathrm{d}v \mathrm{d}s\nonumber\\
		&\quad+C\int_0^te^{-\tilde{\nu}_0(t-s)}\int_{\mathbb{R}^3}w_{\b_1}(s, V_{+}(s))\int_{\mathbb{R}^3\times \mathbb{S}^2}B(V_{+}(s)-u, \t)e^{-\frac{|u|^2}{4}}\nonumber\\
		&\qquad \times |f_{+}(s,X_{+}(s),V'_{+}(s))|\left(|f_{+}(s,X_{+}(s),u')|+|f_{-}(s,X_{+}(s),u')|\right)\mathrm{d}\omega \mathrm{d}u \mathrm{d}v \mathrm{d}s\nonumber\\
		&:=H_{31}+H_{32}.
	\end{align}
	For $H_{31}$, it is clear to see that
	\begin{align}\label{Eq3.44}
		H_{31}&\le C\int_{t-\lambda}^te^{-\tilde{\nu}_0(t-s)}\mathrm{d}s\int_{\mathbb{R}^3}w_{\b_1}(s, V_{+}(s))\int_{\mathbb{R}^3}|V_{+}(s)-u|^{\gamma}e^{-\frac{|u|^2}{4}}\nonumber\\
		&\qquad \times |f_{+}(s,X_{+}(s),V_{+}(s))|\left(|f_{+}(s,X_{+}(s),u)|+|f_{-}(s,X_{+}(s),u)|\right) \mathrm{d}u \mathrm{d}v \nonumber\\
		&\quad+C\int_{0}^{t-\lambda}e^{-\tilde{\nu}_0(t-s)}\mathrm{d}s\Big\{\int_{|v|\ge N}\int_{\mathbb{R}^3}+\int_{|v|\le N}\int_{|u|\ge N}+\int_{|v|\le N}\int_{|u|\le N}\Big\}|V_{+}(s)-u|^{\gamma}e^{-\frac{|u|^2}{4}}\nonumber\\
		&\qquad \times w_{\b_1}(s, V_{+}(s))|f_{+}(s,X_{+}(s),V_{+}(s))|\left(|f_{+}(s,X_{+}(s),u)|+|f_{-}(s,X_{+}(s),u)|\right)\mathrm{d}u \mathrm{d}v \nonumber\\
		&:=H_{311}+H_{312}+H_{313}+H_{314}.
	\end{align}
	Noting $\b-\b_1>4$, one has
	\begin{align}\label{Eq3.45}
		H_{311}&\le C\int_{t-\lambda}^te^{-\tilde{\nu}_0(t-s)}\mathrm{d}s\int_{\mathbb{R}^3}|\mathbf{\tilde{h}}(s, X_{+}(s), V_{+}(s))|\mathrm{d}v\int_{\mathbb{R}^3\times \mathbb{S}^2}|(V_{+}(s)-u)|^{\gamma}e^{-\frac{|u|^2}{4}}\mathrm{d}\omega \mathrm{d}u\|\mathbf{h}(s)\|_{L^{\infty}}  \nonumber\\
		&\leq C\|\mathbf{h}(s)\|^2_{L^{\infty}}\int_{t-\lambda}^te^{-\tilde{\nu}_0(t-s)}\mathrm{d}s\int_{\mathbb{R}^3}\nu(V_{+}(s))(1+|V_{+}(s)|)^{-(\b-\b_1)}\mathrm{d}v\nonumber\\
		&\le C\lambda \sup_{0\le s\le t}\|\mathbf{h}(s)\|^2_{L^{\infty}},
	\end{align}
	and 
	\begin{align}\label{Eq3.46}
		H_{312}&\le C\sup_{0\le s\le t}\|\mathbf{h}(s)\|^2_{L^{\infty}}\int_{0}^{t-\lambda}e^{-\tilde{\nu}_0(t-s)}\mathrm{d}s\int_{|v|\ge N}\nu(V_{+}(s))(1+|V_{+}(s)|)^{-(\b-\b_1)}\mathrm{d}v\nonumber\\
		&\le \frac{C}{N^{\b-\b_1-4}} \sup_{0\le s\le t}\|\mathbf{h}(s)\|^2_{L^{\infty}}.
	\end{align}
	For $H_{313}$, it is direct to have
	\begin{align}\label{Eq3.47}
		H_{313}&\le Ce^{-\frac{N^2}{8}}\sup_{0\le s\le t}\|\mathbf{h}(s)\|^2_{L^{\infty}}\int_{0}^{t-\lambda}e^{-\tilde{\nu}_0(t-s)}\mathrm{d}s\int_{|v|\le N}\nu(V_{+}(s))(1+|V_{+}(s)|)^{-(\b-\b_1)}\mathrm{d}v\nonumber\\
		&\le Ce^{-\frac{N^2}{8}}\sup_{0\le s\le t}\|\mathbf{h}(s)\|^2_{L^{\infty}}.
	\end{align}
	For $H_{314}$, by an argument similar to that in \eqref{Eq3.26}, we obtain
	\begin{align}\label{Eq3.48}
		H_{314}&\le C\sup_{0\le s\le t}\|\mathbf{h}(s)\|_{L^{\infty}}\int_{0}^{t-\lambda}e^{-\tilde{\nu}_0(t-s)}\mathrm{d}s\nonumber\\
		&\quad \times \int_{|v|\le N}\int_{|u|\le N}|V_{+}(s)-u|^{\gamma}(1+|V_{+}(s)|)^{-(\b-\b_1)}e^{-\frac{|u|^2}{4}}|\mathbf{f}(s,X_{+}(s),u)|\mathrm{d}u\mathrm{d}v\nonumber\\
		&\le C\sup_{0\le s\le t}\|\mathbf{h}(s)\|_{L^{\infty}}\int_{0}^{t-\lambda}e^{-\tilde{\nu}_0(t-s)}\mathrm{d}s\Big(\int_{|v|\le N}\int_{|u|\le N}|\mathbf{f}(s,X_{+}(s),u)|^2\mathrm{d}u\mathrm{d}v\Big)^{\frac{1}{2}}\nonumber\\
		&\quad \times \Big(\int_{|v|\le N}\int_{|u|\le N}|V_{+}(s)-u|^{2\gamma}(1+|V_{+}(s)|)^{-2(\b-\b_1)}e^{-\frac{|u|^2}{2}}\mathrm{d}u\mathrm{d}v\Big)^{\frac{1}{2}}\nonumber\\
		&\le C\lambda \sup_{0\le s\le t}\|\mathbf{h}(s)\|^2_{L^{\infty}}+C_{N,\lambda}\mathcal{E}(\mathbf{F}_0).
	\end{align}
	Combining \eqref{Eq3.44} $-$ \eqref{Eq3.48}, we obtain
	\begin{align}\label{Eq3.49}
		H_{31}\le C(\lambda+\frac{1}{N^{\b-\b_1-4}}) \sup_{0\le s\le t}\|\mathbf{h}(s)\|^2_{L^{\infty}}+C_{N,\lambda}\mathcal{E}(\mathbf{F}_0).
	\end{align}

	For $H_{32}$, using the fact $w_{\b_1}(s, V_{+}(s))\leq w_{\b_1}(s, V'_{+}(s))w_{\b_1}(s, u')$, here $V'_{+}(s)$ represents post-collision velocity with respect to pre-collision velocity $V_{+}(s)$,  one has
	\begin{align}\label{Eq3.50}
		H_{32}&\le C\int_{t-\lambda}^te^{-\tilde{\nu}_0(t-s)}\mathrm{d}s\int_{\mathbb{R}^3}\int_{\mathbb{R}^3}|V_{+}(s)-u|^{\gamma}e^{-\frac{|u|^2}{4}}
		|\tilde{h}_{+}(s,X_{+}(s),V'_{+}(s))||\mathbf{\tilde{h}}(s,X_{+}(s),u')| \mathrm{d}u \mathrm{d}v \nonumber\\
		&\quad +C\int_{0}^{t-\lambda}e^{-\tilde{\nu}_0(t-s)}\mathrm{d}s\Big\{\int_{|v|\ge N}\int_{\mathbb{R}^3}\int_{\mathbb{S}^2}+\int_{|v|\le N}\int_{|u|\ge N}\int_{\mathbb{S}^2}+\int_{|v|\le N}\int_{|u|\le N}\int_{\mathbb{S}^2}\Big\}e^{-\frac{|u|^2}{4}}\nonumber\\
		&\qquad \times B(V_{+}(s)-u, \t)|\tilde{h}_{+}(s,X_{+}(s),V'_{+}(s))||\mathbf{\tilde{h}}(s,X_{+}(s),u')|\mathrm{d}\omega \mathrm{d}u\mathrm{d}v \nonumber\\
		&:=H_{321}+H_{322}+H_{323}+H_{324}.
	\end{align}

	Using the fact $(1+|V_{+}(s)|^2)^{\f{\b-\b_1}{2}}\leq (1+|u'|^2)^{\f{\b-\b_1}{2}}(1+|V'_{+}(s)|^2)^{\f{\b-\b_1}{2}}$, one has
	\begin{align}\label{Eq3.51}
		H_{321}&\leq \int_{t-\lambda}^te^{-\tilde{\nu}_0(t-s)}\mathrm{d}s\int_{\mathbb{R}^3}(1+|V_+(s)|^2)^{-\f{\b-\b_1}{2}}\nu(V_+(s)) \mathrm{d}v\|\mathbf{h}(s)\|^2_{L^{\infty}}\nonumber\\
		&\leq C\lambda \sup_{0\le s\le t}\|\mathbf{h}(s)\|^2_{L^{\infty}}.
	\end{align}

	Analogous to \eqref{Eq3.46} $-$ \eqref{Eq3.47}, it holds that
	\begin{align}\label{Eq3.52}
		H_{322}+H_{323}\le \frac{C}{N^{(\b-\b_1)-4}}\sup_{0\le s\le t}\|\mathbf{h}(s)\|^2_{L^{\infty}}.
	\end{align}

	Noting
	\begin{align}\label{Eq3.53}
		\left\{
		\begin{aligned}
			&V'_{+}(s)=V_{+}(s)+\{(u-V_{+}(s))\cdot \omega\}\cdot \omega, \\
			&u'=u-\{(u-V_{+}(s))\cdot \omega\}\cdot \omega,
		\end{aligned}
		\right.
	\end{align}
	we  make a change of variables 
	\begin{align}\label{Eq3.54}
		z=u-V_{+}(s),\quad z_{\shortparallel}=(z\cdot \omega)\omega,\quad z_{\perp}=z-z_{\shortparallel}, \quad \eta=V_{+}(s)+z_{\shortparallel}.
	\end{align}
Owing to
	\begin{align*}
		|B(V_{+}(s)-u, \t)|\leq C|V_{+}(s)-u|^{\gamma}|\cos\t|\leq C(|z_{\shortparallel}|^2+|z_{\perp}|^2)^{\frac{\gamma-1}{2}}|z_{\shortparallel}|,
	\end{align*}
	and 
	\begin{align*}
		\mathrm{d}\omega \mathrm{d}u=\mathrm{d}z_{\shortparallel}\mathrm{d}z_{\perp}\mathrm{d}\omega=2|z_{\shortparallel}|^2d|z_{\shortparallel}|\mathrm{d}\omega\frac{\mathrm{d}z_{\perp}}{|z_{\shortparallel}|^2}=\frac{2}{|z_{\shortparallel}|^2}\mathrm{d}z_{\shortparallel}\mathrm{d}z_{\perp}=\frac{2}{|\eta-V_{+}(s)|^2}\mathrm{d}\eta \mathrm{d}z_{\perp},
	\end{align*}
	then one has
	\begin{align}\label{Eq3.55}
		H_{324}&\le C \int_{0}^{t-\lambda}e^{-\tilde{\nu}_0(t-s)}\mathrm{d}s\int_{|v|\le N}\int_{|u|\le N}\int_{\mathbb{S}^2}B(V_{+}(s)-u, \t)(1+|V_{+}(s)|^2)^{-\f{\b-\b_1}{2}}\nonumber\\
		&\quad \times e^{-\frac{|u|^2}{4}}|h_{+}(s,X_{+}(s),V'_{+}(s))|\cdot \|\mathbf{h}(s)\|_{L^{\infty}}\mathrm{d}\omega \mathrm{d}u\mathrm{d}v \nonumber\\
		&\le C\int_{0}^{t-\lambda}e^{-\tilde{\nu}_0(t-s)}\mathrm{d}s\int_{|v|\le N}\int_{|z|\le 3N}\int_{\mathbb{S}^2}
		e^{-\frac{|z_{\shortparallel}+z_{\perp}+V_{+}(s)|^2}{4}}(|z_{\shortparallel}|^2+|z_{\perp}|^2)^{\frac{\gamma-1}{2}}|z_{\shortparallel}|  \nonumber\\
		&\quad\times(1+|V_{+}(s)|^2)^{-\f{\b-\b_1}{2}}  |h_{+}(s,X_{+}(s),V_{+}(s)+z_{\shortparallel})|\mathrm{d}vd|z_{\shortparallel}|\mathrm{d}z_{\perp} \mathrm{d}\omega\|\mathbf{h}(s)\|_{L^{\infty}}   \nonumber   \\
		&\le C\int_{0}^{t-\lambda}e^{-\tilde{\nu}_0(t-s)}\mathrm{d}s\int_{|v|\le N}\int_{|\eta|\le 5N}\int_{z_{\perp}}
		e^{-\frac{|\eta+z_{\perp}|^2}{4}}(|\eta-V_{+}(s)|^2+|z_{\perp}|^2)^{\frac{\gamma-1}{2}} \nonumber\\
		&\quad\times |\eta-V_{+}(s)|^{-1}(1+|V_{+}(s)|^2)^{-\f{\b-\b_1}{2}}  |h_{+}(s,X_{+}(s),\eta)|\mathrm{d}v \mathrm{d}z_{\perp}\mathrm{d}\eta  \|\mathbf{h}(s)\|_{L^{\infty}}   \nonumber   \\
		&\le C\int_{0}^{t-\lambda}e^{-\tilde{\nu}_0(t-s)}\mathrm{d}s\int_{|v|\le N}\int_{|\eta|\le 5N}|\eta-V_{+}(s)|^{-1}(1+|V_{+}(s)|^2)^{-\f{\b-\b_1}{2}}   \nonumber\\
		&\quad\times|h_{+}(s,X_{+}(s),\eta)|\mathrm{d}v \mathrm{d}\eta  \|\mathbf{h}(s)\|_{L^{\infty}} \nonumber   \\
		&\le C_{N}\int_{0}^{t-\lambda}e^{-\tilde{\nu}_0(t-s)}\mathrm{d}s\|\mathbf{h}(s)\|_{L^{\infty}}\Big(\int_{|v|\le N}\int_{|\eta|\le 5N}|\mathbf{f}(s,X_{+}(s),\eta)|^2\mathrm{d}\eta \mathrm{d}v\Big)^{\f{1}{2}}                                     \nonumber   \\
		&\le C\lambda  \sup_{0\le s\le t}\|\mathbf{h}(s)\|^2_{L^{\infty}}+C_{N,\lambda}\mathcal{E}(\mathbf{F}_0).
	\end{align}

	Hence we have from \eqref{Eq3.50}$-$\eqref{Eq3.55} that
	\begin{align}\label{Eq3.56}
		H_{32}\le C(\lambda+\frac{1}{N^{\b-\b_1-4}}) \sup_{0\le s\le t}\|\mathbf{h}(s)\|^2_{L^{\infty}}+C_{N,\lambda}\mathcal{E}(\mathbf{F}_0).
	\end{align}

	Combining \eqref{Eq3.43}, \eqref{Eq3.49} and \eqref{Eq3.56}, we have 
	\begin{align}\label{Eq3.57}
		H_3\le C(\lambda+\frac{1}{N^{\b-\b_1-4}}) \sup_{0\le s\le t}\|\mathbf{h}(s)\|^2_{L^{\infty}}+C_{N,\lambda}\mathcal{E}(\mathbf{F}_0).
	\end{align}

	\medskip
	\noindent{ 4.}
	For $H_4$, using \eqref{Eq2.16}, one gets that
	\begin{align}\label{Eq3.58}
		H_{4}\le &\int_0^te^{-\tilde{\nu}_0(t-s)}\mu^{\frac{1}{4}}(V_{\pm}(s))\|\nabla_x\phi(s)\|_{L^{\infty}}\mathrm{d}v\mathrm{d}s\nonumber\\
		&\le C\lambda \int_0^te^{-\tilde{\nu}_0(t-s)}\|\mathbf{h}(s)\|_{L^{\infty}}\mathrm{d}s+\frac{C}{\lambda} \int_0^te^{-\tilde{\nu}_0(t-s)}\|\nabla_x\phi(s)\|_{L^{2}}\mathrm{d}s\nonumber\\
		&\le C\lambda\sup_{0\le s\le t}\|\mathbf{h}(s)\|_{L^{\infty}}+\frac{C}{\lambda}\sqrt{\mathcal{E}(\mathbf{F}_0)}.
	\end{align}

%	\begin{align}\label{Eq3.59}
%		\int_{\mathbb{R}^3}|\tilde{h}_{+}(t,x,v)|\mathrm{d}v
%		&\le \int_{\mathbb{R}^3}|\tilde{h}_{+, 0}(X_{+}(0), V_{+}(0))|\mathrm{d}v
%		+C_{N,\lambda}\left(\sqrt{\mathcal{E}(\mathbf{F}_0)}+\mathcal{E}(\mathbf{F}_0)+\mathcal{E}^2(\mathbf{F}_0)\right)\nonumber\\
%		&\qquad +C(\lambda+\frac{1}{N^{\b-\b_1-4}}) \Big(\sup_{0\le s\le t}\|\mathbf{h}(s)\|_{L^{\infty}}+\sup_{0\le s\le t}\|\mathbf{h}(s)\|^2_{L^{\infty}}\Big).
%	\end{align}
	
	\medskip
	\noindent{ 5.}
	It remains to deal with the initial term. For $t\ge t_1>0$, it holds that
	\begin{align}\label{Eq3.60}
		&\int_{\mathbb{R}^3}|\tilde{h}_{+, 0}(X_{+}(0), V_{+}(0))|\mathrm{d}v
		\leq\left\{\int_{|v|\geq \tilde{N}}+\int_{|v|\leq \tilde{N}}\right\}|\tilde{h}_{+, 0}(X_{+}(0), V_{+}(0))|\mathrm{d}v\nonumber
		\\
		\leq& C\|\mathbf{h}_0\|_{L^{\infty}}\int_{|v|\geq \tilde{N}}(1+V_{+}(0))^{-(\b-\b_1)}dv +\int_{|v|\leq \tilde{N}}|\tilde{h}_{+, 0}(X_{+}(0), V_{+}(0))|\mathrm{d}v\nonumber\\
		\leq& C\|\mathbf{h}_0\|_{L^{\infty}}\int_{|v|\geq \tilde{N}}(1+|v|^2)^{-\frac{\b-\b_1}{2}}dv +\frac{C}{t^3}\int_{|y-x|\leq C\tilde{N}t}|\tilde{h}_{+, 0}(y,  V_{+}(0))|\mathrm{d}y\nonumber
		\\
		\leq& C\|\mathbf{h}_0\|_{L^{\infty}}\tilde{N}^{3-(\b-\b_1)}+C\frac{1+\tilde{N}^3t_1^3}{t_1^3}\|\tilde{h}_{+,0}\|_{L^1_x L_v^{\infty}}\nonumber
		\\
		\leq& C\|\mathbf{h}_0\|^{\frac{3}{\b-\b_1}}_{L^{\infty}}\|\tilde{h}_{+,0}\|^{1-\frac{3}{\b-\b_1}}_{L^1_x L_v^{\infty}}+C(\|\mathbf{h}_0\|_{L^\infty}+1)^6\|\tilde{h}_{+,0}\|_{L^1_x L_v^{\infty}},
	\end{align}
	where we have chosen $\tilde{N}=\|\mathbf{h}_0\|_{L^\infty}^{\frac{1}{\b-\b_1}}\|\tilde{h}_{+,0}\|_{L^1_xL^\infty_v}^{-\frac{1}{\b-\b_1}}$ and $	t_1:=
	\frac{1}{16C_1C_2C_3C_4(\|\mathbf{h}_0\|_{\infty}+1)^2}$.
	
\medskip
\noindent{ 6.}
	Inserting \eqref{Eq3.42}, \eqref{Eq3.57}, \eqref{Eq3.58} and \eqref{Eq3.60} into \eqref{Eq3.35} shows 
	\begin{align}\label{Eq3.61}
		\int_{\mathbb{R}^3}|\tilde{h}_{+}(t,x,v)|\mathrm{d}v
		&\le  C\Big(\|\mathbf{h}_0\|^{\frac{3}{\b-\b_1}}_{L^{\infty}}\|\tilde{h}_{+,0}\|^{1-\frac{3}{\b-\b_1}}_{L^1_x L_v^{\infty}}+(\|\mathbf{h}_0\|_{L^{\infty}}+1)^6\|\tilde{h}_{+,0}\|_{L^1_x L_v^{\infty}}\Big)
		+C_{N,\lambda}\sqrt{\mathcal{E}(\mathbf{F}_0)}\nonumber\\
		&\quad 
		+C(\lambda+\frac{1}{N^{\b-\b_1-4}}) \Big(\sup_{0\le s\le t}\|\mathbf{h}(s)\|_{L^{\infty}}+\sup_{0\le s\le t}\|\mathbf{h}(s)\|^2_{L^{\infty}}\Big).
	\end{align}

	For $\int_{\mathbb{R}^3}|\tilde{h}_{-}(t,x,v)|\mathrm{d}v$, similarly, one has
	\begin{align}\label{Eq3.62}
		\int_{\mathbb{R}^3}|\tilde{h}_{-}(t,x,v)|\mathrm{d}v
		&\le  C\Big(\|\mathbf{h}_0\|^{\frac{3}{\b-\b_1}}_{L^{\infty}}\|\tilde{h}_{-,0}\|^{1-\frac{3}{\b-\b_1}}_{L^1_x L_v^{\infty}}+(\|\mathbf{h}_0\|_{L^{\infty}}+1)^6\|\tilde{h}_{-,0}\|_{L^1_x L_v^{\infty}}\Big)
		+C_{N,\lambda}\sqrt{\mathcal{E}(\mathbf{F}_0)}\nonumber\\
		&\quad 
		+C(\lambda+\frac{1}{N^{\b-\b_1-4}}) \Big(\sup_{0\le s\le t}\|\mathbf{h}(s)\|_{L^{\infty}}+\sup_{0\le s\le t}\|\mathbf{h}(s)\|^2_{L^{\infty}}\Big).
	\end{align}

	Therefore we complete the proof of Lemma \ref{Lem3.2} from \eqref{Eq3.61} and \eqref{Eq3.62}.
\end{proof}

\subsection{Uniform $L^{\infty}$- bound}
With the help of  Lemmas \ref{Lem3.1} and \ref{Lem3.2}, one can obtain the following uniform $L^{\infty}$-bound.
\begin{proposition}\label{prop4.3}
	Assume \eqref{Eq2.21}, 	let  $0\leq\b_1<\b-4$, there exists small $\v_1>0$ such that if $\mathcal{E}(\mathbf{F}_0)+\|\mathbf{\tilde{h}}_{0}\|_{L^1_xL^{\infty}_v}\le \varepsilon_1$,
it holds
\begin{align*}
	\sup_{0\leq s\leq t}\|\mathbf{h}(s)\|_{L^\infty}
	\le CM_{0}^2,
\end{align*}
where the positive constant $C\geq1$ depends only on $\gamma$, $\b$ and $\b_1$.
\end{proposition}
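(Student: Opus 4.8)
The plan is to combine Lemma \ref{Lem3.1} and Lemma \ref{Lem3.2} via a bootstrap argument, using the smallness of $\mathcal{E}(\mathbf{F}_0)$ and $\|\mathbf{\tilde h}_0\|_{L^1_xL^\infty_v}$, and the fact that on the short interval $[0,t_1]$ the local existence result (Proposition \ref{prop2.1}) already gives $\sup_{0\le s\le t_1}\|\mathbf h(s)\|_{L^\infty}\le 2C_1(1+\|\mathbf h_0\|_{L^\infty})\le 2C_1(1+M_0)$. Set $\mathcal{M}(t):=\sup_{0\le s\le t}\|\mathbf h(s)\|_{L^\infty}$. First I would fix the free parameters $\lambda$ and $N$ in Lemma \ref{Lem3.2}: choose $\lambda$ small and $N$ large (depending only on $\gamma,\beta,\beta_1$, and a target size like $M_0$) so that $C(\lambda+N^{-(\beta-\beta_1-4)})\le \tfrac{1}{8C_6^2}$ or some similarly convenient small constant, where $C_6\ge1$ is the constant of Lemma \ref{Lem3.1}. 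With $\lambda,N$ now fixed, $C_{N,\lambda}$ becomes a fixed constant, and Lemma \ref{Lem3.2} reads, for all $t\ge t_1$ and all $x$,
\begin{align*}
\sup_{x\in\mathbb{T}^3}\int_{\mathbb{R}^3}|\mathbf{\tilde h}(t,x,v)|\,\mathrm{d}v
\le C\Big(\|\mathbf h_0\|_{L^\infty}^{\frac{3}{\beta-\beta_1}}\|\mathbf{\tilde h}_0\|_{L^1_xL^\infty_v}^{1-\frac{3}{\beta-\beta_1}}+(\|\mathbf h_0\|_{L^\infty}+1)^6\|\mathbf{\tilde h}_0\|_{L^1_xL^\infty_v}\Big)+C_{N,\lambda}\sqrt{\mathcal{E}(\mathbf{F}_0)}+\tfrac{1}{8C_6^2}\big(\mathcal{M}(t)+\mathcal{M}(t)^2\big).
\end{align*}

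Next I would insert this bound into the right-hand side of Lemma \ref{Lem3.1}. Splitting the supremum over $t_1\le s\le t$ there and estimating $\|\mathbf h(s)\|_{L^\infty}^{3/2}(\int|\mathbf{\tilde h}|)^{1/2}\le \frac14\|\mathbf h(s)\|_{L^\infty}^3\cdot\varepsilon + C_\varepsilon \int|\mathbf{\tilde h}|$ by Young's inequality (or more directly, bounding $(\int|\mathbf{\tilde h}|)^{1/2}$ by the square root of the displayed bound and using $\mathcal{M}^{3/2}\cdot\mathcal{M}^{1/2}=\mathcal{M}^2$ on the $\mathcal{M}$-terms), one arrives at an inequality of the schematic form
\begin{align*}
\mathcal{M}(t)\le C_6\big(M_0+M_0^2+\sqrt{\mathcal{E}(\mathbf{F}_0)}\big)+C_6\,\mathcal{M}(t)^{3/2}\Big(A_0 + \sqrt{\mathcal{E}(\mathbf{F}_0)}\Big)^{1/2}+\tfrac14\mathcal{M}(t)+\tfrac14\mathcal{M}(t)^2,
\end{align*}
where $A_0:=C(\|\mathbf h_0\|_{L^\infty}^{3/(\beta-\beta_1)}\|\mathbf{\tilde h}_0\|_{L^1_xL^\infty_v}^{1-3/(\beta-\beta_1)}+(\|\mathbf h_0\|_{L^\infty}+1)^6\|\mathbf{\tilde h}_0\|_{L^1_xL^\infty_v})$, and the $\tfrac14\mathcal{M}$ and $\tfrac14\mathcal{M}^2$ terms come from the $\tfrac{1}{8C_6^2}(\mathcal{M}+\mathcal{M}^2)$ piece after multiplying through by $C_6$. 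Here I use $\|\mathbf h_0\|_{L^\infty}\le M_0$ so $M_0+M_0^2\le 2M_0^2$. The point is that $A_0+\sqrt{\mathcal{E}(\mathbf{F}_0)}$ can be made as small as we like by taking $\varepsilon_1$ small, because $\|\mathbf{\tilde h}_0\|_{L^1_xL^\infty_v}\le\varepsilon_1$ and $\mathcal{E}(\mathbf{F}_0)\le\varepsilon_1$, and the exponents $3/(\beta-\beta_1)<1$ and $1-3/(\beta-\beta_1)>0$ are admissible since $\beta-\beta_1>4>3$ — crucially $A_0\to0$ as $\varepsilon_1\to0$ for \emph{fixed} $M_0$. (Note $\|\mathbf h_0\|_{L^\infty}^{3/(\beta-\beta_1)}\le M_0^{3/(\beta-\beta_1)}\le M_0$ and $(\|\mathbf h_0\|_{L^\infty}+1)^6\le (M_0+1)^6$ are bounded by constants depending on $M_0$.)

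Finally I would run a standard continuity/bootstrap argument. Absorbing $\tfrac14\mathcal{M}$ into the left side, we get $\tfrac34\mathcal{M}(t)\le C_6 C\, M_0^2 + C_6\eta^{1/2}\mathcal{M}(t)^{3/2}+\tfrac14\mathcal{M}(t)^2$ with $\eta:=A_0+\sqrt{\mathcal{E}(\mathbf{F}_0)}$ small. I claim that if $\mathcal{M}(t)\le \Lambda M_0^2$ for a suitable absolute-ish constant $\Lambda$ (depending only on $\gamma,\beta,\beta_1$, chosen so that $\tfrac34\Lambda > $ the leading coefficient, e.g. $\Lambda = 4C_6C$), then in fact $\mathcal{M}(t)\le \tfrac34\Lambda M_0^2$: indeed the quadratic and $3/2$-power terms on the right are bounded by $C_6\eta^{1/2}(\Lambda M_0^2)^{3/2}+\tfrac14(\Lambda M_0^2)^2$, which — after choosing $\varepsilon_1$ small enough that $\eta^{1/2}$ beats the growth in $M_0$, and recalling the cheap but legitimate move of also allowing the final constant to absorb powers of $M_0$ — is $\le \tfrac14\Lambda M_0^2$, giving the strict improvement. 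Since $\mathcal{M}$ is continuous in $t$, $\mathcal{M}(t_1)\le 2C_1(1+M_0)\le \tfrac34\Lambda M_0^2$ (enlarging $\Lambda$ once more if needed, using $M_0\ge1$), and the set $\{t\ge t_1:\mathcal{M}(t)\le\Lambda M_0^2\}$ is open, closed, and nonempty in $[t_1,\infty)$, it is all of $[t_1,\infty)$; combined with the $[0,t_1]$ bound this yields $\sup_{0\le s\le t}\|\mathbf h(s)\|_{L^\infty}\le CM_0^2$ for a constant $C$ depending only on $\gamma,\beta,\beta_1$, as claimed. The main obstacle is bookkeeping: one must verify that the various smallness choices ($\lambda$, $N$, then $\varepsilon_1$) can be made consistently — $\lambda,N$ depend only on $\gamma,\beta,\beta_1$ (and the absorption target), and only \emph{afterward} does $\varepsilon_1$ get chosen small depending additionally on $M_0$ (as stated in Theorem \ref{Thm1.1}, $\varepsilon_1$ depends on $\gamma,\beta,\beta_1,M_0$) — and that the superlinear terms $\mathcal{M}^{3/2},\mathcal{M}^2$ are genuinely absorbed rather than merely estimated, which is exactly what the smallness of $\eta$ and the continuity argument deliver.
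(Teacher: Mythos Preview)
Your overall strategy—combining Lemma~\ref{Lem3.1} with Lemma~\ref{Lem3.2} under a bootstrap assumption—is exactly what the paper does. However, there is a genuine gap in the execution, and it sits precisely in the bookkeeping you flag at the end.

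The arithmetic producing ``$\tfrac14\mathcal M(t)+\tfrac14\mathcal M(t)^2$'' is incorrect. The piece $\tfrac{1}{8C_6^2}(\mathcal M+\mathcal M^2)$ sits \emph{inside a square root} which is then multiplied by $C_6\mathcal M^{3/2}$; this produces terms of order $\mathcal M^2$ and $\mathcal M^{5/2}$, not $\mathcal M$ and $\mathcal M^2$. (If instead you follow the Young's-inequality route you mention, you pick up a $\mathcal M^3$ term that disappears from your schematic.) More seriously, even accepting your schematic as written, the continuity argument fails: under the bootstrap $\mathcal M\le\Lambda M_0^2$ the term $\tfrac14\mathcal M^2\le\tfrac14\Lambda^2 M_0^4$ is \emph{not} bounded by $\tfrac14\Lambda M_0^2$ for large $M_0$, and you cannot ``absorb powers of $M_0$'' into the final constant $C$, which by hypothesis depends only on $\gamma,\beta,\beta_1$.

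The root cause is your choice of an absorption target $\tfrac{1}{8C_6^2}$ that is independent of $M_0$. The paper instead sets $A_0:=C_6\{2M_0^2+\sqrt{\mathcal E(\mathbf F_0)}\}$, makes the a~priori assumption $\mathcal M\le 2A_0$, and then chooses $\lambda$ small, $N$ large, and finally $\varepsilon_1$ small—\emph{all depending on $M_0$ through $A_0$}—so that the entire second term $C_6(2A_0)^{3/2}\bigl(\sup\int|\mathbf{\tilde h}|\bigr)^{1/2}$ in Lemma~\ref{Lem3.1} is at most $\tfrac34 A_0$. This yields $\mathcal M\le \tfrac74 A_0<2A_0$, closing the bootstrap cleanly. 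In short: $\lambda$ and $N$ must also depend on $M_0$, contrary to what you assert in your final paragraph.
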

\begin{proof}
	In terms of Lemma \ref{Lem3.1}, we make the {\it a priori} assumption
	\begin{align}\label{4.65} 
		\|\mathbf{h}(t)\|_{L^\infty}\leq 2A_0:=%\triangleq 
		2C_6\Big\{2M_0^2+\sqrt{\mathcal{E}(\mathbf{F}_0)}\Big\},
	\end{align}
	where the positive constant $C_6\geq1$ is defined in Lemma \ref{Lem3.1}. Then it follows from Lemma \ref{Lem3.1} and the {\it a priori} assumption \eqref{4.65} that 
	\begin{align}\label{4.66}
		\|\mathbf{h}(t)\|_{L^\infty}\leq A_0+C_6(2A_0)^{\f{3}{2}}\cdot\sup_{\substack{t_1\le s\le t\\ y \in \mathbb{T}^3}}\Big(\int_{\mathbb{R}^3}|\mathbf{\tilde{h}}(s,y,u)|\mathrm{d}u\Big)^{\frac{1}{2}}.
	\end{align}

To estimate the second term on the RHS of \eqref{4.65}, it follows from \eqref{4.65} and Lemma \ref{Lem3.2}  that 
\begin{align}\label{4.67}
	\sup_{\substack{t_1\le s\le t\\ y \in \mathbb{T}^3}}\Big(\int_{\mathbb{R}^3}|\mathbf{\tilde{h}}(s,y,u)|\mathrm{d}u\Big)
	\le&  C\Big(M_0^{\frac{3}{\b-\b_1}}\|\mathbf{\tilde{h}}_0\|^{1-\frac{3}{\b-\b_1}}_{L^1_x L_v^{\infty}}+M_0^6\|\mathbf{\tilde{h}}_0\|_{L^1_x L_v^{\infty}}\Big)
	 +C_{N,\lambda}\sqrt{\mathcal{E}(\mathbf{F}_0)} \nonumber\\
	&+C\Big(\lambda+\frac{1}{N^{\b-\b_1-4}}\Big) \Big(2A_0+(2A_0)^2\Big).
\end{align}
Note $0\leq\b_1<\b-4$. One can firstly choose $\lambda$ sufficiently small, then $N\geq1$ large enough, and finally let $	\mathcal{E}(\mathbf{F}_0)+\|\mathbf{\tilde{h}}_{0}\|_{L^1_xL^{\infty}_v}\le \varepsilon_1$ with $\varepsilon_1$ suitably small depending only on $\b$, $\b_1$, $\gamma$ and $M_0$ such that
\begin{align*}
C_6(2A_0)^{\f{3}{2}}\cdot\sup_{\substack{t_1\le s\le t\\ y \in \mathbb{T}^3}}\Big(\int_{\mathbb{R}^3}|\mathbf{\tilde{h}}(s,y,u)|\mathrm{d}u\Big)^{\frac{1}{2}}\leq \frac{3}{4}A_0,
\end{align*}
which, together with \eqref{4.66}, yields immediately that
\begin{equation*}
	\|\mathbf{h}(t)\|_{L^\infty}\leq \f74A_0,%~~\mbox{for}~~t\geq0.
\end{equation*}
for all $t\geq 0$. Hence we have closed the {\it a priori} assumption \eqref{4.65}. Therefore the proof of Proposition \ref{prop4.3} is completed.
\end{proof}

\begin{remark}\label{Rem4.4}
	Assume \eqref{Eq2.21}, Proposition \ref{prop4.3} and Lemma \ref{Lem3.2} yield immediately that 
	\begin{align*}
		\sup_{\substack{t_1\le s\le t\\ y \in \mathbb{T}^3}}\Big(\int_{\mathbb{R}^3}|\mathbf{\tilde{h}}(s,y,u)|\mathrm{d}u\Big)\ll 1,
	\end{align*}
	for $\mathcal{E}(\mathbf{F}_0)+\|\mathbf{\tilde{h}}_{0}\|_{L^1_xL^{\infty}_v}\le \varepsilon_1$ with $\varepsilon_1$ sufficiently small. 
\end{remark}

%%%%%%%%%%%%%%%%%%%%%%%%%%%%%%%%%%%%%%%%%%%%%%%%%%%%%%%%%%
%%%%%%%%%%%%%%%%%%%%%%%%%%%%%%%%%%%%%%%%%%%%%%%%%%%%%%%%%%
%%%%%%%%%%%%%%%%%%%%%%%%%%%%%%%%%%%%%%%%%%%%%%%%%%%%%%%%%%%%%%

\section{Exponential decay of $L^\infty$}
In this section, the aim is to establish the exponential decay in time of $\|\mathbf{h}(t)\|_{L^\infty}$.

\subsection{Estimate  on the hydrodynamic part }
Denote $\bar{\mathbf{f}}(t,x,v):=e^{\lambda t}\mathbf{f}(t,x,v)$, where $\lambda >0$  is a suitably small  constant determined later. Then it follows from  \eqref{Eq1.18} that
\begin{equation}\label{Eq4.2}
	\left\{
	\begin{aligned}
		&\partial_t\bar{\mathbf{f}}+v\cdot\nabla_x\bar{\mathbf{f}}-e^{-\lambda t}\mathbf{q}(\nabla_x\bar{\phi}\cdot\nabla_v)\bar{\mathbf{f}}+\mathbf{Ly}
		=e^{-\lambda t}\mathbf{\Gamma}(\bar{\mathbf{f}},\bar{\mathbf{f}})-e^{-\lambda t}(\nabla_x\bar{\phi}\cdot\frac{v}{2})\mathbf{q}\bar{\mathbf{f}}   \\
		&\quad-\nabla_x\bar{\phi}\cdot v\sqrt{\mu}\mathbf{q_1}+\lambda \bar{\mathbf{f}},\\
		&	-\Delta_x\bar{\phi}=\int\sqrt{\mu}\left(\bar{f}_+-\bar{f}_- \right) dv,   \quad \int_{\mathbb{T}^3}\bar{\phi} ~\mathrm{d}x=0,
	\end{aligned}
	\right.
\end{equation}
where
$\bar{\mathbf{f}}=\begin{bmatrix} 
	\bar{f}_+\\ \bar{f}_- \end{bmatrix}$, $\mathbf{q}=\begin{bmatrix} 
	1&0\\0&-1 \end{bmatrix}$, $\mathbf{q_1}=\begin{bmatrix} 
	1\\ -1 \end{bmatrix}$.

Define the hydrodynamic part $\mathbf P \bar{\mathbf{f}}$ of $\bar{\mathbf{f}}$ as
\begin{equation}\label{Eq4.3}
	\mathbf P \bar{\mathbf{f}}:=\left\lbrace\bar{a}_+(t, x)\begin{bmatrix} 
		\sqrt{\mu}\\ 0 \end{bmatrix}+
	\bar{a}_-(t, x)\begin{bmatrix} 
		0\\ \sqrt{\mu}\end{bmatrix}+
	\bar{\mathbf{b}}(t, x)\cdot\frac{v}{\sqrt{2}}
	\begin{bmatrix} 
		\sqrt{\mu}\\ \sqrt{\mu} \end{bmatrix}+
	\bar{c}(t, x)\frac{|v|^2-3}{2\sqrt{2}}\begin{bmatrix} 
		\sqrt{\mu}\\ \sqrt{\mu} \end{bmatrix}
	\right\rbrace,
\end{equation}
where $\bar{\mathbf{b}}=[\bar{b}_1,\bar{b}_2,\bar{b}_3]^{\mathrm{T}}$.

\begin{lemma}\label{Lem4.1}
	Under the condition \eqref{Eq2.21},  one has that, for $1\leq s\leq t$,
	\begin{equation}\label{Eq4.5}
		\int_s^t\|\mathbf{P} \bar{\mathbf{f}}\|_{L^2}^2\mathrm{d}\tau+\int_s^t\|\nabla_x\bar{\phi}\|_{L^2}^2\mathrm{d}\tau
		\leq G(s)-G(t)+C_7\int_s^t\|\{\mathbf{I}-\mathbf{P}\}\bar{\mathbf{f}}\|_\nu^2\mathrm{d}\tau,
	\end{equation}
	where $|G(s)|\leq C\|\bar{\mathbf{f}}(s)\|_{L^2}^2$.
\end{lemma}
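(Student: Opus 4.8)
\textbf{Proof strategy for Lemma \ref{Lem4.1}.} The plan is to use the classical macro--micro decomposition combined with the test function method for the VPB system, adapted to the periodic torus where the null-space component $\bar c$ must satisfy a zero-mean constraint. First I would substitute $\bar{\mathbf f}=\mathbf P\bar{\mathbf f}+\{\mathbf I-\mathbf P\}\bar{\mathbf f}$ into \eqref{Eq4.2} and project onto the basis functions of $\mathcal N$, thereby obtaining the \emph{local conservation laws}: a system of evolution equations for $\bar a_\pm,\bar{\mathbf b},\bar c$ whose right-hand sides involve spatial derivatives of the microscopic part $\{\mathbf I-\mathbf P\}\bar{\mathbf f}$, the electric field terms $e^{-\lambda t}\nabla_x\bar\phi$, the nonlinear term $e^{-\lambda t}\mathbf\Gamma(\bar{\mathbf f},\bar{\mathbf f})$, and the lower-order factor $\lambda\bar{\mathbf f}$. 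Using these, one derives elliptic-type identities: $-\Delta_x\bar a_\pm$, $-\Delta_x\bar b_i$, $-\Delta_x\bar c$ are each expressed (modulo the troublesome source terms) as divergences of the microscopic part plus lower-order contributions.

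Next, following \cite{E-G-K-M13CMP,Cao19ARMA,Cao19KRM}, I would construct test functions tailored to each hydrodynamic component. For $\bar a_\pm$ one uses $\Psi_{a_\pm}$ built from $\varphi_{a_\pm}$ solving $-\Delta\varphi_{a_\pm}=\bar a_\pm-\fint\bar a_\pm$, and similarly for $\bar{\mathbf b}$. The key new point is the treatment of $\bar c$: since $\int_{\mathbb T^3}\bar c\,dx$ need not vanish (the periodic Poisson problem for $\bar c$ would be ill-posed otherwise), I would use the conservation of defect total energy \eqref{Eq2.7} — which after the $e^{\lambda t}$ rescaling forces $\int_{\mathbb T^3}\bigl(\bar c+\tfrac{\sqrt2}{6}e^{-\lambda t}|\nabla_x\bar\phi|^2\bigr)dx$ to be constant, in fact zero given the neutral data — and therefore introduce $\tilde c(t,x):=\bar c(t,x)+\tfrac{\sqrt2}{6}e^{-\lambda t}|\nabla_x\bar\phi(t,x)|^2$, which has zero spatial mean. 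Then $\varphi_c$ solving $-\Delta\varphi_c=\tilde c$, $\int\varphi_c=0$ is well-defined, and the test function $\Psi_c=(|v|^2-\beta_c)\sqrt\mu\, v\cdot\nabla_x\varphi_c$ (with $\beta_c$ chosen so $\Psi_c\perp\mathcal N$ in the relevant sense) is used. Pairing each test function against \eqref{Eq4.2}, integrating over $[s,t]\times\mathbb T^3\times\mathbb R^3$, and integrating by parts produces, on the left, terms controlling $\int_s^t\|\mathbf P\bar{\mathbf f}\|_{L^2}^2$ together with $\int_s^t\|\nabla_x\bar\phi\|_{L^2}^2$ (the latter appearing naturally from the $\bar a_+-\bar a_-$ difference via the Poisson equation $-\Delta_x\bar\phi=\int\sqrt\mu(\bar f_+-\bar f_-)dv$), and on the right, a total time derivative $G(s)-G(t)$ plus error terms.

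The error terms must all be absorbed into $C_7\int_s^t\|\{\mathbf I-\mathbf P\}\bar{\mathbf f}\|_\nu^2\,d\tau$ plus the already-present $\|\mathbf P\bar{\mathbf f}\|_{L^2}^2$ with small coefficient. The contributions of $\mathbf\Gamma(\bar{\mathbf f},\bar{\mathbf f})$, the electric-field couplings $e^{-\lambda t}\nabla_x\bar\phi\cdot(\ldots)$, and $\tfrac{\sqrt2}{6}e^{-\lambda t}|\nabla_x\bar\phi|^2$ in $\tilde c$ are all genuinely small: they carry either a factor $e^{-\lambda t}$, or a power of $\|\mathbf h\|_{L^\infty}$ times the smallness from the $L^\infty_x L^1_v$ bound (Remark \ref{Rem4.4}), or the smallness of $\|\nabla_x\bar\phi\|_{L^2}$ coming from \eqref{Eq2.14} together with the near-neutrality; the elliptic regularity estimates \eqref{Eq2.15}--\eqref{Eq2.16} and $\|\nabla_x^2\varphi_\bullet\|_{L^2}\lesssim\|\mathbf P\bar{\mathbf f}\|_{L^2}$ keep the test functions controlled. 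The $\lambda\bar{\mathbf f}$ term contributes $\lambda\int_s^t\|\bar{\mathbf f}\|_{L^2}\|\nabla_x\varphi_\bullet\|_{L^2}\lesssim\lambda\int_s^t(\|\mathbf P\bar{\mathbf f}\|_{L^2}^2+\|\{\mathbf I-\mathbf P\}\bar{\mathbf f}\|_{L^2}^2)$, harmless for $\lambda$ small. Defining $G(t)$ as the sum of the boundary-in-time pairings $-\langle\Psi_\bullet,\bar{\mathbf f}\rangle$ gives $|G(t)|\lesssim\|\nabla_x\varphi_\bullet(t)\|_{L^2}\|\bar{\mathbf f}(t)\|_{L^2}\lesssim\|\bar{\mathbf f}(t)\|_{L^2}^2$ as claimed. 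The main obstacle is bookkeeping the $\bar c$-equation carefully: verifying that the modified quantity $\tilde c$ indeed has zero mean (which hinges on correctly matching the rescaled energy conservation law with the definition of $\mathbf P\bar{\mathbf f}$ and the factor $\tfrac{\sqrt2}{6}$), and then controlling the extra quadratic term $e^{-\lambda t}|\nabla_x\bar\phi|^2$ that $\tilde c$ injects into every integration by parts — this requires the exponential decay factor plus $\|\nabla_x\bar\phi\|_{L^\infty}$-type bounds, so the estimate is only closed once the a priori assumption \eqref{Eq2.21} and the uniform $L^\infty$ bound of Proposition \ref{prop4.3} are in force.
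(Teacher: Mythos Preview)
Your proposal is correct and follows essentially the same approach as the paper: the key novelty --- replacing $\bar c$ by $\tilde c=\bar c+\tfrac{\sqrt2}{6}e^{-\lambda t}|\nabla_x\bar\phi|^2$ so that $\int_{\mathbb T^3}\tilde c\,dx=0$ and the Poisson problem for $\varphi_c$ is well-posed --- is exactly what the paper does, and the test functions, the source of smallness (Remark~\ref{Rem4.4}, Proposition~\ref{prop4.3}, assumption~\eqref{Eq2.21}), and the identification of $G$ all match. One detail you leave implicit but which the paper carries out explicitly: since the test functions $\Psi_\bullet$ depend on $t$ through $\varphi_\bullet$, the term $\int_s^t\langle\bar{\mathbf f},\partial_t\Psi_\bullet\rangle\,d\tau$ is not absorbed into $G(s)-G(t)$ and must be controlled separately via dual estimates $\|\partial_t\bar a_\pm\|_{(H^1)^*}$, $\|\partial_t\bar b_i\|_{(H^1)^*}$, $\|\nabla_x\partial_t\varphi_c\|_{L^2}$ (obtained by testing \eqref{Eq4.2} against simpler functions like $\chi(x)\sqrt\mu$); this is routine but worth flagging in your write-up.
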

\begin{proof}
	From the Green's identity, a mild solution $\bar{\mathbf{f}}$ of \eqref{Eq4.2} satisfies
	\begin{align}\label{Eq4.6}
		&\left\langle \bar{\mathbf{f}}(t),\mathbf{\Psi}(t) \right\rangle  
		-\left\langle \bar{\mathbf{f}}(s),\mathbf{\Psi}(s) \right\rangle
		-\int_{s}^t\underbrace{\langle \bar{\mathbf{f}},\partial_t\mathbf{\Psi} \rangle}_{\uppercase\expandafter{\romannumeral1}}
		+\underbrace{\langle \mathbf{P}\bar{\mathbf{f}},v\cdot\nabla_x\mathbf{\Psi} \rangle}_{\uppercase\expandafter{\romannumeral2}} \mathrm{d}\tau
		\nonumber  \\
		&-\int_{s}^t\underbrace{\langle\{\mathbf{I}-\mathbf{P}\}\bar{\mathbf{f}},v\cdot\nabla_x\mathbf{\Psi} \rangle -\langle \mathbf{\Psi},\mathbf{L}\{\mathbf{I}-\mathbf{P}\}\bar{\mathbf{f}} \rangle
		+e^{-\lambda\tau}\langle \mathbf{\Psi},\mathbf{\Gamma}(\bar{\mathbf{f}},\bar{\mathbf{f}}) \rangle}_{\uppercase\expandafter{\romannumeral3}} \mathrm{d}\tau\nonumber\\
		=&-\int_{s}^t\underbrace{e^{-\lambda\tau}\langle \mathbf{q}\sqrt{\mu}\bar{\mathbf{f}},\nabla_x\bar{\phi}\cdot\nabla_v(\frac{\mathbf{\Psi}}{\sqrt{\mu}} )  \rangle}_{\uppercase\expandafter{\romannumeral4}}
		+\underbrace{\langle \mathbf{q_1}\nabla_x\bar{\phi}\cdot v\sqrt{\mu},\mathbf{\Psi} \rangle}_{\uppercase\expandafter{\romannumeral5}}
		-\underbrace{\lambda\langle \bar{\mathbf{f}},\mathbf{\Psi} \rangle}_{\uppercase\expandafter{\romannumeral6}} \mathrm{d}\tau .
	\end{align}

Recall the definition of $\bar{\mathbf{f}}$, it is clear from \eqref{Eq2.4} $-$ \eqref{Eq2.7} that
\begin{align}
		&\iint_{\mathbb{T}^3\times\mathbb{R}^3}\bar{f}_{\pm}(t,x,v)\sqrt{\mu(v)}\mathrm{d}x\mathrm{d}v=0, \label{Ynew1}\\
		&\iint_{\mathbb{T}^3\times\mathbb{R}^3}\left(\bar{f}_{+}+\bar{f}_{-}\right)(t,x,v)v\sqrt{\mu(v)}\mathrm{d}x\mathrm{d}v=\mathbf{0}, \label{Ynew2}\\
		&\iint_{\mathbb{T}^3\times\mathbb{R}^3}\left(\bar{f}_{+}+\bar{f}_{-}\right)(t,x,v)|v|^2\sqrt{\mu(v)}\mathrm{d}x\mathrm{d}v+\int_{\mathbb{T}^3} e^{-\lambda t}|\nabla_{x}\bar{\phi}(t,x)|^2\mathrm{d} x=0,		\label{Ynew3}
\end{align}
which implies that for $t\ge 0$
\begin{equation}\label{Ynew5}
	\left\{
	\begin{aligned}
		&\int_{\mathbb{T}^3}\bar{a}_{\pm}(t,x)\mathrm{d}x=\int_{\mathbb{T}^3}\bar{b}_{i}(t,x)\mathrm{d}x=0,\  i=1,2,3,\\
		&	\int_{\mathbb{T}^3}\left(3\sqrt{2}\bar{c}(t,x) + e^{-\lambda t}|\nabla_{x}\bar{\phi}(t,x)|^2\right)\mathrm{d} x=0.
	\end{aligned}
	\right.
\end{equation}

However, due to $\int_{\mathbb{T}^3}\bar{c}(t,x)\mathrm{d} x \not\equiv 0$, another question arises:  the Poisson equation $-\Delta\varphi_{c}(t,x)=\bar{c}(t,x)$, with $ \int_{\mathbb{T}^3}\varphi_{c}\mathrm{d} x=0$  is ill-posed, so we cannot use it to estimate $\bar{c}$ as  in \cite{E-G-K-M13CMP, Cao19ARMA}. To solve this problem, we define a new function 
\begin{equation}\label{new5c}
	\tilde{c}(t,x):=\bar{c}(t,x)+\frac{\sqrt{2}}{6}e^{-\lambda t}|\nabla_{x}\bar{\phi}(t,x)|^2.
\end{equation}
Thus it follows from \eqref{Ynew5} that
 \begin{equation}\label{Ynew6}
	\int_{\mathbb{T}^3}\tilde{c}(t,x)\mathrm{d} x=0.
\end{equation}
 
 We define $\varphi_{a}^{\pm}(t,x)$, $\varphi_{b_{i}}(t,x)$ and $\varphi_{c}(t,x)$ as the solutions of the following Poisson equations, respectively. 
\begin{align}
	&-\Delta\varphi_{a}^{\pm}(t,x)=\bar{a}_{\pm}(t,x),\quad \int_{\mathbb{T}^3}\varphi_{\bar{a}_{\pm}}(t,x)\mathrm{d}x=0,\label{Ynew7}\\ 
	&-\Delta\varphi_{b_{i}}(t,x)=\bar{b}_i(t,x),\quad  \int_{\mathbb{T}^3}\varphi_{b_{i}}(t,x)\mathrm{d}x=0,\label{Ynew8}\\
	&-\Delta\varphi_{c}(t,x)=\tilde{c}(t,x),\quad \int_{\mathbb{T}^3}\varphi_{c}(t,x)\mathrm{d}x=0.\label{Ynew9}
\end{align}   
It follows from the standard elliptic estimate that
\begin{align}\label{Ynew9-1}
	\|\varphi_{a}^{\pm}\|_{H^2}\leq C\|\bar{a}_{\pm}\|_{L^2},\quad \|\varphi_{b_{i}}\|_{H^2}\leq C\|\bar{b}_{i}\|_{L^2},\quad \|\varphi_{c}\|_{H^2}\leq C\|\tilde{c}\|_{L^2}.
\end{align}
For the sake of later use, we define $\bar{\mathbf{a}}=\begin{bmatrix} 
	\bar{a}_+\\ \bar{a}_- \end{bmatrix}$.

Now we divide the proof into four steps.\\
\noindent{\it Step 1. Estimate on $\nabla_{x} \partial_{t} \varphi_{a}^{\pm}$}. Choosing the test function $\displaystyle\mathbf{\Psi}=\begin{bmatrix} 
\chi_{+}(x)\sqrt{\mu}\\ 0 \end{bmatrix}$, with $\chi_{+}(x)$ depending only on $x$ and substituting it into \eqref{Eq4.6} (with time integration over $[t,t+\varepsilon]$), then we obtain 
\begin{align*}
	&\int_{\mathbb{T}^3}[\bar{a}_{+}(t+\v)-\bar{a}_{+}(t)]\chi_{+}(x)\mathrm{d} x
	=\f{1}{\sqrt{2}}\int_{t}^{t+\v}\int_{\mathbb{T}^3}\bar{\mathbf{b}}\cdot\nabla_x\chi_{+}\mathrm{d} x \mathrm{d} \tau
	+\lambda\int_t^{t+\v}\int_{\mathbb{T}^3}\bar{a}_{+}\chi_{+} \mathrm{d} x\mathrm{d} \tau,
\end{align*}
where we have used that $\uppercase\expandafter{\romannumeral1}$, $\uppercase\expandafter{\romannumeral3}$, $\uppercase\expandafter{\romannumeral4}$ and $\uppercase\expandafter{\romannumeral5}$ vanish.
Taking the difference quotient, we have that for almost $t>0$,
\begin{align}\label{Ynew10}
	&\int_{\mathbb{T}^3}\partial_{t}\bar{a}_{+}(t)\chi_{+}(x)\mathrm{d} x
	=\f{1}{\sqrt{2}}\int_{\mathbb{T}^3}\bar{\mathbf{b}}\cdot\nabla_x\chi_{+}\mathrm{d} x 
	+\lambda\int_{\mathbb{T}^3}\bar{a}_{+}\chi_{+} \mathrm{d} x.
\end{align}
Choosing the test function $\mathbf{\Psi}=\begin{bmatrix} 
	0 \\ \chi_{-}(x)\sqrt{\mu} \end{bmatrix}$ in \eqref{Eq4.6}, with $\chi_{-}(x)$ depending only on $x$. Similarly, it holds
	\begin{align}\label{Ynew11}
		&\int_{\mathbb{T}^3}\partial_{t}\bar{a}_{-}(t)\chi_{-}(x)\mathrm{d} x
		=\f{1}{\sqrt{2}}\int_{\mathbb{T}^3}\bar{\mathbf{b}}\cdot\nabla_x\chi_{-}\mathrm{d} x 
		+\lambda\int_{\mathbb{T}^3}\bar{a}_{-}\chi_{-} \mathrm{d} x.
	\end{align}
It follows from \eqref{Ynew10} and \eqref{Ynew11} that	
	\begin{align*}
		\left|\int_{\mathbb{T}^3} \partial_t\bar{a}_{\pm}(t)\chi_{\pm} \mathrm{d}x\right|\lesssim\{\|\bar{\mathbf{b}}(t)\|_{L^2}+\lambda\|\bar{a}_{\pm}(t)\|_{L^2}\}\cdot \|\chi_{\pm}\|_{H^1}.
	\end{align*}
Hence, for almost $t\geq1$, one has
\begin{align*}
	\|\partial_t\bar{a}_{\pm}(t)\|_{(H^1)^*}\lesssim\|\bar{\mathbf{b}}(t)\|_{L^2}+\lambda\|\bar{a}_{\pm}(t)\|_{L^2},
\end{align*}
where $(H^1)^*=\big(H^1(\mathbb{T}^3)\big)^{*}$ is the dual space of $H^1(\mathbb{T}^3)$ with respect to the dual pair $\langle A, B\rangle=\int_{\mathbb{T}^3} A(x) B(x) d x$, for $A \in H^{1}$ and $B \in\left(H^{1}\right)^{*}$. 

Since $\partial_t\bar{a}_{\pm}(t) \in\left(H^{1}\right)^{*}$, by the standard elliptic theory, we can solve $-\Delta \Phi_{a}^{\pm}=\partial_{t} \bar{a}_{\pm}(t)$, $\int_{\mathbb{T}^3}\Phi_{a}^{\pm}dx=0$. Noting  $\Phi_{a}^{\pm}=-\Delta^{-1} \partial_{t} \bar{a}_{\pm}=\partial_{t} \varphi_{a}^{\pm}$ with $\varphi_{a}^{\pm}$ defined in \eqref{Ynew7}, thus we have
	\begin{align}\label{Eq4.15}
		\left\|\nabla_{x} \partial_{t} \varphi_{a}^{\pm}\right\|_{L^2}&\cong \left\|\Delta^{-1} \partial_{t} \bar{a}_{\pm}(t)\right\|_{H^{1}}=\left\|\Phi_{a}^{\pm}\right\|_{H^{1}} \lesssim\left\|\partial_{t} \bar{a}_{\pm}(t)\right\|_{\left(H^{1}\right)^{*}}\nonumber\\ &\le C\Big(\|\bar{\mathbf{b}}(t)\|_{L^2}+\lambda\|\bar{a}_{\pm}(t)\|_{L^2}\Big).
	\end{align}

%where $\|\nabla\Delta^{-1}\partial_t \bar{a}_{\pm}\|_{L^2}^2:=\|\nabla\Delta^{-1}\partial_t \bar{a}_{+}\|_{L^2}^2+\|\nabla\Delta^{-1}\partial_t \bar{a}_{-}\|_{L^2}^2$.	 
%%%%%%%新增%%%%%%%%%%%%%
%%%%%%%新增%%%%%%%%%%%%%
\noindent \textit {Step 2. Estimate on $\nabla_{x} \partial_{t} \varphi_{c}$ }. Choosing the test function $\displaystyle\mathbf{\Psi}=\mathbf{\Psi}_{c}^t:=
\begin{bmatrix} 
	\frac{|v|^2-3}{2\sqrt{2}}\sqrt{\mu} \chi_{c}(x)\\ \frac{|v|^2-3}{2\sqrt{2}}\sqrt{\mu} \chi_{c}(x) \end{bmatrix}$ in \eqref{Eq4.6}, one has
 \begin{align*}
 	\f{3}{2}\int_{\mathbb{T}^3} [\bar{c}(t+\varepsilon)-\bar{c}(t)] \chi_{c}(x)\mathrm{d}x&=
 	\int_{t}^{t+\varepsilon} \int_{\mathbb{T}^3}\bar{\mathbf{b}}\cdot\nabla_x\chi_{c}\mathrm{d} x \mathrm{d} \tau 
 	+\f{3\lambda}{2}\int_{t}^{t+\varepsilon} \int_{\mathbb{T}^3}\bar{c}\chi_{c}\mathrm{d} x \mathrm{d} \tau
 	\\
 	&\quad+\int_{t}^{t+\varepsilon} \iint_{\mathbb{T}^3 \times \mathbb{R}^3} \{\mathbf{I}-\mathbf{P}\}\bar{\mathbf{f}}\cdot (v\cdot\nabla_x)\mathbf{\Psi}_{c}^t \mathrm{d} v\mathrm{d} x \mathrm{d} \tau \\
 	&\quad+\int_{t}^{t+\varepsilon}e^{-\lambda\tau}\iint_{\mathbb{T}^3 \times \mathbb{R}^3}
 	\mathbf{q}\sqrt{\mu}\bar{\mathbf{f}}\cdot (\nabla_x\bar{\phi}\cdot\nabla_v)\left(\frac{\mathbf{\Psi}_{c}^t}{\sqrt{\mu}} \right)\mathrm{d} v\mathrm{d} x \mathrm{d} \tau,
 \end{align*}
where we have used that $\mathbf{L}\{\mathbf{I}-\mathbf{P}\}\bar{\mathbf{f}}$, $\mathbf{\Gamma}(\bar{\mathbf{f}},\bar{\mathbf{f}})$ and $\nabla_x\bar{\phi}\cdot v\sqrt{\mu}\mathbf{q_1}$, 
integrated against $\mathbf{\Psi}_{c}^t$ are zero.
 Taking the difference quotient, we obtain for almost $t\geq 1$,
 \begin{align}\label{Ynew12}
	\f{3}{2}\int_{\mathbb{T}^3} \partial_{t}\bar{c}(t) \chi_{c}(x)\mathrm{d}x&=
	 \int_{\mathbb{T}^3}\bar{\mathbf{b}}\cdot\nabla_x\chi_{c}\mathrm{d} x 
	 +\f{3\lambda}{2}\int_{\mathbb{T}^3}\bar{c}(t)\chi_{c}(x)\mathrm{d} x \nonumber\\
	&\quad+\iint_{\mathbb{T}^3 \times \mathbb{R}^3} \{\mathbf{I}-\mathbf{P}\}\bar{\mathbf{f}}\cdot (v\cdot\nabla_x)\mathbf{\Psi}_{c}^t \mathrm{d} v\mathrm{d} x  \nonumber\\
	&\quad+e^{-\lambda t}\iint_{\mathbb{T}^3 \times \mathbb{R}^3}
	\mathbf{q}\sqrt{\mu}\bar{\mathbf{f}}\cdot (\nabla_x\bar{\phi}\cdot\nabla_v)\left(\frac{\mathbf{\Psi}_{c}^t}{\sqrt{\mu}}\right)\mathrm{d} v\mathrm{d} x.
\end{align}

For fixed $t\geq 1$, we choose $\chi_{c}(x)=\Phi_{c}(t,x)$ with $-\Delta \Phi_{c}=\partial_{t} \tilde{c}(t)$, $\int_{\mathbb{T}^3}\Phi_{c}dx=0$. It is clear that $\Phi_{c}=-\Delta^{-1} \partial_{t} \tilde{c}(t)=\partial_{t} \varphi_{c}$, where $\varphi_{c}$ is defined in \eqref{Ynew9}.
Using the fact that
\begin{align}\label{new5.1}
\partial_{t}\bar{c}=\partial_{t}\tilde{c}+\frac{\sqrt{2}}{6}\lambda e^{-\lambda t}|\nabla_{x}\bar{\phi}|^2-\frac{\sqrt{2}}{3}e^{-\lambda t}\nabla_{x}\bar{\phi}\cdot\nabla_{x}\partial_{t}\bar{\phi},
\end{align}
we have
\begin{align}\label{new5.2}
\int_{\mathbb{T}^3}\partial_t\bar{c}\partial_t\varphi_{c}\mathrm{d}x=&\int_{\mathbb{T}^3}\partial_t\tilde{c}\partial_t\varphi_{c}\mathrm{d}x +\frac{\sqrt{2}}{6}\lambda \int_{\mathbb{T}^3} e^{-\lambda t}|\nabla_{x}\bar{\phi}|^2\partial_t\varphi_{c}\mathrm{d}x   \nonumber\\
&-\frac{\sqrt{2}}{3}\int_{\mathbb{T}^3} 
e^{-\lambda t}(\nabla_{x}\bar{\phi}\cdot\nabla_{x}\partial_{t}\bar{\phi})\partial_t\varphi_{c}\mathrm{d}x \nonumber\\
=&\|\nabla\Delta^{-1}\partial_t \tilde{c}(t)\|_{L^2}^2+\frac{\sqrt{2}}{6}\lambda \int_{\mathbb{T}^3} e^{-\lambda t}|\nabla_{x}\bar{\phi}|^2\partial_t\varphi_{c}\mathrm{d}x  \nonumber\\
&-\frac{\sqrt{2}}{3}\int_{\mathbb{T}^3} 
e^{-\lambda t}\nabla_{x}\bar{\phi}\cdot\nabla_{x}\partial_{t}\bar{\phi}\partial_t\varphi_{c}\mathrm{d}x.
\end{align}

For the second term on RHS of \eqref{new5.2}, it follows from \eqref{Eq2.21}, \eqref{Eq1.18} and \eqref{Eq4.2} that
\begin{align}\label{new5.3}
\|e^{-\lambda t}\nabla_{x}\bar{\phi}(t)\|_{L^\infty}\leq \delta(1+t)^{-2},
\end{align}
which yields that
\begin{align}\label{new5.4}
	\frac{\sqrt{2}}{6}\lambda \Big|\int_{\mathbb{T}^3} e^{-\lambda t}|\nabla_{x}\bar{\phi}|^2\partial_t\varphi_{c}\mathrm{d}x \Big|&\lesssim \delta \|\nabla_{x}\bar{\phi}(t)\|_{L^2}\|\partial_t\varphi_{c}(t)\|_{L^2}\nonumber\\
	&\lesssim \delta\left(\|\nabla_{x}\bar{\phi}(t)\|_{L^2}^{2}+\|\nabla\Delta^{-1}\partial_t \tilde{c}(t)\|_{L^2}^2\right),
\end{align}
where we have used Poincar\'{e}'s inequality 
\begin{equation}\label{new5.4-1}
\|\partial_t\varphi_{c}(t)\|_{L^2}\lesssim \|\nabla_{x}\partial_t\varphi_{c}(t)\|_{L^2}= \|\nabla\Delta^{-1}\partial_t \tilde{c}(t)\|_{L^2}.
\end{equation}

For the last term on RHS of \eqref{new5.2}, it follows from \eqref{new5.3} that 
\begin{align}\label{new5.5}
\frac{\sqrt{2}}{3}\Big|\int_{\mathbb{T}^3} 
e^{-\lambda t}\nabla_{x}\bar{\phi}\cdot\nabla_{x}\partial_{t}\bar{\phi}\partial_t\varphi_{c}\mathrm{d}x
\Big|
\lesssim&  \delta  \|\nabla_{x}\partial_{t}\bar{\phi}(t)\|_{L^2}\|\partial_t\varphi_{c}(t)\|_{L^2} \nonumber\\
\lesssim&   \delta \left(\|\nabla\Delta^{-1}\partial_t \bar{\mathbf{a}}(t)\|_{L^2}^2+\|\nabla\Delta^{-1}\partial_t \tilde{c}(t)\|_{L^2}^2\right),
\end{align}
where we used $\|\nabla_{x}\partial_{t}\bar{\phi}(t)\|_{L^2}^2\lesssim \left(\|\nabla\Delta^{-1}\partial_t \bar{a}_{+}(t)\|_{L^2}^2+\|\nabla\Delta^{-1}\partial_t \bar{a}_{-}(t)\|_{L^2}^2\right)=: \|\nabla\Delta^{-1}\partial_t \bar{\mathbf{a}}(t)\|_{L^2}^2$.

Substituting \eqref{new5.5} and \eqref{new5.4}  into \eqref{new5.2}, one obtains that
\begin{align}\label{new5.6}
\frac{3}{2}\int_{\mathbb{T}^3}\partial_t\bar{c}\partial_t\varphi_{c}\mathrm{d}x\geq \Big(\f{3}{2}-2\delta\Big)\|\nabla\Delta^{-1}\partial_t \tilde{c}(t)\|_{L^2}^2 
-\delta\left(\|\nabla\Delta^{-1}\partial_t \bar{\mathbf{a}}(t)\|_{L^2}^{2}+\|\nabla_{x}\bar{\phi}(t)\|_{L^2}^2\right).
\end{align}

Next, we  deal with the terms on RHS of \eqref{Ynew12} with $\chi_{c}=\Phi_{c}=\partial_{t} \varphi_{c}$.

For the first term on RHS of \eqref{Ynew12}, it is clear that
\begin{align}\label{Eq4.25}
	\int_{\mathbb{T}^3}\bar{\mathbf{b}}\cdot\nabla_x\Phi_{c}\mathrm{d} x 
	\leq\frac{m}{2} \|\nabla\Delta^{-1}\partial_t \tilde{c}(\tau)\|_{L^2}^2
	+\frac{1}{2m}\|\mathbf{\bar{b}}(\tau)\|_{L^2}^2,
\end{align}
for some  small constant $m>0$.	

For the second term on RHS of \eqref{Ynew12}, a direct calculation shows that
\begin{align}\label{Eq4.26}
\Big|\iint_{\mathbb{T}^3 \times \mathbb{R}^3} \{\mathbf{I}-\mathbf{P}\}\bar{\mathbf{f}}\cdot (v\cdot\nabla_x)\mathbf{\Psi}_{c}^t \mathrm{d} v\mathrm{d} x \Big|
	\leq \frac{m}{2} \|\nabla\Delta^{-1}\partial_t \tilde{c}(\tau)\|_2^2
	+\f{C}{2m}\|\{\mathbf{I}-\mathbf{P}\}\bar{\mathbf{f}}\|_\nu^2.
\end{align}

For the third term on RHS of \eqref{Ynew12}, according to Remark \ref{Rem4.4}, there is a  small constant $\kappa>0$ such that 
\begin{equation}\label{kappa}
\sup_{\substack{t \geq 1\\ y \in \mathbb{T}^3}}\left\{\int_{\mathbb{R}_{v}^3}|\mathbf{f}(t,x,v)|\mathrm{d}v\right\}\leq \kappa.
\end{equation}
which, together with \eqref{new5.4-1}, yields that
\begin{align}\label{Eq4.27}
	&\Big|e^{-\lambda t}\iint_{\mathbb{T}^3 \times \mathbb{R}^3}
	\mathbf{q}\sqrt{\mu}\bar{\mathbf{f}}\cdot (\nabla_x\bar{\phi}\cdot\nabla_v)(\frac{\mathbf{\Psi}_{c}^t}{\sqrt{\mu}} )\mathrm{d} v\mathrm{d} x \Big| \nonumber\\
	\lesssim & \sup\limits_{x\in\mathbb{T}^3} \Big\{\int_{\mathbb{R}_{v}^3}|\mathbf{f}(t,x,v)|\mathrm{d}v\Big\}\int_{\mathbb{T}^3}|\nabla_x\bar{\phi}||\nabla\Delta^{-1}\partial_t \tilde{c}| \mathrm{d}x
	\nonumber \\
	\lesssim &\kappa\left(\|\nabla\Delta^{-1}\partial_t \tilde{c}\|_{L^2}^2 +\|\nabla_x \bar{\phi}\|_{L^2}^2\right).
\end{align}

For the last term on RHS of \eqref{Ynew12}, using \eqref{new5.4-1}, one has
\begin{align}\label{Eq4.30}
\f{3\lambda}{2}	 \left|\int_{\mathbb{T}^3}\bar{c}(t)\partial_{t} \varphi_{c}\mathrm{d} x\right|
\lesssim  \lambda \left(\|\nabla\Delta^{-1}\partial_t \tilde{c}\|_{L^2}^2+ \|\bar{c}\|_{L^2}^2\right).
\end{align}
It follows from \eqref{new5c} and  \eqref{new5.3} that
\begin{align}\label{new5.7}
 \|\bar{c}(t)\|_{L^2}^2\lesssim  \|\tilde{c}(t)\|_{L^2}^2+\delta^2\|\nabla_x \bar{\phi}(t)\|_{L^2}^2,
\end{align}
which, together with \eqref{Eq4.30}, yields that 
\begin{align}\label{new5.8}
\frac{3\lambda}{2}\left|\int_{\mathbb{T}^3}\bar{c}(t)\partial_{t} \varphi_{c}\mathrm{d} x\right|
	\lesssim \lambda \left(\|\nabla\Delta^{-1}\partial_t \tilde{c}\|_{L^2}^2 +\|\tilde{c}(t)\|_{L^2}^2+\delta^2\|\nabla_x \bar{\phi}(t)\|_{L^2}^2\right).
\end{align}

Taking $m=\f{1}{2}$, and let $\delta$, $\kappa$ and $\lambda$ suitably small such that $2\delta+\kappa+\lambda <\f{1}{2}$, then substituting \eqref{new5.6}, \eqref{Eq4.25} $-$ \eqref{Eq4.27} and \eqref{new5.8} into \eqref{Ynew12}, we obtain that for $t\geq 1$,
%\begin{align*}
%\Big(\f{3}{2}-2\delta-m-\kappa-\lambda\Big)	\|\nabla\Delta^{-1}\partial_t \tilde{c}(t)\|_{L^2}^2
%	\lesssim & \f{1}{m}\left(\|\mathbf{\bar{b}}(t)\|_{L^2}^2  +\|\{\mathbf{I}-\mathbf{P}\}\bar{\mathbf{f}}(t)\|_\nu^2\right)  +\lambda \|\tilde{c}(t)\|_{L^2}^2\nonumber\\
%	&+\delta \|\nabla\Delta^{-1}\partial_t \bar{\mathbf{a}}(t)\|_{L^2}^{2}+ (\kappa+\delta)\|\nabla_x \bar{\phi}(t)\|_{L^2}^2 \nonumber\\
%	\lesssim &  \f{1}{m}\left(\|\mathbf{\bar{b}}(t)\|_{L^2}^2  +\|\{\mathbf{I}-\mathbf{P}\}\bar{\mathbf{f}}(t)\|_\nu^2\right) 
%      +\lambda \|\tilde{c}(t)\|_{L^2}^2\nonumber\\
%      &+\delta \|\bar{\mathbf{a}}(t)\|_{L^2}^2+(\kappa+\delta)\|\nabla_x \bar{\phi}(t)\|_{L^2}^2,
%\end{align*}
%and we denote $\|\bar{\mathbf{a}}(t)\|_{L^2}^2:=\left(\|\bar{a}_{+}(t)\|_{L^2}^2+\|\bar{a}_{-}(t)\|_{L^2}^2\right)$.
\begin{align}\label{Eq4.31}
	\|\nabla_{x}\partial_t\varphi_{c}(t)\|_{L^2}^2=\|\nabla\Delta^{-1}\partial_t \tilde{c}(t)\|_{L^2}^2
	\leq& C\left(\|\mathbf{\bar{b}}(t)\|_{L^2}^2  +\|\{\mathbf{I}-\mathbf{P}\}\bar{\mathbf{f}}(t)\|_\nu^2\right) 
	+\lambda \|\tilde{c}(t)\|_{L^2}^2\nonumber\\
	&+\delta \|\bar{\mathbf{a}}(t)\|_{L^2}^2+(\kappa+\delta)\|\nabla_x \bar{\phi}(t)\|_{L^2}^2.
\end{align}

\noindent \textit{Step 3. Estimate on $\nabla_{x} \partial_{t} \varphi_{b_{i}}$ }. Choosing the test function $\displaystyle\mathbf{\Psi}=\mathbf{\Psi}_{b_{i}}^t:=\begin{bmatrix} 
	\frac{v_i}{\sqrt{2}}\sqrt{\mu}\chi_{b_{i}}(x)\\ \frac{v_i}{\sqrt{2}}\sqrt{\mu}\chi_{b_{i}}(x) \end{bmatrix}$  with $\chi_{b_{i}}(x)$ depending only on $x$, then we obtain from  \eqref{Eq4.6} that
 \begin{align*}
	\int_{\mathbb{T}^3} [\bar{b}_{i}(t+\varepsilon)-\bar{b}_{i}(t)] \chi_{b_{i}}(x)\mathrm{d}x&=
	\int_{t}^{t+\varepsilon} \int_{\mathbb{T}^3}\left(\frac{1}{\sqrt{2}} \bar{a}_+\partial_i\chi_{b_{i}}
	+\frac{1}{\sqrt{2}} \bar{a}_-\partial_i\chi_{b_{i}}+
	\bar{c}\partial_i\chi_{b_{i}}+\lambda\bar{b}_{i}\chi_{b_{i}}\right) \mathrm{d} x \mathrm{d} \tau \\
	&\quad+\int_{t}^{t+\varepsilon} \iint_{\mathbb{T}^3 \times \mathbb{R}^3} \{\mathbf{I}-\mathbf{P}\}\bar{\mathbf{f}}\cdot (v\cdot\nabla_x)\mathbf{\Psi}_{b_{i}}^t \mathrm{d} v\mathrm{d} x \mathrm{d} \tau \\
	&\quad+\int_{t}^{t+\varepsilon}e^{-\lambda\tau}\iint_{\mathbb{T}^3 \times \mathbb{R}^3}
	\mathbf{q}\sqrt{\mu}\bar{\mathbf{f}}\cdot (\nabla_x\bar{\phi}\cdot\nabla_v)\left(\frac{\mathbf{\Psi}_{b_{i}}^t}{\sqrt{\mu}}\right)\mathrm{d} v\mathrm{d} x \mathrm{d} \tau.
\end{align*}
Taking the difference quotient, we obtain 
 \begin{align} \label{Ynew13}
	\int_{\mathbb{T}^3} \partial_{t}\bar{b}_{i}(t) \chi_{b_{i}}(x)\mathrm{d}x&=
	 \int_{\mathbb{T}^3}\left(\frac{1}{\sqrt{2}} \bar{a}_+\partial_i\chi_{b_{i}}
	+\frac{1}{\sqrt{2}} \bar{a}_-\partial_i\chi_{b_{i}}+
	\bar{c}\partial_i\chi_{b_{i}}+\lambda\bar{b}_{i}\chi_{b_{i}}\right) \mathrm{d} x   \nonumber\\
	&\quad+\iint_{\mathbb{T}^3 \times \mathbb{R}^3} \{\mathbf{I}-\mathbf{P}\}\bar{\mathbf{f}}\cdot (v\cdot\nabla_x)\mathbf{\Psi}_{b_{i}}^t \mathrm{d} v\mathrm{d} x   \nonumber\\
	&\quad+e^{-\lambda t}\iint_{\mathbb{T}^3 \times \mathbb{R}^3}
	\mathbf{q}\sqrt{\mu}\bar{\mathbf{f}}\cdot (\nabla_x\bar{\phi}\cdot\nabla_v)\left(\frac{\mathbf{\Psi}_{b_{i}}^t}{\sqrt{\mu}}\right)\mathrm{d} v\mathrm{d} x,
\ 	\text{for} \ t \geq 1.
\end{align}

According to Remark \ref{Rem4.4}, it follows that
\begin{align}
\Big|e^{-\lambda t}\iint_{\mathbb{T}^3 \times \mathbb{R}^3}
\mathbf{q}\sqrt{\mu}\bar{\mathbf{f}}\cdot (\nabla_x\bar{\phi}\cdot\nabla_v)\left(\frac{\mathbf{\Psi}_{b_{i}}^t}{\sqrt{\mu}}\right)\mathrm{d} v\mathrm{d} x \Big|\lesssim~\kappa \|\nabla_x\bar{\phi}\|_{L^2}\cdot\|\chi_{b_{i}}\|_{H^1},
\end{align}
which, with \eqref{Ynew13},  yields that
\begin{align*}
	\|\partial_t\bar{b}_{i}(t)\|_{(H^1)^*}\lesssim& \|\bar{\mathbf{a}}(t)\|_{L^2}+\|\bar{c}(t)\|_{L^2}+\lambda\|\bar{b}_{i}(t)\|_{L^2}+\kappa\|\nabla_x\bar{\phi}\|_{L^2}+\|\{\mathbf{I}-\mathbf{P}\}\bar{\mathbf{f}}(t)\|_\nu \\
	\lesssim& \|\bar{\mathbf{a}}(t)\|_{L^2}+\|\tilde{c}(t)\|_{L^2}+\lambda\|\bar{b}_{i}(t)\|_{L^2}+(\kappa+\delta)\|\nabla_x\bar{\phi}\|_{L^2}+\|\{\mathbf{I}-\mathbf{P}\}\bar{\mathbf{f}}(t)\|_\nu.
\end{align*}

For fixed $t\geq 1$, we choose $\chi_{b_{i}}=\Phi_{b_{i}}$ with $-\Delta \Phi_{b_{i}}=\partial_{t} \bar{b}_{i}(t)$, $\int_{\mathbb{T}^3}\Phi_{b_{i}}dx=0$. It is clear that $\Phi_{b_{i}}=-\Delta^{-1} \partial_{t} \bar{b}_{i}=\partial_{t}\varphi_{b_{i}}$, where $\varphi_{b_{i}}$ is defined in \eqref{Ynew8}. By similar arguments as in \textit{Step 1}, we have
\begin{align} \label{Eq4.23}
	\left\|\nabla_{x} \partial_{t} \varphi_{b_{i}}(t)\right\|_{L^2}&\cong \left\|\Delta^{-1} \partial_{t} \bar{b}_i(t)\right\|_{H^{1}}=\left\|\Phi_{b_{i}}(t)\right\|_{H^{1}} \lesssim\left\|\partial_{t} \bar{b}_i(t)\right\|_{\left(H^{1}\right)^{*}}\nonumber\\ &\lesssim \|(\bar{\mathbf{a}},\tilde{c})(t)\|_{L^2}+\lambda\|\bar{b}_{i}(t)\|_{L^2}+(\kappa+\delta)\|\nabla_x\bar{\phi}(t)\|_{L^2}+\|\{\mathbf{I}-\mathbf{P}\}\bar{\mathbf{f}}(t)\|_\nu.
\end{align}

	\noindent{\it Step 4. Estimate on $\bar{\mathbf{a}}$, $\mathbf{\bar{b}}$, $\bar{c}$ }. 

	\noindent{\it Step 4.1. Estimate on $\tilde{c}$.} 
	Motivated by \cite{E-G-K-M13CMP}, we choose the test function
	$$\displaystyle\mathbf{\Psi}=\begin{bmatrix} 
		(|v|^2-\beta_{c})\sqrt{\mu}v\cdot\nabla_x\varphi_{c}\\ (|v|^2-\beta_{c})\sqrt{\mu}v\cdot\nabla_x\varphi_{c} \end{bmatrix},$$
	where $\varphi_{c}$ is the one defined in \eqref{Ynew9}, and $\beta_{c}=5$ so that $\int(|v|^2-\beta_{c})v_i^2\mu(v)dv=0$.  It is easy to verify that	$\uppercase\expandafter{\romannumeral5}$ in \eqref{Eq4.6} vanishes, due to the orthogonality.
	
	For $\uppercase\expandafter{\romannumeral1}$, due to oddness in $v$  and the choice of $\b_{c}$, 
	$$\iint_{\mathbb{T}^3\times\mathbb{R}^3}\mathbf P \bar{\mathbf{f}}\cdot \begin{bmatrix} 
		(|v|^2-\beta_{c})\sqrt{\mu}v\cdot\nabla_x\partial_{t}\varphi_{c}\\ (|v|^2-\beta_{c})\sqrt{\mu}v\cdot\nabla_x\partial_{t}\varphi_{c} \end{bmatrix}\mathrm{d}v \mathrm{d}x=0.$$
	Thus using \eqref{Eq4.31}, we have
	\begin{align}\label{Eq4.32}
	\uppercase\expandafter{\romannumeral1}
		\lesssim &~m_{c}\int_s^t\|\nabla\Delta^{-1}\partial_t \tilde{c}(\tau)\|_{L^2}^2\mathrm{d}\tau
		+\f{1}{m_c}\int_s^t\|\{\mathbf{I}-\mathbf{P}\}\bar{\mathbf{f}}(\tau)\|_\nu^2\mathrm{d}\tau	 \nonumber\\
	\lesssim &~m_{c} \int_s^t\left( \delta\|\bar{\mathbf{a}}(\tau)\|_{L^2}^2 +\lambda\|\tilde{c}(\tau)\|_{L^2}^2+(\kappa+\delta)\|\nabla_x \bar{\phi}(\tau)\|_{L^2}^2+\|\mathbf{\bar{b}}(\tau)\|_{L^2}^2\right)\mathrm{d}\tau  \nonumber\\
		&+\f{1}{m_c}\int_s^t\|\{\mathbf{I}-\mathbf{P}\}\bar{\mathbf{f}}(\tau)\|_\nu^2\mathrm{d}\tau.
	\end{align}

	For $\uppercase\expandafter{\romannumeral2}$, the $\bar{a}_{\pm}$, $\mathbf{\bar{b}}$ contributions vanish owing to oddness in $v$ and the choice of $\beta_{c}$, thus one has from \eqref{new5c} that
	\begin{align}\label{Eq4.33}
		-\uppercase\expandafter{\romannumeral2}
		&=-5\sqrt{2}\int_s^t\int_{\mathbb{T}^3}\bar{c}(\tau,x)\Delta\varphi_{c}\mathrm{d}x\mathrm{d}\tau   \nonumber\\
		&=-5\sqrt{2}\int_s^t\int_{\mathbb{T}^3}\tilde{c}(\tau,x)\Delta\varphi_{c}\mathrm{d}x\mathrm{d}\tau-\frac{5}{3}\int_s^t\int_{\mathbb{T}^3}e^{-\lambda t}|\nabla_{x}\bar{\phi}|^2\tilde{c}\mathrm{d}x\mathrm{d}\tau  \nonumber\\
		&\geq 5\sqrt{2}\int_s^t\|\tilde{c}\|_{L^2}^2\mathrm{d}\tau -\delta\int_s^t\|\nabla_x \bar{\phi}\|_{L^2}\|\tilde{c}\|_{L^2}\mathrm{d}\tau   \nonumber\\
		&\geq (5\sqrt{2}-\delta)\int_s^t\|\tilde{c}\|_{L^2}^2\mathrm{d}\tau -\delta\int_s^t\|\nabla_x \bar{\phi}\|_{L^2}^2\mathrm{d}\tau.
	\end{align}

For $\uppercase\expandafter{\romannumeral4}$, it is apparent from \eqref{kappa} that
	\begin{align}\label{Eq4.34}
		\uppercase\expandafter{\romannumeral4}
		\lesssim & \sup_{\substack{ t\geq 1 \\ y \in \mathbb{T}^3}} \left\{\int_{\mathbb{R}_{v}^3}|\mathbf{f}(t,x,v)|\mathrm{d}v\right\}\int_s^t\|\nabla_x\bar{\phi}\|_{L^2}\|\nabla_x\varphi_{c}\|_{L^2} \mathrm{d}\tau\nonumber\\
		\lesssim &~ \kappa\int_s^t ( \|\nabla_x \bar{\phi}\|_{L^2}^2+\|\tilde{c}\|_{L^2}^2)\mathrm{d}\tau.
	\end{align}
	
	 For $\uppercase\expandafter{\romannumeral6}$, due to oddness in $v$, one has
	\begin{align}\label{Eq4.35}
		\uppercase\expandafter{\romannumeral6}
		\lesssim \lambda \int_{s}^t \left(\|\tilde{c}\|_{L^2}^2
		+\|\{\mathbf{I}-\mathbf{P}\}\bar{\mathbf{f}}\|_\nu^2\right)\mathrm{d}\tau.
	\end{align}  
	
It remains to control $\uppercase\expandafter{\romannumeral3}$. Using \eqref{kappa} and Proposition \ref{prop4.3}, a routine computation yields  that
	\begin{align}\label{Eq4.37}
		\uppercase\expandafter{\romannumeral3}
		\lesssim&~m_c\int_{s}^t\|\tilde{c}\|_{L^2}^2\mathrm{d}\tau
		+\f{1}{m_c}\int_{s}^t\|\{\mathbf{I}-\mathbf{P}\}\bar{\mathbf{f}}\|_\nu^2 \mathrm{d}\tau  \nonumber\\
		&+\sup_{\substack{ t\geq 1\\ y \in \mathbb{T}^3}} \Big\{\int_{\mathbb{R}_{v}^3}|\mathbf{f}(t,x,v)|\mathrm{d}v\|w_\beta \mathbf{f}\|_{L^\infty}\Big\}^{\f{1}{2}}
		 \int_s^t\|\bar{\mathbf{f}}\|_{L^2}^2\mathrm{d}\tau \nonumber\\
		 \lesssim&~m_c\int_{s}^t\|\tilde{c}\|_{L^2}^2\mathrm{d}\tau
		 +\f{1}{m_c}\int_{s}^t\|\{\mathbf{I}-\mathbf{P}\}\bar{\mathbf{f}}\|_\nu^2 \mathrm{d}\tau  
		 +\kappa^{\f{1}{2}} \int_s^t\|\bar{\mathbf{f}}\|_{L^2}^2\mathrm{d}\tau.
	\end{align}

Plugging \eqref{Eq4.32} $-$ \eqref{Eq4.37}  into \eqref{Eq4.6}, and taking $\delta+\lambda+\kappa+m_c< 1$, one can get
\begin{align} \label{Eq4.38}
	\int_{s}^t\|\tilde{c}(\tau)\|_{L^2}^2\mathrm{d}\tau  
	\lesssim& \f{1}{m_c}\int_s^t\|\{\mathbf{I}-\mathbf{P}\}\bar{\mathbf{f}}(\tau)\|_\nu^2\mathrm{d}\tau
	+(\delta+\kappa)\int_s^t( \|\bar{\mathbf{a}}(\tau)\|_{L^2}^2+\|\nabla_x \bar{\phi}(\tau)\|_{L^2}^2)  \mathrm{d}\tau\nonumber\\
		&+m_c\int_{s}^t \|\mathbf{\bar{b}}(\tau)\|_{L^2}^2\mathrm{d}\tau
	+\kappa^{\f{1}{2}}\int_s^t \|\bar{\mathbf{f}}(\tau)\|_{L^2}^2\mathrm{d}\tau+G(s)-G(t),
\end{align}
where $G(s):=\iint_{\mathbb{T}^3\times\mathbb{R}^3}\bar{\mathbf{f}}(s,x,v)\cdot\mathbf{\Psi}(s,x,v)\mathrm{d}v \mathrm{d}x$, and $|G(s)|\leq C\|\bar{\mathbf{f}}(s)\|_{L^2}^2$.
	
		\noindent{\it Step 4.2. Estimate on $\bar{\mathbf{a}}$.} 
	  We choose a test function
	$$\displaystyle\mathbf{\Psi}=\mathbf{\Psi}_{a}:=\begin{bmatrix} 
		-(|v|^2-\beta_{a})\sqrt{\mu}v\cdot\nabla_x\varphi_{a}^{+}\\ -(|v|^2-\beta_{a})\sqrt{\mu}v\cdot\nabla_x\varphi_{a}^{-} \end{bmatrix},$$
	where
	$\varphi_{a}^{\pm}(t,x)$ are the ones defined in \eqref{Ynew7}, and  $\beta_{a}=10$  so that $\int(|v|^2-\beta_{a})\left(\frac{|v|^2-3}{2\sqrt{2}}\right)v_{i}^2\mu(v)dv=0$.

For $\uppercase\expandafter{\romannumeral2}$,  in view of oddness in $v$ and the choice of $\beta_{a}$, the $\mathbf{\bar{b}}$, $\bar{c}$ contributions vanish, one has
\begin{align}\label{Eq4.40}
	\uppercase\expandafter{\romannumeral2}
	=&5\int_s^t\int_{\mathbb{T}^3}\bar{a}_{+}\Delta\varphi_{a}^{+}\mathrm{d}x\mathrm{d}\tau
	+5\int_s^t\int_{\mathbb{T}^3}\bar{a}_{-}\Delta\varphi_{a}^{-}\mathrm{d}x\mathrm{d}\tau\nonumber\\
	=&-5\int_s^t\|\bar{\mathbf{a}}(\tau)\|_{L^2}^2\mathrm{d}\tau,
\end{align}
where we have used the fact $\int(|v|^2-10)v_i^2\mu(v) \mathrm{d}v=-5$.

For $\uppercase\expandafter{\romannumeral5}$, noting $\eqref{Eq4.2}_2$ and \eqref{Ynew7}, we have
 $\bar{\phi}=\varphi_{a}^{+}-\varphi_{a}^{-}$. Then one obtains that
\begin{align}\label{Eq4.40-1}
	\uppercase\expandafter{\romannumeral5}
	=&\sum_{i,j}\int_s^t\int\partial_i\varphi_{a}^{+}\partial_j\bar{\phi}\cdot\int(|v|^2-10)v_iv_j\mu \mathrm{d}v\nonumber\\
	&-\sum_{i,j}\int_s^t\int\partial_i\varphi_{a}^{-}\partial_j\bar{\phi}\cdot\int(|v|^2-10)v_iv_j\mu \mathrm{d}v\nonumber\\
	=&-5\int_s^t\int(\nabla_x\varphi_{a}^{+}-\nabla_x\varphi_{a}^{-})\cdot\nabla_x\bar{\phi} \mathrm{d}x\mathrm{d}\tau
	=-5\int_s^t\|\nabla_x\bar{\phi}\|_{L^2}^2\mathrm{d}\tau.
\end{align}

For $\uppercase\expandafter{\romannumeral1}$, due to the oddness in $v$,  it follows from \eqref{Eq4.3} and  \eqref{Eq4.15} that 
	\begin{align}\label{Eq4.39}
		\uppercase\expandafter{\romannumeral1}
		=&\int_s^t\langle \mathbf{P}\bar{\mathbf{f}},\partial_t\mathbf{\Psi}_{a} \rangle\mathrm{d}\tau+\int_s^t\langle \{\mathbf{I}-\mathbf{P}\}\bar{\mathbf{f}},\partial_t\mathbf{\Psi}_{a} \rangle\mathrm{d}\tau\nonumber\\
		=&\frac{5}{\sqrt{2}}\int_s^t\int (\mathbf{\bar{b}}\cdot\nabla_x\partial_t\varphi_{a}^{+}
		+ \mathbf{\bar{b}}\cdot\nabla_x\partial_t\varphi_{a}^{-} )\mathrm{d}x\mathrm{d}\tau +\int_s^t\langle \{\mathbf{I}-\mathbf{P}\}\bar{\mathbf{f}},\partial_t\mathbf{\Psi}_{a}\rangle\mathrm{d}\tau\nonumber\\
		\lesssim&
		\int_s^t\left(\|\nabla\Delta^{-1}\partial_t \bar{\mathbf{a}}\|_{L^2}^2
		+\|\mathbf{\bar{b}}\|_{L^2}^2+\|\{\mathbf{I}-\mathbf{P}\}\bar{\mathbf{f}}\|_\nu^2\right)\mathrm{d}\tau	\nonumber \\
		\lesssim&\lambda \int_s^t\|\bar{\mathbf{a}}\|_{L^2}^2\mathrm{d}\tau
		+\int_s^t\left(\|\mathbf{\bar{b}}\|_{L^2}^2+\|\{\mathbf{I}-\mathbf{P}\}\bar{\mathbf{f}}\|_\nu^2\right)\mathrm{d}\tau.
\end{align}

Using the same procedure as in \textit{Step 4.1} for $\uppercase\expandafter{\romannumeral3}$, $\uppercase\expandafter{\romannumeral4}$ and $\uppercase\expandafter{\romannumeral6}$, one can get
\begin{align}\label{Eq4.41}
	\uppercase\expandafter{\romannumeral3}+\uppercase\expandafter{\romannumeral4}+\uppercase\expandafter{\romannumeral6}
	\leq& (\kappa+m_a+\lambda)\int_s^t  \left(\|\nabla_x\bar{\phi}\|_{L^2}^2+\|\bar{\mathbf{a}}\|_{L^2}^2\right)\mathrm{d}\tau 
	+C\int_s^t\|\mathbf{\bar{b}}\|_{L^2}^2\mathrm{d}\tau \nonumber\\
	&+\f{C}{m_a}\int_{s}^t\| \{\mathbf{I}-\mathbf{P}\}\bar{\mathbf{f}}\|_\nu^2\mathrm{d}\tau     
	+\kappa^{\f{1}{2}}
	\int_s^t\|\bar{\mathbf{f}}\|_{L^2}^2\mathrm{d}\tau.
\end{align}

Substituting \eqref{Eq4.40} $-$ \eqref{Eq4.41} into \eqref{Eq4.6}, and noting the smallness of $m_a, \kappa, \lambda$, we obtain that
	\begin{align}\label{Eq4.45}
		\int_s^t\left(\|\bar{\mathbf{a}}\|_{L^2}^2+\|\nabla_x\bar{\phi}\|_{L^2}^2\right)\mathrm{d}\tau  
		\leq& C\int_{s}^{t}\left(\| \{\mathbf{I}-\mathbf{P}\}\bar{\mathbf{f}}\|_\nu^2+\|\bar{\mathbf{b}}\|_{L^2}^2\right)\mathrm{d}\tau 
		+\kappa^{\f{1}{2}}\int_s^t\|\bar{\mathbf{f}}\|_{L^2}^2\mathrm{d}\tau \nonumber\\
		&+G(s)-G(t).
	\end{align}
	
\noindent{\it Step 4.3. Estimate on $\mathbf{\bar{b}}$.} To close the estimate, we need to divide the proof into two cases. \\
	\text{Case 1:} 
	For fixed $i,j$, we choose the test function in \eqref{Eq4.6}
	$$\displaystyle\mathbf{\Psi}=\mathbf{\Psi}_{b,1}^{i,j}:=\begin{bmatrix} 
		( v_i^2-\beta_{b})\sqrt{\mu} \partial_{j}\varphi_{b_{j}}\\ ( v_i^2-\beta_{b})\sqrt{\mu} \partial_{j}\varphi_{b_{j}} \end{bmatrix},\ i, j=1,2,3,$$ 
	where
	$\varphi_{b_{j}}$ is the one defined in \eqref{Ynew8} and $\beta_{b}=1$ so that $\int(v_i^2-\beta_{b})\mu(v)dv=0$.  It is clear that	$\uppercase\expandafter{\romannumeral5}$ in \eqref{Eq4.6} is zero, due to the orthogonality.
	
		For $\uppercase\expandafter{\romannumeral2}$,  the $\bar{a}$, $\bar{c}$ contributions vanish due to oddness in $v$, one has
	\begin{align}\label{Eq4.47}
	\uppercase\expandafter{\romannumeral2}
		=2\sqrt{2}\int_s^t \int_{\mathbb{T}^3} \bar{b}_i\cdot\partial_i\partial_j\varphi_{b_j}\mathrm{d}x\mathrm{d}\tau
		=-2\sqrt{2} \int_s^t\int_{\mathbb{T}^3} \bar{b}_i\cdot(\partial_i\partial_j\Delta^{-1}\bar{b}_j ) \mathrm{d}x\mathrm{d}\tau. 
	\end{align}
	
	For $\uppercase\expandafter{\romannumeral1}$,  due to oddness in $v$ and the choice of $\beta_{b}$,  it follows from \eqref{Eq4.3},  \eqref{Eq4.23} and \eqref{new5.7} that 
	\begin{align}\label{Eq4.46}
		\uppercase\expandafter{\romannumeral1}
		=&\sqrt{2}\int_s^t\int \bar{c}\cdot\partial_j\partial_t\varphi_{b_{j}}\mathrm{d}x\mathrm{d}\tau+\int_s^t \left\langle \{\mathbf{I}-\mathbf{P}\}\bar{\mathbf{f}},\partial_t\mathbf{\Psi}_{b,1}^{i,j} \right\rangle\mathrm{d}\tau\nonumber\\
		\lesssim&~m_b\int_s^t\|\nabla_{x} \partial_{t} \varphi_{b_{j}}(\tau)\|_{L^2}^2\mathrm{d}\tau
		+\f{1}{m_b}\int_s^t\left(\|\bar{c}(\tau)\|_{L^2}^2
		+\|\{\mathbf{I}-\mathbf{P}\}\bar{\mathbf{f}}\|_\nu^2 \right) \mathrm{d}\tau\nonumber\\
		\lesssim&~\lambda\int_{s}^{t}\|\mathbf{\bar{b}}\|_{L^2}^2\mathrm{d}\tau 
		+(m_b+\f{\delta}{m_b} )\int_{s}^{t}\left(\|\bar{\mathbf{a}}\|_{L^2}^2
		+\|\nabla_x\bar{\phi}\|_{L^2}^2\right)\mathrm{d}\tau  
		\nonumber\\
		&+\f{1}{m_b}\int_s^t\left(\|\tilde{c}\|_{L^2}^2+ \|\{\mathbf{I}-\mathbf{P}\}\bar{\mathbf{f}}\|_\nu^2 \right)\mathrm{d}\tau,
	\end{align}
	where $m_b$ is a small constant chosen later.
	
	For $\uppercase\expandafter{\romannumeral6}$, it is obvious from \eqref{new5.7} that
	\begin{align}\label{Eq4.49}
		\uppercase\expandafter{\romannumeral6}
		\leq \lambda \int_{s}^t \|\mathbf{\bar{b}}\|_{L^2}^2\mathrm{d}\tau+\delta\int_{s}^t \|\nabla_x\bar{\phi}\|_{L^2}^2\mathrm{d}\tau
		+\lambda \int_s^t(\|\tilde{c}\|_{L^2}^2+\|\{\mathbf{I}-\mathbf{P}\}\bar{\mathbf{f}}\|_\nu^2)\mathrm{d}\tau,
	\end{align} 
	where we have used \eqref{Ynew9-1}.   
	
For $\uppercase\expandafter{\romannumeral3}, \uppercase\expandafter{\romannumeral4}$, by similar arguments as in \textit{Step 4.1}, one can obtain
\begin{align}\label{Eq4.48}
	\uppercase\expandafter{\romannumeral3}
	+\uppercase\expandafter{\romannumeral4}
	\leq& (m_b+\kappa)\int_s^t ( \|\nabla_x\bar{\phi}\|_{L^2}^2+\|\mathbf{\bar{b}}\|_{L^2}^2)\mathrm{d}\tau 
	+\f{C}{m_b}\int_{s}^t\| \{\mathbf{I}-\mathbf{P}\}\bar{\mathbf{f}}\|_\nu^2 \mathrm{d}\tau  +\kappa^{\f{1}{2}}\int_s^t\|\bar{\mathbf{f}}\|_{L^2}^2\mathrm{d}\tau.
\end{align}

	%%%%%%%%%%%%%%%%%%%%%%%%%%%%%%%%%%%%%
%	For $\mathbb{P}_{\mathbf{\Psi}_{\mathbf{b},1}^{i,j}}$,  similar to \eqref{Eq4.34}, one has
%	\begin{align*}
%		\mathbb{P}_{\mathbf{\Psi}_{\mathbf{b},1}^{i,j}}
%		=&\int_s^t\langle q\sqrt{\mu}\mathbf{f},\nabla_x\bar{\phi}\cdot\nabla_v(\frac{\mathbf{\Psi}_{\mathbf{b},1}^{i,j}}{\sqrt{\mu}} )
%		\rangle\mathrm{d}\tau \nonumber\\	
%		\leq&\kappa\int_s^t ( \|\nabla_x\bar{\phi}\|_{L^2}^2+\|\mathbf{\bar{b}}\|_{L^2}^2)\mathrm{d}\tau ,
%	\end{align*}
%	where we  use $\|\nabla_x\varphi_{\mathbf{\bar{b}}}\|_{L^2} \leq \|\varphi_{\mathbf{\bar{b}}}\|_{H^2}\leq C \|\mathbf{\bar{b}}\|_{L^2}$.
	
%	Similar to \eqref{Eq4.44}, , one has
%	\begin{align}\label{Eq4.50}
%		\mathbb{M}_{\mathbf{\Psi}_{\mathbf{b},1}^{i,j}}&\leq \f{m_{b}}{4} \int_s^t\|\mathbf{\bar{b}}\|_{L^2}^2\mathrm{d}\tau
%		+\f{C}{m_b}\int_{s}^t\| \{\mathbf{I}-\mathbf{P}\}\bar{\mathbf{f}}\|_\nu^2 \mathrm{d}\tau   +\kappa^{\f{1}{2}}\int_s^t\|\bar{\mathbf{f}}\|_{L^2}^2\mathrm{d}\tau,
%	\end{align}
%	where we use $\|\nabla_x\varphi_{\mathbf{\bar{b}}}\|_{L^2} \leq \|\varphi_{\mathbf{\bar{b}}}\|_{H^2}\leq C \|\mathbf{\bar{b}}\|_{L^2}$.   
	
Plugging \eqref{Eq4.47} $-$ \eqref{Eq4.48}  into \eqref{Eq4.6}, we obtain
	\begin{align}\label{B-1}
		&\Big|\int_s^t\int \bar{b}_i\cdot(\partial_i\partial_j\Delta^{-1}\bar{b}_j ) \mathrm{d}x\mathrm{d}\tau\Big| \nonumber\\
		\leq&~(\lambda+\kappa+m_b)\int_{s}^{t}\|\mathbf{\bar{b}}\|_{L^2}^2\mathrm{d}\tau
			+(m_b+\f{\delta}{m_b} )\int_{s}^{t}\left(\|\bar{\mathbf{a}}\|_{L^2}^2
		+\|\nabla_x\bar{\phi}\|_{L^2}^2\right)\mathrm{d}\tau  
		\nonumber\\
		&+\f{C}{m_b}\int_s^t\left(\|\tilde{c}\|_{L^2}^2+ \|\{\mathbf{I}-\mathbf{P}\}\bar{\mathbf{f}}\|_\nu^2 \right)\mathrm{d}\tau
		+ \kappa^{\f{1}{2}}\int_{s}^{t}\|\bar{\mathbf{f}}\|_{L^2}\mathrm{d}\tau+G(s)-G(t).
	\end{align}
	
		\text{Case 2:} 
We choose the test function in \eqref{Eq4.6}
	$$\displaystyle\mathbf{\Psi}=\mathbf{\Psi}_{b,2}^{i,j}:=\begin{bmatrix} 
		|v|^2v_iv_j\sqrt{\mu} \partial_{j}\varphi_{b_{i}}\\ |v|^2v_iv_j\sqrt{\mu} \partial_{j}\varphi_{b_{i}} \end{bmatrix}, \ i\neq j.$$ Due to the oddness in $v$, it is clear $\uppercase\expandafter{\romannumeral5}=0$.
	
		For $\uppercase\expandafter{\romannumeral2}$, the $\bar{a}$, $\bar{c}$ contributions vanish due to oddness in $v$, one has
	\begin{align}\label{Eq4.52}
		\uppercase\expandafter{\romannumeral2}
		=-7\sqrt{2} \int_s^t\int_{\mathbb{T}^3} \bar{b}_i\cdot(\partial_j\partial_j\Delta^{-1}\bar{b}_i ) +\bar{b}_j\cdot(\partial_i\partial_j \Delta^{-1}\bar{b}_i ) \mathrm{d}x\mathrm{d}\tau .
	\end{align}

	For $\uppercase\expandafter{\romannumeral1}$, due to oddness in $v$,  it follows from  \eqref{Eq4.23} that 
	\begin{align}\label{Eq4.51}
		\uppercase\expandafter{\romannumeral1}
		=&\int_s^t\left\langle  \{\mathbf{I}-\mathbf{P}\}\bar{\mathbf{f}}, \partial_{t}\mathbf{\Psi}_{\mathbf{b},2}^{i,j}\right\rangle \mathrm{d}\tau\nonumber\\
	\lesssim& ~m_b\int_s^t\|\nabla_{x} \partial_{t} \varphi_{b_{i}}(\tau)\|_{L^2}^2\mathrm{d}\tau
		+\f{1}{m_b}\int_s^t\|\{\mathbf{I}-\mathbf{P}\}\bar{\mathbf{f}}\|_\nu^2 \mathrm{d}\tau  \nonumber\\
	\lesssim& m_b\int_s^t \left(\|\bar{\mathbf{a}}\|_{L^2}^2+\|\nabla_x\bar{\phi}\|_{L^2}^2+\|\tilde{c}\|_{L^2}^2 \right)\mathrm{d}\tau
		+\lambda \int_s^t\|\mathbf{\bar{b}}\|_{L^2}^2 \mathrm{d}\tau 
		+\f{1}{m_b}\int_s^t\|\{\mathbf{I}-\mathbf{P}\}\bar{\mathbf{f}}\|_\nu^2 \mathrm{d}\tau .
	\end{align}

Moreover, it is not hard to show that $\uppercase\expandafter{\romannumeral3}$,  $\uppercase\expandafter{\romannumeral4}$ and $\uppercase\expandafter{\romannumeral6}$ are bounded by
\begin{align}\label{Eq4.53}
	m_b\int_s^t \|\mathbf{\bar{b}}\|_{L^2}^2\mathrm{d}\tau
	+\kappa\int_s^t \|\nabla_x\bar{\phi}\|_{L^2}^2\mathrm{d}\tau
	+\f{C}{m_b}\int_{s}^t\| \{\mathbf{I}-\mathbf{P}\}\bar{\mathbf{f}}\|_\nu^2 \mathrm{d}\tau   
	+\kappa^{\f{1}{2}}\int_s^t\|\bar{\mathbf{f}}\|_{L^2}^2\mathrm{d}\tau.
\end{align}

Combining \eqref{Eq4.52} $-$ \eqref{Eq4.53}, one has
	\begin{align}\label{B-2}
	&\Big| \int_s^t\int_{\mathbb{T}^3} \bar{b}_i\cdot(\partial_j\partial_j\Delta^{-1}\bar{b}_i ) +\bar{b}_j\cdot(\partial_i\partial_j \Delta^{-1}b_i ) \mathrm{d}x\mathrm{d}\tau \Big| \nonumber\\
	\leq& G(s)-G(t) +C(m_b+\lambda)\int_{s}^{t}\|\mathbf{\bar{b}}\|_{L^2}^2\mathrm{d}\tau +m_b\int_{s}^{t}(\|\bar{\mathbf{a}}\|_{L^2}^2+\|\nabla_x\bar{\phi}\|_{L^2}^2+\|\tilde{c}\|_{L^2}^2)\mathrm{d}\tau\nonumber\\ 
	& +\f{C}{m_b}\int_{s}^{t}\|\{\mathbf{I}-\mathbf{P}\}\bar{\mathbf{f}}\|_{\nu}^2\mathrm{d}\tau+ \kappa^{\f{1}{2}}\int_{s}^{t}\|\bar{\mathbf{f}}\|_{L^2}\mathrm{d}\tau.
\end{align}

	Then it is follows \eqref{B-1} and \eqref{B-2}  that
\begin{align}\label{Eq4.57}
	\int_s^t\|\mathbf{\bar{b}}(\tau)\|_{L^2}^2\mathrm{d}\tau
&\leq  G(s)-G(t)
+(m_b+\f{\delta}{m_b} )\int_{s}^{t}\left(\|\bar{\mathbf{a}}\|_{L^2}^2
+\|\nabla_x\bar{\phi}\|_{L^2}^2\right)\mathrm{d}\tau  
\nonumber\\
&+\f{C}{m_b}\int_s^t\left(\|\tilde{c}\|_{L^2}^2+ \|\{\mathbf{I}-\mathbf{P}\}\bar{\mathbf{f}}\|_\nu^2 \right)\mathrm{d}\tau
+ \kappa^{\f{1}{2}}\int_{s}^{t}\|\bar{\mathbf{f}}\|_{L^2}\mathrm{d}\tau.
\end{align}

Using \eqref{Eq4.38}, \eqref{Eq4.45} and \eqref{Eq4.57}, taking $m_b=\sqrt{m_c}$ suitably small, and then choose $\delta$ and $\kappa$ sufficiently small, we have
	\begin{equation}\label{newB-3}
	\int_s^t\|\left(\bar{\mathbf{a}},\bar{\mathbf{b}},\tilde{c}\right)(\tau)\|_{L^2}^2+\|\nabla_x\bar{\phi}(\tau)\|_{L^2}^2\mathrm{d}\tau
	\leq G(s)-G(t)+C\int_s^t\|\{\mathbf{I}-\mathbf{P}\}\bar{\mathbf{f}}\|_\nu^2\mathrm{d}\tau,
\end{equation}
which, together with \eqref{new5.7}, yields that
	\begin{equation}\label{newB-4}
	\int_s^t\|\bar{c}(\tau)\|_{L^2}^2\mathrm{d}\tau
	\leq G(s)-G(t)+C\int_s^t\|\{\mathbf{I}-\mathbf{P}\}\bar{\mathbf{f}}\|_\nu^2\mathrm{d}\tau.
\end{equation}
The Lemma \ref{Lem4.1} is now a direct consequence of \eqref{newB-3} $-$ \eqref{newB-4}.
The proof is completed.

\end{proof}

\subsection{Exponential decay of $L^2$ }
Next, we establish the $L^2$ decay estimate for $\mathbf{f}$, which is crucial step to get the exponential decay in $L^{\infty}$ norm.
\begin{proposition}\label{prop5.2}
	Assume \eqref{Eq2.21}, there exists $0<\lambda \ll 1$ such that
	\begin{align*}
		\|\mathbf{f}(t)\|_{L^2}\leq Ce^{-\frac{\lambda}{2} t}.
	\end{align*}
	where the positive constant $C\geq1$ depends only on $M_0$.
\end{proposition}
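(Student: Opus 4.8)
The plan is to run a weighted $L^2$ energy estimate on system~\eqref{Eq4.2} for $\bar{\mathbf{f}}=e^{\lambda t}\mathbf{f}$, exploiting the coercivity of $\mathbf{L}$ (Lemma~\ref{Lem2.1}), the control of the hydrodynamic part by the microscopic part (Lemma~\ref{Lem4.1}), the uniform $L^\infty$ bound (Proposition~\ref{prop4.3}), the smallness of $\sup_{x}\int_{\mathbb{R}^3}|\mathbf{f}|\,\mathrm{d}v$ (Remark~\ref{Rem4.4}), and the a~priori decay~\eqref{Eq2.21}. First I would take the $L^2(\mathbb{T}^3\times\mathbb{R}^3)$ inner product of the first equation in~\eqref{Eq4.2} with $\bar{\mathbf{f}}$: the streaming term vanishes by periodicity, the term $e^{-\lambda t}\mathbf{q}(\nabla_x\bar{\phi}\cdot\nabla_v)\bar{\mathbf{f}}$ vanishes after an integration by parts in $v$, and, writing $\bar{\phi}=e^{\lambda t}\phi$ and using the mass balance $\partial_t\bar{\rho}+\nabla_x\cdot\mathbf{j}=\lambda\bar{\rho}$ with $\bar{\rho}=-\Delta_x\bar{\phi}$ and $\mathbf{j}=\int_{\mathbb{R}^3} v\sqrt{\mu}(\bar{f}_+-\bar{f}_-)\,\mathrm{d}v$ (obtained by integrating the difference of $\eqref{Eq1.17}_{1}$ and $\eqref{Eq1.17}_{2}$ in $v$ and using the Poisson equation), the linear field term $-\langle\nabla_x\bar{\phi}\cdot v\sqrt{\mu}\,\mathbf{q_1},\bar{\mathbf{f}}\rangle$ reassembles as $-\tfrac12\tfrac{\mathrm{d}}{\mathrm{d}t}\|\nabla_x\bar{\phi}\|_{L^2}^2+\lambda\|\nabla_x\bar{\phi}\|_{L^2}^2$. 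Since $\mathbf{\Gamma}(\bar{\mathbf{f}},\bar{\mathbf{f}})\perp\mathbf{\mathcal{N}}$ by the collision invariants, Lemma~\ref{Lem2.1} gives, with $\mathcal{D}(t):=\|\bar{\mathbf{f}}(t)\|_{L^2}^2+\|\nabla_x\bar{\phi}(t)\|_{L^2}^2$,
\[
\tfrac12\tfrac{\mathrm{d}}{\mathrm{d}t}\mathcal{D}+\lambda_0\|\{\mathbf{I}-\mathbf{P}\}\bar{\mathbf{f}}\|_\nu^2\le e^{-\lambda t}\big\langle\mathbf{\Gamma}(\bar{\mathbf{f}},\bar{\mathbf{f}}),\{\mathbf{I}-\mathbf{P}\}\bar{\mathbf{f}}\big\rangle-e^{-\lambda t}\big\langle(\nabla_x\bar{\phi}\cdot\tfrac v2)\mathbf{q}\bar{\mathbf{f}},\bar{\mathbf{f}}\big\rangle+\lambda\mathcal{D}.
\]

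The next step is to show that the two right-hand side terms are absorbable. For the nonlinear term I would convert one factor back to $e^{\lambda t}\mathbf{f}$ and use Corollary~\ref{Coro2.13} together with Proposition~\ref{prop4.3} ($\|\mathbf{h}\|_{L^\infty}\le CM_0^2$) and Remark~\ref{Rem4.4} ($\sup_{x}\int_{\mathbb{R}^3}|\mathbf{f}|\,\mathrm{d}v\le\kappa$, hence also $\|\mathbf{f}\|_\nu\le CM_0\kappa^{1/2}$); because $\|w_{\b}\bar{\mathbf{f}}\|_{L^\infty}=e^{\lambda t}\|\mathbf{h}\|_{L^\infty}$ while $\|\mathbf{f}\|_\nu=e^{-\lambda t}\|\bar{\mathbf{f}}\|_\nu\le Ce^{-\lambda t}\|\mathbf{P}\bar{\mathbf{f}}\|_{L^2}+e^{-\lambda t}\|\{\mathbf{I}-\mathbf{P}\}\bar{\mathbf{f}}\|_\nu$, the powers of $e^{\lambda t}$ cancel and one is left with a bound $\le C(M_0)\kappa^{1/2}\big(\|\mathbf{P}\bar{\mathbf{f}}\|_{L^2}^2+\|\{\mathbf{I}-\mathbf{P}\}\bar{\mathbf{f}}\|_\nu^2\big)$. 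For the electric term I would rewrite it as $-e^{2\lambda t}\langle(\nabla_x\phi\cdot\tfrac v2)\mathbf{q}\mathbf{f},\mathbf{f}\rangle$, split $\mathbf{f}=\mathbf{P}\mathbf{f}+\{\mathbf{I}-\mathbf{P}\}\mathbf{f}$, use the Gaussian tail of $\mathbf{P}\mathbf{f}$ for the $|v|$-moment in that piece, and control the remaining moments with $\|\nabla_x\phi\|_{L^\infty}\le\delta(1+t)^{-2}$ from~\eqref{Eq2.21} and the interpolation~\eqref{Eq2.16} (using $\b\ge 4$, and a further Young splitting of $(1+|v|)^{1-\gamma}$ against $|v|^2$ when $\gamma<1$); reconverting, the $e^{2\lambda t}$ prefactor compensates the $e^{-2\lambda t}$ hidden in $\|\mathbf{f}\|^2$ and the bound becomes $\le C\delta\big(\|\mathbf{P}\bar{\mathbf{f}}\|_{L^2}^2+\|\{\mathbf{I}-\mathbf{P}\}\bar{\mathbf{f}}\|_\nu^2\big)+C\delta(1+t)^{-2}$.

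Finally I would integrate over $[s,t]$ with $1\le s\le t$, feed in Lemma~\ref{Lem4.1} to replace $\int_s^t\big(\|\mathbf{P}\bar{\mathbf{f}}\|_{L^2}^2+\|\nabla_x\bar{\phi}\|_{L^2}^2\big)\,\mathrm{d}\tau$ by $G(s)-G(t)+C_7\int_s^t\|\{\mathbf{I}-\mathbf{P}\}\bar{\mathbf{f}}\|_\nu^2\,\mathrm{d}\tau$ (with $|G|\le C\|\bar{\mathbf{f}}\|_{L^2}^2$), use Lemma~\ref{Lem4.1} once more together with the choice of $\lambda$ strictly below the relaxation rate furnished by Lemmas~\ref{Lem2.1} and~\ref{Lem4.1} to absorb the $\lambda\mathcal{D}$ contribution, and take $\delta$ and $\kappa$ small; this yields $\mathcal{D}(t)+c\int_s^t\mathcal{D}(\tau)\,\mathrm{d}\tau\le C\mathcal{D}(s)+C$ for all $1\le s\le t$, whence a Gronwall argument gives $\mathcal{D}(t)\lesssim e^{\lambda t}$ for $t\ge 1$, i.e. $\|\mathbf{f}(t)\|_{L^2}\lesssim e^{-\lambda t/2}$, while the interval $[0,1]$ is covered by Proposition~\ref{prop2.1}. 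The hard part is the electric field term: since~\eqref{Eq2.21} supplies only polynomial decay of $\nabla_x\phi$, it must be reshaped into a genuinely small constant ($\delta$) times absorbable quantities, which is possible only by combining the $e^{\lambda t}$-renormalization (so the prefactor $e^{-\lambda t}$ compensates the growth of $\bar{\mathbf{f}}$ and $\bar{\phi}$), the interpolation~\eqref{Eq2.16}, and the splitting $\mathbf{f}=\mathbf{P}\mathbf{f}+\{\mathbf{I}-\mathbf{P}\}\mathbf{f}$; Lemma~\ref{Lem4.1} is indispensable because the energy identity dissipates only the microscopic part, and the smallness of $\kappa$ from Remark~\ref{Rem4.4} is what makes the nonlinear term subcritical.
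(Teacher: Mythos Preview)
Your overall strategy matches the paper's: energy identity on~\eqref{Eq4.2}, coercivity from Lemma~\ref{Lem2.1}, then Lemma~\ref{Lem4.1} to trade the hydrodynamic part for the microscopic one, with smallness supplied by Proposition~\ref{prop4.3} and Remark~\ref{Rem4.4}. Your recovery of the $\|\nabla_x\bar{\phi}\|_{L^2}^2$ energy from the linear field term and your treatment of the nonlinear collision term are both in line with the paper. However, your handling of the quadratic electric term $e^{-\lambda t}\langle(\nabla_x\bar{\phi}\cdot\tfrac{v}{2})\mathbf{q}\bar{\mathbf{f}},\bar{\mathbf{f}}\rangle$ has a gap. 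After the $\mathbf{P}/\{\mathbf{I}-\mathbf{P}\}$ split, the purely microscopic piece forces you to control $\int_{\mathbb{R}^3}|v|\,|\{\mathbf{I}-\mathbf{P}\}\bar{\mathbf{f}}|^2\,\mathrm{d}v$; for $0\le\gamma<1$ this cannot be dominated by $\|\{\mathbf{I}-\mathbf{P}\}\bar{\mathbf{f}}\|_\nu^2$ since $\nu(v)\cong(1+|v|)^\gamma$, and if instead you peel off one factor via $|\bar{f}|\le e^{\lambda t}w_\beta^{-1}\|\mathbf{h}\|_{L^\infty}$ an uncompensated $e^{\lambda t}$ (or $e^{2\lambda t}$) remains that neither the polynomial decay in~\eqref{Eq2.21} nor the interpolation~\eqref{Eq2.16} can absorb after time integration. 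The suggested ``Young splitting of $(1+|v|)^{1-\gamma}$ against $|v|^2$'' only trades a bad moment for a worse one.

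The paper's argument is simpler and sidesteps this entirely: it does \emph{not} split $\bar{\mathbf{f}}$ here. Writing $e^{-\lambda t}\nabla_x\bar{\phi}=\nabla_x\phi$ and $|\bar{f}|^2=e^{2\lambda t}|f|^2$, one uses the pointwise bound $|v|^2|f|\le\|\mathbf{h}\|_{L^\infty}$ (valid for $\beta\ge 2$) to obtain $|v||f|^2\le\|\mathbf{h}\|_{L^\infty}^{1/2}|f|^{3/2}$, then H\"older in $v$ followed by Cauchy--Schwarz in $x$ against $\|\nabla_x\phi\|_{L^2}$ rather than $L^\infty$, converting $e^{\lambda t}\|\nabla_x\phi\|_{L^2}=\|\nabla_x\bar{\phi}\|_{L^2}\lesssim\|\bar{\mathbf{f}}\|_{L^2}$. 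The outcome is the clean bound $\{\|\mathbf{h}\|_{L^\infty}\sup_x\int_{\mathbb{R}^3}|\mathbf{f}|\,\mathrm{d}v\}^{1/2}\|\bar{\mathbf{f}}\|_{L^2}^2$, small by Proposition~\ref{prop4.3} and Remark~\ref{Rem4.4}. One minor additional point: Proposition~\ref{prop2.1} covers only $[0,t_1]$ with $t_1\sim(1+\|\mathbf{h}_0\|_{L^\infty})^{-2}$, possibly much less than $1$; the paper handles $t\in[0,1]$ by a direct Gronwall on the energy identity using the uniform bound of Proposition~\ref{prop4.3}, not by the local existence theorem.
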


\begin{proof}
Multiplying \eqref{Eq4.2} by $\bar{\mathbf{f}}$ and integrating over $\mathbb{T}^3\times \mathbb{R}^3$, one obtains that
\begin{align}\label{Eq4.58}
	&\frac{1}{2}\frac{d}{dt}\|\bar{\mathbf{f}}(t)\|_{L^2}^2 
	+\iint \nabla_x\bar{\phi}\cdot v\sqrt{\mu}(\bar{f}_+-\bar{f}_- ) \mathrm{d}v\mathrm{d}x
	+\lambda_0\|\{\mathbf{I}-\mathbf{P}\}\bar{\mathbf{f}}\|_\nu^2\nonumber\\
	\leq &e^{-\lambda t}\iint\nabla_x\bar{\phi}\cdot \frac{v}{2}(|\bar{f}_-|^2- |\bar{f}_+|^2)  \mathrm{d}v\mathrm{d}x
	+e^{-\lambda t}\iint \mathbf{\Gamma}(\bar{\mathbf{f}},\bar{\mathbf{f}})\cdot \bar{\mathbf{f}}\mathrm{d}v\mathrm{d}x\nonumber\\
	&+\lambda\iint (|\bar{f}_+|^2+|\bar{f}_-|^2 ) \mathrm{d}v\mathrm{d}x,
\end{align}
where $	\|\bar{\mathbf{f}}\|_{L^2}^2:=\|(\bar{f}_{+},\bar{f}_{-})\|_{L^2}^2$.
%Noticing that
%\begin{equation*}
%	\sqrt{\mu}(y_+-y_- ) 
%	=e^{\lambda t}\sqrt{\mu}(f_+-f_- ) 
%	=e^{\lambda t}(F_+-F_- ),
%\end{equation*}
Recalling \eqref{Eq1.4} and \eqref{Eq4.2}, it follows from integration by parts that
\begin{align}\label{Eq4.59}
	\iint_{\mathbb{T}^3\times \mathbb{R}^3}\nabla_x\bar{\phi}\cdot v\sqrt{\mu}( \bar{f}_+-\bar{f}_-)\mathrm{d}v\mathrm{d}x
	=\frac{d}{dt}\Big(\frac{1}{2}\int|\nabla_x\bar{\phi}|^2dx \Big) -\lambda\int|\nabla_x\bar{\phi}|^2dx.
\end{align}

It is easy to show that the first term on RHS of \eqref{Eq4.58} can be bounded by
\begin{align}\label{Eq4.61}
	 \sup\limits_{ x\in\mathbb{T}^3}\Big\{\int_{\mathbb{R}_{v}^3}|\mathbf{f}(t,x,v)|\mathrm{d}v\cdot\|w_\beta \mathbf{f}\|_{L^\infty}\Big\}^{\f{1}{2}}\cdot
	\|\bar{\mathbf{f}}\|_{L^2}^2.
\end{align}
A simple manipulation shows that
\begin{align}\label{Eq4.62}
	&\Big|e^{-\lambda t}\iint_{\mathbb{T}^3\times \mathbb{R}^3}\mathbf{\Gamma}(\bar{\mathbf{f}},\bar{\mathbf{f}})\cdot \bar{\mathbf{f}} \mathrm{d}v\mathrm{d}x\Big|  \nonumber\\
	\leq&\sup\limits_{x\in\mathbb{T}^3} \Big\{\int_{\mathbb{R}_{v}^3}|\mathbf{f}(t,x,v)|\mathrm{d}v\cdot\|w_\beta \mathbf{f}\|_{L^\infty}\Big\}^{\f{1}{2}}
	\|\bar{\mathbf{f}}\|_{L^2} \|\{\mathbf{I}-\mathbf{P}\}\bar{\mathbf{f}}\|_\nu    \nonumber\\
	\leq& \frac{\lambda_0}{2} \|\{\mathbf{I}-\mathbf{P}\}\bar{\mathbf{f}}\|_{\nu }^{2}
	+\frac{1}{2\lambda_0}\sup\limits_{x\in\mathbb{T}^3} \Big\{\int_{\mathbb{R}_{v}^3}|\mathbf{f}(t,x,v)|\mathrm{d}v\cdot\|w_\beta \mathbf{f}\|_{L^\infty}\Big\}\cdot
	\|\bar{\mathbf{f}}\|_{L^2}^2.
\end{align}
Substituting \eqref{Eq4.59}-\eqref{Eq4.62}  into \eqref{Eq4.58}, one has that
\begin{align}\label{Eq4.63}
	&\frac{1}{2}\frac{d}{dt}\left(\|\bar{\mathbf{f}}(t)\|_{L^2}^2+\|\nabla_x\bar{\phi}(t)\|_{L^2}^2 \right) 
	-\lambda\left(\|\bar{\mathbf{f}}(t)\|_{L^2}^2+\|\nabla_x\bar{\phi}(t)\|_{L^2}^2 \right)+\frac{\lambda_0}{2}\|\{\mathbf{I}-\mathbf{P}\}\bar{\mathbf{f}}\|_\nu^2\nonumber\\
	\leq& \Big(\frac{1}{2\lambda_0}\sup\limits_{x\in\mathbb{T}^3} \Big\{\int_{\mathbb{R}_{v}^3}|\mathbf{f}(t,x,v)|\mathrm{d}v\cdot\|w_\beta \mathbf{f}\|_{L^\infty}\Big\}+\sup\limits_{ x\in\mathbb{T}^3} \Big\{\int_{\mathbb{R}_{v}^3}|\mathbf{f}(t,x,v)|\mathrm{d}v\|w_\beta \mathbf{f}\|_{L^\infty}\Big\}^{\f{1}{2}}\Big)
	\cdot\|\bar{\mathbf{f}}\|_{L^2}^2.
\end{align}
For $t \leq 1$, it follows from \eqref{Eq4.63} and Proposition \ref{prop4.3} that
\begin{align}\label{Eq4.64}
	&\left(\|\bar{\mathbf{f}}(t)\|^2_{L^2}+\|\nabla_x\bar{\phi}\|^2_{L^2}\right)+\frac{\lambda_0}{2}\int_0^t\|(\mathbf{I}-\mathbf{P})\bar{\mathbf{f}}(\tau)\|^2_{\nu}\mathrm{d}\tau\nonumber\\
	\le &\left(\|\mathbf{f}_0\|^2_{L^2}+\|\nabla_x\phi_0\|^2_{L^2}\right)\exp\Big\{\f{1}{\lambda_0}\left(\|w_{\beta}\mathbf{f}\|_{L^{\infty}}^2+1\right)\Big\}\nonumber\\
	\le & \left(\|\mathbf{f}_0\|^2_{L^2}+\|\nabla_x\phi_0\|^2_{L^2}\right)\exp\Big\{C(1+M_0)^4\Big\}, \quad 0\le t\le 1.
\end{align}
For $t \geq 1$, integrating \eqref{Eq4.63} over time on $(1, t)$, and noting Remark \ref{Rem4.4}, one has
\begin{align}\label{Eq4.65}
	&\frac{1}{2}\left(\|\bar{\mathbf{f}}(t)\|_{L^2}^2+\|\nabla_x\bar{\phi}(t)\|_{L^2}^2\right)
	+\frac{\lambda_0}{2}\int_1^t\|\{\mathbf{I}-\mathbf{P}\}\bar{\mathbf{f}}\|_\nu^2\nonumber\\
	\leq&\frac{1}{2}\left(\|\bar{\mathbf{f}}(1)\|_{L^2}^2+\|\nabla_x\bar{\phi}(1)\|_{L^2}^2\right)
	+(\kappa+\lambda) \int_1^t\left(\|\bar{\mathbf{f}}(\tau)\|_{L^2}^2+\|\nabla_x\bar{\phi}(\tau)\|_{L^2}^2 \right)\mathrm{d}\tau.
\end{align}

Substituting $\eqref{Eq4.5}$ into \eqref{Eq4.65}
\begin{align}
	&\Big(\frac{1}{2}-\kappa-\lambda\Big)\left(\|\bar{\mathbf{f}}(t)\|_{L^2}^2
	+\|\nabla_x\bar{\phi}(t)\|_{L^2}^2\right)
	+\Big(\f{\lambda_0}{2}-(\kappa+\lambda) (C_7+1)\Big)\int_1^t\|\{\mathbf{I}-\mathbf{P}\}\bar{\mathbf{f}}\|_\nu^2     \nonumber\\
	\leq& C\left(\|\mathbf{f}(1)\|_{L^2}^2+\|\nabla_x\bar{\phi}(1)\|_{L^2}^2 \right). 
\end{align}
Taking $\lambda\leq \f{\lambda_0}{4(C_7+1)}$, and noting $\kappa$ is sufficiently small,  one has
\begin{align*}
	\|\bar{\mathbf{f}}(t)\|_{L^2}^2+\|\nabla_x\bar{\phi}(t)\|_{L^2}^2
	\leq C\left(\|\bar{\mathbf{f}}(1)\|_{L^2}^2+\|\nabla_x\bar{\phi}(1)\|_{L^2}^2\right),
\end{align*}
which yields that
\begin{equation*}
	\|\mathbf{f}(t)\|_{L^2}^2\leq Ce^{-\lambda t},
\end{equation*}
where $C$ depend on $M_0$ and $\lambda_0$.  Therefore the proof of Proposition \ref{prop5.2} is completed.
\end{proof}

We now present the main conclusions of this section.
\begin{proposition}\label{Decay}
	Assume	\eqref{Eq2.21}, there is $0<\lambda_1\leq \min\{\frac{\lambda}{2}, \f{\tilde{\nu}_0}{2}\}$ such that
	\begin{equation}\label{Eq4.67}
		\|\mathbf{h}(t)\|_{L^\infty}\leq Ce^{-\lambda_1 t},
	\end{equation}
	where the positive constant $C \geq 1$ depends only on $M_0, \beta_1, \beta$.
\end{proposition}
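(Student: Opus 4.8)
The plan is to upgrade the $L^2$ decay from Proposition \ref{prop5.2} to an $L^\infty$ decay bound for the weighted function $\mathbf{h}$, using the mild formulation \eqref{Eq3.11} together with the Grad-type kernel estimates. First I would fix $0<\lambda_1\leq\min\{\frac{\lambda}{2},\frac{\tilde\nu_0}{2}\}$ and re-examine \eqref{Eq3.11}. The self-consistent field term is handled via \eqref{Eq2.16}, which controls $\|\nabla_x\phi(s)\|_{L^\infty}$ by $\frac{C}{N}\|\mathbf{h}(s)\|_{L^\infty}+C_N\|\nabla_x\phi(s)\|_{L^2}$, and by Proposition \ref{prop5.2} together with \eqref{Eq2.14} we have $\|\nabla_x\phi(s)\|_{L^2}\le C\|\mathbf{f}(s)\|_{L^2}\le Ce^{-\frac{\lambda}{2}s}$, which decays at the required rate. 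The nonlinear collision term $w_\b\mathrm{\Gamma}^\pm(\mathbf f,\mathbf f)$ is controlled via Corollary \ref{Coro2.13}: it is bounded by $C\nu(v)\|\mathbf h(s)\|_{L^\infty}^{3/2}\big(\int e^{\frac{\sigma_0}{1+s}|\eta|^2}|\mathbf f(s,\eta)|\,d\eta\big)^{1/2}$, and using the uniform bound $\|\mathbf h\|_{L^\infty}\le CM_0^2$ of Proposition \ref{prop4.3} plus the smallness of $\sup_x\int|\mathbf{\tilde h}|\,dv$ from Remark \ref{Rem4.4}, this term contributes a small multiple of $\sup_{0\le s\le t}e^{\lambda_1 s}\|\mathbf h(s)\|_{L^\infty}$ after absorbing the $\nu(v)$ factor into the exponential $e^{-\int_s^t\tilde\nu_\pm}$ via \eqref{Eq3.10}.

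Next I would treat the $\mathbf{K}$ term. As in the proof of Lemma \ref{Lem3.1}, one iterates the mild formulation once to replace $w_\b\mathrm{K}^\pm\mathbf f$ by a double kernel integral against $\mathbf h$ at an earlier time, and splits the resulting integral into the same four cases: $|v|\ge N$, large intermediate or final velocities, short time interval $s-\tau\le 1/N$, and the bulk region $|v|,|u|,|u'|\le 3N$ with $\tau\le s-1/N$. In the first three cases the kernel smallness (either $\frac{1}{1+|V|}$ decay, or the factor $e^{-N^2/64}$, or the shortness of the $\tau$-interval) produces a factor $\frac{C}{N}$ or $Ce^{-N^2/64}$ times $\sup_{0\le s\le t}e^{\lambda_1 s}\|\mathbf h(s)\|_{L^\infty}$. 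In the bulk region, the change of variables $y=\hat X_+(\tau)$ (Corollary \ref{Coro2.8}) converts the velocity integral of $|f_+|^2$ into a space-velocity integral over a ball, which by the $L^2$ decay of Proposition \ref{prop5.2} is bounded by $C_Ne^{-\lambda\tau}\le C_Ne^{-2\lambda_1\tau}$; this is the term that genuinely uses the $L^2$ exponential decay rather than merely the relative-entropy smallness. Carrying the weight $e^{\lambda_1 t}$ through all the time integrals and using $\lambda_1\le\tilde\nu_0/2$ so that $e^{-\int_s^t\tilde\nu_\pm}e^{\lambda_1 t}\le Ce^{\lambda_1 s}e^{-\frac{\tilde\nu_0}{2}(t-s)}$, every term is either absorbed or decays.

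Putting the pieces together, I would define $\Theta(t):=\sup_{0\le s\le t}e^{\lambda_1 s}\|\mathbf h(s)\|_{L^\infty}$ and derive an inequality of the form $\Theta(t)\le C(M_0+M_0^2)+\big(\frac{C}{N}+Ce^{-N^2/64}+C\kappa^{1/2}\big)\Theta(t)+C_N$, where $\kappa$ is the small constant of Remark \ref{Rem4.4}. Choosing $N$ large and then $\varepsilon_1$ (hence $\kappa$) small, the coefficient of $\Theta(t)$ on the right is $\le\frac12$, so $\Theta(t)\le C$ uniformly in $t$, which is exactly \eqref{Eq4.67}. The main obstacle is bookkeeping: one must carefully verify that inserting the extra $e^{\lambda_1 t}$ factor is compatible with every exponential damping factor — in particular that $\lambda_1\le\lambda/2$ is what makes the field term and the bulk-$\mathbf K$ term decay, while $\lambda_1\le\tilde\nu_0/2$ is what makes the transport damping absorb the remaining $(t-s)$-integrals and the $\nu(v)$ growth from the $\mathbf\Gamma$ and $\mathbf K$ estimates. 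The argument is otherwise a routine re-run of the Lemma \ref{Lem3.1} estimates with time-exponential weights, so I would present it compactly, citing Lemma \ref{Lem3.1}'s case analysis for the kernel splitting and only spelling out the modifications needed to track the exponential factor.
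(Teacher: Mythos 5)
Your proposal is correct and is precisely the standard $L^2$--$L^\infty$ bootstrap that the paper itself invokes (the paper omits all details and simply cites \cite{Guo10ARMA, G-J10CMP, Duan17ARMA}); your case-by-case re-run of the Lemma \ref{Lem3.1} estimates with the weight $e^{\lambda_1 t}$, using Proposition \ref{prop5.2} in the bulk region and the constraints $\lambda_1\leq\lambda/2$, $\lambda_1\leq\tilde{\nu}_0/2$ to absorb the exponential factors, is exactly the intended argument.
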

\begin{proof}
According to the standard $L^2-L^\infty$ method of the Boltzmann equation, one can easily infer \eqref{Eq4.67}. Here we omit the details for brevity of presentations.  The reader can refer to  \cite{Guo10ARMA, G-J10CMP, Duan17ARMA} for more details.  Therefore the proof of Proposition \ref{Decay} is completed. 
\end{proof}

\section{Uniform $W^{1,\infty}$-Bound}  \label{section 6}
In order to close the free streaming condition \eqref{Eq2.21}, the remaining task is to perform  the $W^{1, \infty}$ estimate of $\mathbf{f}$.
Recall $\tilde{h}_{\pm}$ in \eqref{Eq1.23}, which satisfy \eqref{Eq3.31} $-$ \eqref{Eq3.33}, one can  establish the following two key lemmas.
\begin{lemma}\label{Lem5.1}
	Assume \eqref{Eq2.21}, let $4\leq\b_{1} <\b -4$,  there exits constant $C$, depending on $\b$, $\b_1$ and $M_0$,  such that  
	\begin{align*}
		\|\partial_x \mathbf{\tilde{h}}(t)\|_{L^{\infty}}
		\le C(1+\|\partial_{x,v}\mathbf{\tilde{h}}_{0}\|_{L^\infty})^2  
		+\delta \sup_{0 \leq s\leq t}\|\partial_v\mathbf{\tilde{h}}(s)\|_{L^\infty}.
	\end{align*} 
\end{lemma}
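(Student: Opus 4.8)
\textbf{Proof plan for Lemma \ref{Lem5.1}.}
The plan is to integrate the equation \eqref{L15} for $\partial_i\tilde h_\pm$ (the analogue of \eqref{L15} but for the true solution rather than the iterates) along the characteristics $(X_\pm(s),V_\pm(s))$ defined in \eqref{Eq2.22}, obtaining a Duhamel representation of $\partial_i\tilde h_\pm(t,x,v)$ as a sum of six terms $D_0+\dots+D_5$ of exactly the same shape as in \eqref{L16}, but now with $\tilde\nu_{\pm,1}$ in place of $\tilde g_{\pm,1}^n$ and with the genuine solution. Since $\tilde\nu_{\pm,1}\ge\frac12\nu(v)\ge\tilde\nu_0>0$ by \eqref{Eq3.33}, every exponential factor $e^{-\int_s^t\tilde\nu_{\pm,1}(\tau)\,d\tau}$ is bounded by $e^{-\tilde\nu_0(t-s)}$, which is the crucial gain over the local-existence step: the time integrals will converge and produce only constants (times $\delta$ or times decaying quantities), not growing ones.

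The second step is to estimate the six terms. The initial term $D_0$ is bounded by $C(1+\|\partial_{x,v}\tilde{\mathbf h}_0\|_{L^\infty})$ using \eqref{Eq3.33}. For $D_1=\int_0^t e^{-\int_s^t\tilde\nu_{\pm,1}}(\nabla_x\partial_i\phi\cdot\nabla_v\tilde h_\pm)\,ds$ I would \emph{not} use the logarithmic estimate \eqref{L18} here; instead, under the a priori assumption \eqref{Eq2.21} one has $\|\nabla^2_x\phi(s)\|_{L^\infty}\le\delta(1+s)^{-5/2}$, so $D_1\le C\delta\int_0^t e^{-\tilde\nu_0(t-s)}(1+s)^{-5/2}\|\partial_v\tilde{\mathbf h}(s)\|_{L^\infty}\,ds\le C\delta\sup_{0\le s\le t}\|\partial_v\tilde{\mathbf h}(s)\|_{L^\infty}$, which is precisely the $\delta\sup\|\partial_v\tilde{\mathbf h}\|$ term in the statement. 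The terms $D_2,D_3$ involving $\partial_i\tilde g_{\pm,1}$ and $v\cdot\nabla_x\partial_i\phi\sqrt\mu w_{\beta_1}$ are handled by $|\partial_i\tilde g_{\pm,1}|\le C(1+|v|)(\|\nabla^2_x\phi\|_{L^\infty}+\|\partial_x\tilde{\mathbf h}\|_{L^\infty})$, the exponential decay $\|\mathbf h(s)\|_{L^\infty}\le Ce^{-\lambda_1 s}$ from Proposition \ref{Decay}, and the smallness $\|\nabla^2_x\phi\|_{L^\infty}\le\delta(1+s)^{-5/2}$; the $\|\partial_x\tilde{\mathbf h}(s)\|_{L^\infty}$ piece that appears inside the integral is absorbed either by smallness of $\delta$ in a Gronwall argument or by noting $C\delta\int_0^t e^{-\tilde\nu_0(t-s)}(1+s)^{-5/2}\|\partial_x\tilde{\mathbf h}(s)\|_{L^\infty}\,ds$ can be moved to the left. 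For $D_4$ I use \eqref{Eq2.47} together with \eqref{Eq2.41} to get $D_4\le C\int_0^t e^{-\tilde\nu_0(t-s)}\|\partial_x\tilde{\mathbf h}(s)\|_{L^\infty}\,ds$; again the coefficient here is $O(1)$ not small, so this term must be carried through a Gronwall-type closure rather than absorbed — or, better, split using $\int_{\mathbb R^3}|\mathtt k^{(i)}_{w_{\beta_1}}|\,d\eta\le C(1+|v|)^{-1}$ and the exponential $L^\infty$ decay is not available for the derivative, so one keeps it as a Gronwall term.

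The third step is the closure. For the collision term $D_5$ I invoke the delicate decomposition indicated in the introduction: $|\partial_x(w_{\beta_1}\Gamma^\pm(\mathbf f,\mathbf f))|$ is bounded via \eqref{new6.11} by $\nu(v)\|w_{\beta_1}\partial_x\mathbf f\|_{L^\infty}\|w_{\beta_1}\mathbf f\|_{L^\infty}^{1/2}(\int|\mathbf f w_{\beta_1}|\,dv)^{1/2}+\|w_\beta\mathbf f\|_{L^\infty}\|w_{\beta_1}\partial_x\mathbf f\|_{L^\infty}$. The first piece, after the factor $e^{-\int_s^t\tilde\nu_{\pm,1}}\nu(V_\pm(s))\le Ce^{-\frac{\tilde\nu_0}{2}(t-s)}$, is controlled using $\|\mathbf h(s)\|_{L^\infty}^{1/2}\le Ce^{-\lambda_1 s/2}$ and the smallness $\sup_x\int|\tilde{\mathbf h}(s,x,v)|\,dv\le\kappa$ from Remark \ref{Rem4.4}, giving a small multiple of $\sup\|\partial_x\tilde{\mathbf h}\|$ plus a convergent integral; the second piece contributes $C\int_0^t e^{-\tilde\nu_0(t-s)}e^{-\lambda_1 s}\|\partial_x\tilde{\mathbf h}(s)\|_{L^\infty}\,ds$. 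Collecting $D_0,\dots,D_5$ gives an integral inequality of the form $u(t)\le C(1+\|\partial_{x,v}\tilde{\mathbf h}_0\|_{L^\infty}) + C\int_0^t(\delta(1+s)^{-5/2}+e^{-\lambda_1 s})u(s)\,ds+\delta\sup_{s\le t}\|\partial_v\tilde{\mathbf h}(s)\|_{L^\infty}$ with $u(t)=\|\partial_x\tilde{\mathbf h}(t)\|_{L^\infty}$, plus — importantly — a leftover $C\delta\int_0^t e^{-\tilde\nu_0(t-s)}u(s)\,ds$ term from $D_1,D_4$; since the kernel $\int_0^\infty(\delta(1+s)^{-5/2}+e^{-\lambda_1 s})\,ds$ is finite and (for $\delta$ small) less than $1$, Gronwall's inequality closes the bound with a final constant of size $C(1+\|\partial_{x,v}\tilde{\mathbf h}_0\|_{L^\infty})^2$ (the square arising from iterating the quadratic-in-data local bounds \eqref{E2.2}, \eqref{E2.3} on the short interval $[0,t_1]$, where the exponential decay has not yet kicked in). The main obstacle I anticipate is the bookkeeping around $D_1$ and $D_4$: one must be careful that the coefficient of $u(s)$ under the integral is genuinely integrable in $s$ and, where it is merely $O(1)\cdot e^{-\tilde\nu_0(t-s)}$ rather than small, that these contributions are grouped so that Gronwall (not a fixed-point-in-$\delta$) argument applies — and that the $\delta\sup\|\partial_v\tilde{\mathbf h}\|$ term is kept \emph{separate} on the right-hand side exactly as in the statement, to be coupled later with the companion lemma for $\|\partial_v\tilde{\mathbf h}\|_{L^\infty}$.
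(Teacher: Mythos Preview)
Your plan correctly identifies the Duhamel representation and handles $D_0$--$D_3$ and $D_5$ essentially as the paper does. The gap is in $D_4$, the linear $K$-term. You write $D_4\le C\int_0^t e^{-\tilde\nu_0(t-s)}\|\partial_x\tilde{\mathbf h}(s)\|_{L^\infty}\,ds$ and then claim in the summary that the leftover from $D_4$ carries a factor $C\delta$. It does not: the kernel bound \eqref{Eq2.41} gives only $C(1+|v|)^{-1}$ with $C$ an absolute constant, so the contribution is $C\int_0^t e^{-\tilde\nu_0(t-s)}u(s)\,ds$ with $C$ of order one. Taking the supremum in $t$ this yields $\sup u\le A+(C/\tilde\nu_0)\sup u$, and there is no reason for $C/\tilde\nu_0<1$; equivalently, a genuine Gronwall argument would produce $e^{Ct}$ growth, not a uniform bound. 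Your own parenthetical (``the coefficient here is $O(1)$ not small \dots one keeps it as a Gronwall term'') flags the problem but does not resolve it.

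The paper's resolution is exactly the missing idea. One inserts the Duhamel formula a \emph{second} time into the $K$-term (the ``double Duhamel'' iteration), producing the double integrals $I_1,\dots,I_4$ in \eqref{Eq5.12} with the product kernel $\mathtt k_{w_{\beta_1}}^{(\cdot)}(V_+(s),u)\,\mathtt k_{w_{\beta_1}}^{(\cdot)}(\hat V_+(s_1),u_1)$. After the usual $|v|\ge N$ / $|u|\ge 2N$ / $|u_1|\ge 3N$ / $s_1\ge s-1/N$ splitting (each giving $C/N\cdot\sup\|\partial_x\tilde{\mathbf h}\|$), the main piece is handled by the change of variables $u\mapsto y=\hat X_+(s_1)$ (Corollary~\ref{Coro2.8}) followed by an \emph{integration by parts in $y$} that converts $\partial_x\tilde h_+(s_1,y,u_1)$ back to $\tilde h_+(s_1,y,u_1)$; the resulting $L^2$ integral is then bounded by $C_N\sqrt{\mathcal E(\mathbf F_0)}$ via the entropy estimate as in \eqref{Eq3.26}. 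This is precisely why Lemma~\ref{Lem2.9} (the $L^2$ bound on $\partial_{vv}^2X_\pm$) is needed: it controls the term $\partial_y\bigl|\det(\partial u/\partial y)\bigr|$ arising from the integration by parts, cf.\ \eqref{Eq5.33}--\eqref{Eq5.38}. With $\mathcal E(\mathbf F_0)\le\varepsilon_1$ small this contribution is also $\le N^{-1}\sup\|\partial_x\tilde{\mathbf h}\|$, and choosing $N$ large plus $\kappa$ small finally allows the Gronwall closure in \eqref{Eq5.42}. Without this step your argument does not close.
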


\begin{lemma}\label{Lem5.2}
	Assume  \eqref{Eq2.21}, let $4\leq\b_{1} <\b -4$,  there exists constant $C$, depending on $\b$, $\b_1$ and $M_0$,  such that 
	\begin{align*}
		\|\partial_v\mathbf{\tilde{h}}(t)\|_{L^\infty}\leq C(1+\|\partial_{x,v}\mathbf{\tilde{h}}_0\|_{L^\infty})^2 +\sup_{0 \leq s\leq t}	\|\partial_x\mathbf{\tilde{h}}(s)\|_{L^\infty}.
	\end{align*}
\end{lemma}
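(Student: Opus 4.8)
\textbf{Proof plan for Lemma \ref{Lem5.2}.} The plan is to integrate the equation for $\partial^j\tilde h_\pm$ along the backward characteristics $(X_\pm(\tau),V_\pm(\tau))$ of \eqref{Eq2.22}, exactly as in the local-existence argument (cf. \eqref{L26}--\eqref{L27}), but now exploiting the global-in-time decay available from Section 4 and Section 5. First I would differentiate \eqref{Eq3.31} in $v_j$ to get the transport equation
\begin{align*}
	\{\partial_t+v\cdot\nabla_x\mp\nabla_x\phi\cdot\nabla_v\}\partial^j\tilde h_\pm+\tilde\nu_{\pm,1}\partial^j\tilde h_\pm
	&=-\partial_j\tilde h_\pm-\partial^j\tilde\nu_{\pm,1}\,\tilde h_\pm\mp\partial^j(\sqrt\mu\,w_{\b_1}v)\cdot\nabla_x\phi\\
	&\quad+\partial^j\big(w_{\b_1}\mathrm{K}^\pm\mathbf f\big)+\partial^j\big(w_{\b_1}\mathrm{\Gamma}^\pm(\mathbf f,\mathbf f)\big),
\end{align*}
and then write $\partial^j\tilde h_\pm(t,x,v)$ as the Duhamel formula against $e^{-\int_s^t\tilde\nu_{\pm,1}(\tau)\mathrm d\tau}$ with initial term $(\partial^j\tilde h_{\pm,0})(X_\pm(0),V_\pm(0))\,e^{-\int_0^t\tilde\nu_{\pm,1}}$. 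Since $\tilde\nu_{\pm,1}\geq\tfrac12\nu(v)\geq\tilde\nu_0$ by \eqref{Eq3.33}, the initial term is bounded by $C\|\partial_{x,v}\mathbf{\tilde h}_0\|_{L^\infty}e^{-\tilde\nu_0 t}$, hence absorbed in $C(1+\|\partial_{x,v}\mathbf{\tilde h}_0\|_{L^\infty})^2$.

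Next I would estimate the six Duhamel terms. The term $-\partial_j\tilde h_\pm$ produces exactly $\int_0^t e^{-\tilde\nu_0(t-s)}\|\partial_x\mathbf{\tilde h}(s)\|_{L^\infty}\mathrm ds\leq \tfrac1{\tilde\nu_0}\sup_{0\le s\le t}\|\partial_x\mathbf{\tilde h}(s)\|_{L^\infty}$, which after adjusting constants gives the $\sup_{0\le s\le t}\|\partial_x\mathbf{\tilde h}(s)\|_{L^\infty}$ on the right-hand side. For the $\partial^j\tilde\nu_{\pm,1}\tilde h_\pm$ term I use $|\partial^j\tilde\nu_{\pm,1}|\le C(1+|v|+\|\nabla_x\phi\|_{L^\infty}+\|\mathbf h\|_{L^\infty})$ together with the factor $e^{-\int_s^t\tilde\nu_{\pm,1}}$ which kills the $(1+|v|)$ growth (as in \eqref{L29}); combined with Proposition \ref{Decay} ($\|\mathbf h(s)\|_{L^\infty}\le Ce^{-\lambda_1 s}$) this is bounded by $C$. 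The electric-field source $\partial^j(\sqrt\mu w_{\b_1}v)\cdot\nabla_x\phi$ is controlled by $\int_0^t e^{-\tilde\nu_0(t-s)}\|\nabla_x\phi(s)\|_{L^\infty}\mathrm ds$, which via \eqref{Eq2.16}, \eqref{Eq2.14} and Proposition \ref{prop5.2} ($\|\nabla_x\phi(s)\|_{L^2}\le C\|\mathbf f(s)\|_{L^2}\le Ce^{-\frac\lambda2 s}$) is again $O(1)$. The $\mathrm K$-term I handle via \eqref{Eq2.50}, \eqref{Eq2.41} and $\partial^j w_{\b_1}\le Cw_\b$, getting $\int_0^t e^{-\tilde\nu_0(t-s)}(\|\mathbf h(s)\|_{L^\infty}+\|\partial_v\mathbf{\tilde h}(s)\|_{L^\infty})\mathrm ds$; the $\|\mathbf h\|_{L^\infty}$ part is $O(1)$ by decay, and the $\|\partial_v\mathbf{\tilde h}\|_{L^\infty}$ part must be absorbed — see below.

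The delicate term is the nonlinear $\partial^j\big(w_{\b_1}\mathrm{\Gamma}^\pm(\mathbf f,\mathbf f)\big)$ one. By Corollary \ref{Coro2.16}, \eqref{new6.18}, it is bounded by
\[
\nu(v)\|\partial_v(w_{\b_1}\mathbf f)\|_{L^\infty}\|w_{\b_1}\mathbf f\|_{L^\infty}^{1/2}\Big(\int_{\mathbb R^3}|\mathbf f w_{\b_1}|\mathrm dv\Big)^{1/2}+\|\partial_v(w_{\b_1}\mathbf f)\|_{L^\infty}\|w_\b\mathbf f\|_{L^\infty}+\|w_\b\mathbf f\|_{L^\infty}^2,
\]
and the naive bound $\int_0^t e^{-\tilde\nu_0(t-s)}\nu(v)\|\mathbf h(s)\|_{L^\infty}\|\partial_v\mathbf{\tilde h}(s)\|_{L^\infty}\mathrm ds$ is useless because of the $\nu(v)$ factor. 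Following the strategy announced in the introduction, I split $e^{-\int_s^t\tilde\nu_{\pm,1}}\nu(v)\le e^{-\frac12\int_s^t\tilde\nu_{\pm,1}}\cdot\nu(v)e^{-\frac{\tilde\nu_0}{2}(t-s)}$ and use $\nu(v)e^{-\frac{\tilde\nu_0}2(t-s)}\le C(t-s)^{-1}$ only where needed; more precisely I bound the whole integrand by
\[
\|\mathbf h(s)\|_{L^\infty}\|\partial_{x,v}\mathbf{\tilde h}(s)\|_{L^\infty}+e^{-\frac{\tilde\nu_0}{2}(t-s)}\nu(v)\,\|\partial_x\mathbf{\tilde h}(s)\|_{L^\infty}\|\mathbf h(s)\|_{L^\infty}^{1/2}\sup_{x}\Big(\int_{\mathbb R^3}|\mathbf{\tilde h}(s,x,v)|\mathrm dv\Big)^{1/2},
\]
where the first piece is integrable in $s$ thanks to the exponential decay of $\|\mathbf h\|_{L^\infty}$ from Proposition \ref{Decay}, and the second piece is small because of Remark \ref{Rem4.4} ($\sup_{t_1\le s,\,x}\int|\mathbf{\tilde h}|\mathrm dv\ll1$). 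Collecting all contributions I obtain
\[
\|\partial_v\mathbf{\tilde h}(t)\|_{L^\infty}\le C(1+\|\partial_{x,v}\mathbf{\tilde h}_0\|_{L^\infty})^2+\sup_{0\le s\le t}\|\partial_x\mathbf{\tilde h}(s)\|_{L^\infty}+C\!\int_0^t\!\big(\|\mathbf h(s)\|_{L^\infty}+\text{smallness}\big)\|\partial_v\mathbf{\tilde h}(s)\|_{L^\infty}\mathrm ds,
\]
and since $\int_0^\infty\|\mathbf h(s)\|_{L^\infty}\mathrm ds<\infty$ and the other coefficient is small, Gronwall's inequality removes the last integral and yields the claim. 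The main obstacle, as flagged, is organizing the nonlinear term so that the $\nu(v)$ weight is traded against $(t-s)^{-1}$ and the factor $\int|\mathbf{\tilde h}|\mathrm dv$ without losing control of $\|\partial_v\mathbf{\tilde h}\|_{L^\infty}$; everything else is a decay bookkeeping exercise using Propositions \ref{prop4.3}, \ref{prop5.2}, \ref{Decay} and Remark \ref{Rem4.4}.
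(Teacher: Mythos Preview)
Your setup and the treatment of the initial term, the $\partial_j\tilde h_\pm$ term, the $\partial^j\tilde\nu_{\pm,1}\tilde h_\pm$ term, the electric-field term, and the nonlinear $\Gamma$-term are all essentially the same as the paper's (see \eqref{Eq5.44}--\eqref{new6.19}, \eqref{Eq5.49}), and the use of the split in \eqref{new6.18} together with Remark~\ref{Rem4.4} and Proposition~\ref{Decay} is exactly what the paper does.

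The gap is in the linear $\mathrm K$-term. Your crude bound via \eqref{Eq2.50}--\eqref{Eq2.41} produces
\[
\int_0^t e^{-\tilde\nu_0(t-s)}\|\partial_v\mathbf{\tilde h}(s)\|_{L^\infty}\,\mathrm ds,
\]
and this term \emph{cannot} be absorbed by the Gronwall argument you propose: its coefficient is $e^{-\tilde\nu_0(t-s)}$, which is of order~$1$ for $s$ near $t$ and is neither $\|\mathbf h(s)\|_{L^\infty}$ nor a small constant. Bounding by $\tfrac{C}{\tilde\nu_0}\sup_{0\le s\le t}\|\partial_v\mathbf{\tilde h}(s)\|_{L^\infty}$ closes only if that constant is $<1$, which you have no reason to expect; passing instead to a genuine Gronwall integral $\int_0^t\|\partial_v\mathbf{\tilde h}(s)\|\,\mathrm ds$ yields an $e^{Ct}$ factor and destroys the global bound. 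Your final displayed pre-Gronwall inequality is therefore not justified.

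The paper resolves this exactly as in Section~\ref{section 6.1}: it isolates the piece $\bar J_{53}^+=\int_0^t e^{-\int_s^t\tilde\nu_{+,1}}\int \mathtt k_{w_{\b_1}}^{(2)}(V_+(s),u)\,\partial_{v_j}\tilde h_+(s,X_+(s),u)\,\mathrm du\,\mathrm ds$ (see \eqref{newV}) and \emph{iterates the Duhamel formula once more} on $\partial_{v_j}\tilde h_+(s,X_+(s),u)$. After the standard case splitting in $|v|,|u|,|u_1|$ and $|s-s_1|$ (each giving a factor $C/N$ in front of $\sup\|\partial_v\mathbf{\tilde h}\|$), the main contribution is the double integral $(**)$ over $\{|u|\le2N,|u_1|\le3N,\,0\le s_1\le s-1/N\}$; here one makes the change of variable $y=\hat X_+(s_1)$ and integrates by parts in $u_1$ to land on $(\iint|f_+(s_1,y,u_1)|^2\,\mathrm dy\,\mathrm du_1)^{1/2}$, which is controlled by $\mathcal E(\mathbf F_0)^{1/2}$ (see \eqref{Eq5.47}--\eqref{Eq5.54}). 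This iteration is precisely what converts the $O(1)$ coefficient in front of $\sup\|\partial_v\mathbf{\tilde h}\|$ into the small coefficient $\tfrac{C}{N}+\kappa^{1/2}$ plus a harmless $C_N$, after which Gronwall (with kernel $e^{-\lambda_1 s}$ from the nonlinear term) legitimately closes.
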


With the help of the two lemmas we can establish the following proposition:
\begin{proposition}\label{Prop5.3}
	Assume \eqref{Eq2.21}, let $4\leq\b_{1} <\b -4$,  there exists constant $C$, depending on $\b$, $\b_1$ and $M_0$,  such that 
	\begin{equation*}
		\|\partial_{x,v}\mathbf{\tilde{h}}(t)\|_{L^\infty} \leq C(1+\|\partial_{x,v}\mathbf{\tilde{h}}_0\|_{L^\infty})^2.
	\end{equation*}
\end{proposition}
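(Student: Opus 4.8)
\textbf{Proof proposal for Proposition \ref{Prop5.3}.}
The plan is to combine Lemma \ref{Lem5.1} and Lemma \ref{Lem5.2} into a single closed estimate for the quantity $\sup_{0\le s\le t}\|\partial_{x,v}\mathbf{\tilde h}(s)\|_{L^\infty}$, exploiting the crucial point that the coupling constant in front of the ``other'' derivative norm is small (namely $\delta$ in Lemma \ref{Lem5.1}, coming from the free streaming condition \eqref{Eq2.21}). First I would take the supremum over $s\in[0,t]$ on both sides of the two lemmas. Lemma \ref{Lem5.1} gives
\begin{align*}
\sup_{0\le s\le t}\|\partial_x\mathbf{\tilde h}(s)\|_{L^\infty}\le C(1+\|\partial_{x,v}\mathbf{\tilde h}_0\|_{L^\infty})^2+\delta\sup_{0\le s\le t}\|\partial_v\mathbf{\tilde h}(s)\|_{L^\infty},
\end{align*}
while Lemma \ref{Lem5.2} gives
\begin{align*}
\sup_{0\le s\le t}\|\partial_v\mathbf{\tilde h}(s)\|_{L^\infty}\le C(1+\|\partial_{x,v}\mathbf{\tilde h}_0\|_{L^\infty})^2+\sup_{0\le s\le t}\|\partial_x\mathbf{\tilde h}(s)\|_{L^\infty}.
\end{align*}
(Strictly, since the right-hand side of Lemma \ref{Lem5.2} already involves the $\sup$ of $\|\partial_x\mathbf{\tilde h}\|$, one should read these estimates as valid for all $t$ with the $\sup$ on the right taken over the same interval; the local existence Proposition \ref{prop2.1} guarantees finiteness of all these quantities on $[0,t_1]$, and the a priori assumption \eqref{Eq2.21} is in force, so there is no circularity.)

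Next I would substitute the second inequality into the first. Writing $P(t):=\sup_{0\le s\le t}\|\partial_x\mathbf{\tilde h}(s)\|_{L^\infty}$ and $D_0:=C(1+\|\partial_{x,v}\mathbf{\tilde h}_0\|_{L^\infty})^2$, this yields
\begin{align*}
P(t)\le D_0+\delta\bigl(D_0+P(t)\bigr)=(1+\delta)D_0+\delta P(t).
\end{align*}
Since $\delta$ is a fixed small constant (in particular $\delta\le \tfrac12$), absorbing the $\delta P(t)$ term into the left-hand side gives
\begin{align*}
P(t)\le \frac{1+\delta}{1-\delta}\,D_0\le 3D_0 = 3C(1+\|\partial_{x,v}\mathbf{\tilde h}_0\|_{L^\infty})^2.
\end{align*}
Feeding this back into Lemma \ref{Lem5.2} bounds $\sup_{0\le s\le t}\|\partial_v\mathbf{\tilde h}(s)\|_{L^\infty}$ by $D_0+3D_0=4D_0$, and summing the two contributions gives $\|\partial_{x,v}\mathbf{\tilde h}(t)\|_{L^\infty}\le \sup_{0\le s\le t}\|\partial_{x,v}\mathbf{\tilde h}(s)\|_{L^\infty}\le 7D_0$, which is the asserted bound with a relabelled constant $C$ depending only on $\b,\b_1,M_0$.

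The only subtlety — and the step deserving care rather than a genuine obstacle — is making the bootstrap rigorous: one must check that $\sup_{0\le s\le t}\|\partial_{x,v}\mathbf{\tilde h}(s)\|_{L^\infty}$ is finite and continuous in $t$ so that the absorption argument is legitimate on the whole existence interval, not just formally. This is handled by invoking the local existence result (Proposition \ref{prop2.1}, estimate \eqref{E2.3}) to start the continuation argument, together with the uniform $L^\infty$ bound of Proposition \ref{prop4.3} and the exponential decay of Proposition \ref{Decay}, which enter Lemmas \ref{Lem5.1}--\ref{Lem5.2} to keep the constants independent of $t$. Once that is in place, the conclusion of Proposition \ref{Prop5.3} follows by the elementary algebraic manipulation above. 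No further estimates on the collision operator or the characteristics are needed here, since all of that work has been absorbed into the two preceding lemmas.
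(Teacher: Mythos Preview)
Your proposal is correct and is exactly the approach the paper takes: the paper's proof of Proposition \ref{Prop5.3} consists of the single sentence ``Proposition \ref{Prop5.3} follows from Lemmas \ref{Lem5.1} and \ref{Lem5.2},'' and your substitution-and-absorption argument using the smallness of $\delta$ is precisely the intended (and only natural) way to combine those two lemmas. Your added remarks on finiteness via Proposition \ref{prop2.1} are a reasonable elaboration of what the paper leaves implicit.
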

\begin{proof}
Proposition \ref{Prop5.3} follows from Lemmas \ref{Lem5.1} and \ref{Lem5.2}. Therefore the proof of Proposition \ref{Prop5.3} is completed.
\end{proof}

In the following two subsections, we prove the above two lemmas.

\subsection{Proof of Lemma \ref{Lem5.1}} \label{section 6.1}
Recall $\partial_i=\frac{\partial}{\partial x_i}$, applying $\partial_i$ to  \eqref{Eq3.31}, we get 
\begin{align}\label{Eq5.3}
	&(\partial_t+v \cdot \nabla _x\mp\nabla_x \phi \cdot \nabla_v)\partial_i\tilde{h}_{\pm}+\tilde{\nu}_{\pm,1}\partial_i\tilde{h}_{\pm} \nonumber\\
	=&\pm\nabla_x \partial_i\phi \cdot \nabla_v\tilde{h}_{\pm} -\partial_i\tilde{\nu}_{\pm,1}\tilde{h}_{\pm} 
	\mp\nabla_x \partial_i\phi \cdot v w_{\b_1}\sqrt{\mu} + \partial_i(w_{\b_1}\mathrm{\Gamma}^{\pm}(\mathbf{f},\mathbf{f}))+\partial_i(w_{\b_1}\mathrm{K}^{\pm}\mathbf{f}).
\end{align}
Integrating \eqref{Eq5.3} along the characteristic lines, one obtains
\begin{align}\label{Eq5.4}
	\partial_i\tilde{h}_{\pm}(t,x,v)
	&=\partial_i\tilde{h}_{\pm0}(X_{\pm}(0),V_{\pm}(0))e^{-\int_0^t\tilde{\nu}_{\pm,1}(\tau, X_{\pm}(\tau), V_{\pm}(\tau))\mathrm{d}\tau}\nonumber\\
	&\quad \pm\int_0^t e^{-\int_s^t\tilde{\nu}_{\pm,1}(\tau, X_{\pm}(\tau), V_{\pm}(\tau))\mathrm{d}\tau}\left(\nabla_x \partial_i\phi \cdot \nabla_v\tilde{h}_{\pm}\right)(s,X_{\pm}(s),V_{\pm}(s))\mathrm{d}s\nonumber\\
	&\quad -\int_0^te^{-\int_s^t\tilde{\nu}_{\pm,1}(\tau, X_{\pm}(\tau), V_{\pm}(\tau))\mathrm{d}\tau}\left(\partial_i\tilde{\nu}_{\pm,1}\tilde{h}_{\pm}\right)(s,X_{\pm}(s),V_{\pm}(s))\mathrm{d}s\nonumber\\
	&\quad \mp\int_0^te^{-\int_s^t\tilde{\nu}_{\pm,1}(\tau, X_{\pm}(\tau), V_{\pm}(\tau))\mathrm{d}\tau}\left(\nabla_x\partial_i\phi\cdot vw_{\b_1}\sqrt{\mu}\right)(s,X_{\pm}(s),V_{\pm}(s))\mathrm{d}s\nonumber\\
	&\quad +\int_0^te^{-\int_s^t\tilde{\nu}_{\pm,1}(\tau, X_{\pm}(\tau), V_{\pm}(\tau))\mathrm{d}\tau}\left(\partial_i(w_{\b_1}\mathrm{\Gamma}^{\pm}(\mathbf{f},\mathbf{f}))\right)(s,X_{\pm}(s),V_{\pm}(s))\mathrm{d}s\nonumber\\
	&\quad +\int_0^te^{-\int_s^t\tilde{\nu}_{\pm,1}(\tau, X_{\pm}(\tau), V_{\pm}(\tau))\mathrm{d}\tau}\left(\partial_i(w_{\b_1}\mathrm{K}^{\pm}\mathbf{f})\right)(s,X_{\pm}(s),V_{\pm}(s))\mathrm{d}s
	=\sum_{i=0}^{5}H_i.
\end{align}
%%%%%%%%%%%%%%%%%新增%%%%%%%%%%%%%%%%%%%%
For the nonlinear collision term $H_4$, it follows from \eqref{new6.11}, \eqref{E2.3}, \eqref{Eq4.67}, Proposition \ref{prop4.3} and Remark \ref{Rem4.4} that 
\begin{align}\label{new6.12}
	&\int_0^te^{-\int_s^t\tilde{\nu}_{\pm,1}(\tau, X_{\pm}(\tau), V_{\pm}(\tau))\mathrm{d}\tau}\left|\left(\partial_i(w_{\b_1}\mathrm{\Gamma}^{\pm}(\mathbf{f},\mathbf{f}))\right)(s,X_{\pm}(s),V_{\pm}(s))\right| \mathrm{d}s   \nonumber \\
	\leq&\Big\{\int_{0}^{t_1}+\int_{t_1}^{t} \Big\}e^{-\f{\nu(v)}{2}(t-s)}\nu(v) \|\partial_x\mathbf{\tilde{h}}(s)\|_{L^\infty}\|\mathbf{h}(s)\|_{L^\infty}^{\f{1}{2}} \cdot\sup\limits_{x\in\mathbb{T}^3} \Big\{\int_{\mathbb{R}_{v}^3} |\mathbf{\tilde{h}}(s,x,v)|\mathrm{d}v\Big\}^{\f{1}{2}}    \mathrm{d}s  \nonumber\\
	&+\int_0^te^{-\tilde{\nu_0}(t-s)} \|\mathbf{h}(s)\|_{L^\infty}\|\partial_x\mathbf{\tilde{h}}(s)\|_{L^\infty}\mathrm{d}s   \nonumber\\
	\leq& C(1+\|\partial_{x, v}\mathbf{\tilde{h}}_{0})\|_{L^\infty})^2 +\int_0^te^{-\tilde{\nu_0}(t-s)} e^{-\lambda_1s}\|\partial_x\mathbf{\tilde{h}}(s)\|_{L^\infty}\mathrm{d}s \nonumber\\
	&+\int_{t_1}^{t}e^{-\f{\nu(v)}{2}(t-s)}\nu(v) \|\partial_x\mathbf{\tilde{h}}(s)\|_{L^\infty}\|\mathbf{h}(s)\|_{L^\infty}^{\f{1}{2}}\cdot\sup\limits_{x\in\mathbb{T}^3} \Big\{\int_{\mathbb{R}_{v}^3} |\mathbf{\tilde{h}}(s,x,v)|\mathrm{d}v\Big\}^{\f{1}{2}}    \mathrm{d}s  \nonumber\\
	\leq& C(1+\|\partial_{x, v}\mathbf{\tilde{h}}_{0})\|_{L^\infty})^2 +\kappa ^{\f{1}{2}}\sup_{0 \leq s\leq t}\|\partial_x\mathbf{\tilde{h}}(s)\|_{L^\infty}+\int_0^t e^{-\lambda_1 s}\|\partial_x\mathbf{\tilde{h}}(s)\|_{L^\infty}\mathrm{d}s.
\end{align}

%%%%%%%%%%%%%%%%%%%%%%%%%%%%新增%%%%%%%%%%%
Next we estimate  the remaining terms on RHS of \eqref{Eq5.4} as follows.

It follows from \eqref{Eq3.32} and \eqref{Eq2.21} that
\begin{align}\label{Eq5.7}
	|\partial_i\tilde{\nu}_{\pm,1}(t, x, v)|\leq C\delta(1+t)^{-\frac{5}{2}}(1+|v|).
\end{align}
Then plugging \eqref{Eq2.21}, \eqref{Eq2.47}, \eqref{Eq3.33}, \eqref{new6.12} $-$  \eqref{Eq5.7} and Proposition \ref{prop4.3} into \eqref{Eq5.4}, one has
\begin{align}\label{Eq5.11}
	|\partial_x \tilde{h}_{\pm}(t,x,v)|
	& \leq C(1+\|\partial_{x, v}\mathbf{\tilde{h}}_{0})\|_{L^\infty})^2 +\delta \sup_{0 \leq s\leq t}\|\partial_v\mathbf{\tilde{h}}(s)\|_{L^\infty} \nonumber\\
	  &\quad+\kappa ^{\f{1}{2}}\sup_{0 \leq s\leq t}\|\partial_x\mathbf{\tilde{h}}(s)\|_{L^\infty} 
	  +\int_0^t e^{-\lambda_1 s}\|\partial_x\mathbf{\tilde{h}}(s)\|_{L^\infty}\mathrm{d}s \nonumber\\
	&\quad  +\Big|\int_0^te^{-\int_s^t\tilde{\nu}_{\pm,1}(\tau)\mathrm{d}\tau}
	\int_{\mathbb{R}^3}\mathtt{k}_{w_{\b_1}}^{(2)}(V_{\pm}(s),u)\partial_x \tilde{h}_{\pm}(s,X_{\pm}(s),u)\mathrm{d}u\Big|    \nonumber\\
	&\quad  +\Big|\int_0^te^{-\int_s^t\tilde{\nu}_{\pm,1}(\tau)\mathrm{d}\tau}
	\int_{\mathbb{R}^3}\mathtt{k}_{w_{\b_1}}^{(1)}(V_{\pm}(s),u)
	\partial_x \tilde{h}_{\mp}(s,X_{\pm}(s),u)du \Big|.
\end{align}

We only deal with the case for $|\partial_i\tilde{h}_{+}(t,x,v)|$, because $|\partial_i\tilde{h}_{-}(t,x,v)|$  can be controlled in the same way. We denote $\hat{\nu}_{\pm,1}(\tau_1):=\tilde{\nu}_{\pm,1}(\tau_1, \hat{X}(\tau_1),  \hat{V}(\tau_1))$ for simplicity of presentation, where $ \hat{X}(\tau_1),  \hat{V}(\tau_1)$ has been defined in \eqref{Hat1}.  Indeed, using \eqref{Eq5.11} again for $\partial_x \tilde{h}_{\pm}(s,X_{\pm}(s),u)$, one has
\begin{align}\label{Eq5.12}
	|\partial_x \tilde{h}_{+}(t,x,v)|
	&\le C(1+\|\partial_{x, v}\mathbf{\tilde{h}}_{0})\|_{L^\infty})^2 +\delta \sup_{0 \leq s\leq t}\|\partial_v\mathbf{\tilde{h}}(s)\|_{L^\infty}
	+\kappa ^{\f{1}{2}}\sup_{0 \leq s\leq t}\|\partial_x\mathbf{\tilde{h}}(s)\|_{L^\infty} \nonumber\\
	&\quad  +\int_0^t e^{-\lambda_1 s}\|\partial_x\mathbf{\tilde{h}}(s)\|_{L^\infty}\mathrm{d}s +\sum_{i=1}^{4}I_i,
\end{align}
where,
\begin{align*}
	I_1:=&\int_0^te^{-\int_s^t\tilde{\nu}_{+,1}(\tau)\mathrm{d}\tau}\mathrm{d}s\int_0^se^{-\int_{s_1}^{s}\hat{\nu}_{+,1}(\tau_1)\mathrm{d}\tau_1}\mathrm{d}s_1  \\
	&\times\int_{\mathbb{R}^3}\int_{\mathbb{R}^3}\mathtt{k}_{w_{\b_1}}^{(2)}(V_{+}(s),u)\cdot\mathtt{k}_{w_{\b_1}}^{(2)}(\hat{V}_{+}(s_1),u_1)\cdot \partial_x \tilde{h}_{+}(s_1,\hat{X}_{+}(s_1),u_1)du_1\mathrm{d}u,   \\
	I_2:=&\int_0^te^{-\int_s^t\tilde{\nu}_{+,1}(\tau)\mathrm{d}\tau}\mathrm{d}s\int_0^se^{-\int_{s_1}^{s}\hat{\nu}_{+,1}(\tau_1)\mathrm{d}\tau_1}\mathrm{d}s_1  \\
	&\times\int_{\mathbb{R}^3}\int_{\mathbb{R}^3}\mathtt{k}_{w_{\b_1}}^{(2)}(V_{+}(s),u)\cdot\mathtt{k}_{w_{\b_1}}^{(1)}(\hat{V}_{+}(s_1),u_1)\cdot \partial_x \tilde{h}_{-}(s_1,\hat{X}_{+}(s_1),u_1)du_1\mathrm{d}u,   \\
	I_3:=&\int_0^te^{-\int_s^t\tilde{\nu}_{+,1}(\tau)\mathrm{d}\tau}\mathrm{d}s\int_0^se^{-\int_{s_1}^{s}\hat{\nu}_{-,1}(\tau_1)\mathrm{d}\tau_1}\mathrm{d}s_1  \\
	&\times\int_{\mathbb{R}^3}\int_{\mathbb{R}^3}\mathtt{k}_{w_{\b_1}}^{(1)}(V_{+}(s),u)\cdot\mathtt{k}_{w_{\b_1}}^{(2)}(\hat{V}_{-}(s_1),u_1)\cdot \partial_x \tilde{h}_{-}(s_1,\hat{X}_{-}(s_1),u_1)du_1\mathrm{d}u,  \\
	I_4:=&\int_0^te^{-\int_s^t\tilde{\nu}_{+,1}(\tau)\mathrm{d}\tau}\mathrm{d}s\int_0^se^{-\int_{s_1}^{s}\hat{\nu}_{-,1}(\tau_1)\mathrm{d}\tau_1}\mathrm{d}s_1   \\ &\times\int_{\mathbb{R}^3}\int_{\mathbb{R}^3}\mathtt{k}_{w_{\b_1}}^{(1)}(V_{+}(s),u)\cdot
	\mathtt{k}_{w_{\b_1}}^{(1)}(\hat{V}_{-}(s_1),u_1)\cdot \partial_x \tilde{h}_{+}(s_1,\hat{X}_{-}(s_1),u_1)du_1\mathrm{d}u.
\end{align*}

We  discuss $I_1$ of \eqref{Eq5.12} and split it into four cases.\\

\text{Case 1:} $|v|\ge N$.  By similar argument as in \eqref{Eq3.19},
\begin{align}\label{Eq5.16}
	I_1\le \frac{C}{N}\sup_{0\le s\le t}\|\partial_x \mathbf{\tilde{h}}(s)\|_{L^{\infty}}.
\end{align}

\text{Case 2:} For either $|v|\le N, |u|\ge 2N$ or $|u|\le 2N, |u_1|\ge 3N$. By similar argument as in \eqref{Eq3.22},
\begin{align}\label{Eq5.17}
	I_1 \le Ce^{-\frac{N^2}{64}}\sup_{0\le s\le t}\|\partial_x \mathbf{\tilde{h}}(s)\|_{L^{\infty}}.
\end{align}

\text{Case 3:} $|v|\le N, |u|\le 2N, |u_1|\le 3N, s-\f{1}{N}\le s_1\le s$.  By similar argument as in \eqref{Eq3.23},
\begin{align}\label{Eq5.18}
	I_1\le \f{C}{N}\sup_{0\le s\le t}\|\partial_x \mathbf{\tilde{h}}(s)\|_{L^{\infty}}.
\end{align}

\text{Case 4:} $|v|\le N, |u|\le 2N, |u_1|\le 3N, 0\le s_1 \le s-\f{1}{N}$.
\begin{align}\label{Eq5.19}
	I_1&\cong \int_0^t \int_0^{s-\f{1}{N}} e^{-\int_s^t\tilde{\nu}_{+,1}(\tau)\mathrm{d}\tau}e^{-\int_{s_1}^{s}\hat{\nu}_{+,1}(\tau_1)\mathrm{d}\tau_1}\mathrm{d}s_1ds   \nonumber\\
	&\quad\times \iint_{B}\mathtt{k}_{w_{\b_1}}^{(2)}(V_{+}(s),u)\cdot\mathtt{k}_{w_{\b_1}}^{(2)}(\hat{V}_{+}(s_1),u_1)\cdot \partial_x \tilde{h}_{+}(s_1,\hat{X}_{+}(s_1),u_1)du_1du,
\end{align}
where  $B=\{(u,u_1):~|u|\leq 2N,$ $|u_1|\leq 3N\}.$ From \eqref{newK3} $-$ \eqref{newK5}, $\mathtt{k}_{w_{\b_1}}^{(2)}(v,u)$ has integrable singularity of $\frac{%
	1}{|v-u|},$ we can choose $\mathtt{k}_{N}^{(2)}(v,u)$ smooth with
compact support such that
\begin{equation*}
	\sup_{|v|\leq 3N}\int_{|u|\leq 3N}|\mathtt{k}_{N}^{(2)}(v,u)-\mathtt{k}_{w_{\b_1}}^{(2)}(v, u)|\mathrm{d}u\leq \frac{1}{N}, \quad  |\mathtt{k}_{N}^{(2)}(v,u)|+|\partial_{v,u}\mathtt{k}_{N}^{(2)}(v,u)|\leq C_N.
\end{equation*}%
Noting that
\begin{align*}
	&\mathtt{k}_{w_{\b_1}}^{(2)}(V_{+}(s),u)\cdot\mathtt{k}_{w_{\b_1}}^{(2)}(\hat{V}_{+}(s_1),u_1)  
	=\left(\mathtt{k}_{w_{\b_1}}^{(2)}(V_{+}(s),u)-\mathtt{k}_{N}^{(2)}(V_{+}(s),u)\right)\cdot\mathtt{k}_{w_{\b_1}}^{(2)}(\hat{V}_{+}(s_1),u_1)    \\
	&+\left(\mathtt{k}_{w_{\b_1}}^{(2)}(\hat{V}_{+}(s_1),u_1)-\mathtt{k}_{N}^{(2)}(\hat{V}_{+}(s_1),u_1)\right)\cdot\mathtt{k}_{N}^{(2)}(V_{+}(s),u)
	+\mathtt{k}_{N}^{(2)}(V_{+}(s),u)\cdot\mathtt{k}_{N}^{(2)}(\hat{V}_{+}(s_1),u_1),
\end{align*}
we can estimate $I_1$ by
\begin{align}\label{Eq5.20}
	I_1&\leq \frac{C}{N}\sup_{0\le s\le t}\|\partial_x \mathbf{\tilde{h}}(s)\|_{L^{\infty}}    
	+ \int_0^t \int_0^{s-\f{1}{N}} e^{-\int_s^t\tilde{\nu}_{+,1}(\tau)\mathrm{d}\tau}e^{-\int_{s_1}^{s}\hat{\nu}_{+,1}(\tau_1)\mathrm{d}\tau_1}\mathrm{d}s_1ds   \nonumber\\
	&\quad\times \iint_{B}\mathtt{k}_{N}^{(2)}(V_{+}(s),u)\mathtt{k}_{N}^{(2)}(\hat{V}_{+}(s_1),u_1)\cdot \partial_x \tilde{h}_{+}(s_1,\hat{X}_{+}(s_1),u_1)du_1du =:I_{1}^{*}.
\end{align}
Consider a change of variable:
\begin{equation}\label{Eq5.21}
	y=\hat{X}_{+}(s_{1})=X_{+}(s_{1};s,X_{+}(s;t,x,v),u)  
\end{equation}
such that
\begin{align}\label{Eq5.22}
	I_{1}^{*}&\cong \int_0^t \int_0^{s-\f{1}{N}} e^{-\int_s^t\tilde{\nu}_{+,1}(\tau)\mathrm{d}\tau}e^{-\int_{s_1}^{s}\hat{\nu}_{+,1}(\tau_1)\mathrm{d}\tau_1}\mathrm{d}s_1ds  \nonumber\\
	&\quad\times\iint_{\hat{B}}\mathtt{k}_{N}^{(2)}(V_{+}(s),u)\mathtt{k}_{N}^{(2)}(\hat{V}_{+}(s_1),u_1)\cdot \partial_y \tilde{h}_{+}(s_1,y,u_1)|det(\frac{\partial u}{\partial y})|du_1dy,
\end{align}
where $\hat{B}$ is the image of $B$ under the transformation $(u, u_1) \rightarrow (y, u_1)$. In addition, it is clear that $\hat{B}\subseteq\{(y, u_1):~|y-X_{+}(s)|\leq C(s-s_{1})N,\;|u_1|\leq 3N\}$. After integrating by parts, one has
\begin{align}\label{Eq5.23}
	I_{1}^{*}&= \int_0^t \int_0^{s-\f{1}{N}} e^{-\int_s^t\tilde{\nu}_{+,1}(\tau)\mathrm{d}\tau} \partial_y\{e^{-\int_{s_1}^{s}\hat{\nu}_{+,1}(\tau_1)\mathrm{d}\tau_1}\}\mathrm{d}s_1ds   \nonumber\\
	&\quad\times \iint_{\hat{B}}\mathtt{k}_{N}^{(2)}(V_{+}(s),u)\mathtt{k}_{N}^{(2)}(\hat{V}_{+}(s_1),u_1)\cdot  \tilde{h}_{+}(s_1,y,u_1)|det(\frac{\partial u}{\partial y})|du_1dy \nonumber\\
	&\quad +\int_0^t \int_0^{s-\f{1}{N}} e^{-\int_s^t\tilde{\nu}_{+,1}(\tau)\mathrm{d}\tau}e^{-\int_{s_1}^{s}\hat{\nu}_{+,1}(\tau_1)\mathrm{d}\tau_1}\mathrm{d}s_1ds   \nonumber\\
	&\quad\times \iint_{\hat{B}} \partial_y\{\mathtt{k}_{N}^{(2)}(V_{+}(s),u)\mathtt{k}_{N}^{(2)}(\hat{V}_{+}(s_1),u_1)\}\cdot  \tilde{h}_{+}(s_1,y,u_1)|det(\frac{\partial u}{\partial y})|du_1dy    \nonumber\\
	&\quad +\int_0^t \int_0^{s-\f{1}{N}} e^{-\int_s^t\tilde{\nu}_{+,1}(\tau)\mathrm{d}\tau}e^{-\int_{s_1}^{s}\hat{\nu}_{+,1}(\tau_1)\mathrm{d}\tau_1}\mathrm{d}s_1ds    \nonumber\\
	&\quad\times \iint_{\hat{B}}\mathtt{k}_{N}^{(2)}(V_{+}(s),u)\mathtt{k}_{N}^{(2)}(\hat{V}_{+}(s_1),u_1)\cdot  \tilde{h}_{+}(s_1,y,u_1)\partial_y\{|det(\frac{\partial u}{\partial y})|\}du_1dy   \nonumber\\
	&\quad +\int_0^t \int_0^{s-\f{1}{N}} e^{-\int_s^t\tilde{\nu}_{+,1}(\tau)\mathrm{d}\tau}e^{-\int_{s_1}^{s}\hat{\nu}_{+,1}(\tau_1)\mathrm{d}\tau_1}\mathrm{d}s_1ds    \nonumber\\
	&\quad\times \iint_{\partial_{y}\hat{B}}\mathtt{k}_{N}^{(2)}(V_{+}(s),u)\mathtt{k}_{N}^{(2)}(\hat{V}_{+}(s_1),u_1)\cdot  \tilde{h}_{+}(s_1,y,u_1)|det(\frac{\partial u}{\partial y})|du_1dS_y   \nonumber\\
	&=: I_{11}^{*}+I_{12}^{*}+I_{13}^{*}+I_{B}.
\end{align}
For the boundary term $I_{B}$, it follows from Corollary \ref{Coro2.8}  and Proposition \ref{prop4.3} that
\begin{equation}\label{newB}
	I_B \leq C_N\sup_{0 \leq s\leq t}\|h(s)\|_{L^\infty} \leq C_N.
\end{equation}

To estimate $I_{11}^{*}$, we notice that
\begin{align}\label{Eq5.24}
	\partial_y\left(e^{-\int_{s_1}^{s}\hat{\nu}_{+,1}(\tau_1)\mathrm{d}\tau_1}\right)=-e^{-\int_{s_1}^{s}\hat{\nu}_{+,1}(\tau_1)\mathrm{d}\tau_1}\cdot \int_{s_1}^{s}\partial_y\left(\hat{\nu}_{+,1}(\tau_1)\right)\mathrm{d}\tau_1,
\end{align}
and
\begin{align}\label{Eq5.25}
	\partial_y\left(\tilde{\nu}_{+,1}(\tau_1, \hat{X}_{+}(\tau_1), \hat{V}_{+}(\tau_1))\right)=\partial_x \hat{\nu}_{+,1}(\tau_1)\cdot\frac{\partial \hat{X}_{+}(\tau_1)}{\partial u}\cdot\frac{\partial u}{\partial y}+\partial_v \hat{\nu}_{+,1}(\tau_1)\cdot\frac{\partial \hat{V}_{+}(\tau_1)}{\partial u}\cdot\frac{\partial u}{\partial y}.
\end{align}
It is clear that
\begin{equation}\label{Eq5.25-1}
	\left|\partial_x \hat{\nu}_{+,1}(\tau_1)\right|+\left|\partial_v \hat{\nu}_{+,1}(\tau_1)\right|\leq C(1+|\hat{V}_{+}(\tau_1)|)\leq C(1+|u|)\leq C_{N}.
\end{equation}

It follows from \eqref{Eq2.23}  that
\begin{align}\label{Eq5.26}
	\Big|\frac{\partial \hat{X}_{+}(\tau_1)}{\partial u}\Big|\leq C(s-s_1), \quad \Big|\frac{\partial \hat{V}_{+}(\tau_1)}{\partial u}\Big|\leq C\delta (s-s_1)+1, \quad  \Big|\frac{\partial u}{\partial y}\Big|\leq C(s-s_1)^{-1}.
\end{align}
Combining \eqref{Eq5.25} $-$ \eqref{Eq5.26}, one gets
\begin{align}\label{Eq5.27}
	\left|\partial_y\left(\tilde{\nu}_{+,1}(\tau_1, \hat{X}_{+}(\tau_1), \hat{V}_{+}(\tau_1))\right)\right|
	\leq C_{N}\left(1+(s-s_1)^{-1}\right),
\end{align}
Which, together with \eqref{Eq5.24}, yields
\begin{align}\label{Eq5.28}
	\left|\partial_y\left(e^{-\int_{s_1}^{s}\hat{\nu}_{+,1}(\tau_1)\mathrm{d}\tau_1}\right)\right|\leq C_{N}e^{-\int_{s_1}^{s}\hat{\nu}_{+,1}(\tau_1)\mathrm{d}\tau_1}\left(1+(s-s_1)\right).
\end{align}
Substituting \eqref{Eq5.28} into \eqref{Eq5.23}, following the same argument in $L^{\infty }$ bound, one can deduce that 
\begin{align}\label{Eq5.29}
	I_{11}^{*} &\leq C_N\int_0^t\int_{0}^{s-\f{1}{N}}e^{-\tilde{\nu}_0(t-s)}e^{-\tilde{\nu}_0(s-s_1)}(1+(s-s_1))\mathrm{d}s_1\mathrm{d}s\Big(\iint_{\hat{B}}|f_+(s_1, y, u_1)|^2 dydu_1\Big)^{\frac{1}{2}}  \nonumber\\
	&\leq C_N\Big(\|h_{+}(\tau)\|^{\frac{1}{2}}_{L^{\infty}}\sqrt{\mathcal{E}(\mathbf{F}_0)}+\sqrt{\mathcal{E}(\mathbf{F}_0)}\Big).
\end{align}

For $I_{12}^{*}$, a direct calculation shows that 
\begin{align}\label{Eq5.30}
	\partial_y\left(\mathtt{k}_{N}^{(2)}(V_{+}(s),u)\cdot\mathtt{k}_{N}^{(2)}(\hat{V}_{+}(s_1),u_1)\right)=&(\partial_u \mathtt{k}_{N}^{(2)})(V_{+}(s),u)\cdot\frac{\partial u}{\partial y}\cdot \mathtt{k}_{N}^{(2)}(\hat{V}_{+}(s_1),u_1)  
	+\mathtt{k}_{N}^{(2)}(V_{+}(s),u)\nonumber\\
	&\times(\partial_v \mathtt{k}_{N}^{(2)})(\hat{V}_{+}(s_1),u_1)\cdot\frac{\partial \hat{V}_{+}(s_1)}{\partial u}\cdot\frac{\partial u}{\partial y},
\end{align}
which, together with \eqref{Eq2.23}, yields that
\begin{align}\label{Eq5.31}
	\left|\partial_y\left(\mathtt{k}_{N}^{(2)}(V_{+}(s),u)\mathtt{k}_{N}^{(2)}(\hat{V}_{+}(s_1),u_1)\right)\right| \leq C_{N}\left(1+(s-s_1)^{-1}\right).
\end{align}
Combining \eqref{Eq5.31} and \eqref{Eq5.23}, by similar arguments as in \eqref{Eq5.29}, one can obtain that
\begin{align}\label{Eq5.32}
	I_{12}^{*} \leq C_N\Big(\|h_{+}(\tau)\|^{\frac{1}{2}}_{L^{\infty}}\sqrt{\mathcal{E}(\mathbf{F}_0)}+\sqrt{\mathcal{E}(\mathbf{F}_0)}\Big).
\end{align}

For $I_{13}^{*}$, we denote $z=X_{+}(s; t,x,v)$. Recalling $y=\hat{X}(s_{1})=X(s_{1};s,X_{+}(s;t,x,v),u)= X(s_{1};s,z,u)$, then 
\begin{align}\label{Eq5.33}
	I_{13}^{*} &\leq C_N\int_0^t\int_{0}^{s-\f{1}{N}}e^{-\tilde{\nu}_0(t-s)}e^{-\tilde{\nu}_0(s-s_1)}\mathrm{d}s_1ds   \nonumber\\
	&\quad \times\Big(\iint_{\hat{B}}|f_+(s_1, y, u_1)|^2 dydu_1\Big)^{\frac{1}{2}}\cdot\Big(\iint_{\hat{B}}|\partial_y\Big(|\det(\frac{\partial u}{\partial y})|\Big)|^2dydu_1\Big)^{\frac{1}{2}}. 
\end{align}
A direct calculation shows that
\begin{equation}\label{Eq5.34}
	\partial_y\Big(\det \Big( \frac{\partial u}{\partial y}\Big) \Big)=\partial_y\Big(\frac{1}{\det
		\Big( \frac{\partial y}{\partial u}\Big) }\Big)=-\frac{1}{\Big(\det \Big( \frac{\partial y}{%
			\partial u}\Big) \Big)^{2}}\partial_u\Big(\det \Big( \frac{\partial y}{\partial u}\Big)
	\Big)\cdot \frac{\partial u}{\partial y},
\end{equation}%
and
\begin{equation}\label{Eq5.35}
	\left|\partial_u\left(\det \left( \frac{\partial y}{\partial u}\right)\right)\right|\leq \left|\frac{\partial \hat{X}_+(s_1)}{\partial u}\right|^2
	\left|\partial^2_{uu} \hat{X}_+(s_1)\right|\leq C(s-s_1)^2\left|\partial^2_{uu} X(s_1; s,z,u)\right|.
\end{equation}%
Then it follows from \eqref{Eq2.23},  \eqref{Eq5.34} $-$ \eqref{Eq5.35}  that 
\begin{equation}\label{Eq5.36}
	\Big|\partial_y\Big(\det \Big( \frac{\partial u}{\partial y}\Big) \Big)\Big| \leq C(s-s_1)\frac{1}{\Big(\det \Big( \frac{\partial y}{%
			\partial u}\Big) \Big)^{2}}\Big|\partial^2_{uu} X(s_1; s,z,u)\Big|.
\end{equation}%
Substituting \eqref{Eq5.36}  into \eqref{Eq5.33}, one has
\begin{align}\label{Eq5.36-1}
	&\Big(\iint_{\hat{B}}|\partial_y\Big(|det(\frac{\partial u}{\partial y})|\Big)|^2\mathrm{d}y\mathrm{d}u_1\Big)^{\frac{1}{2}}   \nonumber \\
	 \leq& C(s-s_1)\Big(\iint_{\hat{B}}\frac{1}{\Big(\det \Big( \frac{\partial y}{%
	 		\partial u}\Big) \Big)^{4}}|\partial^2_{uu} X(s_1; s,z,u)|^2\mathrm{d}y\mathrm{d}u_1\Big)^{\frac{1}{2}}   \nonumber \\
	\leq& C(s-s_1)\Big(\iint_{B}\frac{1}{\left(\det \left( \frac{\partial y}{%
			\partial u}\right)\right)^{4}}|\partial^2_{uu} X(s_1; s,z,u)|^2|\det(\frac{\partial y}{\partial u})|\mathrm{d}u\mathrm{d}u_1\Big)^{\frac{1}{2}}     \nonumber \\
   \leq& C_N(s-s_1)^{-\f{7}{2}}\Big(\int_{|u|\leq 2N}|\partial^2_{uu} X(s_1; s,z,u)|^2\mathrm{d}u\Big)^{\frac{1}{2}},   
\end{align}
where we have used  Corollary \ref{Coro2.8}.
Noting Lemma \ref{Lem2.9}, we have from \eqref{Eq5.36-1} that
\begin{align}\label{Eq5.37}
	\Big(\iint_{\hat{B}}|\partial_y\Big(|det(\frac{\partial u}{\partial y})|\Big)|^2\mathrm{d}y\mathrm{d}u_1\Big)^{\frac{1}{2}}   \leq C_N\sup_{0\le s\le t}\|\partial_x \mathbf{\tilde{h}}(s)\|_{L^{\infty}}.
\end{align}
Following the same argument as in \eqref{Eq3.26}, together with \eqref{Eq5.37} and \eqref{Eq5.33}, one has
\begin{align}\label{Eq5.38}
	I_{13}^{*}&\leq C_{N}\Big(\sup_{0 \leq s\leq t}\|h(s)\|^{\frac{1}{2}}_{L^{\infty}}\sqrt{\mathcal{E}(\mathbf{F}_0)}+\sqrt{\mathcal{E}(\mathbf{F}_0)}\Big)\sup_{0\le s\le t}\|\partial_x \mathbf{\tilde{h}}(s)\|_{L^{\infty}} \nonumber \\
	&\leq \frac{1}{N}\sup_{0\le s\le t}\|\partial_x \mathbf{\tilde{h}}(s)\|_{L^{\infty}},
\end{align}
where $\mathcal{E}(\mathbf{F}_0)\leq \v_1$ with $\v_1$ sufficiently small.  Then it follows from \eqref{newB}, \eqref{Eq5.29}, \eqref{Eq5.32} and \eqref{Eq5.38}  that 
\begin{equation}\label{Eq5.39}
	I_1^{*} \leq \frac{C}{N}\sup_{0\le s\le t}\|\partial_x \mathbf{\tilde{h}}(s)\|_{L^{\infty}}+C_N.
\end{equation}%

Combining \eqref{Eq5.16} $-$ \eqref{Eq5.18}, \eqref{Eq5.20} and \eqref{Eq5.39}, we have
\begin{align}\label{Eq5.40}
	I_1 \leq\frac{C}{N}\sup_{0\le s\le t}\|\partial_x \mathbf{\tilde{h}}(s)\|_{L^{\infty}}+C_N.
\end{align}

Following analogous reasoning to \eqref{Eq5.16} $-$ \eqref{Eq5.40}, we deduce that for $i=2,3,4,5$
\begin{align}\label{Eq5.40-1}
	I_i \leq\frac{C}{N}\sup_{0\le s\le t}\|\partial_x \mathbf{\tilde{h}}(s)\|_{L^{\infty}}+C_N.
\end{align}

Plugging \eqref{Eq5.40} and \eqref{Eq5.40-1} into  \eqref{Eq5.12},  one obtains that
\begin{align}\label{Eq5.41}
	\|\partial_x \tilde{h}_{+}(t)\|_{L^{\infty}}
	&\le C(1+\|\partial_{x, v}\mathbf{\tilde{h}}_{0})\|_{L^\infty})^2  +C_N +(\frac{C}{N}+\kappa ^{\f{1}{2}})\sup_{0 \leq s\leq t}\|\partial_x\mathbf{\tilde{h}}(s)\|_{L^\infty} \nonumber\\
	&\quad  +\delta \sup_{0 \leq s\leq t}\|\partial_v\mathbf{\tilde{h}}(s)\|_{L^\infty}+\int_0^t e^{-\lambda_1 s}\|\partial_x\mathbf{\tilde{h}}(s)\|_{L^\infty}\mathrm{d}s. 
\end{align}
Similarly, $h_{-}(t,x,v)$ has the same estimate as \eqref{Eq5.41}. Thus,
\begin{align}\label{Eq5.42}
	\|\partial_x \mathbf{\tilde{h}}(t)\|_{L^{\infty}}
	&\le C(1+\|\partial_{x, v}\mathbf{\tilde{h}}_{0})\|_{L^\infty})^2  +C_N +(\frac{C}{N}+\kappa ^{\f{1}{2}})\sup_{0 \leq s\leq t}\|\partial_x\mathbf{\tilde{h}}(s)\|_{L^\infty} \nonumber\\
	&\quad  +\delta \sup_{0 \leq s\leq t}\|\partial_v\mathbf{\tilde{h}}(s)\|_{L^\infty}+\int_0^t e^{-\lambda_1 s}\|\partial_x\mathbf{\tilde{h}}(s)\|_{L^\infty}\mathrm{d}s. 
\end{align}
Then Lemma \ref{Lem5.1} follows from Gronwall's inequality and $N\gg 1$, $\kappa\ll1$. Therefore the proof of Lemma \ref{Lem5.1} is completed.

$\hfill\Box$

%%%%%%%%%%%%%%%%%%%%%%%%%%%%%%%%%%%%%%%%%%%%%%%%%%%%%%%%%
%%%%%%%%%%%%%%%%%%%%%%%%%%%%%%%%%%%%%%%%%%%%%

\subsection{Proof of Lemma \ref{Lem5.2}}

Recall $\partial^j=\frac{\partial}{\partial v_j}$, applying $\partial^j$ to  \eqref{Eq3.31}, we obtain 
\begin{align}\label{Eq5.44}
	&(\partial_t+v \cdot \nabla _x \mp\nabla_x \phi \cdot \nabla_v)\partial^j \tilde{h}_{\pm}+\tilde{\nu}_{\pm,1}\partial^j \tilde{h}_{\pm}   \nonumber\\
	=&-\partial_j \tilde{h}_{\pm} -\partial^j \tilde{\nu}_{\pm,1}\tilde{h}_{\pm} \mp\nabla_x \phi \cdot \partial^j (v w_{\b_1}\sqrt{\mu})   + \partial^j(w_{\b_1}\mathrm{\Gamma}^{\pm}(\mathbf{f},\mathbf{f}))+\partial^j (w_{\b_1}K_{\pm}f).
\end{align}
Integrating  \eqref{Eq5.44} along the characteristic, one has 
\begin{align}\label{Eq5.45}
	\partial^j \tilde{h}_{\pm}(t,x,v)
	&=\partial^j \tilde{h}_{\pm0}(X_{\pm}(0),V_{\pm}(0))e^{-\int_0^t\tilde{\nu}_{\pm,1}(\tau, X_{\pm}(\tau), V_{\pm}(\tau))\mathrm{d}\tau}\nonumber\\
	&\quad -\int_0^te^{-\int_s^t\tilde{\nu}_{\pm,1}(\tau, X_{\pm}(\tau), V_{\pm}(\tau))\mathrm{d}\tau}\left(\partial_j \tilde{h}_{\pm}\right)(s,X_{\pm}(s),V_{\pm}(s))\mathrm{d}s\nonumber\\
	&\quad -\int_0^te^{-\int_s^t\tilde{\nu}_{\pm,1}(\tau, X_{\pm}(\tau), V_{\pm}(\tau))\mathrm{d}\tau}\left(\partial^j \tilde{\nu}_{\pm,1}\tilde{h}_{\pm}\right)(s,X_{\pm}(s),V_{\pm}(s))\mathrm{d}s\nonumber\\
	&\quad \mp\int_0^te^{-\int_s^t\tilde{\nu}_{\pm,1}(\tau, X_{\pm}(\tau), V_{\pm}(\tau))\mathrm{d}\tau}\left(\nabla_x \phi \cdot \partial^j (v w_{\b_1}\sqrt{\mu})\right)(s,X_{\pm}(s),V_{\pm}(s))\mathrm{d}s\nonumber\\
	&\quad +\int_0^te^{-\int_s^t\tilde{\nu}_{\pm,1}(\tau, X_{\pm}(\tau), V_{\pm}(\tau))\mathrm{d}\tau}\left(\partial^j(w_{\b_1}\mathrm{\Gamma}^{\pm}(\mathbf{f},\mathbf{f}))\right)(s,X_{\pm}(s),V_{\pm}(s))\mathrm{d}s\nonumber\\
	&\quad +\int_0^te^{-\int_s^t\tilde{\nu}_{\pm,1}(\tau, X_{\pm}(\tau), V_{\pm}(\tau))\mathrm{d}\tau}\left(\partial^j (w_{\b_1}\mathrm{K}^{\pm}\mathbf{f})\right)(s,X_{\pm}(s),V_{\pm}(s))\mathrm{d}s
	=\sum_{i=0}^{5}J_i.
\end{align}

For the nonlinear collision term $J_4$,	using \eqref{new6.18} and similar arguments as in \eqref{new6.12}, one gets that
\begin{align}\label{new6.19}
	&\int_0^te^{-\int_s^t\tilde{\nu}_{\pm,1}(\tau, X_{\pm}(\tau), V_{\pm}(\tau))\mathrm{d}\tau}\left|\left(\partial^{j}(w_{\b_1}\mathrm{\Gamma}^{\pm}(\mathbf{f},\mathbf{f}))\right)(s,X_{\pm}(s),V_{\pm}(s))\right| \mathrm{d}s   \nonumber \\
	\leq& C(1+\|\partial_{x, v}\mathbf{\tilde{h}}_{0})\|_{L^\infty})^2 +\kappa ^{\f{1}{2}}\sup_{0 \leq s\leq t}\|\partial_v\mathbf{\tilde{h}}(s)\|_{L^\infty}+\int_0^t e^{-\lambda_1 s}\|\partial_v\mathbf{\tilde{h}}(s)\|_{L^\infty}\mathrm{d}s.
\end{align}

%%%%%%%%%%%%%%%%%%%新增%%%%%%%%%%%%%%%%%%%%
Next, we deal with the remaining terms on RHS of \eqref{Eq5.45}.
It is easy to see that
\begin{equation}\label{Eq5.46}
	|\partial_{v_j}\tilde{\nu}_{\pm,1}(t,x,v)|\leq C(1+|v| ),
\end{equation}
which, together with   \eqref{Eq4.67} and \eqref{new6.19}, yields  that
\begin{align}\label{Eq5.49}
	\sum_{i=0}^{4}J_i&\leq C(\|\partial_v\mathbf{\tilde{h}}_0\|_{L^\infty}+1)^2 +\sup_{s}\|\partial_x\mathbf{\tilde{h}}(s)\|_{L^\infty}
	+\kappa ^{\f{1}{2}}\sup_{0 \leq s\leq t}\|\partial_v\mathbf{\tilde{h}}(s)\|_{L^\infty}   \nonumber\\
	&\quad+\int_0^t e^{-\lambda_1 s}\|\partial_v\mathbf{\tilde{h}}(s)\|_{L^\infty}\mathrm{d}s.
\end{align}

For $J_5$,  we only consider $\mathrm{K}^{+}\mathbf{f}$, because $\mathrm{K}^{-}\mathbf{f}$ is similar. 
Using \eqref{Eq2.50}, it is easy to have
\begin{align}\label{Eq5.51}
	J_5
	=&\int_0^te^{-\int_s^t\tilde{\nu}_{+,1}(\tau)\mathrm{d}\tau}\left(\partial_{v_j}w_{\b_1}\cdot\mathrm{K}^{+} \mathbf{f}\right)(s,X_+(s),V_+(s)) \mathrm{d}s\nonumber  \\
	&+\int_0^te^{-\int_s^t\tilde{\nu}_{+,1}(\tau)\mathrm{d}\tau}\int\tilde{\mathtt{k}}_{ w_{\b_1}}^{(2)}(V_+(s),u)\tilde{h}_{+}(s,X_+(s),u)du \mathrm{d}s\nonumber\\
	&+\int_0^te^{-\int_s^t\tilde{\nu}_{+,1}(\tau)\mathrm{d}\tau}\int\tilde{\mathtt{k}}_{ w_{\b_1}}^{(1)}(V_+(s),u)\tilde{h}_{-}(s,X_+(s),u)du \mathrm{d}s\nonumber\\
	&+\int_0^te^{-\int_s^t\tilde{\nu}_{+,1}(\tau)\mathrm{d}\tau}\int\mathtt{k}_{ w_{\b_1}}^{(2)}(V_+(s),u)\cdot(w_{\b_1}\partial_{v_j}f_{+})(s,X_+(s),u)du \mathrm{d}s\nonumber\\
	&+\int_0^te^{-\int_s^t\tilde{\nu}_{+,1}(\tau)\mathrm{d}\tau}\int\mathtt{k}_{ w_{\b_1}}^{(1)}(V_+(s),u)\cdot(w_{\b_1}\partial_{v_j}f_{-})(s,X_+(s),u)du \mathrm{d}s \nonumber\\
	=:&J_{51}+J_{52}^{+}+J_{52}^{-}+J_{53}^{+}+J_{53}^{-}.
\end{align}
It follows from \eqref{Eq2.41} that
\begin{equation}\label{Eq5.52}
	J_{51}+J_{52}^{+}+J_{52}^{-}\leq\int_0^te^{-\nu_0(t-s)}\|\mathbf{h}(s)\|_{L^\infty} \mathrm{d}s\leq C.	
\end{equation}
We only need to deal with $J_{53}^{+}$ since $J_{53}^{-}$ can be dealt in the same way. It is clear that 
\begin{align}\label{newV}
	J_{53}^{+} \leq& \int_0^te^{-\tilde{\nu_0}(t-s)} \int\mathtt{k}_{ w_{\b_1}}^{(2)}(V_+(s),u)\cdot\left((\partial_{v_j}w_{\b_1})f_{+}\right)(s,X_+(s),u)du \mathrm{d}s\nonumber\\
	&+\int_0^te^{-\int_s^t\tilde{\nu}_{+,1}(\tau)\mathrm{d}\tau} \int\mathtt{k}_{ w_{\b_1}}^{(2)}(V_+(s),u)\cdot\partial_{v_j}\tilde{h}_{+}(s,X_+(s),u) \mathrm{d}u \mathrm{d}s  \nonumber \\
	\leq& ~C+ \bar{J}_{53}^{+}.
\end{align}
To estimate $\bar{J}_{53}^{+}$,  performing the same procedure as in \eqref{Eq5.11} $-$ \eqref{Eq5.20}, one has
\begin{align}\label{Eq5.47}
	\bar{J}_{53}^{+}\leq& C(\|\partial_{x,v}\mathbf{\tilde{h}}_0\|_{L^\infty}+1)^2 +\sup_{s}\|\partial_x\mathbf{\tilde{h}}(s)\|_{L^\infty}
	+\Big(\f{C}{N}+\kappa ^{\f{1}{2}}\Big)\sup_{0 \leq s\leq t}\|\partial_v\mathbf{\tilde{h}}(s)\|_{L^\infty}   \nonumber\\
	&\quad+\int_0^t e^{-\lambda_1 s}\|\partial_v\mathbf{\tilde{h}}(s)\|_{L^\infty}\mathrm{d}s   
	+(**),
\end{align}
where
\begin{align*}
	(**)\cong&\int_0^t\int_0^{s-\f{1}{N}}e^{-\int_s^t\tilde{\nu}_{\pm,1}(\tau)\mathrm{d}\tau}e^{-\int_{s_1}^s\hat{\nu}_{\pm,1}(\tau_1))\mathrm{d}\tau_1}\mathrm{d}s_1\mathrm{d}s    \\
	&\times\iint_{B} \mathtt{k}_{N}^{(2)}(V_{+}(s),u)\cdot\mathtt{k}_{N}^{(2)}(\hat{V}_{+}(s_1),u_1)\partial_{v_j}\tilde{h}_{+}(s_1, \hat{X}_{+}(s_1), u_1)\mathrm{d}u_1\mathrm{d}u\nonumber\\
	=&\int_0^t\int_0^{s-\f{1}{N}}e^{-\int_s^t\tilde{\nu}_{\pm,1}(\tau)\mathrm{d}\tau}e^{-\int_{s_1}^s\hat{\nu}_{\pm,1}(\tau_1))\mathrm{d}\tau_1}\mathrm{d}s_1\mathrm{d}s    \\
	&\times\iint_{B} \mathtt{k}_{N}^{(2)}(V_{+}(s),u)\cdot\mathtt{k}_{N}^{(2)}(\hat{V}_{+}(s_1),u_1)\partial_{u_{1,j}}\tilde{h}_{+}(s_1, \hat{X}_{+}(s_1), u_1)\mathrm{d}u_1\mathrm{d}u,
\end{align*}
here $u_{1,j}$  is the $j$-th component of $u_1$.
Denote $y=X_+(s_1,s,X_+(s,t,x,v),u)$, after integrating by parts, and noting the boundary term contribution can be controlled by $C_N\sup\limits_{0 \leq s\leq t}\|\mathbf{h}(s)\|_{L^\infty}$, we have
\begin{align}\label{Eq5.54}
	(**)\leq&\int_0^t\int_0^{s-\f{1}{N}}\int_{\hat{B}}e^{-\int_s^t\tilde{\nu}_{\pm,1}(\tau)\mathrm{d}\tau}e^{-\int_{s_1}^{s}\hat{\nu}_{\pm,1}(\tau_1)\mathrm{d}\tau_1}\mathtt{k}_{N}^{(2)}(V_+(s),u)\partial_{u_{1,j}}\mathtt{k}_{N}^{(2)}(\hat{V}_+(s_1),u_1)   \nonumber\\
	&\times\tilde{h}_{+}(s_1,y,u_1)\left|\det\left(\frac{\partial u}{\partial y} \right)  \right| dydu_1
	+C_N\sup_{0 \leq s\leq t}\|\mathbf{h}(s)\|_{L^\infty}\nonumber\\
	\leq&C_N\Big( \sup_{0 \leq s\leq t}\|\mathbf{h}(s)\|_{L^\infty}^{1/2}\sqrt{\mathcal{E}(\mathbf{F}_0)}+\sqrt{\mathcal{E}(\mathbf{F}_0)}\Big)+C_N \leq C_N.
\end{align}
It is a consequence of \eqref{Eq5.45}, \eqref{Eq5.49} and \eqref{Eq5.51}--\eqref{Eq5.54} that
\begin{align*}
	\|\partial_v\mathbf{\tilde{h}}(t)\|_{L^\infty}\leq& C(1+\|\partial_v\mathbf{\tilde{h}}_0\|_{L^\infty} )^2 
	+C_N
	+\sup_{0 \leq s\leq t}	\|\partial_x\mathbf{\tilde{h}}(s)\|_{L^\infty}+\Big(\frac{1}{N}+\kappa^{\f{1}{2}}\Big) \sup_{0 \leq s\leq t}\|\partial_v\mathbf{\tilde{h}}(s)\|_{L^\infty}\nonumber\\
	&+\int_0^t e^{-\lambda_1 s}\|\partial_v\mathbf{\tilde{h}}(s)\|_{L^\infty}\mathrm{d}s.
\end{align*}
Taking $N\gg1$, and noting $\kappa\ll 1$, Lemma \ref{Lem5.2} follows from Gronwall's inequality. Therefore the proof of Lemma \ref{Lem5.2} is completed.
$\hfill\Box$

%%%%%%%%%%%%%%%%%%%%%%%%%%%%%%%%%%%%%%%%%%%%%%%%%%%%%%%%%%%%
%%%%%%%%%%%%%%%%%%%%%%%%%%%%%%%%%%%%%%%%%%%%%%%%%%%%%%%%%%%
\section{Proof of Theorem \ref{Thm1.1}}  \label{section 7}

\medskip
\noindent{ Step 1.} 
Noting Propositions \ref{Decay} and \ref{Prop5.3}, we only need to close \eqref{Eq1.27} $-$ \eqref{Eq1.25} and the {\it a priori} assumption \eqref{Eq2.21}.

It follows from \eqref{L44} $-$ \eqref{L49} that for any $t\geq 0$,
\begin{align}\label{Eq6.4-0}
	\|(f_{+}-f_{-})(t)\|_{L^{\infty}} \leq &
	(1+\sup_{0 \leq s\leq t}\|\mathbf{h}(s)\|_{L^\infty}+\sup_{0 \leq s\leq t}\|\partial_v\mathbf{\tilde{h}}(s)\|_{L^\infty})
	\int_0^t \|(f_{+}-f_{-})(s)\|_{L^{\infty}}\mathrm{d}s \nonumber\\
	&+\|(f_{+}-f_{-})(0)\|_{L^{\infty}}.
\end{align}
 Then it is a consequence of  Propositions \ref{prop4.3} and \ref{Prop5.3}  that for  $t\geq 0$
\begin{align}\label{Eq6.4}
		\|(f_{+}-f_{-})(t)\|_{L^{\infty}}\leq \v_0 
		+C(1+\|\partial_{x,v}\mathbf{\tilde{h}}_0\|_{L^\infty})^2 \int_0^t\|(f_{+}-f_{-})(s)\|_{L^{\infty}}\mathrm{d}s,
\end{align}
which, together with Gronwall's inequality, yields that 
\begin{align}\label{Eq6.5}
	\|(f_{+}-f_{-})(t)\|_{L^{\infty}}\leq \v_0 \exp\left\{C(1+\|\partial_{x,v}\mathbf{\tilde{h}}_0\|_{L^\infty})^2t\right\},
\end{align}
where the positive constant $C>0$ depends on $\|\mathbf{h}_{0}\|_{L^\infty}$. Combining \eqref{Eq2.19} and \eqref{Eq6.5}, one has
\begin{align}\label{Eq6.6}
	\|\nabla_x \phi(t)\|_{L^{\infty}}\leq C	\|(f_{+}-f_{-})(t)\|_{L^{\infty}}\leq C\v_0 \exp\left\{C(1+\|\partial_{x,v}\mathbf{\tilde{h}}_0\|_{L^\infty})^2t\right\}. 
\end{align}
On the other hand, it is follows from \eqref{Eq4.67} that
\begin{align}\label{Eq6.7}
	\|\nabla_x \phi(t)\|_{L^{\infty}}\leq C\|\mathbf{h}(t)\|_{L^\infty}\leq Ce^{-\lambda_1 t}.  
\end{align}
Thus we have from \eqref{Eq6.6} $-$ \eqref{Eq6.7} that
\begin{align}\label{Eq6.9}
	\|\nabla_x \phi(t)\|_{L^{\infty}}\lesssim \min\left\{\v_0 \exp\{C(1+\|\partial_{x,v}\mathbf{\tilde{h}}_0\|_{L^\infty})^2t\}, e^{-\lambda_1 t}\right\}.  
\end{align}

Taking $d= \delta^2e^{-\lambda_1t}(1+\|\partial_{x, v}\mathbf{\tilde{h}}_{0}\|_{L^\infty})^{-2}$ and $R=\frac{1}{4}$ in  Lemma \ref{Lem2.5}, and using Proposition \ref{Prop5.3}, one has
\begin{align}\label{Eq6.10-0}
	\|\nabla^2_x\phi(t)\|_{L^{\infty}}&\le C\|(f_{+}-f_{-})(t)\|_{L^{\infty}}\Big(1+\ln \frac{R}{d}+R^{-3}\Big)+\|\partial_{x}\mathbf{\tilde{h}}(t)\|_{L^\infty}d  \nonumber\\
	&\lesssim \delta^2 e^{-\lambda_1t}+\|(f_{+}-f_{-})(t)\|_{L^{\infty}}\Big(1+2\ln \Big(\f{1}{\delta}\Big)+\lambda_1 t+\ln (1+\|\partial_{x,v}\mathbf{\tilde{h}}_0\|_{L^\infty})\Big),
\end{align}
which, together with \eqref{Eq6.5}, yields that
\begin{align}\label{Eq6.10}
	\|\nabla^2_x\phi(t)\|_{L^{\infty}}&\lesssim \delta^2 e^{-\lambda_1t} + \v_0 \mathfrak{H}(t),
\end{align}
where $\mathfrak{H}(t):=\left(1+2\ln \Big(\f{1}{\delta}\Big)+\lambda_1 t+\ln (1+\|\partial_{x,v}\mathbf{\tilde{h}}_0\|_{L^\infty})\right)\exp\left\{C(1+\|\partial_{x,v}\mathbf{\tilde{h}}_0\|_{L^\infty})^2t\right\}$.

On the other side, it follows from \eqref{Eq6.10-0} and Proposition \ref{Decay} that 
\begin{align}\label{Eq6.11}
	\|\nabla^2_x\phi(t)\|_{L^{\infty}}
	\lesssim \delta^2 e^{-\lambda_1t} + e^{-\lambda_2t} \Big(1+2\ln \Big(\f{1}{\delta}\Big)+\ln (1+\|\partial_{x,v}\mathbf{\tilde{h}}_0\|_{L^\infty})\Big), \ \text{where} \ 0<\lambda_2\leq\f{\lambda_1}{2}.
\end{align}

 Noting \eqref{Eq6.7} $-$ \eqref{Eq6.11},  we choose $\v_0$ sufficiently small such that 
\begin{align}\label{Eq6.12}
	\min\left\{\v_0 \exp\{C(1+\|\partial_{x,v}\mathbf{\tilde{h}}_0\|_{L^\infty})^2t\}, e^{-\lambda_1 t}\right\}\leq \delta^2(1+t)^{-2},
\end{align}
and
\begin{align}\label{Eq6.13}
	\min\Big\{\v_0 \mathfrak{H}(t), e^{-\lambda_2t}\Big(1+2\ln \Big(\f{1}{\delta}\Big)+\ln (1+\|\partial_{x,v}\mathbf{\tilde{h}}_0\|_{L^\infty})\Big)\Big\}\leq \delta^2(1+t)^{-\f{5}{2}}.
\end{align}

Thus we conclude the {\it a priori} assumption \eqref{Eq2.21} from \eqref{Eq6.9} $-$ \eqref{Eq6.13}.

\medskip
\noindent{ Step 2.} 
Recall  \eqref{Eq5.4}, Proposition \ref{Prop5.3} and \eqref{Eq6.11}, it is easy to have
\begin{align}\label{Eq6.15}
	H_1+H_3\leq& C\int_0^t e^{-\tilde{\nu}_0 (t-s)}\|\nabla^2_x\phi(s)\|_{L^{\infty}}(1+
	\|\partial_{v}\mathbf{\tilde{h}}(s)\|_{L^\infty})\mathrm{d}s   \nonumber\\
	\leq& C(1+\|\partial_{x,v}\mathbf{\tilde{h}}_0\|_{L^\infty})^2 \Big(\ln \Big(\f{1}{\delta}\Big)+\ln (1+\|\partial_{x,v}\mathbf{\tilde{h}}_0\|_{L^\infty})\Big)\int_0^t e^{-\tilde{\nu}_0 (t-s)}e^{-\lambda_2s} \mathrm{d}s   \nonumber\\
	\leq& C(1+\|\partial_{x,v}\mathbf{\tilde{h}}_0\|_{L^\infty})^2 \Big(\ln \Big(\f{1}{\delta}\Big)+\ln (1+\|\partial_{x,v}\mathbf{\tilde{h}}_0\|_{L^\infty})\Big)e^{-\lambda_2t}.
\end{align}

In addition, it is not difficult to verify 
\begin{align}\label{Eq6.16}
 H_2+H_4 \leq C(1+\|\partial_{x,v}\mathbf{\tilde{h}}_0\|_{L^\infty})^2e^{-\lambda_1t}.
\end{align}

It remains to estimate $H_5$. We shall adopt the same procedure as in \eqref{Eq5.11} $-$ \eqref{Eq5.20}. Indeed, in view of $\int_0^t e^{-\tilde{\nu}_0 (t-s)}e^{-\lambda_2s} \mathrm{d}s \leq Ce^{-\lambda_2t}$, all of the above cases in Section \ref{section 6.1} except the last one can be bounded by
\begin{align}\label{Eq6.19}
\frac{C}{N}e^{-\lambda_2t}\sup_{0\le s\le t}\{e^{\lambda_2s}\|\partial_x \mathbf{\tilde{h}}(s)\|_{L^{\infty}}\}.
\end{align}

 For the last case, analogous to the treatment of \eqref{Eq5.19}--\eqref{Eq5.38},  it follows from integration by part that 
\begin{align}\label{Eq6.23}
	 I_{1}^{*} \leq& \frac{C}{N}e^{-\lambda_2t}\sup_{0\le s\le t}\{e^{\lambda_2s}\|\partial_x \mathbf{\tilde{h}}(s)\|_{L^{\infty}}\}  +C_Ne^{-\lambda_1 t}\nonumber\\
	 &+C_N\int_0^t\int_{0}^{s-\f{1}{N}}
	 e^{-\tilde{\nu}_0(t-s)}e^{-\tilde{\nu}_0(s-s_1)}\|\mathbf{f}(s_1)\|_{L^2}^{\f{1}{2}} \mathrm{d}s_1\mathrm{d}s\cdot\Big(\iint_{\hat{B}}|f_+(s_1, y, u_1)|^2 dydu_1\Big)^{\frac{1}{4}}  \nonumber\\
	 &+C_N\int_0^t\int_{0}^{s-\f{1}{N}}
	 e^{-\tilde{\nu}_0(t-s)}e^{-\tilde{\nu}_0(s-s_1)}\|\mathbf{f}(s_1)\|_{L^2}^{\f{1}{2}}
	 \mathrm{d}s_1ds   
	\cdot\Big(\iint_{\hat{B}}|f_+(s_1, y, u_1)|^2 dydu_1\Big)^{\frac{1}{4}} \nonumber\\
	 &\quad\times\Big(\iint_{\hat{B}}|\partial_y\Big(|\det(\frac{\partial u}{\partial y})|\Big)|^2dydu_1\Big)^{\frac{1}{2}} \nonumber\\
	 \leq&  \frac{C}{N}e^{-\lambda_2t}\sup_{0\le s\le t}\{e^{\lambda_2s}\|\partial_x \mathbf{\tilde{h}}(s)\|_{L^{\infty}}\} +C_Ne^{-\lambda_1 t} +C_{N}\mathcal{E}^{\f{1}{4}}(\mathbf{F}_0)
	 (1+\|\partial_{x,v}\mathbf{\tilde{h}}_0\|_{L^\infty})^2e^{-\f{\lambda}{4}t}.
\end{align}
Noting $\lambda_2 \leq \f{\lambda_1}{2} \leq \f{\lambda}{4}$, using \eqref{Eq6.15} $-$ \eqref{Eq6.23}, one can choose $N\geq1$ large enough, then let  $\v_1$ suitably small, thus we obtain
\begin{align}\label{Eq6.26}
		\|\partial_{x}\mathbf{\tilde{h}}(t)\|_{L^\infty}  \leq C(1+\|\partial_{x,v}\mathbf{\tilde{h}}_0\|_{L^\infty})^2 \Big(\ln \Big(\f{1}{\delta}\Big)+\ln (1+\|\partial_{x,v}\mathbf{\tilde{h}}_0\|_{L^\infty})\Big)e^{-\lambda_2t}.
\end{align}
Similarly,
\begin{align}\label{Eq6.27}
\|\partial_{v}\mathbf{\tilde{h}}(t)\|_{L^\infty}  \leq C(1+\|\partial_{x,v}\mathbf{\tilde{h}}_0\|_{L^\infty})^2 \Big(\ln \Big(\f{1}{\delta}\Big)+\ln (1+\|\partial_{x,v}\mathbf{\tilde{h}}_0\|_{L^\infty})\Big)e^{-\lambda_2t}.
\end{align}
Combining \eqref{Eq6.26}, \eqref{Eq6.27} and Proposition \ref{Prop5.3},  we conclude \eqref{Eq1.27}.
Therefore the proof of Theorem\ref{Thm1.1} is completed. 
$\hfill\Box$
%%%%%%%%%%%%%%%%%%%%%%%%%%%%%%%%%%%%%%%%%%%%%%%%%%%%%%%%%%%%%%
%%%%%%%%%%%%%%%%%%%%%%%%%%%%%%%%%%%%%%%%%%%%%%%%%%%%%%%%%%%%%%%%%%%%%%%%%%%%%%%%%%%%%%%%%%%%%%%%%%%%%%%%%%%%%%%%%%%%%%%%%%%%

\bigskip

\noindent {\bf Acknowledgments:}  Zaihong Jiang's research is partially supported by the National Natural Science Foundation of China, grants No. 12071439. Yong Wang's research is partially supported by the National Natural Science Foundation of China, grants No. 12022114, No. 12421001 and No. 12288201, and CAS Project for Young Scientists in Basic Research, grant No. YSBR-031.  Hang Xiong's research  is partially supported by China Postdoctoral Science Foundation 2021TQ0351 and 2021M693336. We thank Changguo Xiao for his constructive comments to improve the presentation of the paper.

%\newpage

\end{document}